\documentclass[12pt]{article}

\usepackage[utf8]{inputenc}
\usepackage[a4paper,nomarginpar]{geometry}
\usepackage[english]{babel}
\usepackage{enumitem}
\usepackage{amsmath,amsfonts,amssymb,amsthm,amscd}
\usepackage{tikz,xcolor}

\geometry{lmargin=18mm, rmargin=18mm, tmargin=20mm, bmargin=22mm}

\allowdisplaybreaks
%\renewcommand{\baselinestretch}{1.1}
%\numberwithin{equation}{section}

\language0

\theoremstyle{plain}
\newtheorem{theorem}{Theorem}%[section]
%[section]
%[section]
\newtheorem{lemma}[theorem]{Lemma}%[section]
\newtheorem{corollary}[theorem]{Corollary}%[section]

\theoremstyle{definition}
%[section]
\newtheorem{remark}[theorem]{Remark}%[section]
%[section]
\newtheorem{example}[theorem]{Example}%[section]
%[section]

%% Algunos comandos

\newcommand{\N}{\mathbb{N}} %% Conjunto naturales:     \N
\newcommand{\Z}{\mathbb{Z}} %% Conjunto enteros:       \Z
\newcommand{\R}{\mathbb{R}} %% Conjunto reales:        \R
\newcommand{\C}{\mathbb{C}} %% Conjunto complejos:     \C
 %% Disco unidad:           \D
 %% Circ. unidad:           \T
\newcommand{\K}{\mathbb{K}} %% Cuerpo		           \K

\newcommand{\cC}{\mathcal{C}}
\newcommand{\cD}{\mathcal{D}}

\newcommand{\cP}{\mathcal{P}}
\newcommand{\cR}{\mathcal{R}}

\newcommand{\fM}{\mathfrak{M}}
\newcommand{\fX}{\mathfrak{X}}

% Absolute value notation

% Norm notation

 % Orbit command
\newcommand{\card}{\operatorname{card}}
\newcommand{\supp}{\operatorname{supp}}
\newcommand{\ldens}{\operatorname{\underline{dens}}}
\newcommand{\udens}{\operatorname{\overline{dens}}}

\newcommand{\spa}{\operatorname{span}}      %% Func. env. lineal:        \spa

\newcommand{\esssup}{\operatorname{ess\, sup}}
\newcommand{\eps}{\varepsilon}
\newcommand{\ov}{\overline}

\DeclareRobustCommand{\rchi}{{\mathpalette\irchi\relax}}
\newcommand{\irchi}[2]{\raisebox{\depth}{$#1\chi$}}

%%%%%%%%%%%%%%%%%%%%%%%%%%%%%%%%%%%%%%%%%%%%%%%%%%%%%%%%%%%%%%%%%%%%%%
%%%%%%%%%%%%%%%%%%%%%%%%%%%%%%%%%%%%%%%%%%%%%%%%%%%%%%%%%%%%%%%%%%%%%%

\title{On the dynamics of weighted composition operators}

\author{Nilson C. Bernardes Jr., Antonio Bonilla, Jo\~ao V. A. Pinto}

\date{}

\hyphenation{o-pe-ra-tor}

\begin{document}

\maketitle

\begin{abstract}
We study the properties of power-boundedness, Li-Yorke chaos, distributional chaos, absolutely Ces\`aro boundedness
and mean Li-Yorke chaos for weighted composition operators on $L^p(\mu)$ spaces and on $C_0(\Omega)$ spaces.
We illustrate the general results by presenting several applications to weighted shifts on the classical sequence spaces
$c_0(\N)$, $c_0(\Z)$, $\ell^p(\N)$ and $\ell^p(\Z)$ ($1 \leq p < \infty$) and to weighted translation operators on the
classical function spaces $C_0[1,\infty)$, $C_0(\R)$, $L^p[1,\infty)$ and $L^p(\R)$ ($1 \leq p < \infty$).
\end{abstract}

\bigskip\noindent
{\bf Keywords:} Weighted composition operators, power-boundedness, Li-Yorke chaos, distributional chaos,
absolutely Ces\`aro boundedness, mean Li-Yorke chaos.

\bigskip\noindent
{\bf 2020 Mathematics Subject Classification:} Primary 47A16, 47B33; Secondary 46E15, 46E30.

%%%%%%%%%%%%%%%%%%%%%%%%%%%%%%%%%%%%%%%%%%%%%%%%%%%%%%%%%%%%%%%%%%%%%%
%%%%%%%%%%%%%%%%%%%%%%%%%%%%%%%%%%%%%%%%%%%%%%%%%%%%%%%%%%%%%%%%%%%%%%

\section{Introduction}

Linear dynamics is a branch of mathematics that lies at the interface between the big areas 
of dynamical systems and operator theory.
Its main focus is to study the dynamics of continuous linear operators on topological vector spaces (often Banach or Fr\'echet spaces).
The area has undergone significant development during the last four decades.
We refer the reader to the books \cite{BaMa,GP} for an overview of the area up to around 2010.
The main objective of these books is the study of chaotic behaviors related to the concept of hypercyclicity (existence of a dense orbit), 
such as hypercyclicity itself, Devaney chaos, mixing and frequent hypercyclicity.
During the 2010s, the series of papers \cite{BBMP11,BBMP,BBMP2,BBPW,BBP} laid the foundation for a theory of chaotic behaviors 
related to the dynamics of pairs of points (including Li-Yorke chaos, distributional chaos and mean Li-Yorke chaos) 
in the context of linear dynamics.
More recently, some fundamental notions in dynamical systems that are not notions of chaos, such as hyperbolicity, expansivity, 
shadowing and stability, have started to be systematically investigated in the setting of linear dynamics.
We refer the reader to the recent articles \cite{BCDFP,BerPer}, where many additional references can be found.

On the other hand, composition operators constitute a very important class of operators in operator theory and its applications.
There is a vast literature on the dynamics of these operators in different contexts (analytic, measure-theoretic, topological).
We refer the reader to \cite{AJK,BonKalPer21,BonDAnDarPia22,PBouJSha97,EGalAMon04,TKal19,Prz,JSha93}, for instance.
In the particular case of composition operators on $L^p ( \mu)$ spaces, Li-Yorke chaos was studied in \cite{BerDarPi,BV}, 
topological transitivity and mixing in \cite{BaDarPi}, Devaney chaos and frequent hypercyclicity in \cite{DarPi}, 
generalized hyperbolicity and shadowing in \cite{DAnDarMai21}, expansivity and strong structural stability in \cite{M}, 
distributional chaos in \cite{HeYin}, and Kitai's Criterion in \cite{GG}.

Our goal in the present work is to study the concepts of power-boundedness, Li-Yorke chaos, distributional chaos,
$p$-absolutely Ces\`aro boundedness and mean Li-Yorke chaos for weighted composition operators 
\[
C_{w,f}(\varphi) = (\varphi \circ f) \cdot w
\]
on the classical Banach spaces $L^p(\mu)$ and $C_0(\Omega)$. 
Special emphasis will be given to obtaining characterizations of these concepts.
In order to illustrate the general results, we will present several applications to two special classes of weighted composition operators,
namely, weighted shifts on the classical sequence spaces $c_0(\N)$, $c_0(\Z)$, $\ell^p(\N)$ and $\ell^p(\Z)$ ($1 \leq p < \infty$)
and weighted translation operators on the classical function spaces $C_0[1,\infty)$, $C_0(\R)$, $L^p[1,\infty)$ and $L^p(\R)$
($1 \leq p < \infty$).

Our results on Li-Yorke chaos will emphasize the close relationship between this concept and the notion of power-boundedness
and will complement previous studies developed in \cite{BerDarPi,BV}.
In the case of $L^p(\mu)$ spaces, our results on distributional chaos for weighted composition operators will extend and complement 
recent results obtained in the preprint \cite{HeYin} in the unweighted case, but we will also study here the case of $C_0(\Omega)$ spaces.
To the best of the authors' knowledge, the concepts of absolute Ces\`aro boundedness and mean Li-Yorke chaos have not been studied
before in the context of the present article.

Throughout $\K$ denotes either the field $\R$ of real numbers or the field $\C$ of complex numbers,
$\Z$ denotes the ring of integers, $\N$ denotes the set of all positive integers, and $\N_0 = \N \cup \{0\}$.
By an {\em operator} on a Banach space $Y$, we mean a bounded linear map $T : Y \to Y$.
Recall that the {\em operator norm} of such a map is the number $\|T\| = \sup\{\|Ty\| : \|y\| \leq 1\}$.

%%%%%%%%%%%%%%%%%%%%%%%%%%%%%%%%%%%%%%%%%%%%%%%%%%%%%%%%%%%%%%%%%%%%%%
%%%%%%%%%%%%%%%%%%%%%%%%%%%%%%%%%%%%%%%%%%%%%%%%%%%%%%%%%%%%%%%%%%%%%%

\section{Preliminaries}

\subsection{Weighted composition operators on $L^p(\mu)$}

Throughout this article we fix a real number $p \in [1,\infty)$ and an arbitrary positive measure space $(X,\fM,\mu)$,
unless otherwise specified. 
$L^p(\mu)$ denotes the Banach space over $\K$ of all $\K$-valued $p$-integrable functions on $(X,\fM,\mu)$ 
endowed with the $p$-norm
\[
\|\varphi\|_p = \Big(\int_X |\varphi|^p d\mu\Big)^{\frac{1}{p}}.
\]
$L^\infty(\mu)$ denotes the Banach space over $\K$ of all $\K$-valued essentially bounded measurable functions on $(X,\fM,\mu)$
endowed with the essential supremum norm
\[
\|\varphi\|_\infty = \esssup |f|.
\]
We also consider a measurable map $w : X \to \K$ such that
\begin{equation}\label{weight}
\varphi \cdot w \in L^p(\mu) \ \text{ for all } \varphi \in L^p(\mu).
\end{equation}
If the measure $\mu$ is semifinite, that is, every set with infinite measure contains a set with positive finite measure 
(in particular, if $\mu$ is $\sigma$-finite), then (\ref{weight}) is equivalent to $w \in L^{\infty}(\mu)$ \cite [Proposition~7]{BV},
but this equivalence is not true in general \cite[Remark~8]{BV}.
Given a bimeasurable map $f : X \to X$ (i.e., $f(B) \in \fM$ and $f^{-1}(B) \in \fM$ whenever $B \in \fM$), 
it is not difficult to show that the {\em weighted composition operator}
\[
C_{w,f}(\varphi) = (\varphi \circ f) \cdot w
\]
is a well-defined bounded linear operator on $L^p(\mu)$ if and only if there is a constant $c \in (0,\infty)$ such that
\begin{equation}\label{condition}
\int_B |w|^p d\mu \leq c\, \mu(f(B)) \ \text{ for every } B \in \fM.
\end{equation}
Moreover, in this case, $\|C_{w,f}\|^p \leq c$. Whenever we consider a weighted composition operator $C_{w,f}$ on $L^p(\mu)$, 
we will implicitly assume that $w$ and $f$ satisfy conditions (\ref{weight}) and (\ref{condition}).
As in \cite[Section~3]{BV}, we associate to $w$ and $f$ the following sequence of positive measures on $(X,\fM)$:
\[
\mu_n(B) = \int_B |w^{(n)}|^p d\mu \ \ \ \ (B \in \fM, n \in \N),
\]
where
\[
w^{(1)} = w \ \ \ \text{ and } \ \ \ w^{(n)} = (w \circ f^{n-1}) \cdot\ldots\cdot (w \circ f) \cdot w \ \text{ for } n \geq 2.
\]

\begin{lemma}[\cite{BV}]\label{null} 
If (\ref{condition}) holds, then
\[
\mu_n(B) \leq c^n \mu(f^n(B)) \ \text{ for all } B \in \fM \text{ and } n \in \N.
\]
\end{lemma}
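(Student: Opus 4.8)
The plan is to argue by induction on $n$, reducing the inductive step to a single ``transfer'' inequality that converts the set inequality (\ref{condition}) into a statement about integrals of composed functions. The base case $n=1$ is nothing but (\ref{condition}) itself: since $w^{(1)} = w$, we have $\mu_1(B) = \int_B |w|^p\,d\mu \leq c\,\mu(f(B)) = c\,\mu(f^1(B))$.

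The starting point for the inductive step is the recursion $w^{(n)} = w\cdot(w^{(n-1)}\circ f)$. Indeed, composing each factor of $w^{(n-1)}$ with $f$ shifts $w\circ f^k$ to $w\circ f^{k+1}$, so $w^{(n-1)}\circ f = (w\circ f^{n-1})\cdots(w\circ f)$, and multiplying by $w$ recovers $w^{(n)}$. Taking $p$-th powers of moduli gives $|w^{(n)}|^p = |w|^p\cdot\big(|w^{(n-1)}|^p\circ f\big)$, whence
\[
\mu_n(B) = \int_B \big(|w^{(n-1)}|^p\circ f\big)\,|w|^p\,d\mu .
\]
This expresses $\mu_n$ in terms of the pushforward behaviour of $|w^{(n-1)}|^p$ under $f$, which is exactly what the transfer inequality will control.

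The key step, and the one I expect to be the main obstacle, is to establish the \emph{transfer inequality}
\[
\int_B (g\circ f)\,|w|^p\,d\mu \leq c\int_{f(B)} g\,d\mu
\]
for every nonnegative measurable $g$ and every $B\in\fM$; call this inequality $(\star)$. I would first prove $(\star)$ for a nonnegative simple function $g=\sum_i a_i\,\rchi_{A_i}$ with the $A_i\in\fM$ pairwise disjoint. Then $g\circ f=\sum_i a_i\,\rchi_{f^{-1}(A_i)}$, so $\int_B(g\circ f)\,|w|^p\,d\mu=\sum_i a_i\int_{B\cap f^{-1}(A_i)}|w|^p\,d\mu$. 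Applying (\ref{condition}) to each set $B\cap f^{-1}(A_i)$ and invoking the inclusion $f\big(B\cap f^{-1}(A_i)\big)\subseteq f(B)\cap A_i$ bounds the $i$-th summand by $c\,a_i\,\mu(f(B)\cap A_i)$, and summing yields $(\star)$ for simple $g$. The passage to arbitrary nonnegative measurable $g$ is then routine: approximate $g$ from below by an increasing sequence of simple functions and apply the monotone convergence theorem on both sides, using that $g\mapsto g\circ f$ preserves increasing pointwise limits. The only delicate points are the set-theoretic inclusion $f(B\cap f^{-1}(A))\subseteq f(B)\cap A$ and the measurability of $f(B)$, $f(B)\cap A_i$ and $g\circ f$, all of which are guaranteed by the bimeasurability of $f$.

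With $(\star)$ in hand the induction closes at once. Applying $(\star)$ with $g=|w^{(n-1)}|^p$ to the displayed formula for $\mu_n(B)$ gives $\mu_n(B)\leq c\,\mu_{n-1}(f(B))$. The induction hypothesis applied to the measurable set $f(B)$ then gives $\mu_{n-1}(f(B))\leq c^{n-1}\mu\big(f^{n-1}(f(B))\big)=c^{n-1}\mu(f^n(B))$. Combining the two inequalities yields $\mu_n(B)\leq c^n\,\mu(f^n(B))$, completing the induction and the proof.
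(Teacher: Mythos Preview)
Your proof is correct. The paper does not actually supply a proof of this lemma: it is stated with a citation to \cite{BV} and used as a black box, so there is no argument in the paper to compare against. Your induction on $n$ via the recursion $w^{(n)} = w\cdot(w^{(n-1)}\circ f)$ and the transfer inequality $(\star)$ is a natural and complete route; the verification of $(\star)$ for simple functions using $f(B\cap f^{-1}(A_i))\subseteq f(B)\cap A_i$ together with (\ref{condition}), followed by monotone convergence, is exactly what is needed, and the inductive step $\mu_n(B)\leq c\,\mu_{n-1}(f(B))\leq c^n\mu(f^n(B))$ closes cleanly.
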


\begin{example}
If $X = \N$ (resp.\ $X = \Z$), $\fM = \cP(X)$ (the power set of $X$), $\mu$ is the counting measure on $\fM$ 
and $f : n \in X \mapsto n+1 \in X$, then $L^p(\mu) = \ell^p(\N)$ (resp.\ $L^p(\mu) = \ell^p(\Z)$)
and $C_{w,f}$ coincides with the {\em unilateral} (resp.\ {\em bilateral}) {\em weighted backward shift}
\[
B_w : (x_n)_{n \in X} \in \ell^p(X) \mapsto (w_nx_{n+1})_{n \in X} \in \ell^p(X).
\]
\end{example}

\begin{example}
If $X = [1,\infty)$ (resp.\ $X = \R$), $\fM$ is the $\sigma$-algebra of all Lebesgue measurable sets in $X$, 
$\mu$ is the Lebesgue measure on $\fM$ and $f : x \in X \mapsto x+1 \in X$, then 
$L^p(\mu) = L^p[1,\infty)$ (resp.\ $L^p(\mu) = L^p(\R)$)
and $C_{w,f}$ coincides with the {\em unilateral} (resp.\ {\em bilateral}) {\em weighted translation operator}
\[
T_w : \varphi \in L^p(X) \mapsto \varphi(\cdot + 1) w(\cdot) \in L^p(X).
\]
\end{example}

Recall that $B_w$ (resp.\ $T_w$) is a well-defined bounded linear operator exactly when $w$ is a bounded sequence 
(resp.\ an essentially bounded measurable function).
According to our convention, whenever we consider such an operator $B_w$ (resp.\ $T_w$), we will implicitly assume that this is the case.

%%%%%%%%%%%%%%%%%%%%%%%%%%%%%%
%%%%%%%%%%%%%%%%%%%%%%%%%%%%%%

\subsection{Weighted composition operators on $C_0(\Omega)$}

Throughout this article we fix an arbitrary locally compact Hausdorff space $\Omega$, unless otherwise specified.
$C_0(\Omega)$ denotes the Banach space over $\K$ of all continuous maps $\varphi : \Omega \to \K$ that vanish at infinity 
endowed with the supremum norm 
\[
\|\varphi\| = \sup_{x \in \Omega} |\varphi(x)|.
\]
More generally, given any bounded map $\varphi : \Omega \to \K$ and any $B \subset \Omega$, we define
\[
\|\varphi\|_B = \sup_{x \in B} |\varphi(x)|,
\]
where we consider this supremum to be $0$ if $B = \varnothing$.
In the case $B = \Omega$, we usually write $\|\varphi\|$ instead of $\|\varphi\|_\Omega$ even if $\varphi \not\in C_0(\Omega)$.
Recall that the {\em support} of $\varphi : \Omega \to \K$ is defined by 
\[
\supp \varphi = \ov{\{x \in \Omega : \varphi(x) \neq 0\}}.
\]
$C_c(\Omega)$ denotes the subspace of $C_0(\Omega)$ consisting of those $\varphi \in C_0(\Omega)$
that have compact support. We also consider a continuous map $w : \Omega \to \K$ such that
\begin{equation}\label{weightC0X}
\varphi \cdot w \in C_0(\Omega) \ \text{ for all } \varphi \in C_0(\Omega).
\end{equation}
It is easy to show that (\ref{weightC0X}) holds if and only if $w$ is bounded in $\Omega$.
Given a continuous map $f : \Omega \to \Omega$, it is not difficult to show that the {\em weighted composition operator}
\[
C_{w,f}(\varphi) = (\varphi \circ f) \cdot w
\]
is a well-defined bounded linear operator on $C_0(\Omega)$ if and only if
\begin{equation}\label{conditionC0X}
\{x \in \Omega : f(x) \in K \text{ and } |w(x)| \geq \eps\} \text{ is compact, for all } \eps > 0 \text{ and } K \subset \Omega \text{ compact}.
\end{equation}
Whenever we consider a weighted composition operator $C_{w,f}$ on $C_0(\Omega)$, 
we will implicitly assume that $w$ and $f$ satisfy conditions (\ref{weightC0X}) and (\ref{conditionC0X}).
As in \cite[Section~2]{BV}, we associate to $w$ and $f$ the following sequence of continuous maps from $\Omega$ into $\K$:
\[
w^{(1)} = w \ \ \ \text{ and } \ \ \ w^{(n)} = (w \circ f^{n-1}) \cdot\ldots\cdot (w \circ f) \cdot w \ \text{ for } n \geq 2.
\]

\begin{example}
If $\Omega = \N$ (resp.\ $\Omega = \Z$) endowed with the discrete topology and $f : n \in \Omega \mapsto n+1 \in \Omega$, 
then $C_0(\Omega) = c_0(\N)$ (resp.\ $C_0(\Omega) = c_0(\Z)$) 
and $C_{w,f}$ coincides with the {\em unilateral} (resp.\ {\em bilateral}) {\em weighted backward shift}
\[
B_w : (x_n)_{n \in \Omega} \in c_0(\Omega) \mapsto (w_nx_{n+1})_{n \in \Omega} \in c_0(\Omega).
\]
\end{example}

\begin{example}
If $\Omega = [1,\infty)$ (resp.\ $\Omega = \R$) endowed with its usual topology and $f : x \in \Omega \mapsto x+1 \in \Omega$, 
then $C_0(\Omega) = C_0[1,\infty)$ (resp.\ $C_0(\Omega) = C_0(\R)$)
and $C_{w,f}$ coincides with the {\em unilateral} (resp.\ {\em bilateral}) {\em weighted translation operator}
\[
T_w : \varphi \in C_0(\Omega) \mapsto \varphi(\cdot + 1) w(\cdot) \in C_0(\Omega).
\]
\end{example}

Recall that $B_w$ (resp.\ $T_w$) is a well-defined bounded linear operator exactly when $w$ is a bounded sequence 
(resp.\ a bounded continuous function).
According to our convention, whenever we consider such an operator $B_w$ (resp.\ $T_w$), we will implicitly assume that this is the case.

%%%%%%%%%%%%%%%%%%%%%%%%%%%%%%%%%%%%%%%%%%%%%%%%%%%%%%%%%%%%%%%%%%%%%%
%%%%%%%%%%%%%%%%%%%%%%%%%%%%%%%%%%%%%%%%%%%%%%%%%%%%%%%%%%%%%%%%%%%%%%

\section{Power-bounded versus Li-Yorke chaotic weighted composition operators}

Recall that an operator $T$ on a Banach space $Y$ is said to be {\em power-bounded} if 
there exists $C \in (0,\infty)$ such that $\|T^n\| \leq C$ for all $n \in \N$.
More generally, given a subspace $Z$ of $Y$, we say that $T$ is {\em power-bounded in $Z$} if 
there exists $C \in (0,\infty)$ such that
\[
\|T^n|_Z\| \leq C \ \text{ for all } n \in \N,
\]
where $T^n|_Z : Z \to Y$ is the bounded linear map obtained by restricting the domain of $T^n$ to $Z$.

Given a metric space $M$, recall that a map $f : M \to M$ is said to be {\em Li-Yorke chaotic} if there exists an uncountable set 
$S \subset M$ such that each pair $(x,y)$ of distinct points in $S$ is a {\em Li-Yorke pair for $f$}, in the sense that
\[
\liminf_{n \to \infty} d(f^n(x),f^n(y)) = 0 \ \ \text{ and } \ \ \limsup_{n \to \infty} d(f^n(x),f^n(y)) > 0.
\]
If the set $S$ can be chosen to be dense in $M$, then $f$ is {\em densely Li-Yorke chaotic}.

An extensive study of the concept of Li-Yorke chaos in the setting of linear dynamics was developed in \cite{BBMP11,BBMP2}.
In particular, the following useful characterizations were obtained:
For any operator $T$ on any Banach space $Y$, the following assertions are equivalent:
\begin{itemize}
\item [(i)] $T$ is Li-Yorke chaotic;
\item [(ii)] $T$ admits a {\em semi-irregular vector}, that is, a vector $y \in Y$ such that
  \[
  \liminf_{n \to \infty} \|T^ny\| = 0 \ \ \text{ and } \ \ \limsup_{n \to \infty} \|T^ny\| > 0.
  \]
\item [(iii)] $T$ admits an {\em irregular vector}, that is, a vector $y \in Y$ such that
  \[
  \liminf_{n \to \infty} \|T^ny\| = 0 \ \ \text{ and } \ \ \limsup_{n \to \infty} \|T^ny\| = \infty.
  \]
\end{itemize}

Let us also recall that by an {\em irregular manifold for $T$} we mean a vector subspace of $Y$ consisting,
except for the zero vector, of irregular vectors for $T$.

%%%%%%%%%%%%%%%%%%%%%%%%%%%%%%
%%%%%%%%%%%%%%%%%%%%%%%%%%%%%%

\subsection{The case of the space $L^p(\mu)$}

\begin{theorem}\label{powerbounded}
Consider a weighted composition operator $C_{w,f}$ on $L^p(\mu)$. For each $n \in \N$,
\begin{equation}\label{power0}
\|(C_{w,f})^n\| = \sup_{0 < \mu(B) < \infty} \Big(\frac{\mu_n(f^{-n}(B))}{\mu(B)}\Big)^\frac{1}{p},
\end{equation}
where the supremum is taken over all measurable sets $B$ satisfying $0 < \mu(B) < \infty$.
In particular, $C_{w,f}$ is power-bounded if and only if there exists a constant $C \in (0,\infty)$ such that, 
for each measurable set $B$ of finite positive $\mu$-measure,
\begin{equation}\label{power}
\mu_n(f^{-n}(B)) \le C\, \mu (B) \ \text{ for all } n \in \N.
\end{equation}
\end{theorem}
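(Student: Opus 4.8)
The plan is to compute the $n$-th iterate explicitly and then recast the operator norm as a comparison of two measures. A straightforward induction gives $(C_{w,f})^n \varphi = (\varphi \circ f^n) \cdot w^{(n)}$, whence, using $d\mu_n = |w^{(n)}|^p\, d\mu$,
\[
\|(C_{w,f})^n \varphi\|_p^p = \int_X |\varphi \circ f^n|^p\, |w^{(n)}|^p\, d\mu = \int_X |\varphi \circ f^n|^p\, d\mu_n .
\]
I would then introduce the pushforward measure $\nu_n(B) = \mu_n(f^{-n}(B))$, which is well defined and countably additive because $f$ is bimeasurable, and invoke the change-of-variables formula for pushforwards (checked on indicators, extended by linearity and monotone convergence) to obtain $\int_X |\varphi \circ f^n|^p\, d\mu_n = \int_X |\varphi|^p\, d\nu_n$. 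Consequently,
\[
\|(C_{w,f})^n\|^p = \sup_{\varphi \neq 0} \frac{\int_X |\varphi|^p\, d\nu_n}{\int_X |\varphi|^p\, d\mu} = \sup_h \frac{\int_X h\, d\nu_n}{\int_X h\, d\mu},
\]
where $h = |\varphi|^p$ ranges over the nonzero nonnegative members of $L^1(\mu)$.

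It then remains to identify this last supremum with $M_n := \sup_{0 < \mu(B) < \infty} \nu_n(B)/\mu(B)$. The inequality $\|(C_{w,f})^n\|^p \geq M_n$ is immediate upon testing with $h = \chi_B$ for a set $B$ of finite positive measure. For the reverse inequality I would first use Lemma \ref{null} to record that $\nu_n \ll \mu$; indeed, $\nu_n(B) = \mu_n(f^{-n}(B)) \leq c^n\, \mu(f^n(f^{-n}(B))) \leq c^n\, \mu(B)$ for every $B \in \fM$. Now for a nonnegative simple function $h = \sum_i a_i \chi_{B_i} \in L^1(\mu)$ (so each $B_i$ has finite measure), the defining property of $M_n$ together with $\nu_n \ll \mu$ gives $\nu_n(B_i) \leq M_n\, \mu(B_i)$, whence $\int_X h\, d\nu_n \leq M_n \int_X h\, d\mu$. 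Approximating an arbitrary nonnegative $h \in L^1(\mu)$ from below by such simple functions and applying monotone convergence on both sides extends the inequality to all of $L^1(\mu)$. This yields $\|(C_{w,f})^n\|^p \leq M_n$, and combining the two bounds establishes (\ref{power0}).

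The \emph{in particular} assertion follows at once: since $t \mapsto t^{1/p}$ is increasing, (\ref{power0}) reads $\|(C_{w,f})^n\| = M_n^{1/p}$, so $C_{w,f}$ is power-bounded exactly when $\sup_n M_n < \infty$, which is precisely the existence of a constant $C$ for which (\ref{power}) holds for all $n$ and all $B$ of finite positive measure.

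I expect the main obstacle to be the upper-bound step, namely justifying $\int_X h\, d\nu_n \leq M_n \int_X h\, d\mu$ for all nonnegative $h \in L^1(\mu)$ without assuming $\mu$ to be $\sigma$-finite. The point is that the absolute continuity $\nu_n \ll \mu$ extracted from Lemma \ref{null} lets me avoid the Radon--Nikodym derivative altogether and argue directly through the set inequality $\nu_n(B) \leq M_n\, \mu(B)$ on finite-measure sets, combined with monotone approximation by simple functions; note that simple functions lying in $L^1(\mu)$ are supported on finite-measure sets, so the indeterminate comparison at sets of infinite measure never intervenes.
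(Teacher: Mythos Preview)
Your proof is correct and takes a somewhat different route from the paper's. The paper argues the upper bound directly: for a given $\varphi$ it slices $X$ into the level sets $B_i = \{t^{i-1} \le |\varphi| < t^i\}$ (each of finite measure since $\varphi \in L^p(\mu)$), bounds $\|(C_{w,f})^n\varphi\|_p^p$ by $t^p r^p \|\varphi\|_p^p$ using the definition of $r$ and Lemma~\ref{null} for the null sets, and then lets $t \to 1$. You instead recast $\|(C_{w,f})^n\varphi\|_p^p$ as $\int |\varphi|^p\, d\nu_n$ with $\nu_n = \mu_n \circ f^{-n}$, reduce the norm computation to the measure comparison $\sup_h \int h\, d\nu_n / \int h\, d\mu$, and handle the upper bound by monotone approximation through simple $L^1(\mu)$ functions, using Lemma~\ref{null} only to secure $\nu_n \ll \mu$. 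The two arguments are close in spirit---both ultimately rest on approximating $|\varphi|^p$ by simple functions supported on finite-measure sets---but your pushforward formulation is cleaner in that it avoids the auxiliary parameter $t$ and the limiting step, while the paper's version is more explicit and self-contained. Either way the key observation, which you identify correctly, is that the supremum over characteristic functions already saturates the norm because $L^1(\mu)$ functions are monotone limits of simple functions whose nonzero pieces sit on finite-measure sets.
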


\begin{proof}
Fix $n \in \N$ and denote the right-hand side of (\ref{power0}) by $r$.

Given a measurable set $B$ of finite positive $\mu$-measure, define $\phi = \frac{1}{\mu(B)^{1/p}}\,\rchi_B$. 
Then $\|\phi\|_p =1$ and
\begin{eqnarray*}
\|(C_{w,f})^n(\phi)\|_p^p & = &\frac{1}{\mu(B)} \int_{X}|\rchi_{B}\circ f^n|^p|w\circ f^{n-1}|^p\cdots|w\circ f|^p|w|^p d\mu\\
  & = &\frac{1}{\mu(B)} \int_{f^{-n}(B)}|w\circ f^{n-1}|^p\cdots|w\circ f|^p|w|^p d\mu\\
  & = &\frac{\mu_n( f^{-n}(B))}{\mu (B)}\cdot
\end{eqnarray*}
This implies that $r \leq \|(C_{w,f})^n\|$.

On the other hand, take any real number $t > 1$. Given $\varphi \in L^p(\mu)$, $ \varphi \ne 0$, consider the measurable sets
\[
B_i = \{x \in X : t^{i-1} \le |\varphi(x)| < t^i\} \ \ \ (i \in \Z).
\]
Since
\[
\sum_{i \in \Z} t^{(i-1)p} \mu(B_i) \le \int_X |\varphi|^p d\mu < \infty,
\]
we have that $\mu(B_i)< \infty$ for all $i \in \Z$. Since $\mu_n(f^{-n}(B)) = 0$ if $\mu(B) = 0$ (by Lemma~\ref{null}),
it follows from the definition of $r$ that $\mu_n( f^{-n}(B_i)) \le r^p \mu(B_i)$ for all $i \in \Z$. Therefore,
\begin{eqnarray*}
\|(C_{w,f})^n(\varphi)\|_p^p & = & \sum_{i \in \Z} \int_{f^{-n}(B_i)} |\varphi\circ f^n|^p|w\circ f^{n-1}|^p\cdots|w\circ f|^p|w|^p d\mu\\
  & \le & \sum_{i \in \Z} t^{ip} \int_{f^{-n}(B_i)} |w\circ f^{n-1}|^p\cdots|w\circ f|^p|w|^p d\mu\\
  & =   & \sum_{i \in \Z} t^{ip} \mu_n( f^{-n}(B_i)) \le t^p r^p \sum_{i \in \Z} t^{(i-1)p} \mu(B_i) \le t^p r^p \|\varphi\|_p^p.
\end{eqnarray*}
This implies that $\|(C_{w,f})^n\| \leq t\, r$. Since $t > 1$ is arbitrary, we obtain $\|(C_{w,f})^n\| \leq r$,
which completes the proof of (\ref{power0}).

The last assertion follows from (\ref{power0}).
\end{proof}

In the particular case of a weighted shift $B_w$ on $\ell^p(\N)$, it is well known that
\begin{equation}\label{WS-1}
\|(B_w)^n\| = \sup_{i \in \N} |w_i \cdots w_{i+n-1}| \ \text{ for all } n \in \N,
\end{equation}
which is a special case of formula (\ref{power0}). Thus, $B_w$ is power-bounded if and only if
\begin{equation}\label{WS-2}
\sup\{|w_i \cdots w_j| : i,j \in \N, i \le j\} < \infty.
\end{equation}
Therefore, \cite[Corollary 14]{BV} (see also \cite[Proposition~27]{BBMP11}) can be rewritten as follows.

\begin{corollary}
A weighted shift $B_w$ on $\ell^p(\N)$ is Li-Yorke chaotic if and only if it is not power-bounded.
\end{corollary}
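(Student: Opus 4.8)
The plan is to prove both implications through the characterization recalled above, namely that $B_w$ is Li-Yorke chaotic if and only if it admits an irregular vector, combined with the explicit norm formula (\ref{WS-1}) and its consequence (\ref{WS-2}). Throughout I write $W = \|w\|_\infty$, which is finite because $B_w$ is a bounded operator and in fact satisfies $W > 1$ in the case of interest (otherwise all products $|w_i\cdots w_j|$ would be bounded by $1$). I would also record that $(B_w^n x)_i = w_i w_{i+1}\cdots w_{i+n-1}\,x_{i+n}$, so that $\|B_w^n x\|_p^p = \sum_{m>n}|w_{m-n}\cdots w_{m-1}|^p |x_m|^p$, and that a single basis vector satisfies $B_w^n e_m = 0$ for $n \ge m$.

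The implication ``Li-Yorke chaotic $\Rightarrow$ not power-bounded'' is the easy one and I would dispatch it first. If $B_w$ is Li-Yorke chaotic, it admits an irregular vector $y$, so $\limsup_n \|B_w^n y\| = \infty$; were $B_w$ power-bounded with constant $C$, we would have $\|B_w^n y\| \le C\|y\|$ for every $n$, a contradiction.

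For the converse I would construct an irregular vector directly. Assuming $B_w$ is not power-bounded, (\ref{WS-2}) yields $\sup\{|w_i\cdots w_j| : i \le j\} = \infty$; since for each $J$ there are only finitely many products with $j \le J$, this supremum stays infinite when restricted to products with arbitrarily large $j$. I would build $y = \sum_k c_k e_{m_k}$ on a strictly increasing, sparse set of indices. For each $k$ I pick an interval $[i_k,j_k]$ with $j_k > j_{k-1}$ and $|w_{i_k}\cdots w_{j_k}|$ as large as needed, and set $m_k = j_k+1$, $n_k = j_k-i_k+1$. Because $m_k - n_k = i_k$ and $m_k - 1 = j_k$, the $k$-th spike contributes exactly $|w_{i_k}\cdots w_{j_k}|^p c_k^p$ to $\|B_w^{n_k} y\|_p^p$, so making this product large relative to $c_k$ forces $\|B_w^{n_k} y\| \to \infty$ and hence $\limsup_n \|B_w^n y\| = \infty$.

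The delicate point, which I expect to be the main obstacle, is to guarantee $\liminf_n \|B_w^n y\| = 0$ while keeping $y \in \ell^p$. Here I would exploit that at time $n = m_K$ only the spikes with $m_k > m_K$ survive, which gives $\|B_w^{m_K} y\|_p^p \le W^{p m_K}\sum_{k>K} c_k^p$, using $|w_{m_k-m_K}\cdots w_{m_k-1}| \le W^{m_K}$. The idea is to choose the coefficients recursively so that this tail is crushed: having fixed $m_{k-1}$, I select $c_k$ with $c_k^p \le 2^{-k}W^{-p m_{k-1}}$, and only afterwards choose $[i_k,j_k]$ (with large $j_k$) so that $|w_{i_k}\cdots w_{j_k}|$ beats $c_k^{-1}$ by the margin needed for the $\limsup$. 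Since $m_{k-1}\le m_k\le\cdots$ and $W>1$, for $k>K$ one has $W^{-pm_{k-1}}\le W^{-pm_K}$, whence $\|B_w^{m_K}y\|_p^p \le \sum_{k>K}2^{-k} = 2^{-K}\to 0$, while $\sum_k c_k^p < \infty$ secures $y\in\ell^p$. This produces an irregular vector, and the equivalence with Li-Yorke chaos completes the converse. The whole argument hinges on the interplay between the boundedness of the individual weights, which controls the off-peak norms, and the unboundedness of their products, which produces the peaks.
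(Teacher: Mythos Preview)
Your argument is correct. The forward implication is immediate, and for the converse your recursive construction of $y = \sum_k c_k e_{m_k}$ works: the choice $c_k^p \le 2^{-k}W^{-p m_{k-1}}$ together with $m_{k-1} \ge m_K$ for $k>K$ does give $\|B_w^{m_K}y\|_p^p \le 2^{-K}$, while choosing $[i_k,j_k]$ afterwards so that $|w_{i_k}\cdots w_{j_k}| \ge k\,c_k^{-1}$ forces $\|B_w^{n_k}y\|_p \ge k$ via the $i_k$-th coordinate. The observation that $W>1$ whenever $B_w$ fails to be power-bounded is also fine, since $W\le 1$ would bound every product by~$1$.

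Your route, however, is quite different from the paper's. The paper gives no proof at all: it simply notes that the power-boundedness criterion (\ref{WS-2}) coincides with the negation of the known Li-Yorke criterion for unilateral weighted shifts established in \cite[Corollary~14]{BV} and \cite[Proposition~27]{BBMP11}, so the corollary is a restatement of those results. The machinery behind those citations (and behind Theorem~\ref{LY-PB} later in the paper) is a Baire-category argument: one shows that the set of vectors whose orbit has a subsequence tending to~$0$ is residual (automatic here, since every $e_m$ is eventually killed), and then non-power-boundedness combined with Banach--Steinhaus makes the set of vectors with unbounded orbit residual as well, so their intersection furnishes an irregular vector. Your approach trades this soft existence argument for an explicit hands-on construction; it is more elementary and self-contained, at the cost of requiring the bookkeeping with $W^{m_K}$ and the recursive choice of coefficients. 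The Baire argument, by contrast, generalizes effortlessly to the abstract $L^p(\mu)$ setting of Theorem~\ref{LY-PB}, where no such explicit vector is readily available.
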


Similarly, in the particular case of a weighted shift $B_w$ on $\ell^p(\Z)$, it is well known that
\begin{equation}\label{WS-3}
\|(B_w)^n\| = \sup_{i \in \Z} |w_i \cdots w_{i+n-1}| \ \text{ for all } n \in \N,
\end{equation}
which is also a special case of formula (\ref{power0}). Thus, $B_w$ is power-bounded if and only if
\begin{equation}\label{WS-4}
\sup\{|w_i \cdots w_j| : i,j \in \Z, i \le j\} < \infty.
\end{equation}
Hence, \cite[Corollary 15]{BV} (see also \cite[Corollary~1.6]{BerDarPi}) can be rewritten as follows.

\begin{corollary}
A weighted shift $B_w$ on $\ell^p(\Z)$ with nonzero weights is Li-Yorke chaotic if and only if 
it is not power-bounded and $\displaystyle \liminf_{n \to \infty} |w_{-n} \cdots w_{-1}| = 0$.
\end{corollary}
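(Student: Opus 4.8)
The plan is to reduce everything to the single scalar sequence $P_n=\|(B_w)^ne_0\|_p=|w_{-n}\cdots w_{-1}|$ together with the diagonal quantities $a_n^{(m)}=\|(B_w)^ne_m\|_p=|w_{m-n}\cdots w_{m-1}|$. First I would record the two structural facts that make the argument run: because the iterates of distinct basis vectors have disjoint supports at every fixed time, one has the exact identity $\|(B_w)^nx\|_p^p=\sum_{m\in\Z}|x_m|^p\,(a_n^{(m)})^p$ for all $x\in\ell^p(\Z)$; and because the weights are nonzero, telescoping the products yields $a_n^{(m)}=\gamma_m\,P_{n-m}$ for all large $n$, where $\gamma_m\in(0,\infty)$ depends only on $m$. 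I would also use that, by \eqref{WS-4}, $B_w$ is power-bounded if and only if $M:=\sup\{|w_i\cdots w_j|:i\le j\}<\infty$, in which case $P_{n'}\le M\,P_n$ whenever $n'>n$ (the quotient $P_{n'}/P_n$ being one of the windows). Throughout I would invoke the stated equivalence of Li-Yorke chaos with the existence of a semi-irregular, equivalently irregular, vector.

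For necessity, suppose $B_w$ is Li-Yorke chaotic and fix a semi-irregular vector $x$. To obtain the condition on backward products, assume for contradiction that $\liminf_nP_n=\delta>0$; picking any $m_0$ with $x_{m_0}\ne0$, the identity above gives $\|(B_w)^nx\|_p\ge|x_{m_0}|\,a_n^{(m_0)}=|x_{m_0}|\,\gamma_{m_0}P_{n-m_0}$, so $\liminf_n\|(B_w)^nx\|_p\ge|x_{m_0}|\,\gamma_{m_0}\,\delta>0$, contradicting semi-irregularity; hence $\liminf_nP_n=0$. To obtain non-power-boundedness, suppose instead that $B_w$ were power-bounded; then $P_{n'}\le M\,P_n$ for $n'>n$ together with $\liminf_nP_n=0$ forces $P_n\to0$, whence $a_n^{(m)}=\gamma_mP_{n-m}\to0$ for every $m$, and since $a_n^{(m)}\le M$ uniformly, dominated convergence in the series for $\|(B_w)^nx\|_p^p$ gives $\|(B_w)^nx\|_p\to0$ for every $x$ --- again impossible for a semi-irregular vector. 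Thus Li-Yorke chaos forces both non-power-boundedness and $\liminf_n|w_{-n}\cdots w_{-1}|=0$.

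For sufficiency I would assume the two conditions and split on the value of $\limsup_nP_n$. If $\limsup_nP_n>0$, then every basis vector is already semi-irregular: indeed $\liminf_na_n^{(m)}=\gamma_m\liminf_nP_{n-m}=0$, while $\limsup_na_n^{(m)}=\gamma_m\limsup_nP_n>0$. If instead $\limsup_nP_n=0$, i.e.\ $P_n\to0$, I would argue by Baire category in $\ell^p(\Z)$: since $B_w$ is not power-bounded, $\sup_n\|(B_w)^n\|=\infty$, so by the uniform boundedness principle the set $\{x:\sup_n\|(B_w)^nx\|_p=\infty\}$ is a dense $G_\delta$; on the other hand $\{x:\liminf_n\|(B_w)^nx\|_p=0\}$ is a $G_\delta$ containing every finitely supported vector --- for such $x$ the finite sum $\sum_m|x_m|^p\gamma_m^pP_{n-m}^p\to0$ because $P_n\to0$ --- and is therefore dense. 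Intersecting the two dense $G_\delta$ sets produces an irregular vector, so $B_w$ is Li-Yorke chaotic.

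I expect the sufficiency to be the crux, and within it the regime $P_n\to0$, where no basis vector is semi-irregular and one must exhibit a single vector that simultaneously returns near $0$ and has unbounded orbit. The naive route is to build such a vector by an explicit interlaced superposition $\sum_lc_le_{m_l}$, but coordinating the blow-up times (coming from non-power-boundedness) with the return times (coming from $\liminf_nP_n=0$) is delicate, since the windows realizing non-power-boundedness may sit anywhere. The point of the plan is to bypass this bookkeeping: the blow-up set is residual for free by uniform boundedness, the return-to-zero set is residual because $P_n\to0$ trivializes the orbits of finitely supported vectors, and Baire's theorem then supplies the desired vector without ever locating the windows.
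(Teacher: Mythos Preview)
Your proof is correct. The paper itself does not prove this corollary directly; it observes that the power-boundedness characterization \eqref{WS-4} allows one to rewrite \cite[Corollary~15]{BV} (equivalently \cite[Corollary~1.6]{BerDarPi}) in the stated form. Your argument, by contrast, is self-contained: the telescoping identity $a_n^{(m)}=\gamma_m P_{n-m}$ reduces everything to the single sequence $P_n$, and the sufficiency in the regime $P_n\to 0$ is handled by exactly the Baire-category mechanism that underlies Theorem~\ref{LY-PB} (residual set of unbounded orbits from uniform boundedness, residual set of orbits with $\liminf$ zero from density of finitely supported vectors). What your approach buys is an elementary proof that does not appeal to the general machinery of \cite{BV} or to the abstract subspace $\fX$; what the paper's route buys is that the bilateral-shift case falls out as a one-line specialization once the general $L^p(\mu)$ theory is in place.
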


We now return to general weighted composition operators on $L^p(\mu)$.

\begin{theorem}\label{LY-PB}
Given a weighted composition operator $C_{w,f}$ on $L^p(\mu)$, let $\fX$ be the closed subspace of $L^p(\mu)$ 
generated by the characteristic functions $\rchi_B$ such that $B$ is a measurable set of finite $\mu$-measure satisfying
\begin{equation}\label{LY-PB-1}
\liminf_{n \to \infty} \mu_n(f^{-n}(B)) =0.
\end{equation}
Then, $C_{w,f}$ is Li-Yorke chaotic if and only if it is not power-bounded in $\fX$.
\end{theorem}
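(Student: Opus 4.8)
The plan is to prove both implications through the equivalences (i)--(iii) recalled above, exploiting the identity $\|(C_{w,f})^n\rchi_B\|_p^p = \mu_n(f^{-n}(B))$ established inside the proof of Theorem~\ref{powerbounded}. This identity says precisely that each generator $\rchi_B$ of $\fX$ satisfies $\liminf_n \|(C_{w,f})^n\rchi_B\|_p = 0$, i.e. condition (\ref{LY-PB-1}) is exactly the statement that $\rchi_B$ has iterates with vanishing lower limit. I would record at the outset two elementary facts: first, (\ref{LY-PB-1}) is inherited by subsets, since $A \subseteq B$ gives $f^{-n}(A) \subseteq f^{-n}(B)$, hence $\mu_n(f^{-n}(A)) \le \mu_n(f^{-n}(B))$; second, the finite linear combinations of such generators form a dense subspace $D$ of $\fX$.

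For the direction ``Li-Yorke chaotic $\Rightarrow$ not power-bounded in $\fX$'', I would start from an irregular vector $y$, which exists by (i)$\Rightarrow$(iii), and show that in fact $y \in \fX$. Fixing $t>1$, I split $y$ along its level sets $B_i = \{x : t^{i-1} \le |y(x)| < t^i\}$ ($i \in \Z$), each of finite measure by the estimate already used in the proof of Theorem~\ref{powerbounded}. Writing $y_i = y\,\rchi_{B_i}$, the sets $f^{-n}(B_i)$ are pairwise disjoint, so $\|(C_{w,f})^n y\|_p^p = \sum_i \|(C_{w,f})^n y_i\|_p^p$. Choosing $n_k \to \infty$ with $\|(C_{w,f})^{n_k} y\|_p \to 0$ forces $\|(C_{w,f})^{n_k} y_i\|_p \to 0$ for each fixed $i$, and together with the lower bound $\|(C_{w,f})^n y_i\|_p^p \ge t^{(i-1)p}\mu_n(f^{-n}(B_i))$ this shows that every $B_i$ satisfies (\ref{LY-PB-1}). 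By the subset monotonicity, $y_i$ is an $L^p$-limit of simple functions supported on sets satisfying (\ref{LY-PB-1}), so $y_i \in \fX$; since $y = \sum_i y_i$ converges in $L^p(\mu)$ and $\fX$ is closed, $y \in \fX$. As $y$ is irregular, $\|(C_{w,f})^n|_{\fX}\| \ge \|(C_{w,f})^n y\|_p / \|y\|_p$ has limit superior $\infty$, whence $C_{w,f}$ is not power-bounded in $\fX$.

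For the converse I would argue by cases. If some set $B$ satisfying (\ref{LY-PB-1}) also has $\limsup_n \mu_n(f^{-n}(B)) > 0$, then $\rchi_B$ is a semi-irregular vector and $C_{w,f}$ is Li-Yorke chaotic by (ii)$\Rightarrow$(i). Otherwise $\lim_n \mu_n(f^{-n}(B)) = 0$ for every such $B$, so $\|(C_{w,f})^n\varphi\|_p \to 0$ for every $\varphi$ in the dense subspace $D$. Here I invoke a Baire-category argument on the Banach space $\fX$ applied to the bounded operators $(C_{w,f})^n|_{\fX} : \fX \to L^p(\mu)$: non power-boundedness in $\fX$ means $\sup_n \|(C_{w,f})^n|_{\fX}\| = \infty$, so by the uniform boundedness principle $\{y \in \fX : \sup_n \|(C_{w,f})^n y\|_p = \infty\}$ is residual, while $\{y \in \fX : \liminf_n \|(C_{w,f})^n y\|_p = 0\}$ is residual because it is the countable intersection $\bigcap_{m,N}\{y : \exists\, n \ge N,\ \|(C_{w,f})^n y\|_p < 1/m\}$ of open sets each containing $D$. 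Any vector in the intersection is irregular and lies in $\fX \subseteq L^p(\mu)$, so $C_{w,f}$ is Li-Yorke chaotic.

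The delicate points are the level-set step in the forward direction --- that a single subsequence realizing $\liminf = 0$ controls all pieces simultaneously, which is exactly what confines the irregular vector to $\fX$ --- and, in the converse, the recognition that (\ref{LY-PB-1}) is not additive: a finite combination of generators may fail (\ref{LY-PB-1}) because the relevant subsequences differ across pieces. This non-additivity is the main obstacle, and it is what rules out a naive direct construction and forces both the two-case split and the category argument.
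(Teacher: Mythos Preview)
Your argument is correct in both directions. The converse is essentially the paper's proof: the paper argues by contraposition (assume not Li-Yorke chaotic, hence no semi-irregular vector, hence every generator has $\lim=0$, then Banach--Steinhaus plus a residuality citation), while you argue directly via the same two-case split and write out the $G_\delta$ description of $\cR_1$ explicitly rather than citing \cite[Corollary~4]{BBMP2}. These are the same idea.

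The forward direction is where you diverge. The paper invokes \cite[Theorem~10]{BV} as a black box to produce a family $(B_i)_{i\in I}$ and a \emph{common} subsequence $(n_j)$ along which $\mu_{n_j}(f^{-n_j}(B_i))\to 0$, then observes that the normalized $\phi_i=\mu(B_i)^{-1/p}\rchi_{B_i}$ lie in $\fX$ and witness the unboundedness. You instead take an irregular vector $y$ directly, slice it along level sets, and use the single subsequence realizing $\liminf=0$ for $y$ to push each slice $B_i$ into (\ref{LY-PB-1}); subset monotonicity then places every simple approximant of $y_i$ in $\fX$, and closedness of $\fX$ gives $y\in\fX$. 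This is more self-contained---it avoids the external citation and in effect re-derives the relevant content of \cite[Theorem~10]{BV} on the spot---at the cost of a slightly longer argument. The paper's route is shorter if one is willing to import the cited characterization; yours makes the mechanism (a common subsequence controlling all level sets simultaneously) fully visible, and your closing remark about non-additivity of (\ref{LY-PB-1}) correctly isolates why the converse cannot be done by a naive construction.
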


\begin{proof}
Suppose that $C_{w,f}$ is Li-Yorke chaotic. By \cite[Theorem~10]{BV}, there exist a nonempty countable family $(B_i)_{i \in I}$
of measurable sets of finite positive $\mu$-measure and an increasing sequence $(n_j)_{j \in \N}$ of positive integers such that
\begin{equation}\label{LY-PB-2}
\sup\Big\{\frac{\mu_n(f^{-n}(B_i))}{\mu(B_i)} : i \in I, n \in \N\Big\} = \infty
\end{equation}
and
\begin{equation}\label{LY-PB-3}
\lim_{j \to \infty} \mu_{n_j}(f^{-n_j}(B_i)) = 0 \ \text{ for all } i \in I.
\end{equation}
For each $i \in I$, let $\phi_i = \frac{1}{\mu(B_i)^{1/p}}\, \rchi_{B_i}$. 
By (\ref{LY-PB-3}), $\phi_i \in \fX$ for all $i \in I$. Since $\|\phi_i\|_p = 1$ and 
\[
\|(C_{w,f})^n(\phi_i)\|_p^p = \frac{\mu_n(f^{-n}(B_i))}{\mu(B_i)} \ \ \ (i \in I, n \in \N),
\]
it follows from (\ref{LY-PB-2}) that $C_{w,f}$ is not power-bounded in $\fX$.

Conversely, suppose that $C_{w,f}$ is not Li-Yorke chaotic. Then, $C_{w,f}$ does not admit a semi-irregular vector.
Thus, (\ref{LY-PB-1}) is equivalent to
\[
\lim_{n \to \infty} \mu_n(f^{-n}(B)) =0.
\]
This implies that the set $\cR_1$ of all $\varphi \in \fX$ whose orbit under $C_{w,f}$ 
has a subsequence converging to zero is dense in $\fX$.
Hence, by \cite[Corollary~4]{BBMP2}, $\cR_1$ is residual in $\fX$.
If $C_{w,f}$ was not power-bounded in $\fX$, the Banach-Steinhaus theorem \cite[Theorem~2.5]{WRud91} would imply that 
the set $\cR_2$ of all $\varphi \in \fX$ whose orbit under $C_{w,f}$ is unbounded is also residual in $\fX$.
This would imply the existence of an irregular vector for $C_{w,f}$ (take any vector in $\cR_1 \cap \cR_2$).
Since we are assuming that $C_{w,f}$ is not Li-Yorke chaotic, we conclude that $C_{w,f}$ must be power-bounded in~$\fX$.
\end{proof}

\begin{theorem}\label{DLY-PB}
Consider a weighted composition operator $C_{w,f}$ on $L^p(\mu)$. 
If for every measurable set $A$ of finite $\mu$-measure and for every $\eps > 0$, there is a measurable set $B \subset A$ with 
\[
\mu(A \backslash B) < \eps \ \ \text{ and } \ \ \ \liminf_{n \to \infty} \mu_n(f^{-n}(B)) = 0,
\]
then the following assertions are equivalent:
\begin{itemize}
\item [\rm (i)] $C_{w,f}$ is Li-Yorke chaotic;
\item [\rm (ii)] $C_{w,f}$ has a residual set of irregular vectors;
\item [\rm (iii)] $C_{w,f}$ is not power-bounded.
\end{itemize}
\end{theorem}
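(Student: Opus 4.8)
The plan is to establish the cycle of implications (ii) $\Rightarrow$ (i) $\Rightarrow$ (iii) $\Rightarrow$ (ii), the standing hypothesis being needed only for the last one. The implication (ii) $\Rightarrow$ (i) is immediate: a residual set of irregular vectors is in particular nonempty, so $C_{w,f}$ has an irregular (hence semi-irregular) vector and is therefore Li-Yorke chaotic. For (i) $\Rightarrow$ (iii) I would invoke the characterization recalled before this subsection: if $C_{w,f}$ is Li-Yorke chaotic then it admits an irregular vector $\varphi$, for which $\limsup_{n\to\infty} \|(C_{w,f})^n\varphi\|_p = \infty$; since $\|(C_{w,f})^n\varphi\|_p \le \|(C_{w,f})^n\|\,\|\varphi\|_p$, this forces $\sup_n \|(C_{w,f})^n\| = \infty$, i.e. $C_{w,f}$ is not power-bounded. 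Neither of these two implications uses the hypothesis.

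The heart of the argument is (iii) $\Rightarrow$ (ii), and here I would follow the template of the converse part of the proof of Theorem~\ref{LY-PB}. Assume $C_{w,f}$ is not power-bounded. By the Banach--Steinhaus theorem \cite[Theorem~2.5]{WRud91}, the set $\cR_2$ of all $\varphi \in L^p(\mu)$ whose orbit under $C_{w,f}$ is unbounded is residual. Let $\cR_1$ be the set of all $\varphi \in L^p(\mu)$ with $\liminf_{n\to\infty} \|(C_{w,f})^n\varphi\|_p = 0$; this is a $G_\delta$ set, so by \cite[Corollary~4]{BBMP2} it will suffice to prove that $\cR_1$ is dense. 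Granting density, $\cR_1$ is residual, the intersection $\cR_1 \cap \cR_2$ is residual, and every vector in it is irregular (it satisfies $\liminf_{n\to\infty} \|(C_{w,f})^n\varphi\|_p = 0$ and $\limsup_{n\to\infty} \|(C_{w,f})^n\varphi\|_p = \infty$), which is exactly (ii).

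The main obstacle, and the only place the hypothesis enters, is the density of $\cR_1$. Given $\varphi \in L^p(\mu)$ and $\eps > 0$, I would first approximate $\varphi$ within $\eps/2$ by a simple function $\psi = \sum_{k=1}^m c_k\,\rchi_{A_k}$ with the $A_k$ pairwise disjoint of finite measure, and set $A = \bigcup_{k=1}^m A_k$. Applying the hypothesis to $A$ yields a measurable $B \subset A$ with $\mu(A \setminus B)$ as small as we wish and $\liminf_{n\to\infty} \mu_n(f^{-n}(B)) = 0$. The crucial observation is that, setting $B_k = B \cap A_k$, the set function $B' \mapsto \mu_n(f^{-n}(B'))$ is, for each fixed $n$, a measure (because $f^{-n}$ preserves disjoint unions and $\mu_n$ is a measure), so $\mu_n(f^{-n}(B)) = \sum_{k=1}^m \mu_n(f^{-n}(B_k))$; hence along a single subsequence $(n_j)$ realizing $\mu_{n_j}(f^{-n_j}(B)) \to 0$ one gets $\mu_{n_j}(f^{-n_j}(B_k)) \to 0$ \emph{simultaneously} for every $k$. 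This common subsequence is precisely what places the linear combination $\psi' = \sum_{k=1}^m c_k\,\rchi_{B_k}$ in $\cR_1$: using $\|(C_{w,f})^n \rchi_{B_k}\|_p^p = \mu_n(f^{-n}(B_k))$ (as computed in the proof of Theorem~\ref{powerbounded}) together with the triangle inequality, $\|(C_{w,f})^{n_j}\psi'\|_p \le \sum_{k=1}^m |c_k|\,\mu_{n_j}(f^{-n_j}(B_k))^{1/p} \to 0$. Finally $\|\psi - \psi'\|_p^p = \sum_{k=1}^m |c_k|^p\,\mu(A_k \setminus B_k) \le (\max_k |c_k|^p)\,\mu(A \setminus B)$, which is made smaller than $(\eps/2)^p$ by taking $\mu(A \setminus B)$ small; thus $\psi'$ lies in $\cR_1$ within $\eps$ of $\varphi$, and $\cR_1$ is dense. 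The decomposition-plus-finite-additivity step that produces a single subsequence good for all the pieces at once is the crux of the whole proof; once it is secured, the remaining estimates are routine.
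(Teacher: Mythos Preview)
Your proof is correct and follows essentially the same approach as the paper: both arguments reduce to showing that the set $\cR_1$ of vectors whose orbit has a subsequence tending to zero is dense (hence residual), and then intersect with the Banach--Steinhaus residual set $\cR_2$ of vectors with unbounded orbit. The paper packages the density argument by introducing the dense set $\fX_0$ of simple functions $\sum_k b_k\rchi_{B_k}$ with $\liminf_n \mu_n(f^{-n}(B_1\cup\cdots\cup B_m))=0$, whereas you spell out explicitly the approximation step and the finite-additivity observation that secures a common subsequence for all pieces; these are the same idea, and your version simply unpacks what the paper leaves implicit.
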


\begin {proof} 
Let $\fX_0$ be the set of all simple functions of the form $\sum _{k=1}^m b_k \rchi_{B_k}$, 
where $b_1,\ldots,b_m$ are scalars and $B_1,\ldots,B_m$ are measurable sets of finite $\mu$-measure satisfying
\begin{equation}\label{DLY-PB-1}
\liminf_{n \to \infty} \mu_n(f^{-n}(B_1 \cup \ldots \cup B_m)) = 0.
\end{equation}
The assumption implies that $\fX_0$ is dense in $L^p(\mu)$. Moreover, by (\ref{DLY-PB-1}),
\[
\liminf_{n \to \infty} (C_{w,f})^n(\varphi) = 0 \ \text{ for all } \varphi \in \fX_0.
\]
Hence, the set $\cR_1$ of all $\varphi \in L^p(\mu)$ whose orbit under $C_{w,f}$ 
has a subsequence converging to zero is residual in $L^p(\mu)$.
If (iii) holds, then the set $\cR_2$ of all $\varphi \in L^p(\mu)$ whose orbit under $C_{w,f}$ 
is unbounded is also residual in $L^p(\mu)$, which implies (ii).
The other implications are clear.
\end{proof}

\begin{remark}\label{R-DLY-PB}
{\bf (a)} In the case $L^p(\mu)$ is separable, \cite[Theorem~10]{BBMP2} says that (ii) is equivalent to
\begin{itemize}
\item [(ii')] $C_{w,f}$ is densely Li-Yorke chaotic.
\end{itemize}

\noindent
{\bf (b)} If the space $L^p(\mu)$ is separable and there is an increasing sequence $(n_j)_{j \in \N}$ of positive integers such that,
for every measurable set $A$ of finite $\mu$-measure and for every $\eps > 0$, there is a measurable set $B \subset A$ with 
\[
\mu(A \backslash B) < \eps \ \ \text{ and } \ \ \ \lim _{j \to \infty} \mu_{n_j}(f^{-n_j}(B)) = 0,
\]
then \cite[Theorem~31]{BBMP2} implies that (i)--(iii) are also equivalent to
\begin{itemize}
\item [(iv)] $C_{w,f}$ admits a dense irregular manifold.
\end{itemize}
\end{remark}

Let us now present some applications to weighted translation operators.

\begin{corollary}
Let $X = [1,\infty)$ or $\R$. For any weighted translation operator $T_w$ on $L^p(X)$,
\[
\|(T_w)^n\| = \|w^{(n)}\|_\infty \ \text{ for all } n \in \N.
\]
In particular, $T_w$ is power-bounded if and only if $\sup_{n \in \N} \|w^{(n)}\|_\infty < \infty$.
\end{corollary}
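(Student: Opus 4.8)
The plan is to read the result off from the general norm formula in Theorem~\ref{powerbounded}. Here $f$ is the translation $x \mapsto x+1$ on $X$, so $f^n$ is translation by $n$, which is injective and preserves Lebesgue measure. Writing $\mu_n(f^{-n}(B)) = \int_{f^{-n}(B)} |w^{(n)}|^p\,d\mu$, formula (\ref{power0}) gives
\[
\|(T_w)^n\|^p = \sup_{0 < \mu(B) < \infty} \frac{1}{\mu(B)} \int_{f^{-n}(B)} |w^{(n)}|^p\,d\mu,
\]
so everything reduces to identifying this supremum of averages of $|w^{(n)}|^p$ with $\esssup |w^{(n)}|^p = \|w^{(n)}\|_\infty^p$.

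For the inequality $\|(T_w)^n\| \le \|w^{(n)}\|_\infty$, I would first observe that, since $f^n$ is injective and measure-preserving as a point map, one has $\mu(f^{-n}(B)) \le \mu(B)$ for every measurable $B$ (with equality when $X = \R$, since there $f^n$ is a bijection). Bounding the integrand by $\|w^{(n)}\|_\infty^p$ then yields $\int_{f^{-n}(B)} |w^{(n)}|^p\,d\mu \le \|w^{(n)}\|_\infty^p\,\mu(f^{-n}(B)) \le \|w^{(n)}\|_\infty^p\,\mu(B)$, so each average is at most $\|w^{(n)}\|_\infty^p$.

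The substantive half is the reverse inequality, and this is where I expect the only real care to be needed. Fix $t < \|w^{(n)}\|_\infty$ and set $E = \{x \in X : |w^{(n)}(x)| > t\}$, a set of positive measure. The difficulty is that $E$ may have infinite measure, so it cannot be used directly; instead I would invoke the $\sigma$-finiteness of Lebesgue measure to choose a measurable $E' \subset E$ with $0 < \mu(E') < \infty$. Taking $B = f^n(E')$ and using the injectivity of $f^n$ gives $f^{-n}(B) = f^{-n}(f^n(E')) = E'$, while measure-preservation gives $\mu(B) = \mu(E') \in (0,\infty)$; hence $B$ is an admissible test set and the corresponding average equals $\frac{1}{\mu(E')}\int_{E'} |w^{(n)}|^p\,d\mu \ge t^p$.

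Letting $t \uparrow \|w^{(n)}\|_\infty$ shows the supremum is at least $\|w^{(n)}\|_\infty^p$, and combined with the upper bound this proves $\|(T_w)^n\| = \|w^{(n)}\|_\infty$. The final ``in particular'' is then immediate, since power-boundedness means exactly $\sup_{n \in \N} \|(T_w)^n\| < \infty$, which now reads $\sup_{n \in \N} \|w^{(n)}\|_\infty < \infty$.
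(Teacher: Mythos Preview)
Your proof is correct and follows essentially the same route as the paper's: both invoke Theorem~\ref{powerbounded}, obtain the upper bound by replacing $|w^{(n)}|^p$ with $\|w^{(n)}\|_\infty^p$ and using $\mu(f^{-n}(B)) \le \mu(B)$, and obtain the lower bound by testing on finite-measure pieces of the superlevel set $\{|w^{(n)}| > t\}$ shifted forward by $n$. The only cosmetic difference is that the paper extracts a finite-measure piece by intersecting with an explicit bounded interval $[-k,k]$, whereas you appeal abstractly to $\sigma$-finiteness.
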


\begin{proof}
Fix $n \in \N$. In the present case, $\mu$ is the Lebesgue measure (which is translation invariant) and $f : x \in X \mapsto x+1 \in X$. 
By Theorem~\ref{powerbounded},
\[
\|(T_w)^n\| = \sup_{0 < \mu(B) < \infty} \Big(\frac{\mu_n(f^{-n}(B))}{\mu(B)}\Big)^\frac{1}{p}
\leq \sup_{0 < \mu(B) < \infty} \Big(\frac{\|w^{(n)}\|_\infty^p\, \mu(f^{-n}(B))}{\mu(B)}\Big)^\frac{1}{p} = \|w^{(n)}\|_\infty.
\]
On the other hand, take $0 < \delta < \|w^{(n)}\|_\infty$ (the case $\|w^{(n)}\|_\infty = 0$ is trivial) and define
\[
B = \{x \in X : |w^{(n)}(x)| \geq \|w^{(n)}\|_\infty - \delta\}.
\]
Since $\mu(B) > 0$, we can choose $k \in \N$ such that the set $B' = B \cap [-k,k]$ has positive $\mu$-measure.
Hence, by Theorem~\ref{powerbounded},
\[
\|(T_w)^n\| \geq \Big(\frac{\mu_n(f^{-n}(B' + n))}{\mu(B' + n)}\Big)^\frac{1}{p}
= \Big(\frac{\mu_n(B')}{\mu(B')}\Big)^\frac{1}{p} \geq \|w^{(n)}\|_\infty - \delta.
\]
By letting $\delta \to 0^+$, we obtain $\|(T_w)^n\| \geq \|w^{(n)}\|_\infty$.
\end{proof}

\begin{corollary}
For weighted shifts $B_w$ on $\ell^p(\N)$ and weighted translation operators $T_w$ on $L^p[1,\infty)$,
properties (i)--(iv) above are always equivalent to each other.
\end{corollary}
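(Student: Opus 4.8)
The plan is to verify that both operators satisfy the hypotheses of Theorem~\ref{DLY-PB} and of Remark~\ref{R-DLY-PB}(b), after which the equivalence of (i)--(iv) follows immediately. Since $\ell^p(\N)$ and $L^p[1,\infty)$ are separable, only the measure-theoretic hypotheses remain to be checked. The key structural feature I would exploit is that in both cases $f$ is a forward translation on a space with a left endpoint, so the $f^n$-preimage of any bounded set is eventually empty.

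First I would make the preimage computation explicit. For $B_w$ on $\ell^p(\N)$ we have $f(k) = k+1$, hence $f^{-n}(\{m\}) = \{m-n\}$ when $m > n$ and $f^{-n}(\{m\}) = \varnothing$ otherwise; consequently $f^{-n}(B) = \varnothing$ once $n \geq \max B$, for any finite set $B$. For $T_w$ on $L^p[1,\infty)$ we have $f^{-n}(B) = (B - n) \cap [1,\infty)$, so if $B \subset [1,R]$ then $f^{-n}(B) = \varnothing$ whenever $n > R - 1$. In either case $\mu_n(f^{-n}(B)) = 0$ for all sufficiently large $n$, whence $\lim_{n \to \infty} \mu_n(f^{-n}(B)) = 0$ for every bounded $B$.

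Next I would produce the sets required by the hypotheses. Given a set $A$ of finite $\mu$-measure and $\eps > 0$: in the $\ell^p(\N)$ case, $A$ is necessarily finite, so I take $B = A$ and the truncation is vacuous; in the $L^p[1,\infty)$ case, I use continuity of the measure to pick $R$ so large that $B = A \cap [1,R]$ satisfies $\mu(A \setminus B) < \eps$. By the previous step such a $B$ is bounded and satisfies $\lim_{n \to \infty} \mu_n(f^{-n}(B)) = 0$, so in particular $\liminf_{n \to \infty} \mu_n(f^{-n}(B)) = 0$. This verifies the hypothesis of Theorem~\ref{DLY-PB}, giving the equivalence of (i)--(iii); taking the full sequence $n_j = j$ also verifies the hypothesis of Remark~\ref{R-DLY-PB}(b), which together with separability yields the equivalence with (iv).

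The argument is essentially mechanical once the preimage identities are in hand; the only point requiring care is the observation that the unilateral geometry forces $f^{-n}(B)$ to be empty for large $n$. This is precisely what fails in the bilateral cases $\ell^p(\Z)$ and $L^p(\R)$, and hence is the real content of restricting to $\N$ and $[1,\infty)$.
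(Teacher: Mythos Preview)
Your proof is correct and follows exactly the approach the paper takes: the paper's proof consists of the single sentence ``It is enough to observe that the conditions in Remark~\ref{R-DLY-PB}(b) are satisfied with $(n_j)_{j \in \N}$ being the full sequence $(n)_{n \in \N}$,'' and you have simply spelled out the verification of those conditions in detail. Your observation that the unilateral geometry forces $f^{-n}(B) = \varnothing$ for large $n$ on bounded sets is precisely the point the paper leaves implicit.
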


\begin{proof}
It is enough to observe that the conditions in Remark~\ref{R-DLY-PB}(b) are satisfied 
with $(n_j)_{j \in \N}$ being the full sequence $(n)_{n \in \N}$.
\end{proof}

\begin{corollary}
Given a weighted translation operator $T_w$ on $L^p(\R)$, let $\fX$ be the closed subspace of $L^p(\R)$ generated by the 
characteristic functions $\rchi_B$ such that $B$ is a Lebesgue measurable set in $\R$ of finite Lebesgue measure satisfying
\[
\liminf_{n \to \infty} \int_B |w(x-n) \cdots w(x-1)|^p dx = 0.
\]
Then, the following assertions are equivalent:
\begin{itemize}
\item [\rm (i)] $T_w$ is Li-Yorke chaotic;
\item [\rm (ii)] There exists a nonempty countable family $(B_i)_{i \in I}$ of Lebesgue measurable sets in $\R$ of finite positive 
  Lebesgue measure such that
  \[
  \liminf_{n \to \infty} \int_{B_i} |w(x-n) \cdots w(x-1)|^p dx = 0 \ \text{ for all } i \in I
  \]
  and
  \[
  \sup\Big\{\frac{1}{\mu(B_i)} \int_{B_i} |w(x-n) \cdots w(x-1)|^p dx : i \in I, n \in \N\Big\} = \infty.
  \]
\item [\rm (iii)] $T_w$ is not power-bounded in $\fX$.
\end{itemize}
\end{corollary}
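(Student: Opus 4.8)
The plan is to reduce everything to Theorem~\ref{LY-PB} by first making the abstract quantity $\mu_n(f^{-n}(B))$ explicit in this translation-invariant setting. Here $f(x)=x+1$, so $f^{-n}(B)=B-n$ and $w^{(n)}(x)=w(x)w(x+1)\cdots w(x+n-1)$. Writing $\mu_n(f^{-n}(B))=\int_{B-n}|w^{(n)}|^p\,d\mu$ and performing the change of variables $x\mapsto x-n$, which is legitimate because Lebesgue measure is translation invariant, I would obtain
\[
\mu_n(f^{-n}(B)) = \int_B |w(x-n)\cdots w(x-1)|^p\,dx.
\]
This is the only genuine computation in the argument, and it shows that the condition defining $\fX$ in the present statement is literally condition~(\ref{LY-PB-1}) of Theorem~\ref{LY-PB}. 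Consequently the subspace $\fX$ here coincides with the one in Theorem~\ref{LY-PB}, and the equivalence (i)~$\Leftrightarrow$~(iii) is nothing but that theorem.

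It then remains to fit (ii) into the chain. For (i)~$\Rightarrow$~(ii) I would invoke \cite[Theorem~10]{BV} exactly as in the proof of Theorem~\ref{LY-PB}: Li-Yorke chaos yields a nonempty countable family $(B_i)_{i\in I}$ of sets of finite positive measure and an increasing sequence $(n_j)$ with $\sup_{i,n}\mu_n(f^{-n}(B_i))/\mu(B_i)=\infty$ and $\lim_{j}\mu_{n_j}(f^{-n_j}(B_i))=0$ for every $i$. Rewriting these two conditions through the identity above gives precisely the two displayed requirements in (ii); note that the common-subsequence convergence is stronger than the per-set $\liminf=0$ demanded in (ii), so it is in particular sufficient.

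For the converse I would run the first half of the proof of Theorem~\ref{LY-PB} to establish (ii)~$\Rightarrow$~(iii). The first condition in (ii), read through the identity, says $\liminf_{n\to\infty}\mu_n(f^{-n}(B_i))=0$, so each $\rchi_{B_i}$ belongs to $\fX$; setting $\phi_i=\mu(B_i)^{-1/p}\rchi_{B_i}$ gives unit vectors in $\fX$ with $\|(C_{w,f})^n(\phi_i)\|_p^p=\mu_n(f^{-n}(B_i))/\mu(B_i)$, and the second condition in (ii) says exactly that the supremum of these norms over $i$ and $n$ is infinite, that is, $C_{w,f}$ is not power-bounded in $\fX$. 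Combining (i)~$\Leftrightarrow$~(iii) with (i)~$\Rightarrow$~(ii)~$\Rightarrow$~(iii) then closes the loop.

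I would expect the only delicate point to be bookkeeping rather than mathematics: making sure the change of variables indexes the factors of $w$ in the correct order $w(x-n)\cdots w(x-1)$, and checking that the weaker per-set $\liminf$ in (ii) is still enough to place each $\rchi_{B_i}$ in $\fX$, which it is by the very definition of $\fX$, so that the non-power-boundedness extracted in (ii) genuinely lives on $\fX$. No estimate beyond those already used for Theorems~\ref{powerbounded} and~\ref{LY-PB} should be required.
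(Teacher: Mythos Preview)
Your proposal is correct and follows essentially the same approach as the paper: the change-of-variables identity $\mu_n(f^{-n}(B))=\int_B|w(x-n)\cdots w(x-1)|^p\,dx$ reduces (i)~$\Leftrightarrow$~(iii) to Theorem~\ref{LY-PB}, and (i)~$\Leftrightarrow$~(ii) is handled via \cite[Theorem~10]{BV}. The only minor difference is that the paper cites \cite[Remark~11]{BV} for (ii)~$\Rightarrow$~(i), whereas you close the loop through (ii)~$\Rightarrow$~(iii) by the explicit unit-vector argument; your route is slightly more self-contained and equally valid.
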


\begin{proof}
The equivalence (i) $\Leftrightarrow$ (ii) follows from \cite[Theorem~10]{BV} and \cite[Remark~11]{BV}.
The equivalence (i) $\Leftrightarrow$ (iii) is a restatement of Theorem~\ref{LY-PB} in the present particular situation.
\end{proof}

%%%%%%%%%%%%%%%%%%%%%%%%%%%%%%
%%%%%%%%%%%%%%%%%%%%%%%%%%%%%%

\subsection{The case of the space $C_0(\Omega)$}

\begin{theorem}\label{PB-C0X}
For any weighted composition operator $C_{w,f}$ on $C_0(\Omega)$,
\begin{equation}\label{PB-C0X-1}
\|(C_{w,f})^n\| = \|w^{(n)}\| \ \text{ for all } n \in \N.
\end{equation}
In particular, $C_{w,f}$ is power-bounded if and only if there is a constant $C \in (0,\infty)$ such that
\begin{equation}\label{PB-C0X-2}
\|w^{(n)}\| \leq C \ \text{ for all } n \in \N.
\end{equation}
\end{theorem}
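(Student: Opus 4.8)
The plan is to reduce everything to an explicit formula for the iterates of $C_{w,f}$ and then match upper and lower bounds for $\|(C_{w,f})^n\|$ separately. First I would record, by a straightforward induction on $n$ using the definition $C_{w,f}(\varphi) = (\varphi \circ f) \cdot w$ together with the multiplicative structure of the $w^{(n)}$, that
\[
(C_{w,f})^n(\varphi) = (\varphi \circ f^n) \cdot w^{(n)} \qquad (\varphi \in C_0(\Omega),\ n \in \N).
\]
The upper bound is then immediate: for any $\varphi \in C_0(\Omega)$ with $\|\varphi\| \le 1$, the sup norm gives
\[
\|(C_{w,f})^n(\varphi)\| = \sup_{x \in \Omega} |\varphi(f^n(x))|\,|w^{(n)}(x)| \le \sup_{x \in \Omega} |w^{(n)}(x)| = \|w^{(n)}\|,
\]
so that $\|(C_{w,f})^n\| \le \|w^{(n)}\|$. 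Here $w^{(n)}$ is indeed bounded, being a finite product of bounded functions by (\ref{weightC0X}).

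For the reverse inequality I would argue by approximation. Fix $n$, assume $\|w^{(n)}\| > 0$ (the case $\|w^{(n)}\| = 0$ being trivial), and let $\eps \in (0, \|w^{(n)}\|)$. By the definition of the supremum there is a point $x_0 \in \Omega$ with $|w^{(n)}(x_0)| > \|w^{(n)}\| - \eps$. Put $y_0 = f^n(x_0)$. Since $\Omega$ is locally compact Hausdorff and the singleton $\{y_0\}$ is compact, Urysohn's lemma produces a function $\varphi \in C_c(\Omega) \subset C_0(\Omega)$ with $0 \le \varphi \le 1$ and $\varphi(y_0) = 1$. Then $\|\varphi\| = 1$ and, evaluating the iterate formula at $x_0$,
\[
\|(C_{w,f})^n(\varphi)\| \ge |\varphi(f^n(x_0))|\,|w^{(n)}(x_0)| = |w^{(n)}(x_0)| > \|w^{(n)}\| - \eps.
\]
Hence $\|(C_{w,f})^n\| \ge \|w^{(n)}\| - \eps$, and letting $\eps \to 0^+$ yields $\|(C_{w,f})^n\| \ge \|w^{(n)}\|$, which together with the upper bound proves (\ref{PB-C0X-1}).

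I expect the only genuinely delicate point to be the construction of the test function, where one must invoke local compactness and the Hausdorff property of $\Omega$ to guarantee a compactly supported bump equal to $1$ at the prescribed point $y_0$; everything else is bookkeeping with the sup norm. Finally, the last assertion is immediate from (\ref{PB-C0X-1}): since $\|(C_{w,f})^n\| = \|w^{(n)}\|$ for every $n \in \N$, the operator $C_{w,f}$ is power-bounded precisely when $\sup_{n \in \N}\|w^{(n)}\| < \infty$, that is, when (\ref{PB-C0X-2}) holds for some $C \in (0,\infty)$.
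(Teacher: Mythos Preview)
Your proposal is correct and follows essentially the same route as the paper: both arguments use the iterate formula $(C_{w,f})^n(\varphi) = (\varphi \circ f^n)\cdot w^{(n)}$ to get the upper bound immediately, and for the lower bound both pick a point $x_0$ where $|w^{(n)}|$ nearly attains its supremum and invoke Urysohn's lemma (via local compactness) to build a norm-one bump function in $C_0(\Omega)$ equal to $1$ at $f^n(x_0)$. Your explicit handling of the trivial case $\|w^{(n)}\| = 0$ and the induction for the iterate formula are the only cosmetic additions.
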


\begin{proof}
Since
\[
\|(C_{w,f})^n(\varphi)\| = \|(\varphi \circ f^n) \cdot w^{(n)}\| \leq \|w^{(n)}\| \|\varphi\| \ \text{ for all } \varphi \in C_0(\Omega),
\]
we obtain $\|(C_{w,f})^n\| \leq \|w^{(n)}\|$.

On the other hand, given $\eps > 0$, there exists $x_n \in \Omega$ such that $|w^{(n)}(x_n)| \geq \|w^{(n)}\| - \eps$.
Choose an open neighborhood $V_n$ of $f^n(x_n)$ in $\Omega$ such that $\overline{V_n}$ is compact.
By Uryshon's lemma, there exists a continuous map $\phi_n : \Omega \to [0,1]$ such that 
$\supp \phi_n \subset V_n$ and $\phi_n(f^n(x_n)) = 1$.
Hence, $\phi_n \in C_0(\Omega)$ and $\|\phi_n\| = 1$. Since
\[
\|(C_{w,f})^n(\phi_n)\| \geq |\phi_n(f^n(x_n))\, w^{(n)}(x_n)| = |w^{(n)}(x_n)| \geq \|w^{(n)}\| - \eps,
\]
we obtain $\|(C_{w,f})^n\| \geq \|w^{(n)}\| - \eps$. Since $\eps > 0$ is arbitrary, the proof of (\ref{PB-C0X-1}) is complete.

The last assertion follows from (\ref{PB-C0X-1}).
\end{proof}

For weighted shifts $B_w$ on $c_0(\N)$ and $c_0(\Z)$, 
the norms of the iterates of $B_w$ are also given by formulas (\ref{WS-1}) and (\ref{WS-3}), respectively.
Hence, the conditions for $B_w$ to be power-bounded are also given by (\ref{WS-2}) and (\ref{WS-4}), respectively.
Thus, \cite[Corollary 4]{BV} (see also \cite[Proposition~27]{BBMP11}) and \cite[Corollary 5]{BV} can be rewritten as follows.

\begin{corollary}
A weighted shift $B_w$ on $c_0(\N)$ is Li-Yorke chaotic if and only if it is not power-bounded.
\end{corollary}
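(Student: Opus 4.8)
The plan is to route everything through the equivalence (i)$\Leftrightarrow$(iii) recalled just before Section~3.1: $B_w$ is Li-Yorke chaotic if and only if it admits an irregular vector. The easy half of the corollary is the contrapositive of the ``not power-bounded $\Rightarrow$ Li-Yorke chaotic'' implication: if $B_w$ is power-bounded, say $\|(B_w)^n\| \le C$ for all $n$, then $\|(B_w)^n x\| \le C\|x\|$ for every $x \in c_0(\N)$, so no orbit is unbounded and in particular $B_w$ has no irregular vector; hence $B_w$ is not Li-Yorke chaotic. Thus I would spend the effort on the reverse implication.

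For the converse I would argue by Baire category, mirroring the proof of Theorem~\ref{DLY-PB}. Assume $B_w$ is not power-bounded, i.e.\ (by the norm formula (\ref{WS-1}) and the characterization (\ref{WS-2})) $\sup_n \|(B_w)^n\| = \sup\{|w_i \cdots w_j| : i \le j\} = \infty$. The first key observation is that the dense subspace of finitely supported sequences $c_{00}(\N)$ consists of vectors whose orbit is \emph{eventually} zero: since $(B_w)^n e_k = w_{k-1}\cdots w_{k-n}\, e_{k-n}$ vanishes as soon as $n \ge k$, every finitely supported $x$ satisfies $(B_w)^n x = 0$ for all large $n$. Consequently the set $\cR_1$ of all $x \in c_0(\N)$ whose orbit has a subsequence converging to $0$ is dense. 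Since $\cR_1 = \bigcap_m \bigcup_n \{x : \|(B_w)^n x\| < 1/m\}$ is a $G_\delta$ set, being dense it is residual.

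Next, because $\sup_n \|(B_w)^n\| = \infty$, the uniform boundedness principle \cite[Theorem~2.5]{WRud91} guarantees that the set $\cR_2$ of all $x$ with unbounded orbit is also residual. As $c_0(\N)$ is a Banach space, $\cR_1 \cap \cR_2$ is then residual, hence nonempty, and any $x$ in this intersection satisfies $\liminf_n \|(B_w)^n x\| = 0$ and $\limsup_n \|(B_w)^n x\| = \infty$. Thus $x$ is an irregular vector, and $B_w$ is Li-Yorke chaotic, completing the harder implication.

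The step I expect to require the most care is controlling the lower limit, that is, securing $\liminf_n \|(B_w)^n x\| = 0$. A direct gliding-hump construction (placing mass on the basis vectors $e_{j_k+1}$ so that the iterate $(B_w)^{\,j_k-i_k+1}$ blows up) easily forces $\limsup_n \|(B_w)^n x\| = \infty$, but it does not obviously keep the norms of infinitely many intermediate iterates small, since longer blocks coming from far-out coordinates may re-inflate the norm. The Baire-category route circumvents this difficulty by exploiting the special feature of the \emph{unilateral} shift that finitely supported vectors have orbits dying in finitely many steps, which is precisely what makes $\cR_1$ dense; this is the $c_0(\N)$-analogue of the mechanism behind Theorem~\ref{DLY-PB}.
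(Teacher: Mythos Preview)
Your proof is correct. The Baire-category argument works exactly as you describe: finitely supported vectors die after finitely many iterates, so $\cR_1$ is a dense $G_\delta$; non-power-boundedness makes $\cR_2$ residual by Banach--Steinhaus; any point in the intersection is irregular.

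The paper, however, does not argue this way here. It simply observes that the known characterization of Li-Yorke chaos for $B_w$ on $c_0(\N)$ from \cite[Corollary~4]{BV} (equivalently \cite[Proposition~27]{BBMP11}) --- namely, that $\sup\{|w_i\cdots w_j| : i\le j\}=\infty$ --- coincides with the negation of power-boundedness via the norm formula~(\ref{WS-1}). So the paper's ``proof'' is a one-line citation plus a reinterpretation. Your approach is more self-contained and is essentially the mechanism behind the paper's later Theorem~\ref{DLY-PB-C0X} and its corollary for $c_0(\N)$, specialized to this case and presented before that machinery is set up. The trade-off: the paper's route is shorter if one accepts the external reference, while yours exposes the actual reason the result holds and requires nothing beyond the general Li-Yorke characterization and Baire category.
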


\begin{corollary}
A weighted shift $B_w$ on $c_0(\Z)$ with nonzero weights is Li-Yorke chaotic if and only if it is not power-bounded 
and $\displaystyle \liminf_{n \to \infty} |w_{-n} \cdots w_{-1}| = 0$.
\end{corollary}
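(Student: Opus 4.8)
The plan is to reduce everything to the existence of a semi-irregular (equivalently, irregular) vector, combining the explicit form of the iterates of $B_w$ with the power-boundedness criterion (\ref{WS-4}). Writing $P_n = |w_{-n}\cdots w_{-1}|$, I would first record the elementary identity
\[
(B_w^n x)_k = w_k\cdots w_{k+n-1}\,x_{k+n},\qquad\text{so}\qquad \|B_w^n x\| = \sup_{m\in\Z}|w_{m-n}\cdots w_{m-1}|\,|x_m|,
\]
and in particular $\|B_w^n e_m\| = |w_{m-n}\cdots w_{m-1}|$. Since the weights are nonzero, for $m\ge 0$ and $n\ge m$ this product factors as $|w_0\cdots w_{m-1}|\cdot P_{n-m}$, with the analogous factorization $P_{n+|m|}/|w_{-|m|}\cdots w_{-1}|$ for $m<0$; in both cases one divides by a fixed nonzero finite product. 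Consequently $\liminf_n\|B_w^n e_m\|=0$ for each $m$ is equivalent to $\liminf_n P_n=0$, and $\|B_w^n e_m\|\to 0$ for each $m$ is equivalent to $P_n\to 0$. These two observations are the engine of the argument.

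For the forward implication, assume $B_w$ is Li-Yorke chaotic. By the equivalence (i)$\Leftrightarrow$(iii) recalled above it has an irregular vector $y$, so $\limsup_n\|B_w^n y\|=\infty$ forces $\sup_n\|B_w^n\|=\infty$; thus $B_w$ is not power-bounded. Choosing an index $m_0$ with $y_{m_0}\ne 0$ and a subsequence along which $\|B_w^{n_j}y\|\to 0$, the norm formula gives $|w_{m_0-n_j}\cdots w_{m_0-1}|\to 0$, and factoring out the fixed nonzero product as above yields $P_{n_j-m_0}\to 0$ (or $P_{n_j+|m_0|}\to 0$), hence $\liminf_n P_n=0$, which is the stated condition.

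For the converse, assume $B_w$ is not power-bounded and $\liminf_n P_n=0$, and split according to the behaviour of $(P_n)$. If $\limsup_n P_n>0$, then $\|B_w^n e_0\|=P_n$ has $\liminf=0$ and $\limsup>0$, so $e_0$ is semi-irregular and $B_w$ is Li-Yorke chaotic. If instead $\lim_n P_n=0$, then by the engine the full orbit of every finitely supported vector tends to $0$; since these vectors are dense in $c_0(\Z)$, the set $\cR_1$ of vectors whose orbit has a subsequence tending to $0$ is dense, hence residual by \cite[Corollary~4]{BBMP2}. As $B_w$ is not power-bounded, $\sup_n\|B_w^n\|=\infty$, so the Banach--Steinhaus theorem \cite[Theorem~2.5]{WRud91} makes the set $\cR_2$ of vectors with unbounded orbit residual as well; any vector in $\cR_1\cap\cR_2$ is irregular. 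This is precisely the mechanism of the proofs of Theorems~\ref{LY-PB} and~\ref{DLY-PB}, transported to $c_0(\Z)$, and it also recovers \cite[Corollary~5]{BV} (see also \cite[Corollary~1.6]{BerDarPi}) once (\ref{WS-4}) is used to identify ``not power-bounded'' with $\sup\{|w_i\cdots w_j|:i\le j\}=\infty$.

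The step I expect to be the real obstacle is this case split in the converse: one must notice that when $\limsup_n P_n>0$ a single basis vector already suffices, whereas the Baire-category argument runs only after $\lim_n P_n=0$ guarantees the \emph{simultaneous} decay of all finitely supported orbits (a fixed subsequence making $P_{n_j}\to 0$ need not make the shifted sequences $P_{n_j-m}$ decay). The dichotomy is internally consistent because power-boundedness gives $P_n\le \|B_w^{\,n-m}\|\,P_m\le C\,P_m$ for $n>m$, so under power-boundedness $\liminf_n P_n=0$ already forces $P_n\to 0$; hence the case $\limsup_n P_n>0$ can occur only when $B_w$ fails to be power-bounded, as it must.
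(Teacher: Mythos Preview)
Your proof is correct. The paper itself does not prove this corollary: it simply observes that the power-boundedness characterization (\ref{WS-4}) lets one rewrite \cite[Corollary~5]{BV} in the stated form. Your argument is a self-contained proof that bypasses the citation, using exactly the tools the paper deploys elsewhere---the semi-irregular/irregular vector characterization of Li-Yorke chaos, and the Baire-category mechanism from Theorems~\ref{LY-PB-C0X} and~\ref{DLY-PB-C0X}. The case split in the converse (treating $\limsup_n P_n>0$ by exhibiting $e_0$ as a semi-irregular vector, and $\lim_n P_n=0$ via the residual-sets argument) is the right move, and your remark that a single subsequence $P_{n_j}\to 0$ need not kill the shifted products simultaneously is precisely why the split is needed. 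Your final consistency check ($P_n\le \|B_w^{\,n-m}\|\,P_m$) is correct but not required for the proof; it just confirms that Case~1 is compatible with the hypothesis.
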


\begin{theorem}\label{LY-PB-C0X}
Given a weighted composition operator $C_{w,f}$ on $C_0(\Omega)$, 
let $\fX$ be the closed subspace of $C_0(\Omega)$ generated by the functions $\varphi \in C_0(\Omega)$ 
whose support is contained in a relatively compact open set $B$ in $\Omega$ satisfying
\begin{equation}\label{LY-PB-C0X-1}
\liminf_{n \to \infty} \|w^{(n)}\|_{f^{-n}(B)} = 0.
\end{equation}
Then, $C_{w,f}$ is Li-Yorke chaotic if and only if it is not power-bounded in $\fX$.
\end{theorem}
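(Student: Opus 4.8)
The plan is to follow the proof of Theorem~\ref{LY-PB} step by step, trading the measure-theoretic ratios $\mu_n(f^{-n}(B))/\mu(B)$ for the topological quantities $\|w^{(n)}\|_{f^{-n}(B)}$ and the flat characteristic functions for Urysohn bump functions. The one elementary estimate I would use throughout is that, whenever $\varphi \in C_0(\Omega)$ satisfies $\{x : \varphi(x) \neq 0\} \subseteq B$,
\[
\|(C_{w,f})^n \varphi\| = \sup_{x \in \Omega} |\varphi(f^n(x))|\,|w^{(n)}(x)| \leq \|\varphi\|\,\|w^{(n)}\|_{f^{-n}(B)},
\]
since the supremum is effectively taken over those $x$ with $f^n(x) \in B$, i.e.\ $x \in f^{-n}(B)$.

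For the converse implication I would reproduce the Baire category argument essentially verbatim. If $C_{w,f}$ is not Li-Yorke chaotic, then by the equivalences recalled in Section~3 it admits no semi-irregular vector, so $\liminf_n \|(C_{w,f})^n \varphi\| = 0$ forces $\lim_n \|(C_{w,f})^n \varphi\| = 0$ for every $\varphi$. Each generator of $\fX$ has support in a relatively compact open set $B$ with $\liminf_n \|w^{(n)}\|_{f^{-n}(B)} = 0$, so the displayed bound gives $\liminf_n \|(C_{w,f})^n \varphi\| = 0$, whence the orbit of every generator, and of every finite linear combination of generators, converges to $0$. Thus the set $\cR_1$ of all $\varphi \in \fX$ whose orbit has a subsequence converging to $0$ contains a dense subspace, and is therefore residual in $\fX$ by \cite[Corollary~4]{BBMP2}. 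If $C_{w,f}$ were not power-bounded in $\fX$, the uniform boundedness principle \cite[Theorem~2.5]{WRud91} would make the set $\cR_2$ of all $\varphi \in \fX$ with unbounded orbit residual in $\fX$ too, and any vector in $\cR_1 \cap \cR_2$ would be irregular, contradicting the absence of semi-irregular vectors. Hence $C_{w,f}$ is power-bounded in $\fX$.

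For the direct implication I would invoke the Li-Yorke chaos criterion for weighted composition operators on $C_0(\Omega)$ from \cite{BV} (the counterpart of \cite[Theorem~10]{BV} used in Theorem~\ref{LY-PB}), which I expect to furnish a nonempty countable family $(B_i)_{i \in I}$ of relatively compact open sets with $\liminf_n \|w^{(n)}\|_{f^{-n}(B_i)} = 0$ for each $i$ and $\sup\{\|w^{(n)}\|_{f^{-n}(B_i)} : i \in I,\ n \in \N\} = \infty$. The first property makes every bump function supported in some $B_i$ a generator of $\fX$. Given $M > 0$, the second property provides indices $i,n$ and a point $x^\ast$ with $f^n(x^\ast) \in B_i$ and $|w^{(n)}(x^\ast)| > M$; Urysohn's lemma then yields $\phi \in C_0(\Omega)$ with $\|\phi\| = 1$, $\supp \phi \subseteq B_i$ and $\phi(f^n(x^\ast)) = 1$, so that $\phi \in \fX$ and $\|(C_{w,f})^n \phi\| \geq |w^{(n)}(x^\ast)| > M$. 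Since $M$ is arbitrary, $\sup_n \|(C_{w,f})^n|_\fX\| = \infty$, i.e.\ $C_{w,f}$ is not power-bounded in $\fX$.

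The main obstacle, and the place where the $C_0(\Omega)$ argument genuinely differs from the $L^p(\mu)$ one, is this last construction. In $L^p(\mu)$ the single normalized function $\rchi_{B_i}/\mu(B_i)^{1/p}$ realizes the ratio $\mu_n(f^{-n}(B_i))/\mu(B_i)$ for all $n$ at once, but no continuous function is flat, so one bump supported in $B_i$ cannot simultaneously capture $\|w^{(n)}\|_{f^{-n}(B_i)}$ for every $n$. The resolution I would rely on is that establishing power-unboundedness in $\fX$ only requires exceeding each prescribed level $M$, so both the witnessing bump function and the exponent $n$ may be allowed to depend on $M$; the only point then needing care is to confirm that these $n$-dependent bumps really belong to $\fX$, which is exactly where the relative compactness of the $B_i$ together with the vanishing-liminf condition (\ref{LY-PB-C0X-1}) enters.
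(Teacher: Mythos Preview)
Your proof is correct and follows the paper's overall strategy: invoke the \cite{BV} characterization for the forward direction, and run the Baire-category argument for the converse. The converse is identical to the paper's.

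In the forward direction there is one noteworthy technical difference. The paper appeals to \cite[Theorem~1]{BV} together with \cite[Remark~3]{BV}, which supplies the family $(B_i)$ with the additional nested property $\overline{B_i}\subset B_{i+1}$. This extra structure lets one build a \emph{single} Urysohn function $\phi_i$ per index (equal to $1$ on all of $\overline{B_i}$, supported in $B_{i+1}$), so that $\|(C_{w,f})^n\phi_i\|\geq \|w^{(n)}\|_{f^{-n}(B_i)}$ holds for \emph{every} $n$ simultaneously---a direct analogue of the normalized characteristic function in the $L^p$ proof. Your approach instead manufactures a fresh bump $\phi$ for each threshold $M$, localized at the single witness point $f^n(x^\ast)$. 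This is perfectly sufficient for power-unboundedness and has the minor advantage of not needing the nesting; the paper's construction is closer in spirit to the $L^p$ argument and yields a fixed countable family of unit vectors in $\fX$ witnessing the blow-up, which can be convenient for related arguments (e.g.\ the mean Li-Yorke analogue later in the paper).
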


\begin{proof}
Suppose that $C_{w,f}$ is Li-Yorke chaotic. By \cite[Theorem~1]{BV} and \cite[Remark~3]{BV}, 
there exist a sequence $(B_i)_{i \in \N}$ of nonempty relatively compact open sets in $\Omega$ and 
an increasing sequence $(n_j)_{j \in \N}$ of positive integers such that $\overline{B_i} \subset B_{i+1}$ for all $i \in \N$,
\begin{equation}\label{LY-PB-C0X-2}
\sup\big\{\|w^{(n)}\|_{f^{-n}(B_i)} : i, n \in \N\big\} = \infty
\end{equation}
and
\begin{equation}\label{LY-PB-C0X-3}
\lim_{j \to \infty} \|w^{(n_j)}\|_{f^{-n_j}(B_i)} = 0 \ \text{ for all } i \in \N.
\end{equation}
Since $\overline{B_i} \subset B_{i+1}$, there is a continuous map $\phi_i : \Omega \to [0,1]$ such that 
$\supp \phi_i \subset B_{i+1}$ and $\phi_i = 1$ on $\overline{B_i}$.
By (\ref{LY-PB-C0X-3}), $\phi_i \in \fX$ for all $i \in \N$. Since $\|\phi_i\| = 1$ and 
\[
\|(C_{w,f})^n(\phi_i)\| \geq \|(\phi_i \circ f^n) \cdot w^{(n)}\|_{f^{-n}(B_i)} = \|w^{(n)}\|_{f^{-n}(B_i)} \ \ \ (i, n \in \N),
\]
it follows from (\ref{LY-PB-C0X-2}) that $C_{w,f}$ is not power-bounded in $\fX$.

Conversely, suppose that $C_{w,f}$ is not Li-Yorke chaotic. 
If $\varphi \in C_0(\Omega)$ has support contained in a relatively compact open set $B$ in $\Omega$ satisfying (\ref{LY-PB-C0X-1}), then
\[
\liminf_{n \to \infty} \|(C_{w,f})^n(\varphi)\| = \liminf_{n \to \infty} \|(\varphi \circ f^n) \cdot w^{(n)}\|_{f^{-n}(B)}
  \leq \|\varphi\| \liminf_{n \to \infty} \|w^{(n)}\|_{f^{-n}(B)} = 0.
\]
Since $C_{w,f}$ does not admit a semi-irregular vector, we conclude that
\[
\lim_{n \to \infty} (C_{w,f})^n(\varphi) = 0.
\]
This implies that the set $\cR_1$ of all $\varphi \in \fX$ whose orbit under $C_{w,f}$ has a subsequence converging to zero 
is dense, hence residual, in $\fX$.
Now, simply continue arguing as at the end of the proof of Theorem~\ref{LY-PB} to conclude that $C_{w,f}$ is power-bounded in $\fX$.
\end{proof}

\begin{theorem}\label{DLY-PB-C0X}
Consider a weighted composition operator $C_{w,f}$ on $C_0(\Omega)$. If 
\[
\liminf_{n \to \infty} \|w^{(n)}\|_{f^{-n}(B)} = 0
\]
for every relatively compact open set $B$ in $\Omega$, then the following assertions are equivalent:
\begin{itemize}
\item [\rm (i)] $C_{w,f}$ is Li-Yorke chaotic;
\item [\rm (ii)] $C_{w,f}$ has a residual set of irregular vectors;
\item [\rm (iii)] $C_{w,f}$ is not power-bounded.
\end{itemize}
\end{theorem}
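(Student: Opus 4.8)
The plan is to transcribe the proof of Theorem~\ref{DLY-PB} to the present setting, using compactly supported continuous functions in place of simple functions and exploiting that the hypothesis now applies uniformly to every relatively compact open set. The first and main step is to produce a dense, hence residual, supply of vectors whose orbit has a subsequence converging to zero. Given $\varphi \in C_c(\Omega)$, local compactness of $\Omega$ provides a relatively compact open set $B$ with $\supp \varphi \subset B$. Since $\varphi$ vanishes off $B$, the function $\varphi \circ f^n$ vanishes off $f^{-n}(B)$, so
\[
\|(C_{w,f})^n(\varphi)\| = \|(\varphi \circ f^n) \cdot w^{(n)}\|_{f^{-n}(B)} \leq \|\varphi\|\, \|w^{(n)}\|_{f^{-n}(B)},
\]
and the hypothesis $\liminf_{n \to \infty} \|w^{(n)}\|_{f^{-n}(B)} = 0$ forces $\liminf_{n \to \infty} \|(C_{w,f})^n(\varphi)\| = 0$. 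As $C_c(\Omega)$ is dense in $C_0(\Omega)$, the set $\cR_1$ of all $\varphi \in C_0(\Omega)$ whose orbit admits a subsequence converging to zero is dense; being a $G_\delta$ set, it is residual by \cite[Corollary~4]{BBMP2}.

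With $\cR_1$ residual, the implication (iii)~$\Rightarrow$~(ii) is obtained exactly as at the end of the proof of Theorem~\ref{DLY-PB}: if $C_{w,f}$ is not power-bounded, the Banach--Steinhaus theorem makes the set $\cR_2$ of vectors with unbounded orbit residual, whence $\cR_1 \cap \cR_2$ is a residual set all of whose members are irregular, which is precisely~(ii).

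The two remaining implications use only the characterizations of Li-Yorke chaos recalled in the Preliminaries. For (ii)~$\Rightarrow$~(i), any irregular vector is in particular semi-irregular, so $C_{w,f}$ is Li-Yorke chaotic. For (i)~$\Rightarrow$~(iii), Li-Yorke chaos yields an irregular vector $y$, and since $\limsup_{n \to \infty} \|(C_{w,f})^n y\| = \infty$ its orbit is unbounded, so $C_{w,f}$ cannot be power-bounded. This closes the cycle (i)~$\Rightarrow$~(iii)~$\Rightarrow$~(ii)~$\Rightarrow$~(i).

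I expect the only delicate points to be the support bookkeeping---checking that $\varphi \circ f^n$ is supported in $f^{-n}(B)$ so that the relatively-compact-open hypothesis can legitimately be invoked---and the passage from density to residuality of $\cR_1$. Everything else is a faithful copy of the $L^p$ argument, and in fact the $C_0(\Omega)$ version is slightly cleaner, since the hypothesis is stated directly for every relatively compact open set instead of requiring an inner approximation of finite-measure sets as in Theorem~\ref{DLY-PB}.
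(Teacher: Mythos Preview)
Your proof is correct and follows essentially the same approach as the paper: both arguments use the density of $C_c(\Omega)$ together with the estimate $\|(C_{w,f})^n(\varphi)\| \leq \|\varphi\|\,\|w^{(n)}\|_{f^{-n}(B)}$ for $\supp\varphi \subset B$ to make $\cR_1$ residual, then invoke Banach--Steinhaus for (iii)~$\Rightarrow$~(ii) and note that the remaining implications are immediate. Your write-up merely spells out a few details (support bookkeeping, the $G_\delta$ nature of $\cR_1$, the easy implications) that the paper leaves implicit.
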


\begin {proof} 
Given $\varphi \in C_c(\Omega)$, take a relatively compact open set $B$ in $\Omega$ with $\supp \varphi \subset B$. 
By the hypothesis,
\[
\liminf_{n \to \infty} \|(C_{w,f})^n(\varphi)\| \leq \|\varphi\| \liminf_{n \to \infty} \|w^{(n)}\|_{f^{-n}(B)} = 0.
\]
Since $C_c(\Omega)$ is dense in $C_0(\Omega)$, it follows that the set $\cR_1$ of all $\varphi \in C_0(\Omega)$ 
whose orbit under $C_{w,f}$ has a subsequence converging to zero is residual in $C_0(\Omega)$.
If (iii) holds, then the set $\cR_2$ of all $\varphi \in C_0(\Omega)$ 
whose orbit under $C_{w,f}$ is unbounded is also residual in $C_0(\Omega)$, which implies~(ii).
The other implications are clear.
\end{proof}

\begin{remark}\label{R-DLY-PB-C0X}
{\bf (a)} In the case $C_0(\Omega)$ is separable, \cite[Theorem~10]{BBMP2} says that (ii) is equivalent~to
\begin{itemize}
\item [(ii')] $C_{w,f}$ is densely Li-Yorke chaotic.
\end{itemize}

\noindent
{\bf (b)} If the space $C_0(\Omega)$ is separable and there is an increasing sequence $(n_j)_{j \in \N}$ of positive integers such that
\[
\lim_{j \to \infty} \|w^{(n_j)}\|_{f^{-n_j}(B)} = 0
\]
for every relatively compact open set $B$ in $\Omega$, then \cite[Theorem~31]{BBMP2} implies that (i)--(iii) are also equivalent to
\begin{itemize}
\item [(iv)] $C_{w,f}$ admits a dense irregular manifold.
\end{itemize}
\end{remark}

Let us now present some applications to weighted translation operators.

\begin{corollary}
Let $\Omega = [1,\infty)$ or $\R$. For any weighted translation operator $T_w$ on $C_0(\Omega)$,
\[
\|(T_w)^n\| = \sup_{x \in \Omega} |w(x) \cdots w(x+n-1)| \ \text{ for all } n \in \N.
\]
In particular, $T_w$ is power-bounded if and only if 
\[
\sup\{|w(x) \cdots w(x+n-1)| : x \in \Omega, n \in \N\} < \infty.
\]
\end{corollary}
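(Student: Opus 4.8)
The plan is to obtain both assertions as immediate specializations of Theorem~\ref{PB-C0X}. Recall that in the present setting $T_w$ is precisely the weighted composition operator $C_{w,f}$ associated with the translation map $f : x \mapsto x+1$ on $\Omega = [1,\infty)$ or $\R$. Hence the whole statement reduces to computing the composite weights $w^{(n)}$ explicitly for this particular $f$ and then quoting formula (\ref{PB-C0X-1}) together with the power-boundedness characterization (\ref{PB-C0X-2}).

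First I would identify the iterates of $f$. Since $f(x) = x+1$, a trivial induction gives $f^k(x) = x+k$ for every $k \in \N_0$. Plugging this into the defining product
\[
w^{(n)} = (w \circ f^{n-1}) \cdots (w \circ f) \cdot w
\]
yields, for each $x \in \Omega$,
\[
w^{(n)}(x) = w(x+n-1) \cdots w(x+1)\, w(x),
\]
so that $|w^{(n)}(x)| = |w(x) \cdots w(x+n-1)|$. Taking the supremum over $x \in \Omega$ therefore gives
\[
\|w^{(n)}\| = \sup_{x \in \Omega} |w(x) \cdots w(x+n-1)|.
\]

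Next I would simply invoke Theorem~\ref{PB-C0X}: its formula (\ref{PB-C0X-1}) states $\|(C_{w,f})^n\| = \|w^{(n)}\|$, which combined with the computation above is exactly the claimed identity for $\|(T_w)^n\|$. For the ``in particular'' part, the last assertion of Theorem~\ref{PB-C0X} says that $T_w$ is power-bounded if and only if $\sup_{n \in \N} \|w^{(n)}\| < \infty$; substituting the explicit expression for $\|w^{(n)}\|$ turns this condition into
\[
\sup\{|w(x) \cdots w(x+n-1)| : x \in \Omega,\ n \in \N\} < \infty,
\]
as desired. There is no genuine obstacle here: the only point requiring care is the verification that $w^{(n)}$ factors as the stated finite product, and even that is routine once the formula $f^k(x) = x+k$ is in hand. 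The result is thus a direct corollary, with the entire content already supplied by Theorem~\ref{PB-C0X}.
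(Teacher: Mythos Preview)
Your proposal is correct and takes essentially the same approach as the paper: the paper's proof consists of the single sentence ``This is a particular case of Theorem~\ref{PB-C0X},'' and you have simply spelled out the routine identification $w^{(n)}(x) = w(x)\cdots w(x+n-1)$ that makes this specialization explicit.
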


\begin{proof}
This is a particular case of Theorem~\ref{PB-C0X}.
\end{proof}

\begin{corollary}
For weighted shifts $B_w$ on $c_0(\N)$ and weighted translation operators $T_w$ on $C_0[1,\infty)$,
properties (i)--(iv) above are always equivalent to each other.
\end{corollary}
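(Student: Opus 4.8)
The plan is to reduce everything to a direct verification of the hypotheses of Theorem~\ref{DLY-PB-C0X} together with those of Remark~\ref{R-DLY-PB-C0X}(b), exactly as was done for the $L^p$ counterpart above. Both $c_0(\N)$ and $C_0[1,\infty)$ are separable, so it will suffice to establish that in each of these two settings
\[
\lim_{n \to \infty} \|w^{(n)}\|_{f^{-n}(B)} = 0
\]
for every relatively compact open set $B$. Once this is shown, the hypothesis of Theorem~\ref{DLY-PB-C0X} holds (in fact with the $\liminf$ replaced by a genuine limit), yielding the equivalence of (i), (ii) and (iii); and the hypothesis of Remark~\ref{R-DLY-PB-C0X}(b) holds with $(n_j)_{j \in \N}$ taken to be the full sequence $(n)_{n \in \N}$, yielding the equivalence with (iv) as well.

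The heart of the matter is a single geometric observation, independent of the weight $w$. The maps $f$ here are the right translations $n \mapsto n+1$ on $\N$ and $x \mapsto x+1$ on $[1,\infty)$, and the relatively compact open sets in these two spaces are precisely the bounded ones. Hence, if $B$ is relatively compact, pick $M$ with $B \subset \{x : x \leq M\}$; since every point of $\Omega$ satisfies $x \geq 1$, we have $f^n(x) = x + n \geq 1 + n > M$ whenever $n \geq M$, so $f^{-n}(B) = \varnothing$ for all such $n$. By the convention $\|\cdot\|_{\varnothing} = 0$ adopted in the preliminaries, this forces $\|w^{(n)}\|_{f^{-n}(B)} = 0$ for every large $n$, and the displayed limit follows at once.

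With this observation the two required hypotheses hold simultaneously, and the stated equivalences follow immediately from Theorem~\ref{DLY-PB-C0X} and Remark~\ref{R-DLY-PB-C0X}(b). I do not anticipate any real obstacle: the entire content reduces to the elementary fact that a rightward shift on a space possessing a left endpoint eventually moves any bounded window past any fixed bound, thereby emptying its iterated preimages. The only mild point of care is to phrase the bounding step uniformly across the discrete case ($\Omega = \N$, where relatively compact open sets are exactly the finite sets) and the continuous case ($\Omega = [1,\infty)$, where they are the bounded open sets), but in both the same estimate $x + n > M$ for $n \geq M$ settles the matter.
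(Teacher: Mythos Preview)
Your proposal is correct and takes essentially the same approach as the paper: the paper's proof is the single sentence ``The conditions in Remark~\ref{R-DLY-PB-C0X}(b) are satisfied with $(n_j)_{j \in \N}$ being the full sequence $(n)_{n \in \N}$,'' and your argument simply supplies the (elementary) verification of this claim, namely that $f^{-n}(B)=\varnothing$ for all large $n$ because $[1,\infty)$ and $\N$ have a left endpoint.
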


\begin{proof}
The conditions in Remark~\ref{R-DLY-PB-C0X}(b) are satisfied with $(n_j)_{j \in \N}$ being the full sequence $(n)_{n \in \N}$.
\end{proof}

\begin{corollary}
Given a weighted translation operator $T_w$ on $C_0(\R)$, let $\fX$ be the closed subspace of $C_0(\R)$ generated by the 
functions $\varphi \in C_0(\R)$ whose support is contained in a bounded open set $B$ in $\R$ satisfying
\[
\liminf_{n \to \infty} \Big(\sup_{x \in B} |w(x-n) \cdots w(x-1)|\Big) = 0.
\]
Then, the following assertions are equivalent:
\begin{itemize}
\item [\rm (i)] $T_w$ is Li-Yorke chaotic;
\item [\rm (ii)] There exists a sequence $(B_i)_{i \in \N}$ of bounded open sets in $\R$ such that
  \[
  \ov{B_i} \subset B_{i+1} \ \text{ for all } i \in \N,
  \]
  \[
  \liminf_{n \to \infty} \Big(\sup_{x \in B_i} |w(x-n) \cdots w(x-1)|\Big) = 0 \ \text{ for all } i \in \N
  \]
  and
  \[
  \sup\Big\{\sup_{x \in B_i} |w(x-n) \cdots w(x-1)| : i, n \in \N\Big\} = \infty.
  \]
\item [\rm (iii)] $T_w$ is not power-bounded in $\fX$.
\end{itemize}
\end{corollary}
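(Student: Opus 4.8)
The plan is to recognize this final Corollary as the $C_0(\R)$-specialization of Theorem~\ref{LY-PB-C0X}, combined with the structural characterization of Li-Yorke chaos from \cite{BV}. First I would set up the translation between the abstract data of Theorem~\ref{LY-PB-C0X} and the concrete situation here: with $\Omega = \R$, $f(x) = x+1$, and hence $f^n(x) = x+n$, a relatively compact open set is exactly a bounded open set, and for such a $B$ the restriction $\|w^{(n)}\|_{f^{-n}(B)}$ becomes $\sup_{y \in f^{-n}(B)} |w^{(n)}(y)|$. Substituting $y = x-n$ with $x \in B$ and unwinding the definition $w^{(n)} = (w\circ f^{n-1})\cdots(w\circ f)\cdot w$ gives $w^{(n)}(x-n) = w(x-1)\cdots w(x-n)$, so condition \eqref{LY-PB-C0X-1} turns into precisely the liminf condition appearing in the definition of $\fX$ in the Corollary. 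With this dictionary in hand, the equivalence (i)~$\Leftrightarrow$~(iii) is nothing more than a verbatim restatement of Theorem~\ref{LY-PB-C0X}, so no new argument is needed there.

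Next I would establish (i)~$\Leftrightarrow$~(ii). The natural route is to invoke the characterization of Li-Yorke chaos for weighted composition operators on $C_0(\Omega)$ given in \cite[Theorem~1]{BV} together with \cite[Remark~3]{BV}, exactly as was done inside the proof of Theorem~\ref{LY-PB-C0X}. That result produces, when $C_{w,f}$ is Li-Yorke chaotic, a nested sequence $(B_i)_{i\in\N}$ of nonempty relatively compact open sets with $\ov{B_i}\subset B_{i+1}$, an unboundedness condition of type \eqref{LY-PB-C0X-2}, and a liminf/subsequence condition of type \eqref{LY-PB-C0X-3}; conversely, the existence of such a family yields Li-Yorke chaos. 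Translating these three conditions through the same dictionary as above converts \eqref{LY-PB-C0X-2} into the displayed supremum equal to $\infty$ and \eqref{LY-PB-C0X-3} (which in the original is a subsequence statement) into the liminf condition stated in (ii). So the proof reduces to citing \cite[Theorem~1]{BV} and \cite[Remark~3]{BV} for (i)~$\Leftrightarrow$~(ii), and citing Theorem~\ref{LY-PB-C0X} for (i)~$\Leftrightarrow$~(iii).

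The one point requiring genuine care, and the step I expect to be the main obstacle, is the passage between a subsequence-limit condition and a liminf condition. In \eqref{LY-PB-C0X-3} one is handed a single increasing sequence $(n_j)$ along which $\|w^{(n_j)}\|_{f^{-n_j}(B_i)}\to 0$ simultaneously for all $i$, whereas statement (ii) of the Corollary only asks, for each $i$ separately, that $\liminf_{n\to\infty}\int$-type quantity be zero. The forward direction is therefore immediate (a common subsequence certainly forces each individual liminf to vanish), but the converse — producing the nested family with a single common subsequence from the per-index liminf conditions plus the unboundedness — is where \cite[Remark~3]{BV} must be doing the real work, and I would want to confirm that the remark indeed supplies the diagonalization/nesting that lets individual liminf conditions be upgraded to the joint subsequence form. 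Modulo that citation, everything else is bookkeeping: verifying the index shift $w^{(n)}(x-n) = w(x-1)\cdots w(x-n)$ and matching each quantified condition term by term.
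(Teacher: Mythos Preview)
Your proposal is correct and matches the paper's own proof essentially verbatim: the paper simply states that (i)~$\Leftrightarrow$~(ii) follows from \cite[Theorem~1]{BV} and \cite[Remark~3]{BV}, while (i)~$\Leftrightarrow$~(iii) is a restatement of Theorem~\ref{LY-PB-C0X}. Your identification of the subsequence-versus-liminf passage as the only nontrivial step is accurate, and you are right that \cite[Remark~3]{BV} is precisely the citation absorbing that work.
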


\begin{proof}
The equivalence (i) $\Leftrightarrow$ (ii) follows from \cite[Theorem~1]{BV} and \cite[Remark~3]{BV}.
The equivalence (i) $\Leftrightarrow$ (iii) is a restatement of Theorem~\ref{LY-PB-C0X} in the present particular situation.
\end{proof}

%%%%%%%%%%%%%%%%%%%%%%%%%%%%%%%%%%%%%%%%%%%%%%%%%%%%%%%%%%%%%%%%%%%%%%
%%%%%%%%%%%%%%%%%%%%%%%%%%%%%%%%%%%%%%%%%%%%%%%%%%%%%%%%%%%%%%%%%%%%%%

\section{Distributionally chaotic weighted composition \\ operators}
 
Recall that the {\em lower density} and the {\em upper density} of a set $D \subset \N$ are defined by
\[
\ldens(D) = \liminf_{n \to \infty} \frac{\card(D \cap [1,n])}{n} \ \ \text{ and } \ \
\udens(D) = \limsup_{n \to \infty} \frac{\card(D \cap [1,n])}{n},
\]
respectively. 

Given a metric space $M$, recall that a map $f : M \to M$ is said to be {\em distributionally chaotic} if there exist
an uncountable set $\Gamma \subset M$ and $\eps > 0$ such that each pair $(x,y)$ of distinct points in $\Gamma$
is a {\em distributionally chaotic pair for $f$}, in the sense that
\[
\ldens\{n \in \N : d(f^n(x),f^n(y)) < \eps\} = 0
\]
and
\[
\udens\{n \in \N : d(f^n(x),f^n(y)) < \tau\} = 1 \ \text{ for all } \tau > 0.
\]
If the set $\Gamma$ can be chosen to be dense in $M$, then $f$ is {\em densely distributionally chaotic}.

An extensive study of the concept of distributional chaos in the setting of linear dynamics was developed in \cite{BBMP11,BBMP,BBPW}.
In particular, the following useful characterizations were obtained: 
For any operator $T$ on any Banach space $Y$, the following assertions are equivalent:
\begin{itemize}
\item [(i)] $T$ is distributionally chaotic;
\item [(ii)] $T$ admits a {\em distributionally irregular vector}, that is, 
  a vector $y \in Y$ for which there exist $D,E \subset \N$ with $\udens(D) = \udens(E) = 1$ such that
  \[
   \lim_{n \in D} T^{n}y = 0 \ \ \text{ and } \ \ \lim_{n \in E} \|T^{n}y\| = \infty;
  \]
\item [(iii)] $T$ satisfies the {\em Distributional Chaos Criterion}, that is, 
  there exist sequences $(x_k)_{k \in \N}$ and $(y_k)_{k \in \N}$ in $Y$ such that:
  \begin{itemize}
  \item [(a)] There exists $D \subset \N$ with $\udens(D) = 1$ such that $\lim_{n \in D} T^n x_k = 0$ for all $k$.
  \item [(b)] $y_k \in \ov{\spa\{x_n : n \in \N\}}$ for all $k \in \N$, $\|y_k\| \to 0$ as $k \to \infty$, 
    and there exist $\eps > 0$ and an increasing sequence $(N_k)_{k \in \N}$ of positive integers such that
    \[
    \card\{1 \leq n \leq N_k : \|T^n y_k\| > \eps\} \geq \eps N_k \ \text{ for all } k \in \N.
    \]
    \end{itemize}
  \end{itemize}

Let us also recall that by a {\em distributionally irregular manifold for $T$} we mean a vector subspace of $Y$
consisting, except for the zero vector, of distributionally irregular vectors for $T$.
  
%%%%%%%%%%%%%%%%%%%%%%%%%%%%%%
%%%%%%%%%%%%%%%%%%%%%%%%%%%%%%

\subsection{The case of the space $L^p(\mu)$}
 
\begin{theorem}\label{Dist-1}
A weighted composition operator $C_{w,f}$ on $L^p(\mu)$ is distributionally chaotic if and only if there exists a nonempty countable
family $(B_i)_{i \in I}$ of measurable sets of finite positive $\mu$-measure such that the following properties hold:
\begin{itemize}
\item [\rm (a)] There exists a set $D \subset \N$ with $\udens(D) = 1$ such that
  \[
  \lim_{n \in D} \mu_n(f^{-n}(B_i)) = 0 \ \text{ for all } i \in I.
  \]
  \item [\rm (b)] There exist $\eps > 0$ and an increasing sequence $(N_k)_{k \in \N}$ of positive integers such that,
  for each $k \in \N$, there are $r \in \N$, $i_1,\ldots,i_r \in I$ and $b_1,\ldots,b_r \in (0,\infty)$ with
  \[
  \card\Big\{1 \leq n \leq N_k : \frac{b_1 \mu_n(f^{-n}(B_{i_1})) + \cdots + b_r \mu_n(f^{-n}(B_{i_r}))}
  {b_1 \mu(B_{i_1}) + \cdots + b_r \mu(B_{i_r})} > k \Big\} \geq \eps N_k.
  \]
\end{itemize}
\end{theorem}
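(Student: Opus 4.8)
My plan is to prove the equivalence by matching the two measure-theoretic conditions against the Distributional Chaos Criterion (iii) recalled in the preliminaries, using throughout the elementary but decisive identity that for pairwise \emph{disjoint} measurable sets $A_1,\dots,A_s$ of finite measure and scalars $\beta_1,\dots,\beta_s$,
\[
\|(C_{w,f})^n(\beta_1\rchi_{A_1}+\cdots+\beta_s\rchi_{A_s})\|_p^p = \sum_{l=1}^{s} |\beta_l|^p\,\mu_n(f^{-n}(A_l)),
\]
which holds because $(C_{w,f})^n\rchi_{A_l}=\rchi_{f^{-n}(A_l)}\cdot w^{(n)}$ and the preimages $f^{-n}(A_l)$ stay pairwise disjoint. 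Since the family in (b) need not be disjoint, a preparatory \emph{disjointification} is required: replacing a finite subfamily $B_{i_1},\dots,B_{i_r}$ by the finite atoms $A_l$ of the algebra it generates, and the weights $b_j$ by $c_l=\sum_{j:\,A_l\subset B_{i_j}}b_j$, turns the ratio in (b) into $\sum_l c_l\mu_n(f^{-n}(A_l))/\sum_l c_l\mu(A_l)$, i.e.\ exactly $\|(C_{w,f})^n y\|_p^p/\|y\|_p^p$ for the disjointly supported $y=\sum_l c_l^{1/p}\rchi_{A_l}$. Because each such atom lies inside some $B_{i_j}$, monotonicity of $\mu_n$ gives $\mu_n(f^{-n}(A_l))\le\mu_n(f^{-n}(B_{i_j}))$, so the atoms inherit property (a).

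For the implication ``family $\Rightarrow$ distributional chaos'', I would enumerate the finite atoms of all finite subfamilies of $(B_i)$, take $(x_k)$ to be their normalized characteristic functions $\mu(A)^{-1/p}\rchi_A$, and verify (iii). Clause (a) of the criterion holds with the same $D$ as in the theorem, since every atom sits inside some $B_i$. For clause (b), using the disjointification above I would set $y_k=c_k\sum_l c_l^{1/p}\rchi_{A_l}$ with $c_k^p=1/(kS_k)$ and $S_k=\sum_l c_l\mu(A_l)$; then $\|y_k\|_p^p=1/k\to0$, while on the density-$\ge\eps$ set of indices $n\le N_k$ on which the ratio in (b) exceeds $k$ one gets $\|(C_{w,f})^n y_k\|_p^p>1$, which is the required frequent largeness above a fixed threshold.

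For the converse I would start from (iii) with data $(x_k),(y_k),D,\eps,(N_k)$, and build the family from the level sets $\{t^{i-1}\le|x_m|<t^i\}$ of the $x_m$ (for a fixed $t>1$), subdivided into finitely many phase sectors so that each $x_m$ genuinely lies in $\ov{\spa\{\rchi_{B_i}\}}$, together with the finite atoms of their finite subfamilies. The estimate $\mu_n(f^{-n}(B))\le t^{-(i-1)p}\|(C_{w,f})^n x_m\|_p^p$, borrowed verbatim from the proof of Theorem~\ref{powerbounded}, turns clause (a) of the criterion into property (a) of the theorem, again with the same $D$. To produce property (b) I would approximate each $y_k\in\ov{\spa\{\rchi_{B_i}\}}$ by a finite sum $s_k=\sum_i a_{k,i}\rchi_{B_i}$ so accurate that $\|(C_{w,f})^n(y_k-s_k)\|<\eps/2$ for all $n\le N_k$; this is possible despite the absence of power-boundedness, since only the finitely many operators $(C_{w,f})^n$ with $n\le N_k$ intervene and $\max_{n\le N_k}\|(C_{w,f})^n\|<\infty$. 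Then $\|(C_{w,f})^n s_k\|>\eps/2$ wherever $\|(C_{w,f})^n y_k\|>\eps$, and after disjointifying the finitely many $B_i$ in $s_k$ into atoms, the ratio of (b) equals $\|(C_{w,f})^n s_k\|_p^p/\|s_k\|_p^p>(\eps/2)^p/\|s_k\|_p^p$, a quantity that tends to $\infty$ because $\|s_k\|\to0$.

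The main obstacle I anticipate is reconciling the two structural features concealed in the statement: the possible overlapping of the sets used in (b), and the gap between the merely vanishing norms $\|y_k\|$ and the explicit threshold ``$>k$''. The former is dispatched once and for all by the atomic disjointification, whose only delicate point is checking that it preserves property (a); the latter is handled by a reindexing of the pairs $(N_k,\text{threshold})$, exploiting that the realized ratios blow up at the controlled rate $(\eps/2)^p/\|s_k\|_p^p$, so that for each prescribed $k$ one selects an index with ratio exceeding $k$ on a set of density $\ge\eps$. A secondary point meriting care is the phase decomposition of complex-valued $x_m$ into finitely many sectors, needed so that each $x_m$ indeed belongs to the closed linear span of the chosen characteristic functions.
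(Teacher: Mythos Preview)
Your plan for the sufficiency direction ($\Leftarrow$) matches the paper's proof almost verbatim: both perform the atomic disjointification of the finite family $B_{i_1},\dots,B_{i_r}$ attached to each $k$, observe that the ratio in (b) coincides with $\|(C_{w,f})^n s_k\|_p^p/\|s_k\|_p^p$ for the rescaled disjointly supported simple function $s_k$, and feed this into the Distributional Chaos Criterion.

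For the necessity direction ($\Rightarrow$), however, you and the paper diverge. You start from characterization~(iii) (the criterion itself) and must therefore handle the whole sequence $(x_k)$, the approximation of each $y_k$ by a simple function accurate for every $n\le N_k$, and---in the complex case---the phase decomposition of the $x_k$ so that they actually lie in $\ov{\spa\{\rchi_{B_i}\}}$. The paper instead invokes characterization~(ii): it takes a single distributionally irregular vector $\varphi$ and defines the entire family at once as the dyadic level sets $B_i=\{2^{i-1}\le|\varphi|<2^i\}$ with $I=\{i\in\Z:\mu(B_i)>0\}$. Property~(a) drops out of the pointwise lower bound $2^{(i-1)p}\mu_n(f^{-n}(B_i))\le\|(C_{w,f})^n\varphi\|_p^p$, and property~(b) from the reverse bound $\|(C_{w,f})^n\varphi\|_p^p\le\sum_i 2^{ip}\mu_n(f^{-n}(B_i))$, which after truncation to $|i|\le j$ yields the required finite weighted average with weights $b_j=2^{ip}$ and $\eps=\tfrac12$. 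The key economy is that this argument never needs $\varphi$ to lie in the closed span of the $\rchi_{B_i}$---the two inequalities above are insensitive to the phase of $\varphi$---so no phase sectors, no approximation step, and no reindexing are required; the threshold ``$>k$'' arises directly. Your route is correct but noticeably longer; the paper's buys simplicity by exploiting that a single irregular vector already encodes both clauses simultaneously.
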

 
\begin{proof}
($\Rightarrow$): Let $\varphi \in L^p(\mu)$ be a distributionally irregular vector for $C_{w,f}$.
There exist $D,E \subset \N$ with $\udens(D) = \udens(E) = 1$ such that
\[
\lim_{n \in D} \|(C_{w,f})^n(\varphi)\|_p = 0 \ \ \text{ and } \ \ \lim_{n \in E} \|(C_{w,f})^n(\varphi)\|_p = \infty.
\]
Consider the measurable sets
\[
B_i = \{x \in X : 2^{i-1} \leq |\varphi(x)| < 2^i\} \ \ \ \ \ (i \in \Z)
\]
and let $I = \{i \in \Z : \mu(B_i) > 0\}$. Since
\[
2^{(i-1)p} \mu_n(f^{-n}(B_i)) \leq \|(C_{w,f})^n(\varphi)\|_p^p\,,
\]
we have that property (a) holds. On the other hand, for each $k \in \N$, since
\[
\udens\{n \in \N : \|(C_{w,f})^n(\varphi)\|_p^p > 2^p k \|\varphi\|_p^p\} \geq \udens(E) = 1,
\]
there exists $N_k \in \N$ such that
\[
\card\{1 \leq n \leq N_k : \|(C_{w,f})^n(\varphi)\|_p^p > 2^p k \|\varphi\|_p^p\} \geq N_k \Big(1 - \frac{1}{2k}\Big).
\]
Moreover, it is clear that the $N_k$'s can be chosen so that the sequence $(N_k)_{k \in \N}$ is increasing.
Fix $k \in \N$ and let
\[
J = \{1 \leq n \leq N_k : \|(C_{w,f})^n(\varphi)\|_p^p > 2^p k \|\varphi\|_p^p\}.
\]
Since
\[
2^p k \|\varphi\|_p^p < \|(C_{w,f})^n(\varphi)\|_p^p \leq \sum_{i \in \Z} 2^{ip} \mu_n(f^{-n}(B_i)) \ \text{ for all } n \in J,
\]
we can take $j \in \N$ large enough so that
\[
\sum_{i=-j}^j 2^{ip} \mu_n(f^{-n}(B_i)) > 2^p k \|\varphi\|_p^p \geq k \sum_{i=-j}^j 2^{ip} \mu(B_i) \ \text{ for all } n \in J.
\]
This shows that property (b) holds with $\eps = 1/2$.

\smallskip\noindent
($\Leftarrow$): By property (b), for each $k \in \N$, there are $r_k \in \N$, $i_{k,1},\ldots,i_{k,r_k} \in I$ and 
$b_{k,1},\ldots,b_{k,r_k} \in (0,\infty)$ such that
\begin{equation}\label{EqDist-1}
\card\Big\{1 \leq n \leq N_k : \frac{b_{k,1} \mu_n(f^{-n}(B_{i_{k,1}})) + \cdots + b_{k,r_k} \mu_n(f^{-n}(B_{i_{k,r_k}}))}
{b_{k,1} \mu(B_{i_{k,1}}) + \cdots + b_{k,r_k} \mu(B_{i_{k,r_k}})} > k \Big\} \geq \eps N_k.
\end{equation}
For each $k \in \N$, take pairwise disjoint measurable sets $A_{k,1},\ldots,A_{k,s_k}$ such that
\[
A_{k,1} \cup \ldots \cup A_{k,s_k} = B_{i_{k,1}} \cup \ldots \cup B_{i_{k,r_k}}
\]
and each $B_{i_{k,j}}$ is a union of some of the $A_{k,\ell}$'s.
It follows from property (a) that
\[
\lim_{n \in D}\, (C_{w,f})^n(\rchi_{A_{k,\ell}}) = 0 \ \text{ for all } k \in \N \text{ and } \ell \in \{1,\ldots,s_k\}.
\]
For each $k \in \N$ and each $\ell \in \{1,\ldots,s_k\}$, let $a_{k,\ell} = \sum_j b_{k,j}$,
where the sum is taken over all $j \in \{1,\ldots,r_k\}$ satisfying $A_{k,\ell} \subset B_{i_{k,j}}$.
Consider the measurable simple functions
\[
s_k = \frac{(a_{k,1})^\frac{1}{p} \rchi_{A_{k,1}} + \cdots + (a_{k,s_k})^\frac{1}{p} \rchi_{A_{k,s_k}}}
  {k^\frac{1}{p} \big(a_{k,1} \mu(A_{k,1}) + \cdots + a_{k,s_k} \mu(A_{k,s_k})\big)^\frac{1}{p}} \ \ \ \ \ (k \in \N).
\]
Clearly, $\|s_k\|_p \to 0$ as $k \to \infty$. Moreover, since
\begin{align*}
\|(C_{w,f})^n(s_k)\|_p^p 
  &= \frac{a_{k,1} \mu_n(f^{-n}(A_{k,1})) + \cdots + a_{k,s_k} \mu_n(f^{-n}(A_{k,s_k}))}
     {k\, \big(a_{k,1} \mu(A_{k,1}) + \cdots + a_{k,s_k} \mu(A_{k,s_k})\big)}\\
  &= \frac{b_{k,1} \mu_n(f^{-n}(B_{i_{k,1}})) + \cdots + b_{k,r_k} \mu_n(f^{-n}(B_{i_{k,r_k}}))}
     {k\, \big(b_{k,1} \mu(B_{i_{k,1}}) + \cdots + b_{k,r_k} \mu(B_{i_{k,r_k}})\big)}\,,
\end{align*}
it follows from (\ref{EqDist-1}) that
\[
\card\{1 \leq n \leq N_k : \|(C_{w,f})^n(s_k)\|_p > 1\} \geq \eps N_k.
\]
This shows that the operator $C_{w,f}$ satisfies the Distributional Chaos Criterion, and so it is distributionally chaotic.
\end{proof}

\begin{remark}\label{Disjoint}
Note that the countable family $(B_i)_{i \in I}$ constructed in the proof of Theorem~\ref{Dist-1} 
has the additional property that its terms are pairwise disjoint.
\end{remark}

The next corollaries will give a simple necessary condition and a simple sufficient condition for $C_{w,f}$ to be distributionally chaotic. 
 
\begin{corollary}\label{distr+}
If a weighted composition operator $C_{w,f}$ on $L^p(\mu)$ is distributionally chaotic, then there exists a nonempty countable family 
$(B_i)_{i \in I}$ of pairwise disjoint measurables sets of finite positive $\mu$-measure such that
\begin{itemize}
\item there exists $D \subset \N$ with $\udens(D) = 1$ and $\displaystyle \lim_{n \in D} \mu_n(f^{-n}(B_i)) = 0$ for all $i \in I$,
\item for every constant $C > 0$, $\displaystyle \udens\Big\{n \in \N  : \sup_{i \in I} \frac{\mu_n(f^{-n}(B_i))}{\mu(B_i)} \geq C\Big\} > 0$.
\end{itemize}
\end{corollary}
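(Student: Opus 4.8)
The plan is to derive both conclusions directly from the characterization in Theorem~\ref{Dist-1} together with Remark~\ref{Disjoint}. Assuming that $C_{w,f}$ is distributionally chaotic, Theorem~\ref{Dist-1} yields a nonempty countable family $(B_i)_{i \in I}$ of measurable sets of finite positive $\mu$-measure satisfying properties (a) and (b), and Remark~\ref{Disjoint} guarantees that this family may be taken to be pairwise disjoint. The first bullet of the corollary is then exactly property (a), so nothing further is required there; the entire content of the argument lies in converting property (b) into the positive-upper-density statement of the second bullet.

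For that conversion I would exploit the elementary fact that a weighted average of ratios never exceeds the largest of those ratios. Writing $a_j = \mu_n(f^{-n}(B_{i_j}))$ and $c_j = \mu(B_{i_j})$, the inequalities $a_j \le (\max_\ell a_\ell/c_\ell)\,c_j$ give
\[
\frac{b_1 a_1 + \cdots + b_r a_r}{b_1 c_1 + \cdots + b_r c_r} \le \max_{1 \le j \le r} \frac{a_j}{c_j} \le \sup_{i \in I} \frac{\mu_n(f^{-n}(B_i))}{\mu(B_i)}.
\]
Consequently, for each $k \in \N$, every index $n$ counted in property (b) also satisfies $\sup_{i \in I} \mu_n(f^{-n}(B_i))/\mu(B_i) > k$, so that
\[
\card\Big\{1 \le n \le N_k : \sup_{i \in I} \frac{\mu_n(f^{-n}(B_i))}{\mu(B_i)} > k\Big\} \ge \eps N_k.
\]

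To finish, fix $C > 0$ and set $S = \big\{n \in \N : \sup_{i \in I} \mu_n(f^{-n}(B_i))/\mu(B_i) \ge C\big\}$. For every $k \ge C$ the set counted above is contained in $S$, hence $\card(S \cap [1,N_k]) \ge \eps N_k$, that is $\card(S \cap [1,N_k])/N_k \ge \eps$. Since $(N_k)_{k \in \N}$ is increasing and therefore tends to infinity, taking the limit superior along the subsequence $(N_k)$ gives $\udens(S) \ge \eps > 0$, which is the required second bullet. The proof is essentially routine once Theorem~\ref{Dist-1} is available; the one step demanding a moment's thought is the mediant inequality, which is precisely what lets the convex combinations appearing in property (b) be dominated by the supremum of the individual ratios $\mu_n(f^{-n}(B_i))/\mu(B_i)$.
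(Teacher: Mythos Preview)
Your proof is correct and follows essentially the same approach as the paper's: both start from Theorem~\ref{Dist-1} and Remark~\ref{Disjoint}, use the mediant-type inequality to bound the weighted average in (b) by $\sup_{i \in I} \mu_n(f^{-n}(B_i))/\mu(B_i)$, and then extract positive upper density along the sequence $(N_k)$. The only cosmetic difference is that the paper phrases the last step by forming $J = \bigcup_k J_k$ and showing $J \setminus H$ is finite, whereas you argue directly that the relevant set for each $k \ge C$ is contained in $S$; the content is identical.
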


\begin{proof}
Let $(B_i)_{i \in I}$ be the family given by Theorem~\ref{Dist-1}. 
We may assume that the $B_i$'s are pairwise disjoint (Remark~\ref{Disjoint}).
For each $k \in \N$, let $J_k \subset \{1,\ldots,N_k\}$ be the set that appears in (b). Let $J = \bigcup_{k=1}^\infty J_k$.
Since $\card J_k \geq \eps N_k$ for all $k \in \N$, we have that
\[
\udens(J) \geq \eps.
\]
Fix $C > 0$ and define
\[
H = \Big\{n \in \N  : \sup_{i \in I} \frac{\mu_n(f^{-n}(B_i))}{\mu(B_i)} \geq C\Big\}.
\]
If $n \in J_k \backslash H$ and we use the notation in (b), then
\[
k < \frac{b_1 \mu_n(f^{-n}(B_{i_1})) + \cdots + b_r \mu_n(f^{-n}(B_{i_r}))}{b_1 \mu(B_{i_1}) + \cdots + b_r \mu(B_{i_r})} < C.
\]
Thus, $J \backslash H$ is finite, and so $\udens(H) \geq \udens(J) \geq \eps > 0$.
\end{proof}

\begin{corollary}
Consider a weighted composition operator $C_{w,f}$ on $L^p(\mu)$. If there is a nonempty countable
family $(B_i)_{i \in I}$ of measurable sets of finite positive $\mu$-measure such that
\begin{itemize}
\item there exists $D \subset \N$ with $\udens(D) = 1$ and $\displaystyle \lim_{n \in D} \mu_n(f^{-n}(B_i)) = 0$ for all $i \in I$,
\item there exist $\eps > 0$ and an increasing sequence $(N_k)_{k \in \N}$ of positive integers such that,
  for each $k \in \N$, there exists $i \in I$ with
  \[
  \card\Big\{1 \leq n \leq N_k : \frac{\mu_n(f^{-n}(B_i))}{\mu(B_i)} > k \Big\} \geq \eps N_k,
  \]
\end{itemize}
then $C_{w,f}$ is distributionally chaotic.
\end{corollary}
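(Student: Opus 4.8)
The plan is to obtain this corollary as an immediate consequence of Theorem~\ref{Dist-1}, by recognizing that the two hypotheses listed here are exactly the specializations of conditions (a) and (b) of that theorem to the degenerate case in which the linear combination appearing in (b) reduces to a single term. So I would apply Theorem~\ref{Dist-1} with the very same family $(B_i)_{i \in I}$ that is given to us, and simply check that properties (a) and (b) hold.

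First I would observe that the first bullet of the hypothesis \emph{is} condition (a) of Theorem~\ref{Dist-1}: we are handed a set $D \subset \N$ with $\udens(D) = 1$ such that $\lim_{n \in D} \mu_n(f^{-n}(B_i)) = 0$ for every $i \in I$, which is precisely what (a) requires. Thus (a) holds with the given family and the given set $D$, and no work is needed.

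Next I would verify condition (b) using the $\eps > 0$ and the increasing sequence $(N_k)_{k \in \N}$ supplied by the second bullet. Fixing $k \in \N$, let $i = i(k) \in I$ be the index furnished by the hypothesis, and take $r = 1$, $i_1 = i(k)$ and $b_1 = 1$ in the statement of (b). With these choices the ratio occurring in (b) collapses to
\[
\frac{b_1 \mu_n(f^{-n}(B_{i_1}))}{b_1 \mu(B_{i_1})} = \frac{\mu_n(f^{-n}(B_{i(k)}))}{\mu(B_{i(k)})},
\]
so the set being counted in (b) coincides with the set counted in the second bullet, and the bound $\geq \eps N_k$ holds verbatim. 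Since this argument is uniform in $k$, condition (b) is satisfied with the same $\eps$ and the same sequence $(N_k)_{k \in \N}$.

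There is no genuine obstacle here; the only point worth recording is that the positive weight $b_1$ cancels between numerator and denominator, so the normalization $b_1 = 1$ entails no loss of generality, and the content of the corollary is merely that the single-term instance $r = 1$ of the more flexible condition (b) already suffices. Having verified (a) and (b), I would conclude by Theorem~\ref{Dist-1} that $C_{w,f}$ is distributionally chaotic.
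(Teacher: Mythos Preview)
Your proposal is correct and matches the paper's own proof, which simply says ``It follows immediately from Theorem~\ref{Dist-1}.'' You have merely spelled out the one-line observation that the second bullet is the $r=1$ instance of condition (b).
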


\begin{proof}
It follows immediately from Theorem~\ref{Dist-1}.
\end{proof}

The fact that the weighted shift on $\ell^p(\N)$ considered in the next example is distributionally chaotic 
is due to Fr\'ed\'eric Bayart and appeared in \cite[Theorem~27]{BBP}. 
In order to illustrate Theorem~\ref{Dist-1} in a concrete situation, we will give below a different proof of this fact.

\begin {example} 
The weighted shift $B_w$ on $\ell^p(\N)$ with weights $w_n = \big(\frac{n+1}{n}\big)^\frac{1}{p}$ is distributionally chaotic.
\end{example}

\begin{proof}
Regard $B_w$ as $C_{w,f}$ by considering $X =\N$, $\fM = \cP(X)$, $\mu$ the counting measure on $\fM$ 
and $f : n \in \N \to n+1 \in \N$. Let $(n_k)_{k \in \N}$ be an increasing sequence of positive integers such that
$\ln n_k > k \ln k$ for every $k \in \N$. 
We define $I$ as the set of all integers $n$ with $n_k \leq n \leq k\, n_k$ for some $k \in \N$ and put $B_i = \{i\}$ for each $i \in I$.
Since condition (a) in Theorem~\ref{Dist-1} holds trivially, let us establish condition (b).
Let $N_k = k\, n_k$ for each $k \in \N$. 
Given $k \geq 2$, let $r = (k-1) n_k$, $i_j = n_k + j$ and $b_j = \frac {1}{i_j}$ ($j \in \{1,\ldots,r\}$).
Then
\[
b_1 \mu(B_{i_1}) + \cdots + b_r \mu(B_{i_r}) = b_1 + \cdots + b_r = \sum_{j=n_k+1}^{k\,n_k} \frac{1}{j} \leq \ln(k\,n_k) - \ln n_k = \ln k.
\]
Moreover, for every $n \in \{n_k,\ldots,(k-1)n_k\}$,
\[
b_1 \mu_n(f^{-n}(B_{i_1})) + \cdots + b_r \mu_n(f^{-n}(B_{i_r})) \geq \sum_{j=1} ^{n_k} \frac{1}{j} \geq \ln n_k.
\]
Thus,
\[
\card\Big\{1 \leq n \leq N_k : 
  \frac{b_1 \mu_n(f^{-n}(B_{i_1})) + \cdots + b_r \mu_n(f^{-n}(B_{i_r}))}{b_1 \mu(B_{i_1}) + \cdots + b_r \mu(B_{i_r})} > k\Big\}
  \geq \Big(\frac{k-2}{k}\Big) N_k.
\]
Hence, by Theorem~\ref{Dist-1}, $B_w$ is distributinally chaotic.
\end{proof}

For general weighted shifts on $\ell^p(\N)$, Theorem~\ref{Dist-1} gives us the following characterization.

\begin{corollary}
A weighted shift $B_w$ on $\ell^p(\N)$ is distributionally chaotic if and only if 
there exist $\eps > 0$ and an increasing sequence $(N_k)_{k \in \N}$ of positive integers such that,
for each $k \in \N$, there are $r \in \N$, $i_1,\ldots,i_r \in \N$ and $b_1,\ldots,b_r \in (0,\infty)$ with
\[
\card\Big\{1 \leq n \leq N_k : \frac{1}{b_1 + \cdots + b_r} \sum_{1 \leq j \leq r, i_j > n} b_j |w_{i_j-n} \cdots w_{i_j-1}|^p > k \Big\} 
  \geq \eps N_k.
\]
\end{corollary}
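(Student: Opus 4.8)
The plan is to recognize this corollary as the direct specialization of Theorem~\ref{Dist-1} to the weighted-shift setting, so that the whole argument reduces to computing the measures $\mu_n(f^{-n}(\cdot))$ on singletons and then reconciling the two formulations by decomposing finite sets into singletons. No idea beyond Theorem~\ref{Dist-1} will be needed.

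First I would realize $B_w$ as $C_{w,f}$ with $X = \N$, $\fM = \cP(\N)$, $\mu$ the counting measure, and $f : n \mapsto n+1$, as in the corresponding Example. For a singleton one has $f^{-n}(\{i\}) = \{i-n\}$ when $i > n$ and $f^{-n}(\{i\}) = \varnothing$ otherwise, whence
\[
\mu_n(f^{-n}(\{i\})) =
\begin{cases} |w_{i-n} \cdots w_{i-1}|^p & \text{if } i > n,\\ 0 & \text{if } i \leq n,\end{cases}
\qquad \mu(\{i\}) = 1 .
\]
In particular, for each fixed $i$ the quantity $\mu_n(f^{-n}(\{i\}))$ vanishes for every $n \geq i$.

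For the sufficiency direction I would take $I = \N$ and $B_i = \{i\}$. Condition (a) of Theorem~\ref{Dist-1} then holds with $D = \N$ (which has $\udens(D)=1$), since $\lim_{n\to\infty}\mu_n(f^{-n}(\{i\}))=0$ trivially for every $i$. Substituting the formulas above into condition (b) turns its quotient into exactly $\frac{1}{b_1+\cdots+b_r}\sum_{1\leq j\leq r,\, i_j>n} b_j|w_{i_j-n}\cdots w_{i_j-1}|^p$, so condition (b) becomes literally the hypothesis of the corollary. Hence Theorem~\ref{Dist-1} yields that $B_w$ is distributionally chaotic.

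For the necessity direction I would apply Theorem~\ref{Dist-1} to obtain a family $(B_i)_{i\in I}$ of nonempty finite subsets of $\N$, the constant $\eps$, the sequence $(N_k)$, and for each $k$ the data $r,i_1,\ldots,i_r,b_1,\ldots,b_r$ of (b). The one genuine step is to rewrite the linear combination over singletons: using additivity $\mu_n(f^{-n}(B_{i_\ell})) = \sum_{m\in B_{i_\ell}}\mu_n(f^{-n}(\{m\}))$ and $\mu(B_{i_\ell}) = \card(B_{i_\ell})$, I would set, for each $m \in B_{i_1}\cup\cdots\cup B_{i_r}$, the coalesced weight $b'_m = \sum_{\ell:\, m\in B_{i_\ell}} b_\ell > 0$. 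Interchanging the two finite sums shows that both the numerator and the denominator of the quotient in (b) are unchanged when expressed over these singletons, so the quotient equals $\frac{1}{\sum_m b'_m}\sum_{m>n} b'_m |w_{m-n}\cdots w_{m-1}|^p$, with $m$ ranging over $B_{i_1}\cup\cdots\cup B_{i_r}$. This is precisely the quotient in the corollary, the elements $m$ playing the role of the indices $i_j$ and the $b'_m$ that of the $b_j$; thus the cardinality estimate in (b) becomes the one claimed, with the same $\eps$ and $(N_k)$. I expect the only point requiring care to be this final bookkeeping—verifying that coalescing weights over shared elements preserves both numerator and denominator—while everything else is an immediate transcription.
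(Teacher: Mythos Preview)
Your proposal is correct and follows essentially the same approach as the paper: apply Theorem~\ref{Dist-1} with $B_i = \{i\}$ for sufficiency (noting condition (a) is automatic), and for necessity decompose the sets $B_i$ into singletons. Your coalescing argument with the weights $b'_m$ is exactly the bookkeeping the paper glosses over when it says ``it is enough to decompose each set $B_i$ given by Theorem~\ref{Dist-1} as a union of singletons.''
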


\begin{proof}
This corollary is essentially a restatement of Theorem~\ref{Dist-1} in the present particular situation.
For the sufficience of the condition, it is enough to consider $B_i = \{i\}$ for each $i \in \N$. Since
\[
\frac{b_1 \mu_n(f^{-n}(B_{i_1})) + \cdots + b_r \mu_n(f^{-n}(B_{i_r}))}{b_1 \mu(B_{i_1}) + \cdots + b_r \mu(B_{i_r})}
  = \frac{1}{b_1 + \cdots + b_r} \sum_{1 \leq j \leq r, i_j > n} b_j |w_{i_j-n} \cdots w_{i_j-1}|^p,
\]
the hypothesis implies that condition (b) in Theorem~\ref{Dist-1} holds; therefore, $B_w$ is distributionally chaotic.
For the converse, it is enough to decompose each set $B_i$ given by Theorem~\ref{Dist-1} as a union of singletons.
\end{proof}

In a similar way, Theorem~\ref{Dist-1} implies the following result for weighted shifts on $\ell^p(\Z)$.

\begin{corollary}
A weighted shift $B_w$ on $\ell^p(\Z)$ is distributionally chaotic if and only if 
there exists a set $S \subset \Z$ such that the following properties hold:
\begin{itemize}
\item [\rm (a)] There exists $D \subset \N$ with $\udens(D) = 1$ and 
  $\displaystyle \lim_{n \in D} |w_{i-n} \cdots w_{i-1}| = 0$ for all $i \in S$.
\item [\rm (b)] There exist $\eps > 0$ and an increasing sequence $(N_k)_{k \in \N}$ of positive integers such that,
  for each $k \in \N$, there are $r \in \N$, $i_1,\ldots,i_r \in S$ and $b_1,\ldots,b_r \in (0,\infty)$ with
  \[
  \card\Big\{1 \leq n \leq N_k : \frac{b_1 |w_{i_1-n} \cdots w_{i_1-1}|^p + \cdots + b_r |w_{i_r-n} \cdots w_{i_r-1}|^p}
  {b_1 + \cdots + b_r} > k \Big\} \geq \eps N_k.
  \]
\end{itemize}  
\end{corollary}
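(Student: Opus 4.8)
The plan is to deduce this characterization directly from Theorem~\ref{Dist-1}, exactly as the preceding $\ell^p(\N)$ corollary was, by realizing $B_w$ as $C_{w,f}$ with $X = \Z$, $\fM = \cP(\Z)$, $\mu$ the counting measure and $f : n \mapsto n+1$. The first step is to record the relevant identities in this concrete setting: one has $f^{-n}(B) = B - n$ and $w^{(n)}(m) = w_m w_{m+1} \cdots w_{m+n-1}$, so that for a singleton $B_i = \{i\}$ (which has $\mu(B_i) = 1$),
\[
\mu_n(f^{-n}(\{i\})) = |w^{(n)}(i-n)|^p = |w_{i-n} \cdots w_{i-1}|^p.
\]
Crucially, unlike in the $\ell^p(\N)$ case, on $\Z$ the set $f^{-n}(\{i\}) = \{i-n\}$ is never empty, so this quantity need not vanish for large $n$; this is precisely why hypothesis (a) must be stated explicitly here, in contrast to the $\N$ corollary, where condition (a) of Theorem~\ref{Dist-1} holds automatically with $D = \N$ because the backward products become empty and hence zero.

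For the sufficiency, I would take $I = S$ and $B_i = \{i\}$ for each $i \in S$. Then condition (a) of the present statement is literally condition (a) of Theorem~\ref{Dist-1}, and for any choice of $i_1, \ldots, i_r \in S$ and $b_1, \ldots, b_r \in (0,\infty)$ the ratio appearing in Theorem~\ref{Dist-1}(b) equals
\[
\frac{b_1 |w_{i_1-n} \cdots w_{i_1-1}|^p + \cdots + b_r |w_{i_r-n} \cdots w_{i_r-1}|^p}{b_1 + \cdots + b_r},
\]
so condition (b) here is exactly condition (b) there. Theorem~\ref{Dist-1} then yields distributional chaos.

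For the necessity, I would invoke Theorem~\ref{Dist-1} together with Remark~\ref{Disjoint} to obtain a countable family $(B_i)_{i \in I}$ of pairwise disjoint, finite, nonempty subsets of $\Z$ satisfying (a) and (b), and set $S = \bigcup_{i \in I} B_i$. Passing condition (a) to the singletons of $S$ is immediate: $\mu_n(f^{-n}(B_i)) = \sum_{m \in B_i} |w_{m-n} \cdots w_{m-1}|^p$ is a finite sum of nonnegative terms, so its limit over $D$ being zero forces each summand to tend to zero. The main (and essentially only nontrivial) step is rewriting condition (b): for the indices $i_1, \ldots, i_r$ produced at stage $k$, I would expand $\mu_n(f^{-n}(B_{i_j}))$ and $\mu(B_{i_j}) = \card(B_{i_j})$ as sums over the points of $B_{i_j}$, and then regroup numerator and denominator as sums over the singletons $\{m\} \subset S$, assigning to the point $m \in B_{i_j}$ the coefficient $b_j$. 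The pairwise disjointness from Remark~\ref{Disjoint} is exactly what makes these coefficients well-defined, since each $m$ lies in a unique block; after regrouping, the block-ratio becomes a singleton-ratio of the required form, and condition (b) of the statement holds with the same $\eps$ and $(N_k)_{k \in \N}$. I expect no genuine obstacle beyond this bookkeeping, the whole argument being a transcription of Theorem~\ref{Dist-1} specialized to singletons; the one conceptual point to flag is the nonvanishing of the backward products on $\Z$, which is what forces hypothesis (a) to appear.
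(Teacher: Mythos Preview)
Your proposal is correct and is precisely the argument the paper has in mind: the paper simply states that the corollary follows from Theorem~\ref{Dist-1} ``in a similar way'' to the $\ell^p(\N)$ case, and your write-up supplies exactly that translation, including the use of Remark~\ref{Disjoint} to make the singleton regrouping well-defined. Your observation that condition~(a) must be retained here because $f^{-n}(\{i\})$ is never empty on $\Z$ is the one new point relative to the $\ell^p(\N)$ corollary, and you handle it correctly.
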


\begin{remark}
If the weights $w_n$ are nonzero and 
\[
\lim_{n \to \infty} \prod_{j=-n+1}^{0} w_j = 0,
\]
then condition (a) above holds for $S = \Z$ and $D = \N$.
Hence, in this case, the weighted shift $B_w$ on $\ell^p(\Z)$ is distributionally chaotic if and only if condition (b) above holds for $S = \Z$.
\end{remark}

Below we will obtain some additional sufficient conditions for $C_{w,f}$ to be distributionally chaotic.

\begin{theorem} Consider a weighted composition operator $C_{w,f}$ on $L^p(\mu)$. 
Suppose that there is a sequence $(A_k)_{k \in \N}$ of measurable sets of finite $\mu$-measure such that the following properties hold: 
\begin{itemize}
\item [\rm (a)] There exists $D \subset \N$ with $\udens(D) = 1$ such that, for each $k \in \N$ and each $\delta > 0$,
  there is a measurable set $B \subset A_k$ with 
  \[
  \mu (A_k \backslash B) < \delta \ \ \text{ and } \ \ \lim_{n \in D} \mu_{n}(f^{-n}(B)) = 0.
  \]
\item [\rm (b)] $\displaystyle \lim_{k \to \infty} \mu(A_k) = 0$ and there exist $\eps > 0$ and an increasing sequence 
  $(N_k)_{k \in \N}$ of positive integers such that
  \[
  \card\{1 \leq n \leq N_k : \mu_n(f^{-n}(A_k)) > \eps\} \geq \eps N_k \ \text{ for all } k \in \N.
  \]
\end{itemize}
Then, $C_{w,f}$ is distributionally chaotic.
\end{theorem}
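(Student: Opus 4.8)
The plan is to verify the Distributional Chaos Criterion recalled at the start of this section. The natural witness for the ``unbounded along a large set'' requirement is $y_k = \rchi_{A_k}$. Indeed, $\|y_k\|_p^p = \mu(A_k) \to 0$ by property~(b), so $\|y_k\|_p \to 0$; and the computation in the proof of Theorem~\ref{powerbounded} gives $\|(C_{w,f})^n(y_k)\|_p^p = \mu_n(f^{-n}(A_k))$, so the estimate in property~(b) becomes
\[
\card\{1 \leq n \leq N_k : \|(C_{w,f})^n(y_k)\|_p > \eps^{1/p}\} \geq \eps N_k \quad (k \in \N).
\]
Setting $\eps' = \min\{\eps,\eps^{1/p}\}$ then delivers condition~(b) of the criterion, once we have exhibited a sequence $(x_n)$ with $y_k \in \ov{\spa\{x_n : n \in \N\}}$ for every $k$.

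To build the $x_n$'s I would apply property~(a) with shrinking tolerance: for each $k,m \in \N$, choose a measurable set $B_{k,m} \subset A_k$ with $\mu(A_k \setminus B_{k,m}) < 1/m$ and $\lim_{n \in D} \mu_n(f^{-n}(B_{k,m})) = 0$, where $D$ is the single set of upper density $1$ provided by~(a). Relabelling the countable family $\{\rchi_{B_{k,m}} : k,m \in \N\}$ as a sequence $(x_n)_{n \in \N}$, the relation $\lim_{n \in D} \mu_n(f^{-n}(B_{k,m})) = 0$ is exactly $\lim_{n \in D}(C_{w,f})^n(\rchi_{B_{k,m}}) = 0$, so condition~(a) of the criterion holds with this common $D$.

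It remains to see that $y_k$ lies in the closed span of the $x_n$'s, and this is where property~(a) does its real work. Since $B_{k,m} \subset A_k$, we have $\rchi_{A_k} - \rchi_{B_{k,m}} = \rchi_{A_k \setminus B_{k,m}}$ and hence $\|\rchi_{A_k} - \rchi_{B_{k,m}}\|_p^p = \mu(A_k \setminus B_{k,m}) < 1/m$, so $y_k = \rchi_{A_k} = \lim_{m \to \infty} \rchi_{B_{k,m}}$ belongs to $\ov{\spa\{x_n : n \in \N\}}$. All clauses of the Distributional Chaos Criterion are then satisfied, and $C_{w,f}$ is distributionally chaotic. The main point to watch is the bookkeeping: property~(a) must (and does) supply a single $D$ serving every generator simultaneously, and the mismatch between the exponent $1/p$ on the norm and the linear appearance of $\eps$ is absorbed by passing to $\min\{\eps,\eps^{1/p}\}$; no genuine analytic difficulty arises beyond these routine adjustments.
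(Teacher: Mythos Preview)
Your proof is correct and follows essentially the same approach as the paper: both define the $x_n$'s as the characteristic functions $\rchi_{B_{k,m}}$ (the paper uses the index $r$ instead of $m$) obtained from property~(a), take $y_k = \rchi_{A_k}$, and verify the Distributional Chaos Criterion directly. Your handling of the mismatch between $\eps$ and $\eps^{1/p}$ via $\eps' = \min\{\eps,\eps^{1/p}\}$ is slightly more explicit than the paper's, which simply asserts that the criterion is satisfied.
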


\begin{proof}
By property (a), for every $k,r \in \N$, there is a measurable set $B_{k,r} \subset A_k$ such that 
  \[
  \mu (A_k \backslash B_{k,r}) < \frac{1}{r} \ \ \text{ and } \ \ \lim_{n \in D} \mu_{n}(f^{-n}(B_{k,r})) = 0.
  \]
Hence, $\rchi_{A_k} \in \ov{\spa\{\rchi_{B_{j,r}} : j,r \in \N\}}$ for all $k \in \N$, and
\[
\lim_{n \in D} (C_{w,f})^n(\rchi_{B_{k,r}}) = 0 \ \text{ for all } k,r \in \N.
\]
Moreover, by property (b), $\|\rchi_{A_k}\|_p \to 0$ as $k \to \infty$, and 
\[
\card\{1 \leq n \leq N_k : \|(C_{w,f})^n(\rchi_{A_k})\|_p > \eps^\frac{1}{p}\} \geq \eps N_k \ \text{ for all } k \in \N. 
\]
This shows that $C_{w,f}$ satisfies the Distributional Chaos Criterion.
\end{proof}

\begin{theorem}\label{DCSum}
Consider a weighted composition operator $C_{w,f}$ on $L^p(\mu)$ with positive weight function $w : X \to (0,\infty)$.
Suppose that there is a sequence $(A_k)_{k \in \N}$ of measurable sets of finite positive $\mu$-measure such that 
the following properties hold:
\begin{itemize}
\item [\rm (a)] There exists a set $D \subset \N$ with $\udens(D) = 1$ such that 
  \[
  \lim_{n \in D} \mu_{n}(f^{-n}(A_k)) = 0 \ \text{ for all } k \in \N.
  \]

\item [\rm (b)] There exists a set $E \subset \N$ with $\udens(E) > 0$ such that 
  \[
  \sum_{n \in E} \left(\frac{\mu(A_n)}{\mu_n(f^{-n}(A_n))}\right)^{\frac{1}{p}} < \infty.
  \]
\end{itemize}
Then, $C_{w,f}$ is distributionally chaotic.
\end{theorem}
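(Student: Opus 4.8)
The plan is to verify the Distributional Chaos Criterion (characterization (iii) of distributional chaos recalled above), which by the stated equivalence yields that $C_{w,f}$ is distributionally chaotic. Throughout I will use the identity
\[
\|(C_{w,f})^n(\rchi_B)\|_p^p = \int_{f^{-n}(B)} |w^{(n)}|^p\, d\mu = \mu_n(f^{-n}(B)),
\]
valid for every measurable set $B$ and every $n \in \N$, which is the computation already performed in the proof of Theorem~\ref{powerbounded}.

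The decisive structural feature I would exploit is the positivity of the weight. Since $w > 0$, each $w^{(n)}$ is positive, so the operator $(C_{w,f})^n \varphi = (\varphi \circ f^n)\cdot w^{(n)}$ is monotone on nonnegative functions: if $\varphi \geq \psi \geq 0$ $\mu$-almost everywhere, then $(C_{w,f})^n \varphi \geq (C_{w,f})^n \psi \geq 0$, whence $\|(C_{w,f})^n \varphi\|_p \geq \|(C_{w,f})^n \psi\|_p$. This monotonicity is what lets me bound the orbit of a sum from below by the orbit of a single summand, with no need for disjointness of the $A_k$'s or any control of cancellation.

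For the construction I would take $x_k = \rchi_{A_k}$; then property (a) gives $\lim_{n \in D}(C_{w,f})^n x_k = 0$ for all $k$, which is part (a) of the Criterion. For part (b), observe first that convergence of the series in hypothesis (b) forces $\mu_n(f^{-n}(A_n)) > 0$ for every $n \in E$, so I may set $a_n = \mu_n(f^{-n}(A_n))^{-1/p}$ and define, for each $k \in \N$,
\[
y_k = \sum_{n \in E,\, n \geq k} a_n\, \rchi_{A_n}.
\]
The partial sums are Cauchy in $L^p(\mu)$ since $\sum_{n \in E} a_n \|\rchi_{A_n}\|_p = \sum_{n \in E} (\mu(A_n)/\mu_n(f^{-n}(A_n)))^{1/p} < \infty$; hence $y_k \in \ov{\spa\{x_n : n \in \N\}}$, and being a tail of a convergent series, $\|y_k\|_p \to 0$ as $k \to \infty$. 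Moreover, for every $m \in E$ with $m \geq k$, the pointwise inequality $y_k \geq a_m \rchi_{A_m} \geq 0$ together with the monotonicity above gives
\[
\|(C_{w,f})^m y_k\|_p \geq a_m\, \|(C_{w,f})^m \rchi_{A_m}\|_p = a_m\, \mu_m(f^{-m}(A_m))^{1/p} = 1.
\]

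It then remains to produce $\eps > 0$ and an increasing sequence $(N_k)$ of positive integers with $\card\{1 \leq n \leq N_k : \|(C_{w,f})^n y_k\|_p > \eps\} \geq \eps N_k$. Here I would use $\udens(E) > 0$: writing $\alpha = \udens(E)$ and setting $\eps = \alpha/4 < 1$, there are infinitely many integers $N$ with $\card(E \cap [1,N])/N > \alpha/2$, and for each $k$ I pick such an $N_k$ large enough that $(k-1)/N_k < \alpha/4$ and that $(N_k)$ is increasing. Since $\|(C_{w,f})^n y_k\|_p \geq 1 > \eps$ for all $n \in E$ with $n \geq k$, discarding the at most $k-1$ elements of $E$ below $k$ yields
\[
\card\{1 \leq n \leq N_k : \|(C_{w,f})^n y_k\|_p > \eps\} \geq \card(E \cap [1,N_k]) - (k-1) > \tfrac{\alpha}{2} N_k - \tfrac{\alpha}{4} N_k = \eps N_k.
\]
This verifies part (b) of the Criterion and finishes the proof. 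I expect the only genuinely delicate point to be the reduction from the sum $y_k$ to a single term $a_m\rchi_{A_m}$; everything else is bookkeeping with upper densities and tails of a convergent series, and the positivity of $w$ is precisely what makes that reduction legitimate.
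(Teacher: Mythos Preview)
Your proof is correct, and the core idea---using positivity of $w$ to get monotonicity of $(C_{w,f})^n$ on nonnegative functions, so that the orbit of a sum dominates the orbit of any single summand---is exactly the idea the paper uses. The execution, however, differs in a way worth noting. The paper builds a \emph{single} vector $y = \sum_{n\in E} c_n \rchi_{A_n}$ whose orbit blows up along $E$, and to arrange simultaneously $\sum c_n \mu(A_n)^{1/p} < \infty$ and $c_n \mu_n(f^{-n}(A_n))^{1/p} \to \infty$ it employs the Abel--Dini trick $c_n = (r_n)^{-1/2}\mu_n(f^{-n}(A_n))^{-1/p}$ with $r_n$ the tail sums; it then invokes \cite[Proposition~8]{BBMP} to pass from a distributionally unbounded orbit to condition (b) of the Criterion. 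You instead take the simpler coefficients $a_n = \mu_n(f^{-n}(A_n))^{-1/p}$ and use the \emph{tails} $y_k$ of the resulting series, so $\|y_k\|_p \to 0$ comes for free, and you verify the density inequality in condition (b) directly from $\udens(E)>0$. Your route is more self-contained (no external proposition, no $\sqrt{r_n}$ trick); the paper's route produces a single distributionally unbounded vector, which is a slightly stronger intermediate conclusion.
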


\begin{proof}
We will apply the Distributional Chaos Criterion.
For condition (a) in the criterion, we define $x_k = \rchi_{A_k}$ and observe that
\[
\lim_{n \in D} \|(C_{w,f})^n(x_k)\|_p = \lim_{n \in D} \mu_{n}(f^{-n}(A_k))^{\frac{1}{p}} = 0 \ \text{ for all } k \in \N, 
\]
because of hypothesis (a). By using hypothesis (b), we will prove the existence of a vector 
\[
y \in \ov{\spa\{x_k : k \in \N\}}
\]
such that
\[
\lim_{n \in E} \|(C_{w,f})^n(y)\|_p = \infty.
\]
In view of \cite[Proposition~8]{BBMP}, this will imply that condition (b) in the criterion also holds, which will complete the proof.
First, let us prove that hypothesis (b) implies the existence of a sequence $(c_n)_{n \in \N}$ of non-negative scalars such that 
\[
\sum_{n \in E} c_n\, \mu(A_n)^{\frac{1}{p}} < \infty \ \ \text{ and } \ \ \lim_{n \in E} c_n\, \mu_{n}(f^{-n}(A_n))^{\frac{1}{p}} = \infty. 
\]
For this purpose, we will use an argument similar to one used in \cite{Chen}. Let 
\begin{align*}
 a_n &=\left\{
  \begin{array}{cl}
    \left(\frac{\mu(A_n)}{\mu_n(f^{-n}(A_n))}\right) ^{\frac{1}{p}}  &  \text{ if }  n \in E\\
    0  &  \text{ if } n \notin E,
  \end{array}
	     \right.\\
r_n &= \displaystyle \sum _{i\ge n} a_i\,,\\
c_n & =\left\{
  \begin{array}{cl}
    \frac{1}{\sqrt{r_n}\, \mu_n(f^{-n}(A_n))^{\frac{1}{p}} }  &  \text{ if }  n \in E\\
    0  &  \text{ if } n \notin E.
  \end{array}
	     \right.\\
\end{align*}
Then,
\[
\lim_{n \in E} c_n\, \mu_{n}(f^{-n}(A_n))^{\frac{1}{p}} = \lim_{n \in E} \frac{1}{\sqrt{r_n}} = \infty,
\]
since $\sum_{n \in E} a_n$ is convergent. Moreover, we have that 
\[
\sum_{n \in \N} \frac{a_n}{\sqrt{r_n}} \leq 2\sqrt{r_1}\,,
\]
since 
\[
\frac{a_n}{\sqrt{r_n}} = \frac{(\sqrt{r_n} + \sqrt{r_{n+1}})(\sqrt{r_n} - \sqrt{r_{n+1}})}{\sqrt{r_n}} \leq 2 (\sqrt{r_n} - \sqrt{r_{n+1}}).
\]
Hence,
\[
\sum_{n \in E} c_n\, \mu(A_n)^{\frac{1}{p}} = \sum_{n \in E} \frac{a_n}{\sqrt{r_n}} = \sum_{n \in \N} \frac{a_n}{\sqrt{r_n}}
  \leq 2 \sqrt{r_1} \leq 2 \sqrt{\sum_{n \in E} \left(\frac{\mu(A_n)}{\mu_n(f^{-n}(A_n))}\right)^{\frac{1}{p}}} < \infty.
\]
Now, by defining $y = \displaystyle \sum_{n \in E} c_n\, \rchi_{A_n}$, we have that
\[
\|y\|_p = \Big\|\sum_{n \in E} c_n\, \rchi_{A_n}\Big\|_p \leq \sum_{n \in E} c_n\, \mu(A_n)^{\frac{1}{p}} < \infty.
\]
Thus, $y \in \ov{\spa\{x_k : k \in \N\}}$. Moreover,
 \begin{align*}
\lim_{n \in E} \|(C_{w,f})^n(y)\|_p &= \lim_{n \in E} \Big\|\sum_{k \in E} c_k\, (C_{w,f})^n(\rchi_{A_k})\Big\|_p\\
  &\geq \lim_{n \in E} \|c_n\, (C_{w,f})^n(\rchi_{A_n})\|_p\\
  &= \lim_{n \in E} c_n\, \mu_{n}(f^{-n}(A_n))^{\frac{1}{p}} = \infty.
\end{align*}
This completes the proof.
\end{proof}

\begin{corollary} 
Consider a weighted composition operator $C_{w,f}$ on $L^p(\mu)$ with positive weight function $w : X \to (0,\infty)$.
If there is a measurable set $A$ of finite positive $\mu$-measure such that 
\begin{itemize}
\item there exists $D \subset \N$ with $\udens(D) = 1$ and $\displaystyle \lim_{n \in D} \mu_{n}(f^{-n}(A)) = 0$, 

\item there exists $E \subset \N$ with $\udens(E) > 0$ and $\displaystyle \sum_{n \in E} \frac{1}{\mu_n(f^{-n}(A))^{\frac{1}{p}}} < \infty$,
\end{itemize}
then $C_{w,f}$ is distributionally chaotic.
\end{corollary}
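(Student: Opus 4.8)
The plan is to obtain this corollary as an immediate specialization of Theorem~\ref{DCSum}, applied to the constant sequence. Concretely, I would set $A_k = A$ for every $k \in \N$, so that $(A_k)_{k \in \N}$ is a sequence of measurable sets of finite positive $\mu$-measure, exactly as demanded in the hypotheses of that theorem. The positivity requirement on the weight, $w : X \to (0,\infty)$, is the standing assumption of both statements, so no separate verification is needed there.

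With this choice, property (a) of Theorem~\ref{DCSum} becomes $\lim_{n \in D} \mu_n(f^{-n}(A_k)) = 0$ for all $k$, and since every $A_k$ equals $A$, this is literally the first assumption of the corollary (with the same $D$). The only point deserving a brief check is property (b): here $A_n = A$ for all $n$, so the series appearing in Theorem~\ref{DCSum}(b) factors as
\[
\sum_{n \in E} \Big(\frac{\mu(A_n)}{\mu_n(f^{-n}(A_n))}\Big)^{\frac{1}{p}}
  = \mu(A)^{\frac{1}{p}} \sum_{n \in E} \frac{1}{\mu_n(f^{-n}(A))^{\frac{1}{p}}}.
\]
Because $\mu(A)$ is a fixed finite positive number, the factor $\mu(A)^{1/p}$ is a positive constant, and hence the left-hand series converges precisely when $\sum_{n \in E} \mu_n(f^{-n}(A))^{-1/p}$ converges. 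That last condition is exactly the second assumption of the corollary (again with the same $E$, which has $\udens(E) > 0$ as required).

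Having matched both hypotheses of Theorem~\ref{DCSum} for the constant sequence, I would simply invoke that theorem to conclude that $C_{w,f}$ is distributionally chaotic. I do not expect any genuine obstacle: the whole content of the argument is the observation that $\mu(A)^{1/p}$ pulls out of the sum, reducing the single-set summability condition to the form required by the theorem. If one wanted to be fully self-contained one could instead re-run the Distributional Chaos Criterion directly with $x_k = \rchi_A$ and a suitably weighted vector $y = \sum_{n \in E} c_n \rchi_A$, but reusing Theorem~\ref{DCSum} is cleaner and avoids repeating the computation.
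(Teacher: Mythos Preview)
Your proposal is correct and matches the paper's intent: the corollary is stated immediately after Theorem~\ref{DCSum} with no separate proof, precisely because it is the specialization $A_k = A$ for all $k$, which you verify cleanly. The only substantive step is pulling the constant $\mu(A)^{1/p}$ out of the sum, and you handle that correctly.
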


In the sequel we will present several sufficient conditions for $C_{w,f}$ to be densely distributionally chaotic.
 
\begin{theorem} 
Consider a weighted composition operator $C_{w,f}$ on $L^p(\mu)$.
If the space $L^p(\mu)$ is separable and for every measurable set $A$ of finite $\mu$-measure and for every $\eps > 0$, 
there is a measurable set $B \subset  A$ with 
\[
\mu(A \backslash B) < \eps \ \ \text{ and } \ \ \lim_{n \to \infty} \mu_{n}(f^{-n}(B)) = 0,
\]
then the following assertions are equivalent:
\begin{itemize}
\item [\rm (i)] $C_{w,f}$ is distributionally chaotic;
\item [\rm (ii)] $C_{w,f}$ is densely distributionally chaotic;
\item [\rm (iii)] $C_{w,f}$ admits a dense distributionally irregular manifold;
\item [\rm (iv)] There exist $\phi \in L^p(\mu)$ and $\delta > 0$ such that 
  \[
  \udens\{n \in \N : \|(C_{w,f})^n(\phi)\|_p \geq \delta\} > 0.
  \]
\end{itemize}
\end{theorem}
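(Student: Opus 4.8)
The plan is to prove the cyclic chain of implications $\mathrm{(iii)} \Rightarrow \mathrm{(ii)} \Rightarrow \mathrm{(i)} \Rightarrow \mathrm{(iv)} \Rightarrow \mathrm{(iii)}$, observing that the first three implications are valid for an arbitrary operator on an arbitrary Banach space, so that the separability and the approximation hypothesis are needed only for the final one. This mirrors the structure already used in the proofs of Theorems~\ref{DLY-PB} and~\ref{DCSum}, with the role of Li--Yorke chaos now played by distributional chaos and the Banach--Steinhaus step replaced by the Distributional Chaos Criterion.

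For $\mathrm{(iii)} \Rightarrow \mathrm{(ii)}$, let $\cM$ be a dense distributionally irregular manifold. If $\varphi, \psi \in \cM$ are distinct, then $\varphi - \psi \in \cM \setminus \{0\}$ is distributionally irregular, so there are $D, E \subset \N$ with $\udens(D) = \udens(E) = 1$ along which $(C_{w,f})^n(\varphi - \psi)$ tends to $0$ and $\|(C_{w,f})^n(\varphi - \psi)\|_p$ tends to $\infty$, respectively. The first gives $\udens\{n : \|(C_{w,f})^n(\varphi-\psi)\|_p < \tau\} = 1$ for every $\tau > 0$, and the second gives $\ldens\{n : \|(C_{w,f})^n(\varphi-\psi)\|_p < \eps\} = 0$ for a suitable $\eps > 0$; hence $(\varphi, \psi)$ is a distributionally chaotic pair and $\cM \setminus \{0\}$ is a dense uncountable distributionally scrambled set. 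The implication $\mathrm{(ii)} \Rightarrow \mathrm{(i)}$ is immediate. For $\mathrm{(i)} \Rightarrow \mathrm{(iv)}$, distributional chaos yields a distributionally irregular vector $\phi$, for which $\|(C_{w,f})^n(\phi)\|_p \to \infty$ along a set $E$ with $\udens(E) = 1$; then $\udens\{n : \|(C_{w,f})^n(\phi)\|_p \geq \delta\} = 1 > 0$ for any $\delta > 0$, which is $\mathrm{(iv)}$.

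The heart of the matter is $\mathrm{(iv)} \Rightarrow \mathrm{(iii)}$. First I would use the approximation hypothesis exactly as in the proof of Theorem~\ref{DLY-PB}: the simple functions $\sum_k b_k\, \rchi_{B_k}$ whose pieces $B_k$ have finite measure and satisfy $\lim_{n \to \infty} \mu_n(f^{-n}(B_k)) = 0$ form a dense subspace $\fX_0$ of $L^p(\mu)$, and since $\|(C_{w,f})^n(\rchi_{B_k})\|_p^p = \mu_n(f^{-n}(B_k)) \to 0$, every $\varphi \in \fX_0$ satisfies $\lim_n (C_{w,f})^n(\varphi) = 0$. Thus $C_{w,f}$ possesses a dense set of vectors whose orbits converge to $0$. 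Since $L^p(\mu)$ is separable, I fix a countable dense sequence $(x_k)_{k \in \N}$ in $\fX_0$; these verify condition (a) of the Distributional Chaos Criterion with the full index set $D = \N$, and $\ov{\spa\{x_k : k \in \N\}} = L^p(\mu)$.

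It remains to produce the vectors $y_k$ of condition (b) of the Criterion, and this is the step I expect to be the main obstacle. Condition $\mathrm{(iv)}$ says precisely that $C_{w,f}$ has a distributionally unbounded orbit, but it only guarantees positive upper density of $\{n : \|(C_{w,f})^n(\phi)\|_p \geq \delta\}$ rather than an orbit tending to infinity. The plan is to upgrade this, via the operator-theoretic results of \cite{BBMP,BBPW}, to a vector $y \in \ov{\spa\{x_k\}} = L^p(\mu)$ with $\lim_{n \in E} \|(C_{w,f})^n(y)\|_p = \infty$ for some $E$ with $\udens(E) > 0$; the amplification exploits linearity together with the density-one decay of the $x_k$ to superpose suitably rescaled copies of the large part of the orbit of $\phi$. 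Once such a $y$ is available, \cite[Proposition~8]{BBMP} (used already in the proof of Theorem~\ref{DCSum}) furnishes the sequence $(y_k)$, so the Criterion applies with the dense subspace $\ov{\spa\{x_k\}}$; the dense-manifold refinement of the Criterion from \cite{BBPW} then delivers a dense distributionally irregular manifold, which is $\mathrm{(iii)}$. Alternatively, since the hypothesis makes the set of vectors with orbit converging to $0$ residual, one may package the whole of $\mathrm{(iv)} \Rightarrow \mathrm{(iii)}$ as a direct application of the corresponding general equivalence theorem for separable spaces in \cite{BBMP}.
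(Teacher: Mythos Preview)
Your proposal is correct and follows essentially the same route as the paper: the implications $\mathrm{(iii)}\Rightarrow\mathrm{(ii)}\Rightarrow\mathrm{(i)}\Rightarrow\mathrm{(iv)}$ are handled as general facts, and for $\mathrm{(iv)}\Rightarrow\mathrm{(iii)}$ one builds the dense set $\fX_0$ of simple functions with orbits tending to zero and then invokes a black-box theorem from the distributional-chaos literature. The paper is simply more direct at this last step: it cites \cite[Theorem~33]{BBPW} (not \cite{BBMP}) immediately, which takes as input precisely a dense set of vectors with $T^n x\to 0$ together with a single vector satisfying condition~(iv), and outputs a dense distributionally irregular manifold---so your longer detour through the Distributional Chaos Criterion, the ``upgrade'' of $\phi$, and \cite[Proposition~8]{BBMP} is unnecessary, and indeed the vague amplification step you flag as the main obstacle is already absorbed into that cited theorem.
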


\begin{proof}
(i) $\Rightarrow$ (iv): Take a distributionally irregular vector $\phi \in L^p(\mu)$ for $C_{w,f}$.

\smallskip\noindent
(iv) $\Rightarrow$ (iii): Let $\fX_0$ be the set of all simple functions of the form $\sum_{j=1}^m b_j \rchi_{B_j}$,
where $b_1,\ldots,b_m$ are scalars and each $B_j$ is a measurable set of finite $\mu$-measure satisfying 
\[
\lim_{n \to \infty} \mu_n(f^{-n}(B_j)) = 0.
\]
By the assumption, $\fX_0 $ is dense in $L^p(\mu)$. Moreover,
\[
\lim_{n \to \infty} (C_{w,f})^n(\varphi) = 0 \ \text{ for all } \varphi \in \fX_0.
\]
Hence, by (iv) and \cite[Theorem~33]{BBPW}, (iii) holds.   

\smallskip\noindent
(iii) $\Rightarrow$ (ii) $\Rightarrow$ (i): Obvious.
\end{proof}

\begin{corollary} 
Assume the hypotheses of the previous theorem.
If there exist a constant $C > 0$ and a measurable set $A$ of finite positive $\mu$-measure such that
\[
\udens\{n \in \N : \mu_n(f^{-n}(A)) \geq C \} > 0,
\]
then $C_{w,f}$ is densely distributionally chaotic.
\end{corollary}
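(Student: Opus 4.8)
The plan is to reduce the entire statement to condition (iv) of the previous theorem, whose equivalence with dense distributional chaos has already been established there. Since the two standing hypotheses of that theorem, namely the separability of $L^p(\mu)$ and the approximation property for measurable sets of finite measure, are assumed to hold in the present corollary, it suffices to exhibit one vector $\phi \in L^p(\mu)$ together with a threshold $\delta > 0$ for which
\[
\udens\{n \in \N : \|(C_{w,f})^n(\phi)\|_p \geq \delta\} > 0.
\]

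The natural candidate is the characteristic function $\phi = \rchi_A$ of the set $A$ supplied by the hypothesis. First I would record the norms of its iterates. Since $(C_{w,f})^n(\rchi_A) = (\rchi_A \circ f^n) \cdot w^{(n)} = \rchi_{f^{-n}(A)} \cdot w^{(n)}$, the definition of $\mu_n$ gives directly
\[
\|(C_{w,f})^n(\rchi_A)\|_p^p = \int_{f^{-n}(A)} |w^{(n)}|^p \, d\mu = \mu_n(f^{-n}(A)).
\]
This is precisely the computation already performed, for the normalized function $\rchi_B/\mu(B)^{1/p}$, in the proof of Theorem~\ref{powerbounded}.

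With this identity available, I would set $\delta = C^{1/p}$. Then $\|(C_{w,f})^n(\rchi_A)\|_p \geq \delta$ holds exactly when $\mu_n(f^{-n}(A)) \geq C$, so the two index sets coincide:
\[
\{n \in \N : \|(C_{w,f})^n(\rchi_A)\|_p \geq \delta\} = \{n \in \N : \mu_n(f^{-n}(A)) \geq C\}.
\]
By hypothesis the right-hand set has positive upper density, hence so does the left-hand set. This establishes condition (iv) with $\phi = \rchi_A$ and $\delta = C^{1/p}$, and the implication (iv) $\Rightarrow$ (ii) from the previous theorem then yields that $C_{w,f}$ is densely distributionally chaotic.

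There is essentially no genuine obstacle here: the substance of the corollary is entirely absorbed by the previous theorem, and all that remains is to translate the measure-theoretic assumption into the analytic condition (iv) via the concrete vector $\rchi_A$. The only point requiring a moment's care is that $A$ has finite positive $\mu$-measure, which guarantees simultaneously that $\rchi_A \in L^p(\mu)$ (finiteness) and that $\rchi_A \neq 0$ (positivity), so that (iv) is meaningful; the positivity of $C$ likewise ensures $\delta > 0$.
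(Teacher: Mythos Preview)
Your proof is correct and follows exactly the paper's approach: the paper's proof is the single sentence ``Property (iv) in the previous theorem holds with $\phi = \rchi_A$ and $\delta = C^{\frac{1}{p}}$,'' and you have simply unpacked why this is so.
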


\begin{proof}
Property (iv) in the previous theorem holds with $\phi = \rchi_A$ and $\delta = C^\frac{1}{p}$.
\end{proof}

Recall that a {\em measurable system} is a $4$-tuple $(X,\fM,\mu,f)$ such that: 
\begin{itemize}
\item [($\alpha$)] $(X,\fM,\mu)$ is a $\sigma$-finite measure space with $\mu(X) > 0$; 
\item [($\beta$)] $f : X \to X$ is a non-singular bimeasurable bijective map 
  (where $f$ {\em non-singular} means that $\mu(f^{-1}(B)) = 0$ if and only if $\mu(B) = 0$); 
\item [($\gamma$)] there exists $c > 0$ such that $\mu(f^{-1}(B)) \leq c\, \mu(B)$ for every $B \in \fM$.
\end{itemize}
Note that condition ($\gamma$) implies that the composition operator 
$C_f(\varphi) = \varphi \circ f$ is a well-defined bounded linear operator on $L^p (\mu)$.
Recall also that a measurable system $(X,\fM,\mu,f)$ (equivalently, the map $f$) is said to be:
\begin{itemize}
\item {\em conservative} if for each measurable set $B$ of positive $\mu$-measure, there exists $n \in \N$ such that 
  $\mu(B \cap f^{-n}(B)) > 0$;
\item {\em dissipative} if there is a measurable set $W$ (called a {\em wandering set}) such that the sets $f^n(W)$, $n \in \Z$, 
  are pairwise disjoint and $X = \bigcup_{n \in \Z} f^n(W)$. 
\end{itemize}
Finally, recall that a set $A \subset  X$ is said to be $f$-invariant if $ f^{-1}(A) = A$. 
The following classic result can be found in \cite[Theorem 3.2]{K}:

\medskip\noindent
{\bf Hopf Decomposition Theorem.}  {\it If $(X,\fM,\mu)$ is a $\sigma$-finite measure space and $f : X \to X$ is a non-singular
measurable map, then $X$ can be written as the union of two disjoint $f$-invariant sets $\cC(f)$ and $\cD(f)$, 
called the {\em conservative} and the {\em dissipative parts} of $f$, respectively, such that 
$f|_{\cC(f)}$ is conservative and $f|_{\cD(f)}$ is dissipative.}

\begin{theorem} Consider a measurable system $(X,\fM,\mu,f)$ and assume the following conditions:
\begin{itemize} 
\item [\rm (a)] For every measurable set $A$ of finite $\mu$-measure and for every $\eps > 0$, 
  there is a measurable set $B \subset  A$ with 
  \[
  \mu(A \backslash B) < \eps \ \ \text{ and } \ \ \lim_{n \to \infty} \mu(f^{-n}(B)) = 0.
  \]
\item [\rm (b)] There is a measurable set $B' \subset W \subset \cD(f)$ of finite positive $\mu$-measure, 
  $W$ being a wandering set of $\cD(f)$, such that
  \[
  \sum_{n \in \Z} \mu(f^{n}(B')) \text{ converges}.
  \]
\end{itemize}
Then, the composition operator $C_f$ is densely distributionally chaotic.
\end{theorem}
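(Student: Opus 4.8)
The plan is to recognize $C_f$ as the special case of a weighted composition operator $C_{w,f}$ with constant weight $w \equiv 1$, and then to reduce the statement to the corollary proved above (the one asserting dense distributional chaos under the approximation hypothesis together with a recurrence condition $\udens\{n : \mu_n(f^{-n}(A)) \geq C\} > 0$). With $w \equiv 1$ one has $w^{(n)} \equiv 1$, hence $\mu_n = \mu$ for every $n$ and $\|(C_f)^n(\rchi_A)\|_p^p = \mu(f^{-n}(A))$ for every measurable set $A$ of finite measure. Under this identification, hypothesis (a) of the present theorem is exactly the approximation hypothesis required there: for every measurable set $A$ of finite measure and every $\eps > 0$ there is $B \subset A$ with $\mu(A \setminus B) < \eps$ and $\mu_n(f^{-n}(B)) = \mu(f^{-n}(B)) \to 0$. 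It therefore remains only to produce a constant $C > 0$ and a measurable set of finite positive $\mu$-measure on which $\mu_n(f^{-n}(\cdot))$ stays $\geq C$ along a set of positive upper density, and then to invoke that corollary.

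The heart of the argument is to manufacture such a set from the wandering data in (b). I would set $E = \bigcup_{n \in \Z} f^n(B')$. Since $B' \subset W$ and $W$ is a wandering set of $\cD(f)$, the translates $f^n(W)$, and a fortiori the $f^n(B')$, are pairwise disjoint; hence $\mu(E) = \sum_{n \in \Z} \mu(f^n(B'))$, which is finite by (b) and strictly positive because $B' = f^0(B') \subset E$ has positive measure. Moreover $E$ is $f$-invariant: $f(E) = \bigcup_n f^{n+1}(B') = E$, and since $f$ is bijective this gives $f^{-1}(E) = E$, whence $f^{-n}(E) = E$ for all $n \in \N$.

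The invariance makes the forward orbit of $\rchi_E$ completely rigid: $(C_f)^n(\rchi_E) = \rchi_E \circ f^n = \rchi_{f^{-n}(E)} = \rchi_E$, so $\|(C_f)^n(\rchi_E)\|_p = \mu(E)^{1/p} =: \delta > 0$ for every $n$. Equivalently, $\mu_n(f^{-n}(E)) = \mu(E)$ for all $n$, so with $A = E$ and $C = \mu(E)$ we get $\udens\{n \in \N : \mu_n(f^{-n}(E)) \geq C\} = 1 > 0$. Combining this recurrence with the approximation property (a), the corollary applies and yields that $C_f$ is densely distributionally chaotic.

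The main obstacle is conceptual rather than computational: one must see that the summability in (b) is precisely what converts the two-sided orbit of $B'$ into a finite-measure $f$-invariant set, and this is exactly where the dissipative/wandering structure of $\cD(f)$ enters, since it forces the disjointness that gives $\mu(E) = \sum_{n} \mu(f^n(B'))$. A secondary point to check is that the standing hypotheses of the corollary are genuinely met: its approximation hypothesis is supplied verbatim by (a), while separability of $L^p(\mu)$ (needed for the underlying dense-manifold machinery) should be confirmed in the present $\sigma$-finite setting, after which the remaining bookkeeping is already contained in the proofs of the preceding theorem and corollary.
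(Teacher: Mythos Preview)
Your approach is correct and genuinely different from the paper's. The paper invokes a criterion from \cite{BBMP} (their Theorem~19) of the ``hypercyclicity-criterion'' type: from (a) it obtains a dense set on which orbits tend to zero, and from (b) it shows that $\phi = \rchi_{B'}$ makes both series $\sum_{n\geq 1}(C_f)^n(\phi) = \sum_{n\geq 1}\rchi_{f^{-n}(B')}$ and $\sum_{n\geq 1}S^n(\phi) = \sum_{n\geq 1}\rchi_{f^{n}(B')}$ converge unconditionally, using the disjointness of the $f^n(B')$ together with the summability in (b). Your route is shorter and more conceptual: you observe that the same summability lets you package the entire two-sided orbit $E = \bigcup_{n\in\Z} f^n(B')$ into a single $f$-invariant set of finite positive measure, so that $\rchi_E$ is literally a fixed point of $C_f$ with norm $\mu(E)^{1/p}>0$, and then you feed this into the earlier corollary. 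What your approach buys is that you never have to verify convergence of any series or construct a right inverse $S$; what the paper's approach buys is that it appeals to a criterion whose hypotheses visibly match the two-sided summability in (b).

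The one caveat you flag yourself is real: the corollary you invoke sits under a theorem whose hypotheses include separability of $L^p(\mu)$, and $\sigma$-finiteness of $(X,\fM,\mu)$ does \emph{not} by itself guarantee this (counting measure on an uncountable set is $\sigma$-finite but the resulting $\ell^p$ is nonseparable). So ``should be confirmed'' is not a formality; either you must add separability as a standing hypothesis, or you must check that the criterion underlying that corollary (ultimately \cite[Theorem~33]{BBPW}) goes through without it. The paper's route via \cite[Theorem~19]{BBMP} may sidestep this issue, which is one practical reason for its choice.
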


\begin {proof}
Let $\fX_0$ be the set of all simple functions of the form $\sum_{j=1}^m b_j \rchi_{B_j}$,
where $b_1,\ldots,b_m$ are scalars and each $B_j$ is a measurable set of finite $\mu$-measure satisfying 
\[
\lim_{n \to \infty} \mu(f^{-n}(B_j)) = 0.
\]
By property (a), $\fX_0 $ is dense in $L^p(\mu)$. Moreover, $\lim_{n \to \infty} (C_f)^n(\varphi) = 0$ for all $\varphi \in \fX_0$.
Therefore, by \cite[Theorem~19]{BBMP}, it remains to show that there exist a subset $Y$ of $L^p(\mu)$, 
a map $S : Y \to Y$ with $C_f(S(\varphi)) = \varphi$ on $Y$, and a vector $\phi \in Y \backslash \{0\}$ such that  the series
\[
\sum_{n=1}^{\infty} (C_f)^n(\phi) \ \text{ and } \sum_{n=1}^{\infty} S^n(\phi) \ \text{ converge unconditionally}.
\]
For this purpose, we put 
\[
Y = \Big\{\rchi_B : B \in \fM \text{ and } \sum_{n \in \Z} \mu(f^n(B)) \text{ converges}\Big\}
\]
and $\phi = \rchi_{B'} \in Y \backslash \{0\}$. 
Since $B' \subset W \subset \cD(f)$ and $W$ is a wandering set, we have that the sets $f^n(B')$, $n \in \Z$, are pairwise disjoint.
Hence, the convergence of the series in (b) implies that, for any increasing sequence $(n_j)_{j \in \N}$ of positive integers,
\[
\rchi_{\bigcup_{j = 1}^m f^{-n_j}(B')} \in L^p(\mu) \ \text{ for all } m \in \N
\]
and
\[
\sum_{j = 1}^m (C_f)^{n_j}(\phi) =\sum_{j = 1}^m \rchi_{f^{-n_j}(B')} = \rchi_{\bigcup_{j = 1}^m f^{-n_j}(B')}
  \to \rchi_{\bigcup_{j = 1}^\infty f^{-n_j}(B')} \text{ in } L^p(\mu) \text{ as } m \to \infty.
\]
Hence, the series $\sum_{j = 1}^\infty (C_f)^{n_j}(\phi)$ converges in $L^p(\mu)$.
This implies that the series $\sum_{n=1}^{\infty} (C_f)^n(\phi)$ is unconditionally convergent.
Now, we define $S : Y \to Y$ by
\[
S(\varphi) = \rchi_{f(B)}, \text{ for each } \varphi = \rchi_{B} \in Y.
\]
Clearly, $C_f(S(\varphi)) = \varphi$. By arguing as above, it follows that the series
\[
\sum_{n=1}^\infty S^n(\phi) = \sum_{n=1}^\infty \rchi_{f^n(B')} \ \text{ converges unconditionally}.
\]
This completes the proof.
\end{proof}

\begin{corollary} Let $(X,\fM,\mu,f)$ be a non-conservative measurable system with $\mu(X) < \infty$. 
If for every measurable set $A$ of finite $\mu$-measure and for every $\eps > 0$, there is a measurable set $B \subset  A$ with 
$\mu(A \backslash B) < \eps$ and $\lim_{n \to \infty} \mu(f^{-n}(B)) = 0$, the $C_f$ is densely distributionally chaotic.
\end{corollary}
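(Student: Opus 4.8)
The plan is to verify that the two hypotheses of the immediately preceding theorem are satisfied, so that its conclusion applies verbatim. Condition (a) of that theorem is precisely the approximation hypothesis assumed in the corollary, so the entire task reduces to producing a set $B'$ witnessing condition (b): a measurable set $B' \subset W \subset \cD(f)$ of finite positive $\mu$-measure, with $W$ a wandering set of $\cD(f)$, for which $\sum_{n \in \Z} \mu(f^n(B'))$ converges.

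First I would unpack non-conservativity via the Hopf Decomposition Theorem, writing $X = \cC(f) \cup \cD(f)$ as the disjoint union of the conservative and dissipative parts. I would then argue that non-conservativity forces $\mu(\cD(f)) > 0$: if $\mu(\cD(f)) = 0$, then $f$ would agree up to a null set with its conservative restriction $f|_{\cC(f)}$, making $f$ itself conservative and contradicting the hypothesis. Since $f|_{\cD(f)}$ is dissipative, there is a wandering set $W \subset \cD(f)$ with the sets $f^n(W)$, $n \in \Z$, pairwise disjoint and $\cD(f) = \bigcup_{n \in \Z} f^n(W)$.

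Next I would check that $0 < \mu(W) < \infty$. Finiteness is immediate from $\mu(W) \le \mu(X) < \infty$. For positivity I use that $f$ is a non-singular bimeasurable bijection, so $\mu(f^n(W)) = 0$ for every $n$ whenever $\mu(W) = 0$; were that the case, $\mu(\cD(f)) = \sum_{n \in \Z} \mu(f^n(W)) = 0$ would contradict the previous step. Hence $\mu(W) > 0$, and I may simply take $B' = W$.

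Finally, the convergence of the series is the only genuinely quantitative point, and it is exactly where the finiteness of $\mu(X)$ enters: since the sets $f^n(W)$ are pairwise disjoint subsets of $X$, their measures sum to at most $\mu(X)$, so $\sum_{n \in \Z} \mu(f^n(B')) = \sum_{n \in \Z} \mu(f^n(W)) \le \mu(X) < \infty$. This establishes condition (b), and the theorem then yields that $C_f$ is densely distributionally chaotic. I expect no serious obstacle here; the only delicate point is the implication from non-conservativity to $\mu(\cD(f)) > 0$, which relies on the (essential) uniqueness of the Hopf decomposition.
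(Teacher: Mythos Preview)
Your proposal is correct and follows essentially the same approach as the paper: verify condition (a) is the stated hypothesis, then use non-conservativity plus the Hopf decomposition to produce a wandering set $W \subset \cD(f)$ of positive finite measure, and bound $\sum_{n \in \Z} \mu(f^n(B'))$ by $\mu(X) < \infty$ via disjointness. The paper's proof is simply a terser version of yours, taking any $B' \subset W$ with $\mu(B') > 0$ rather than $B' = W$.
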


\begin {proof}
Since the measurable system is not conservative, there is a measurable set $B' \subset W \subset \cD(f)$ with $\mu(B') > 0$.
Since
\[
\sum_{n \in \Z} \mu(f^{n}(B')) \leq \mu(X) < \infty,
\]
the result follows from the previous theorem.
\end{proof}

\begin{theorem}\label{ddcwc} 
Consider a weighted composition operator $C_{w,f}$ on $L^p(\mu)$ with positive weight function $w : X \to (0,\infty)$.
Suppose that the measure space $(X,\fM,\mu)$ is $\sigma$-finite, the bimeasurable map $f$ is injective and non-singular,
and the following properties hold: 
\begin{itemize} 
\item [\rm (a)] For every measurable set $A$ of finite $\mu$-measure and for every $\eps > 0$, 
  there is a measurable set $B \subset  A$ with 
  \[
  \mu(A \backslash B) < \eps \ \ \text{ and } \ \ \lim_{n \to \infty} \mu_n(f^{-n}(B)) = 0.
  \]
\item [\rm (b)] There is a measurable set $B' \subset W \subset \cD(f)$ of finite positive $\mu$-measure, 
  $W$ being a wandering set of $\cD(f)$, and a subset $D$ of $\N$ with $\udens(D) > 0$ such that
  \[
  \sum_{n \in D} \int_{f^n(B')} \left(\prod_{j=0}^{n-1} (w \circ f^{j-n})\right)^{-p} d\mu < \infty.
  \]
\end{itemize}
Then, $C_{w,f}$ is densely distributionally chaotic.
\end{theorem}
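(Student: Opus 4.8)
The plan is to reduce the statement to the two hypotheses behind the criterion \cite[Theorem~33]{BBPW} used in the proof of the equivalence theorem above: that $C_{w,f}$ has a dense set of vectors whose orbits tend to $0$, and that there exist a vector $\phi$ and a $\delta>0$ with $\udens\{n\in\N:\|(C_{w,f})^n\phi\|_p\ge\delta\}>0$. Property~(a) secures the first exactly as before: the simple functions $\sum_j b_j\,\rchi_{B_j}$ with each $B_j$ of finite $\mu$-measure and $\lim_n\mu_n(f^{-n}(B_j))=0$ span a dense subspace $\fX_0$ of $L^p(\mu)$ on which $\lim_n(C_{w,f})^n\varphi=0$, since $\|(C_{w,f})^n\rchi_B\|_p^p=\mu_n(f^{-n}(B))$. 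Thus the real work is to extract the blow-up vector of the second hypothesis from property~(b).

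For this I would use that $w>0$ and $f$ is injective and non-singular to introduce the right inverse $S\varphi=(\varphi\circ f^{-1})\,(w\circ f^{-1})^{-1}$ (understood to vanish off $f(X)$), which satisfies $C_{w,f}\circ S=\mathrm{id}$ a.e. Iterating gives $S^n\varphi=(\varphi\circ f^{-n})\,\big(\prod_{k=1}^n(w\circ f^{-k})\big)^{-1}$, and since $w^{(n)}=\prod_{i=0}^{n-1}(w\circ f^i)$ we have $\prod_{k=1}^n(w\circ f^{-k})=w^{(n)}\circ f^{-n}=\prod_{j=0}^{n-1}(w\circ f^{j-n})$. Consequently $S^n(\rchi_{B'})=\rchi_{f^n(B')}\,(w^{(n)}\circ f^{-n})^{-1}$, so that $\|S^n(\rchi_{B'})\|_p^p=\int_{f^n(B')}\big(\prod_{j=0}^{n-1}(w\circ f^{j-n})\big)^{-p}d\mu$ is precisely the $n$-th summand appearing in property~(b).

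I would then set $\phi=\sum_{n\in D}S^n(\rchi_{B'})$. As $B'\subset W$ with $W$ wandering, the sets $f^n(B')$, $n\in\Z$, are pairwise disjoint, so the series converges in $L^p(\mu)$ exactly when $\sum_{n\in D}\|S^n(\rchi_{B'})\|_p^p<\infty$---which is the hypothesis in (b)---and hence $\phi$ is a well-defined nonzero vector. For $m\in D$, the $n$-th term of $(C_{w,f})^m\phi$ is supported in $f^{n-m}(B')$ (it equals $S^{n-m}(\rchi_{B'})$ when $n\ge m$ and $(C_{w,f})^{m-n}(\rchi_{B'})$ when $n<m$), so these terms again have pairwise disjoint supports, while the $n=m$ term is $(C_{w,f})^m S^m(\rchi_{B'})=\rchi_{B'}$. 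Hence $\|(C_{w,f})^m\phi\|_p\ge\|\rchi_{B'}\|_p=\mu(B')^{1/p}$ for every $m\in D$, and with $\delta=\mu(B')^{1/p}$ we obtain $\udens\{n:\|(C_{w,f})^n\phi\|_p\ge\delta\}\ge\udens(D)>0$, which is the second hypothesis.

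With both hypotheses in hand, \cite[Theorem~33]{BBPW} yields that $C_{w,f}$ is densely distributionally chaotic. The step I expect to be most delicate is the measure-theoretic justification surrounding $S$: because $f$ is only injective and non-singular rather than globally invertible, I must verify that the explicit formula for $S^n(\rchi_{B'})$ and the mutual disjointness of the sets $f^j(B')$ hold $\mu$-a.e., which is exactly where the wandering set $W\subset\cD(f)$ and the non-singularity of $f$ enter; and I must justify passing $(C_{w,f})^m$ through the infinite series defining $\phi$, which is legitimate by boundedness of $C_{w,f}$ together with the $L^p$-convergence of the partial sums, after which the lower bound follows by restricting to $B'$.
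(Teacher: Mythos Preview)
Your proposal is correct and follows essentially the same construction as the paper: both build the vector
$\sum_{n\in D}\frac{1}{v_n}\,\rchi_{f^n(B')}$ (your $\phi$ is the paper's $\phi_1$, where $v_n=\prod_{j=0}^{n-1}(w\circ f^{j-n})$ on $f^n(X)$), and both exploit the pairwise disjointness of the sets $f^k(B')$ to extract the lower bound $\|(C_{w,f})^m\phi\|_p\geq\mu(B')^{1/p}$ for $m\in D$. The only genuine difference is the closing citation: the paper keeps the whole tail sequence $\phi_k=\sum_{n\in D,\,n\geq k}\frac{1}{v_n}\,\rchi_{f^n(B')}\to 0$, uses it with \cite[Proposition~8]{BBMP} to manufacture a distributionally unbounded orbit, and finishes via \cite[Theorem~15]{BBMP}; you instead use the single vector $\phi$ to verify directly the ``(iv)-type'' hypothesis and invoke \cite[Theorem~33]{BBPW}. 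Both routes feed condition~(a) into the same dense-$\fX_0$ argument (which you in fact spell out more explicitly than the paper does), so the two proofs are interchangeable variants of one idea.
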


\begin{proof}
For each $n \in \N$, we define $v_n : X \to (0,\infty)$ by
\[
v_n = \prod_{j=0}^{n-1} (w \circ f^{j-n}) \text{ on } f^n(X) \ \ \ \text{ and } \ \ \ v_n = 1 \text{ on } X \backslash f^n(X).
\]
Note that $v_n$ is well-defined because $f$ is injective. For each $k \in \N$, let
\[
\phi_k = \sum_{n \in D, n \geq k} \frac{1}{v_n}\, \rchi_{f^n(B')}.
\]
The convergence of the series in (b) implies that each  $\phi_k$ belongs to $L^p(\mu)$ and $\phi_k \to 0$ as $k \to \infty$.
Moreover,
\[
(C_{w,f})^n(\phi_k) = (C_{w,f})^n\Big(\frac{1}{v_n}\, \rchi_{f^n(B')}\Big) + \cdots = \rchi_{B'}+ \cdots,
\]
whenever $n \in D$ and $n \geq k$. Hence, by defining $\eps = \mu(B')^{\frac{1}{p}} > 0$, we have that
\[
\|(C_{w,f})^n(\phi_k)\|_p > \eps \ \text{ for all } n \in D \text{ with } n \geq k.
\]
Since $\delta = \udens(D) > 0$, there is an increasing sequence $(N_k)_{k \in \N}$ of positive integers such that
\[
\card\{1 \leq n \leq N_k : \|(C_{w,f})^n(\phi_k)\|_p > \eps\} \geq \frac{\delta}{2}\, N_k \ \text{ for all } k \in \N.
\]
By \cite[Proposition~8]{BBMP}, $C_{w,f}$ admits a distributionally unbounded orbit. 
Thus, by \cite [Theorem~15]{BBMP}, $C_{w,f}$ is densely distributionally chaotic.
\end{proof}

\begin{corollary} 
Consider a weighted shift $B_w$ on $\ell^p(\N)$ with positive weights. 
If there exists a set $D \subset \N$ with $\udens(D) > 0$ such that
\[
\sum_{n \in D} \left(\prod_{j=1}^{n} w_j \right)^{-p} < \infty,
\]
then $B_w$ is densely distributionally chaotic.
\end{corollary}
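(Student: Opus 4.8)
The plan is to recognize this corollary as a direct application of Theorem~\ref{ddcwc}. First I would realize $B_w$ as the weighted composition operator $C_{w,f}$ on $L^p(\mu)$ obtained by taking $X = \N$, $\fM = \cP(\N)$, $\mu$ the counting measure and $f : n \mapsto n+1$, exactly as in the first example of Section~2. With this realization the structural hypotheses of Theorem~\ref{ddcwc} become immediate: $(X,\fM,\mu)$ is $\sigma$-finite since $\N = \bigcup_n \{n\}$, the map $f$ is injective and bimeasurable, and the weights are positive by assumption. The dissipative part of $f$ is the whole space: choosing the wandering set $W = \{1\}$, the forward iterates $f^n(\{1\}) = \{n+1\}$ ($n \geq 0$) are pairwise disjoint and cover $\N$, so $\cD(f) = \N$ and I may take $B' = \{1\}$.

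Next I would check hypothesis (a) of Theorem~\ref{ddcwc}. Since a set of finite counting measure is finite, given a finite set $A$ and $\eps > 0$ I simply take $B = A$, so that $\mu(A \setminus B) = 0 < \eps$; and because $f^{-n}(A) = \{m \in \N : m + n \in A\} = \varnothing$ for all sufficiently large $n$, I get $\mu_n(f^{-n}(A)) = 0$ eventually, whence $\lim_{n \to \infty} \mu_n(f^{-n}(A)) = 0$. Thus hypothesis (a) holds trivially in this setting.

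The heart of the argument is hypothesis (b), and this is where the bookkeeping must be done with care. With $B' = \{1\}$ one has $f^n(B') = \{n+1\}$, so the integrand in (b) is evaluated only at the point $n+1$. Using $f^k(m) = m+k$ I would compute $f^{j-n}(n+1) = j+1$ for $0 \leq j \leq n-1$, hence $(w \circ f^{j-n})(n+1) = w_{j+1}$ and
\[
\Big(\prod_{j=0}^{n-1} (w \circ f^{j-n})\Big)(n+1) = \prod_{j=0}^{n-1} w_{j+1} = \prod_{j=1}^{n} w_j.
\]
Since $\mu$ is the counting measure, the integral over the singleton $f^n(B')$ collapses to a single term, giving
\[
\int_{f^n(B')} \Big(\prod_{j=0}^{n-1}(w \circ f^{j-n})\Big)^{-p}\, d\mu = \Big(\prod_{j=1}^{n} w_j\Big)^{-p}.
\]
Therefore the series in hypothesis (b) equals exactly $\sum_{n \in D} \big(\prod_{j=1}^{n} w_j\big)^{-p}$, which is finite by assumption, and $\udens(D) > 0$ is assumed as well. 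All hypotheses of Theorem~\ref{ddcwc} being met, I conclude that $B_w = C_{w,f}$ is densely distributionally chaotic. The main obstacle is precisely the weight-product computation: one must track the negative powers $f^{j-n}$ and remember that the relevant branch of $v_n$ lives on $f^n(X)$, which is exactly why placing $B'$ inside the forward orbit of the wandering point $1$ is essential.
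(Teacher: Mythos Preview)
Your proof is correct and follows essentially the same route as the paper: you apply Theorem~\ref{ddcwc} with $B' = \{1\}$ and reduce condition (b) to the given series via the identity $\int_{f^n(\{1\})}\big(\prod_{j=0}^{n-1}(w\circ f^{j-n})\big)^{-p}d\mu = \big(\prod_{j=1}^n w_j\big)^{-p}$. The paper's own proof is terser (it simply asserts that condition (a) is ``trivially true'' and states the integral identity without the intermediate computation), whereas you spell out why $f^{-n}(A)$ is eventually empty for finite $A$ and track $f^{j-n}(n+1)=j+1$ explicitly; these are exactly the details the paper suppresses.
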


\begin{proof}
In the present case, $\mu$ is the counting measure on $\N$ and $f : n \in \N \to n+1 \in \N$. 
Condition (a) in Theorem~\ref{ddcwc} is trivially true.
By defining $B' = \{1\}$, we have that $B'$ is a wandering set with finite positive $\mu$-measure such that 
\[
\sum_{n \in D} \int_ {f^n(B')} \left(\prod_{j=0}^{n-1} (w \circ f^{j-n})\right)^{-p} d\mu
  = \sum_{n \in D} \left(\prod_{j=1}^{n} w_j\right)^{-p} < \infty.
\]
Hence, condition (b) in Theorem~\ref{ddcwc} is also true. Thus, $B_w$ is densely distributionally chaotic.
\end{proof}

\begin{corollary}
Consider a weighted shift $B_w$ on $\ell^p(\Z)$ with positive weights. If 
\[
\lim_{n \to \infty} \prod_{j=-n+1}^{0} w_j = 0
\]
and there exists a set $D \subset \N$ with $\udens(D) > 0$ such that
\[
\sum_{n \in D} \left(\prod_{j=1}^{n} w_j \right)^{-p}  < \infty,
\]
then $B_w$ is densely distributionally chaotic.
\end{corollary}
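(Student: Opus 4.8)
The plan is to derive this corollary from Theorem~\ref{ddcwc}, mirroring the treatment of the unilateral case in the preceding corollary. First I would realize $B_w$ as $C_{w,f}$ by taking $X = \Z$, $\fM = \cP(\Z)$, $\mu$ the counting measure, and $f : n \mapsto n+1$. The standing hypotheses of Theorem~\ref{ddcwc} are then immediate: $\mu$ is $\sigma$-finite, $f$ is a bijection (hence injective) and non-singular since $\mu(f^{-1}(B)) = \mu(B)$ for every $B$, and the weights are positive by assumption. Moreover, as $f$ is the translation on $\Z$, the singleton $W = \{1\}$ is a wandering set whose iterates $f^n(W) = \{1+n\}$ are pairwise disjoint and exhaust $\Z$, so $f$ is dissipative with $\cD(f) = \Z$; I would then take $B' = \{1\} \subset W \subset \cD(f)$.

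It remains to match the two hypotheses of the corollary with conditions (a) and (b) of Theorem~\ref{ddcwc}. For condition (a), a set of finite counting measure is simply a finite set, so it suffices to show $\mu_n(f^{-n}(\{i\})) \to 0$ for every singleton and then take $B = A$. A direct computation gives $f^{-n}(\{i\}) = \{i-n\}$ and hence $\mu_n(f^{-n}(\{i\})) = (w_{i-n} \cdots w_{i-1})^p$. Writing $P_m = \prod_{j=-m+1}^{0} w_j$, one factors $w_{i-n} \cdots w_{i-1}$ as $P_{n-i+1}$ times a fixed finite product of weights when $i \geq 1$, or as $P_{n-i+1}$ divided by such a product when $i \leq 0$; since $P_m \to 0$ by the first hypothesis, condition (a) follows. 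For condition (b), with $B' = \{1\}$ we have $f^n(B') = \{1+n\}$ and $(w \circ f^{j-n})(1+n) = w_{1+j}$, so the product $\prod_{j=0}^{n-1}(w \circ f^{j-n})$ evaluated on $f^n(B')$ equals $\prod_{j=1}^{n} w_j$. The integral over $f^n(B')$ therefore collapses to $(\prod_{j=1}^{n} w_j)^{-p}$, and condition (b) becomes exactly the second hypothesis $\sum_{n \in D} (\prod_{j=1}^{n} w_j)^{-p} < \infty$.

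The only genuinely new point compared with the unilateral corollary is condition (a): in $\ell^p(\N)$ the preimages $f^{-n}(\{i\})$ become empty as soon as $n \geq i$, so (a) holds trivially, whereas in $\ell^p(\Z)$ they never vanish, and this is precisely where the first hypothesis $\lim_{n} \prod_{j=-n+1}^{0} w_j = 0$ is needed. The main (and modest) obstacle is thus the index bookkeeping in the factorization of $w_{i-n} \cdots w_{i-1}$ that transfers the decay of $P_m$ to every singleton; once this is settled, Theorem~\ref{ddcwc} immediately yields that $B_w$ is densely distributionally chaotic.
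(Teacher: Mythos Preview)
Your proposal is correct and follows essentially the same approach as the paper: both realize $B_w$ as $C_{w,f}$ on $(\Z,\cP(\Z),\mu)$ with $f(n)=n+1$, take $B'=\{1\}$ as a wandering set to verify condition~(b) of Theorem~\ref{ddcwc}, and use the hypothesis $\prod_{j=-n+1}^{0} w_j \to 0$ to show that $\mu_n(f^{-n}(A)) \to 0$ for every finite $A$, thereby verifying condition~(a) with $B=A$. Your explicit factorization via $P_m$ is a more detailed version of what the paper leaves as the one-line observation $\sum_{i\in A}(w_{-n+i}\cdots w_{-1+i})\to 0$.
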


\begin{proof}
In the present case, $\mu$ is the counting measure on $\Z$ and $ f : n \in \Z \to n+1 \in \Z$. 
Since $\lim_{n \to \infty} (w_{-n+1} \cdots w_0) = 0$, for every finite set $A \subset \Z$, 
\[
\lim_{n \to \infty} \mu_n(f^{-n}(A)) = \lim_{n \to \infty} \sum_{i \in A} \mu_n(f^{-n}(\{i\})) 
  = \lim_{n \to \infty} \sum_{i \in A} (w_{-n+i} \cdots w_{-1+i}) = 0.
\]
Now, $B' = \{1\}$ is a wandering set with finite positive $\mu$-measure such that
\[
\sum_{n \in D} \int_{f^{n}(B')} \left(\prod_{j=0}^{n-1} (w \circ f^{j-n})\right)^{-p} d\mu 
  = \sum_{n \in D} \left(\prod_{j=1}^{n} w_j\right)^{-p} < \infty.
\]
Thus, by Theoren \ref{ddcwc}, $B_w$ is densely distributionally chaotic.
\end{proof}

Let us mention that reference \cite{MOP} contains some sufficient conditions for weighted shifts on K\"othe sequence spaces
to be distributionally chaotic.

%%%%%%%%%%%%%%%%%%%%%%%%%%%%%%
%%%%%%%%%%%%%%%%%%%%%%%%%%%%%%

\subsection{The case of the space $C_0(\Omega)$}

\begin{theorem}\label{Dist-C0X-1}
A weighted composition operator $C_{w,f}$ on $C_0(\Omega)$ is distributionally chaotic if and only if there exists a sequence
$(B_i)_{i \in \N}$ of relatively compact open sets in $\Omega$ such that the following properties hold:
\begin{itemize}
\item [\rm (a)] There exists a set $D \subset \N$ with $\udens(D) = 1$ such that
  \[
  \lim_{n \in D} \|w^{(n)}\|_{f^{-n}(B_i)} = 0 \ \text{ for all } i \in \N.
  \]
  \item [\rm (b)] There exist $\eps > 0$ and an increasing sequence $(N_k)_{k \in \N}$ of positive integers such that,
  for each $k \in \N$, there exists $i \in \N$ with
  \[
  \card\{1 \leq n \leq N_k : \|w^{(n)}\|_{f^{-n}(B_i)} > k\} \geq \eps N_k.
  \]
\end{itemize}
\end{theorem}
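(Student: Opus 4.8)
The plan is to prove both implications through the characterizations of distributional chaos recalled above, treating this statement as the $C_0(\Omega)$ counterpart of Theorem~\ref{Dist-1}. Throughout I abbreviate $\beta_{i,n} = \|w^{(n)}\|_{f^{-n}(B_i)}$ and recall from the proof of Theorem~\ref{PB-C0X} that $\|(C_{w,f})^n(\varphi)\| = \|(\varphi\circ f^n)\cdot w^{(n)}\| = \sup_{x\in\Omega}|\varphi(f^n(x))|\,|w^{(n)}(x)|$ for every $\varphi\in C_0(\Omega)$.

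For the necessity ($\Rightarrow$), I would start from a distributionally irregular vector $\varphi$ for $C_{w,f}$, so that there are $D,E\subset\N$ with $\udens(D)=\udens(E)=1$, $\lim_{n\in D}\|(C_{w,f})^n(\varphi)\|=0$ and $\lim_{n\in E}\|(C_{w,f})^n(\varphi)\|=\infty$. The natural family is the sequence of relatively compact open superlevel sets $B_i=\{x\in\Omega:|\varphi(x)|>1/i\}$ ($i\in\N$), whose closures lie in the compact sets $\{|\varphi|\ge 1/i\}$. Property~(a) then falls out of the pointwise estimate $\beta_{i,n}\le i\,\|(C_{w,f})^n(\varphi)\|$, valid because $|\varphi(f^n(x))|>1/i$ on $f^{-n}(B_i)$; the same $D$ works for every $i$.

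The main obstacle lies in deriving property~(b), where a single index $i$ must serve a positive-density block of $n$'s for each $k$. Here I would exploit that the $B_i$ are nested, so $\beta_{i,n}$ is non-decreasing in $i$, with $\sup_i\beta_{i,n}=\|w^{(n)}\|_{f^{-n}(\{\varphi\ne0\})}\ge\|(C_{w,f})^n(\varphi)\|/\|\varphi\|$. For each $k$ set $E_k=\{n\in E:\|(C_{w,f})^n(\varphi)\|>k\|\varphi\|\}$, which is cofinite in $E$ and hence satisfies $\udens(E_k)=1$; for every $n\in E_k$ there is a least index $g_k(n)$ with $\beta_{g_k(n),n}>k$. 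Choosing $N_k$ increasing with $\card(E_k\cap[1,N_k])\ge\tfrac12 N_k$ and then $i_k=\max_{n\in E_k\cap[1,N_k]}g_k(n)$, monotonicity in $i$ gives $\beta_{i_k,n}>k$ for all $n\in E_k\cap[1,N_k]$, which yields~(b) with $\eps=\tfrac12$.

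For the sufficiency ($\Leftarrow$), I would verify the Distributional Chaos Criterion. The delicate point is that the criterion needs vectors whose orbits both tend to $0$ along $D$ (forcing small supports, controlled by~(a)) and stay large (forcing the support to meet the points where $w^{(n)}$ is big, supplied by~(b)). I reconcile this by working one $k$ at a time: using the index $i_k$ and the $\ge\eps N_k$ values $n$ from~(b), for each such $n$ I pick a witness $x_n\in f^{-n}(B_{i_k})$ with $|w^{(n)}(x_n)|>k$ and record the finitely many points $f^n(x_n)\in B_{i_k}$. By local compactness and Urysohn's lemma there is $\phi^{(k)}\in C_c(\Omega)$ with $0\le\phi^{(k)}\le1$, $\supp\phi^{(k)}\subset B_{i_k}$, and $\phi^{(k)}=1$ at each recorded point. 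Then $\supp\phi^{(k)}\subset B_{i_k}$ gives $\|(C_{w,f})^n(\phi^{(k)})\|\le\beta_{i_k,n}\to0$ along $D$ (criterion~(a)), while $\phi^{(k)}(f^n(x_n))=1$ gives $\|(C_{w,f})^n(\phi^{(k)})\|>k$ for the chosen $n$'s. Taking $x_k=\phi^{(k)}$ and $y_k=\tfrac1k\phi^{(k)}$ (so that $\|y_k\|\to0$ and $\|(C_{w,f})^n(y_k)\|>1$ on a set of size $\ge\eps N_k$) verifies the criterion after relabelling the constant, and hence $C_{w,f}$ is distributionally chaotic.
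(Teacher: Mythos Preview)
Your proof is correct and follows essentially the same route as the paper: the same level-set family $B_i=\{|\varphi|>1/i\}$ for necessity, and the Distributional Chaos Criterion with Urysohn bump functions for sufficiency. In the $(\Leftarrow)$ direction you are in fact slightly more economical than the paper---it builds an auxiliary nested sequence $(A_k)$ with $\overline{A_k}\subset A_{k+1}\subset B_{i_1}\cup\cdots\cup B_{i_{k+1}}$ and takes $\phi_k$ with $\phi_k=1$ on $\overline{A_k}$ and $\supp\phi_k\subset A_{k+1}$, whereas you simply take $\phi^{(k)}$ supported in $B_{i_k}$ and equal to $1$ at the finitely many witness points; since a single $D$ already handles every $B_i$, your simplification is harmless.
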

 
\begin{proof}
($\Rightarrow$): Let $\varphi \in C_0(\Omega)$ be a distributionally irregular vector for $C_{w,f}$.
There exist $D,E \subset \N$ with $\udens(D) = \udens(E) = 1$ such that
\[
\lim_{n \in D} \|(C_{w,f})^n(\varphi)\| = 0 \ \ \text{ and } \ \ \lim_{n \in E} \|(C_{w,f})^n(\varphi)\| = \infty.
\]
Consider the relatively compact open sets
\[
B_i = \{x \in \Omega : |\varphi(x)| > i^{-1}\} \ \ \ \ \ (i \in \N).
\]
Since
\[
i^{-1} \|w^{(n)}\|_{f^{-n}(B_i)} \leq \|(\varphi \circ f^n) \cdot w^{(n)}\|_{f^{-n}(B_i)} \leq \|(C_{w,f})^n(\varphi)\|,
\]
we have that property (a) holds. On the other hand, for each $k \in \N$, since
\[
\udens\{n \in \N : \|(C_{w,f})^n(\varphi)\| > k \|\varphi\|\} \geq \udens(E) = 1,
\]
there exists $N_k \in \N$ such that
\[
\card\{1 \leq n \leq N_k : \|(C_{w,f})^n(\varphi)\| > k \|\varphi\|\} \geq N_k \Big(1 - \frac{1}{2k}\Big).
\]
Moreover, the $N_k$'s can be chosen so that the sequence $(N_k)_{k \in \N}$ is increasing.
Since
\[
\|(C_{w,f})^n(\varphi)\| = \sup_{i \in \N} \|(\varphi \circ f^n) \cdot w^{(n)}\|_{f^{-n}(B_i)} 
  \leq \|\varphi\| \sup_{i \in \N} \|w^{(n)}\|_{f^{-n}(B_i)},
\]
we see that there exists $i_k \in \N$ such that
\[
\card\{1 \leq n \leq N_k : \|w^{(n)}\|_{f^{-n}(B_{i_k})} > k\} \geq N_k \Big(1 - \frac{1}{2k}\Big).
\]
Thus, property (b) holds with $\eps = 1/2$.

\smallskip\noindent
($\Leftarrow$): By property (b), for each $k \in \N$, there exists $i_k \in \N$ such that
\[
\card F_k \geq \eps N_k, \ \text{ where } F_k = \{1 \leq n \leq N_k : \|w^{(n)}\|_{f^{-n}(B_{i_k})} > k\}.
\]
We shall construct a sequence $(A_k)_{k \in \N}$ of relatively compact open sets in $\Omega$ such that
$\ov{A_k} \subset B_{i_1} \cup \ldots \cup B_{i_k}$, $\ov{A_k} \subset A_{k+1}$ and
\[
\{1 \leq n \leq N_k : \|w^{(n)}\|_{f^{-n}(A_k)} > k\} \supset F_k \ \text{ for all } k \in \N.
\]
For this purpose, for each $n \in F_k$, take $x_{k,n} \in f^{-n}(B_{i_k})$ such that $|w^{(n)}(x_{k,n})| > k$.
We begin by choosing an open set $A_1$ in $\Omega$ satisfying
\[
\{f^n(x_{1,n}) : n \in F_1\} \subset A_1 \subset \ov{A_1} \subset B_{i_1}.
\]
If $k \geq 2$ and $A_1,\ldots,A_{k-1}$ have already been chosen, then take an open set $A_k$ in $\Omega$ such that
\[
\ov{A_{k-1}} \cup \{f^n(x_{k,n}) : n \in F_k\} \subset A_k \subset \ov{A_k} \subset B_{i_1} \cup \ldots \cup B_{i_k}.
\]
It is clear that the sequence $(A_k)_{k \in \N}$ constructed in this way has the desired properties.
Now, for each $k \in \N$, take a continuous map $\phi_k : \Omega \to [0,1]$ such that
\[
\phi_k = 1 \text{ on } \ov{A_k} \ \ \text{ and } \ \ \supp \phi_k \subset A_{k + 1}.
\]
It follows from property (a) that
\[
\lim_{n \in D} \|(C_{w,f})^n(\phi_k)\| \leq \lim_{n \in D} \|w^{(n)}\|_{f^{-n}(A_{k + 1})} = 0 \ \text{ for all } k \in \N.
\]
Let $\varphi_k = \frac{1}{k} \phi_k$ for each $k \in \N$. Obviously, $\varphi_k \to 0$ as $k \to \infty$.
If $n \in \{1,\ldots,N_k\}$ satisfies $\|w^{(n)}\|_{f^{-n}(A_k)} > k$, then
\[
\|(C_{w,f})^n(\varphi_k)\| \geq \|(\varphi_k \circ f^n) \cdot w^{(n)}\|_{f^{-n}(A_k)} = \frac{1}{k}\, \|w^{(n)}\|_{f^{-n}(A_k)} > 1.
\]
Thus,
\[
\card\{1 \leq n \leq N_k : \|(C_{w,f})^n(\varphi_k)\| > 1\} \geq \card F_k \geq \eps N_k \ \text{ for all } k \in \N.
\]
Hence, by the Distributional Chaos Criterion, $C_{w,f}$ is distributionally chaotic.
\end{proof}

\begin{remark}
Note that the sequence $(B_i)_{i \in \N}$ constructed in the proof of Theorem~\ref{Dist-C0X-1} has the following additional property:
$\ov{B_i} \subset B_{i+1}$ for all $i \in \N$.
\end{remark}

\begin{corollary}\label{dcuws-c0}
A weighted shift $B_w$ on $c_0(\N)$ is distributionally chaotic if and only if
\begin{equation}\label{eq-dcuws-c0}
\inf_{k \in \N} \Big(\sup_{N \in \N} \frac{\card\{1 \leq n \leq N : |w_i \cdots w_{i+n-1}| > k \text{ for some } i \in \N\}}{N}\Big) > 0.
\end{equation}
\end{corollary}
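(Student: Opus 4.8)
The plan is to read off both implications directly from Theorem~\ref{Dist-C0X-1}, after translating every quantity into the language of products of consecutive weights. First I would record the dictionary for the shift: here $\Omega = \N$ is discrete, so the relatively compact open sets are exactly the finite subsets of $\N$, and $f(m) = m+1$ gives $f^{-n}(\{j\}) = \{j-n\}$, which is empty when $j \le n$. By Theorem~\ref{PB-C0X} and the definition of $w^{(n)}$ one has $w^{(n)}(i) = w_i \cdots w_{i+n-1}$, hence $\|w^{(n)}\| = \sup_{i \in \N}|w_i \cdots w_{i+n-1}|$, and for a finite set $B$, $\|w^{(n)}\|_{f^{-n}(B)} = \max_{j \in B,\, j>n}|w_{j-n}\cdots w_{j-1}|$. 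Writing $S_k = \{n \in \N : \|w^{(n)}\| > k\}$ and $Q_k(N) = \card(S_k \cap [1,N])$, the left-hand side of (\ref{eq-dcuws-c0}) is precisely $\inf_{k}\sup_{N} Q_k(N)/N$, since the inner count in (\ref{eq-dcuws-c0}) equals $Q_k(N)$ (as ``$|w_i\cdots w_{i+n-1}|>k$ for some $i$'' says exactly $\|w^{(n)}\|>k$).

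For the forward implication I would invoke Theorem~\ref{Dist-C0X-1}: distributional chaos yields a sequence $(B_i)$ of finite sets, a set $D$ with $\udens(D)=1$, some $\eps>0$ and an increasing $(N_k)$ satisfying (a) and (b). Since $\|w^{(n)}\|_{f^{-n}(B_i)} \le \|w^{(n)}\|$ for every $i$, condition (b) forces $Q_k(N_k) \ge \eps N_k$ for each $k$, whence $\sup_N Q_k(N)/N \ge \eps$ for all $k$ and therefore $\inf_k \sup_N Q_k(N)/N \ge \eps > 0$, which is exactly (\ref{eq-dcuws-c0}). This direction is routine.

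The converse is where the real work lies, and the delicate point is that (\ref{eq-dcuws-c0}) is phrased with $\sup_N$ rather than $\limsup_N$, while condition (b) of Theorem~\ref{Dist-C0X-1} needs the relevant densities to persist along an increasing, hence unbounded, sequence. The key observation bridging this gap is that $m_k := \min S_k \to \infty$ as $k \to \infty$ (each $S_k$ being nonempty, for otherwise $\sup_N Q_k(N)/N = 0$ would contradict positivity of the left-hand side): indeed, for each fixed $N$ the quantity $\max_{1 \le n \le N}\|w^{(n)}\|$ is finite, so once $k$ exceeds it no $n \le N$ lies in $S_k$. Consequently every $N$ with $Q_k(N) > 0$ satisfies $N \ge m_k$, so any witness $N$ for $Q_k(N)/N > c/2$, where $c$ denotes the positive left-hand side of (\ref{eq-dcuws-c0}), must itself tend to infinity with $k$. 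Choosing one such witness $\nu_k \ge m_k$ for each $k$, we get $\nu_k \to \infty$, and I would then greedily extract indices $\ell_1 < \ell_2 < \cdots$ with $\nu_{\ell_1} < \nu_{\ell_2} < \cdots$; note $\ell_r \ge r$.

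Finally I would build the required family. For each running index $r$, at the pair $(k,N) = (\ell_r, \nu_{\ell_r})$ there are at least $(c/2)\nu_{\ell_r}$ integers $n \le \nu_{\ell_r}$ with $\|w^{(n)}\| > \ell_r$; for each such $n$ pick $x_n \in \N$ with $|w_{x_n}\cdots w_{x_n+n-1}| > \ell_r$ and set $y_n = x_n + n$, so that $\|w^{(n)}\|_{f^{-n}(\{y_n\})} > \ell_r$. Let $B_r$ be the finite set of all these $y_n$. Condition (a) of Theorem~\ref{Dist-C0X-1} then holds trivially with $D = \N$: each $B_r$ is finite, so $f^{-n}(B_r) = \varnothing$ and hence $\|w^{(n)}\|_{f^{-n}(B_r)} = 0$ for all large $n$. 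For condition (b), take $\eps = c/2$ and the increasing sequence $(\nu_{\ell_r})_r$, using the index $i = r$ at step $r$; since $\ell_r \ge r$, the selected $n$'s satisfy $\|w^{(n)}\|_{f^{-n}(B_r)} > \ell_r \ge r$, so $\card\{1 \le n \le \nu_{\ell_r} : \|w^{(n)}\|_{f^{-n}(B_r)} > r\} \ge (c/2)\nu_{\ell_r}$. Theorem~\ref{Dist-C0X-1} then yields distributional chaos. The main obstacle, as indicated, is precisely the $\sup$-versus-$\limsup$ mismatch, resolved by the estimate $m_k \to \infty$ that automatically forces the witnessing lengths to infinity.
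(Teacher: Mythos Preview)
Your proof is correct and follows essentially the same route as the paper: both directions are read off Theorem~\ref{Dist-C0X-1}, the forward one by the trivial estimate $\|w^{(n)}\|_{f^{-n}(B_i)}\le\|w^{(n)}\|$, and the converse by selecting witnesses $x_n$ for $\|w^{(n)}\|>k$ and packaging the points $x_n+n$ into finite sets $B_k$, with condition~(a) holding automatically since $f^{-n}(B)=\varnothing$ eventually. The one place where you are more careful than the paper is the passage to an increasing sequence $(N_k)$: the paper simply writes ``by passing to a subsequence, if necessary, we may assume that the sequence $(N_k)_{k\in\N}$ is increasing,'' whereas you justify this explicitly via the observation that $\|w^{(n)}\|\le\|w\|^n<\infty$ forces $m_k=\min S_k\to\infty$, so any witnessing $N_k$ must tend to infinity. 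That extra sentence is a genuine improvement in rigor, but otherwise the two arguments coincide.
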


\begin{proof}
($\Leftarrow$): Since condition (a) in Theorem~\ref{Dist-C0X-1} is superfluous in the present case, it is enough to establish condition~(b).
The hypothesis implies the existence of an $\eps > 0$ and a sequence $(N_k)_{k \in \N}$ of positive integers such that
\[
\card I_k > \eps N_k, \ \text{ where } I_k = \{1 \leq n \leq N_k : |w_i \cdots w_{i+n-1}| > k \text{ for some } i \in \N\}.
\]
By definition, for each $n \in I_k$, there exists $i_{k,n} \in \N$ such that $|w_{i_{k,n}} \cdots w_{i_{k,n}+n-1}| > k$. Let
\[
B_k = \{i_{k,n} + n : n \in I_k\}.
\]
Since $\|w^{(n)}\|_{f^{-n}(B_k)} \geq |w^{(n)}(i_{k,n})| = |w_{i_{k,n}} \cdots w_{i_{k,n}+n-1}| > k$ for every $n \in I_k$, we obtain
\[
\card\{1 \leq n \leq N_k : \|w^{(n)}\|_{f^{-n}(B_k)} > k\} \geq \card I_k > \eps N_k.
\]
Now, by passing to a subsequence, if necessary, we may assume that the sequence $(N_k)_{k \in \N}$ is increasing.
Thus, condition (b) in Theorem~\ref{Dist-C0X-1} holds and $B_w$ is distributionally chaotic.

\smallskip\noindent
($\Rightarrow$): By Theorem~\ref{Dist-C0X-1}, for each $k \in \N$, there exists $i_k \in \N$ such that
\[
\card\{1 \leq n \leq N_k : \|w^{(n)}\|_{f^{-n}(B_{i_k})} > k\} \geq \eps N_k.
\]
If $n \in \{1,\ldots,N_k\}$ satisfies $\|w^{(n)}\|_{f^{-n}(B_{i_k})} > k$, then $|w_i \cdots w_{i+n-1}| = |w^{(n)}(i)| > k$
for some $i \in f^{-n}(B_{i_k})$. Therefore,
\[
\frac{\card\{1 \leq n \leq N_k : |w_i \cdots w_{i+n-1}| > k \text{ for some } i \in \N\}}{N_k} \geq \eps.
\]
This clearly implies (\ref{eq-dcuws-c0}).
\end{proof}

\begin{corollary}\label{dcbws-c0}
A weighted shift $B_w$ on $c_0(\Z)$ is distributionally chaotic if and only if
there exist a set $S \subset \Z$ and a set $D \subset \N$ with $\udens(D) = 1$ such that
\begin{equation}\label{eq1-dcbws-c0}
\lim_{n \in D} |w_{i-n} \cdots w_{i-1}| = 0 \ \text{ for all } i \in S
\end{equation}
and
\begin{equation}\label{eq2-dcbws-c0}
\inf_{k \in \N} \Big(\sup_{N \in \N} \frac{\card\{1 \leq n \leq N : |w_{i-n} \cdots w_{i-1}| > k \text{ for some } i \in S\}}{N}\Big) > 0.
\end{equation}
\end{corollary}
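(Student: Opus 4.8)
The plan is to deduce everything from Theorem~\ref{Dist-C0X-1}, exactly as Corollary~\ref{dcuws-c0} does in the unilateral case, the only genuine difference being that here condition~(a) of that theorem is no longer automatic. First I would record the translation of the abstract data into shift language: taking $\Omega = \Z$ with the discrete topology and $f(n) = n+1$, the relatively compact open sets are precisely the finite subsets of $\Z$, and a direct computation gives $w^{(n)}(x) = w_x \cdots w_{x+n-1}$, so that $f^{-n}(B) = \{b-n : b \in B\}$ and
\[
\|w^{(n)}\|_{f^{-n}(B)} = \sup_{i \in B} |w_{i-n} \cdots w_{i-1}|.
\]
With this dictionary in hand, conditions~(a) and~(b) of Theorem~\ref{Dist-C0X-1} become statements about $\sup_{i \in B}|w_{i-n}\cdots w_{i-1}|$, and the task reduces to passing between a sequence $(B_i)$ of finite sets and a single set $S$.

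For the necessity I would take the sequence $(B_i)_{i\in\N}$ of finite sets and the set $D$ given by Theorem~\ref{Dist-C0X-1}, and set $S = \bigcup_{i\in\N} B_i$, keeping the same $D$. Any $m \in S$ lies in some $B_{i_0}$, and the bound $|w_{m-n}\cdots w_{m-1}| \le \|w^{(n)}\|_{f^{-n}(B_{i_0})}$ together with condition~(a) yields (\ref{eq1-dcbws-c0}). For (\ref{eq2-dcbws-c0}) I would observe that $\|w^{(n)}\|_{f^{-n}(B_i)} > k$ forces some $m \in B_i \subset S$ with $|w_{m-n}\cdots w_{m-1}| > k$; applying this to the witness sets furnished by condition~(b) shows that, for every $k$, the window $N_k$ makes the counting ratio in (\ref{eq2-dcbws-c0}) at least $\eps$, so the infimum over $k$ is positive.

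For the sufficiency I would run the reverse construction, closely following the proof of Corollary~\ref{dcuws-c0}. Writing $\eps > 0$ for the infimum in (\ref{eq2-dcbws-c0}), for each $k$ I choose a window $N_k$ with
\[
\card\{1 \le n \le N_k : |w_{m-n}\cdots w_{m-1}| > k \text{ for some } m \in S\} > \frac{\eps}{2}\, N_k,
\]
and for each $n$ in this set I pick some $m \in S$ realizing $|w_{m-n}\cdots w_{m-1}| > k$; the finitely many chosen indices form a finite set $B_k \subset S$ satisfying $\card\{1 \le n \le N_k : \|w^{(n)}\|_{f^{-n}(B_k)} > k\} \ge \frac{\eps}{2} N_k$. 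Since each $B_k$ is a finite subset of $S$, condition (\ref{eq1-dcbws-c0}) forces $\lim_{n\in D}\|w^{(n)}\|_{f^{-n}(B_k)} = 0$, so the common set $D$ supplies condition~(a) of Theorem~\ref{Dist-C0X-1} while the sets $B_k$ supply condition~(b), and the theorem then yields distributional chaos.

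The step that needs real care, and the only place where the bilateral case genuinely differs from the unilateral one beyond the presence of condition~(a), is arranging that $(N_k)$ be increasing, as Theorem~\ref{Dist-C0X-1}(b) requires, whereas (\ref{eq2-dcbws-c0}) only provides, for each fixed threshold $k$, a supremum over windows $N$. Here I would invoke the standing assumption that $w$ is a bounded sequence: for a window of length $N$ one has $|w_{m-n}\cdots w_{m-1}| \le (1+\|w\|_\infty)^{N}$ for all $m$ and all $n \le N$, so once $k \ge (1+\|w\|_\infty)^{N}$ no $n \le N$ can meet the threshold $k$. Consequently the witnessing windows $N_k$ cannot stay bounded, and I can extract a strictly increasing subsequence $N_{k_1} < N_{k_2} < \cdots$. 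Re-indexing by $j$ replaces the threshold $k$ by $k_j \ge j$, and since $\|w^{(n)}\|_{f^{-n}(B)} > k_j$ implies $\|w^{(n)}\|_{f^{-n}(B)} > j$, the counting estimate survives with the matching threshold $j$. This monotonicity argument, rather than any of the set-theoretic translations, is where the essential work lies.
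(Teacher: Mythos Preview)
Your proof is correct and follows essentially the same approach as the paper: both directions are deduced from Theorem~\ref{Dist-C0X-1} by passing between a sequence of finite subsets of $\Z$ and the single set $S=\bigcup_i B_i$. The only cosmetic differences are that the paper, for sufficiency, enumerates \emph{all} nonempty finite subsets of $S$ as its sequence $(B_i)_{i\in\N}$ (so the witnessing set $\{i_{k,n}:n\in I_k\}$ is automatically one of them) rather than building $B_k$ ad hoc, and it handles the monotonicity of $(N_k)$ with the single phrase ``by passing to a subsequence, if necessary,'' whereas you spell out why this is legitimate via the bound $|w_{m-n}\cdots w_{m-1}|\le \|w\|_\infty^n$; your explicit justification is a nice addition but not a different idea.
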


\begin{proof}
($\Leftarrow$): Let $(B_i)_{i \in \N}$ be an enumeration of the collection of all nonempty finite subsets of $S$. For every $i \in \N$,
\[
\lim_{n \in D} \|w^{(n)}\|_{f^{-n}(B_i)} = \lim_{n \in D} \Big(\max_{j \in B_i} |w_{j-n} \cdots w_{j-1}|\Big) = 0,
\]
because of (\ref{eq1-dcbws-c0}). Hence, condition (a) in Theorem~\ref{Dist-C0X-1} holds.
By (\ref{eq2-dcbws-c0}), there exist $\eps > 0$ and a sequence $(N_k)_{k \in \N}$ of positive integers such that
\[
\card I_k > \eps N_k, \ \text{ where } I_k = \{1 \leq n \leq N_k : |w_{i-n} \cdots w_{i-1}| > k \text{ for some } i \in S\}.
\]
For each $n \in I_k$, take $i_{k,n} \in S$ such that $|w_{i_{k,n}-n} \cdots w_{i_{k,n}-1}| > k$. 
Then, $\{i_{k,n} : n \in I_k\} = B_{i_k}$ for some $i_k \in \N$. Since 
\[
\card\{1 \leq n \leq N_k : \|w^{(n)}\|_{f^{-n}(B_{i_k})} > k\} \geq \card I_k > \eps N_k,
\]
we see that (by passing to a subsequence, if necessary) condition (b) in Theorem~\ref{Dist-C0X-1} also holds.
Thus, $B_w$ is distributionally chaotic.

\smallskip\noindent
($\Rightarrow$): Let $(B_i)_{i \in \N}$ be the sequence given by Theorem~\ref{Dist-C0X-1}.
Define $S = \bigcup_{i \in \N} B_i \subset \Z$. 
It is straightforward to check that properties (a) and (b) in Theorem~\ref{Dist-C0X-1} imply 
(\ref{eq1-dcbws-c0}) and (\ref{eq2-dcbws-c0}), respectively.
\end{proof}

\begin{remark}
If the weights $w_n$ are nonzero and 
\[
\lim_{n \to \infty} \prod_{j=-n+1}^{0} w_j = 0,
\]
then (\ref{eq1-dcbws-c0}) holds for $S = \Z$ and $D = \N$.
Hence, in this case, the weighted shift $B_w$ on $c_0(\Z)$ is distributionally chaotic if and only if
\[
\inf_{k \in \N} \Big(\sup_{N \in \N} \frac{\card\{1 \leq n \leq N : |w_i \cdots w_{i+n-1}| > k \text{ for some } i \in \Z\}}{N}\Big) > 0.
\]
\end{remark}

The corresponding results for weighted translation operators are stated below, but their proofs are left to the reader.

\begin{corollary}
A weighted translation operator $T_w$ on $C_0[1,\infty)$ is distributionally chaotic if and only if
\[
\inf_{k \in \N} \Big(\sup_{N \in \N} \frac{\card\{1 \leq n \leq N : |w(x) \cdots w(x+n-1)| > k \text{ for some } x \in [1,\infty)\}}{N}\Big) > 0.
\]
\end{corollary}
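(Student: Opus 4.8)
The plan is to obtain this corollary as the specialization of Theorem~\ref{Dist-C0X-1} to $\Omega = [1,\infty)$, $f : x \mapsto x+1$, following the same pattern as the proof of Corollary~\ref{dcuws-c0}. In this setting $w^{(n)}(x) = w(x) w(x+1) \cdots w(x+n-1)$ and $f^{-n}(B) = (B-n) \cap [1,\infty)$ for every $B \subset [1,\infty)$. The first thing I would record is that condition (a) of Theorem~\ref{Dist-C0X-1} is automatically at our disposal: if $B$ is a relatively compact (hence bounded) open subset of $[1,\infty)$, say $B \subset [1,M]$, then $f^{-n}(B) = (B-n) \cap [1,\infty)$ is empty as soon as $n > M-1$, so that $\|w^{(n)}\|_{f^{-n}(B)} = 0$ for all large $n$ and $\lim_{n \to \infty} \|w^{(n)}\|_{f^{-n}(B)} = 0$. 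Thus for any sequence of relatively compact open sets, condition (a) holds with $D = \N$, and only condition (b) carries content.

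For the sufficiency I would assume the displayed density condition and extract $\eps > 0$ and positive integers $N_k$ with $\card I_k > \eps N_k$, where
\[
I_k = \{1 \le n \le N_k : |w(x)\cdots w(x+n-1)| > k \text{ for some } x \in [1,\infty)\};
\]
passing to a subsequence I may take $(N_k)_{k \in \N}$ increasing. For each $k$ and each $n \in I_k$ I would pick $x_{k,n} \in [1,\infty)$ with $|w(x_{k,n})\cdots w(x_{k,n}+n-1)| > k$, and then let $B_k$ be a bounded open subset of $[1,\infty)$ containing the finite set $\{x_{k,n} + n : n \in I_k\}$ (a finite union of small open intervals works, and $B_k$ is relatively compact). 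Since $f^n(x_{k,n}) = x_{k,n}+n \in B_k$, we have $x_{k,n} \in f^{-n}(B_k)$, whence $\|w^{(n)}\|_{f^{-n}(B_k)} \ge |w^{(n)}(x_{k,n})| > k$. Therefore $\card\{1 \le n \le N_k : \|w^{(n)}\|_{f^{-n}(B_k)} > k\} \ge \card I_k > \eps N_k$, so $(B_k)_{k \in \N}$ satisfies condition (b) (take $i = k$), while condition (a) holds by the observation above; Theorem~\ref{Dist-C0X-1} then yields distributional chaos.

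For the necessity I would start from a distributionally chaotic $T_w$ and take the sequence $(B_i)_{i \in \N}$ furnished by Theorem~\ref{Dist-C0X-1}. Condition (b) supplies $\eps > 0$ and an increasing sequence $(N_k)$ such that for each $k$ some index $i_k$ obeys $\card\{1 \le n \le N_k : \|w^{(n)}\|_{f^{-n}(B_{i_k})} > k\} \ge \eps N_k$. For any such $n$, the inequality $\sup_{x \in f^{-n}(B_{i_k})} |w^{(n)}(x)| > k$ produces a point $x \in f^{-n}(B_{i_k}) \subset [1,\infty)$ with $|w(x)\cdots w(x+n-1)| > k$, so $n$ is counted in the numerator of the displayed quotient. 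Hence that quotient is at least $\eps$ at $N = N_k$ for every $k$, forcing the infimum over $k$ of the supremum over $N$ to be at least $\eps > 0$.

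I would expect no genuine obstacle: the argument is a faithful transcription of the proof of Corollary~\ref{dcuws-c0} from the discrete index set $\N$ to the continuous index set $[1,\infty)$. The only points needing a little care are the automatic verification of condition (a) through the boundedness of the relatively compact sets $B_i$, and the passage from the strict inequality $\|w^{(n)}\|_{f^{-n}(B)} > k$ for a supremum to an actual point witnessing it; both are routine.
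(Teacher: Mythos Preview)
Your proposal is correct and is exactly the intended approach: the paper explicitly states that the proofs of this corollary and the next are ``left to the reader,'' and the natural proof is precisely the transcription of the argument for Corollary~\ref{dcuws-c0} from $c_0(\N)$ to $C_0[1,\infty)$ that you outline. Your handling of the two small points---that condition~(a) of Theorem~\ref{Dist-C0X-1} holds automatically because $f^{-n}(B)$ is eventually empty for bounded $B$, and that a strict lower bound on a supremum is witnessed by some point---is accurate.
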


\begin{corollary}
Consider a weighted translation operator $T_w$ on $C_0(\R)$ with a zero-free weight function satisfying 
\[
\lim_{n \to \infty} \sup_{x \in (0,1)} |w(x-n) \cdots w(x-1)| = 0.
\]
Then, $T_w$ is distributionally chaotic if and only if
\[
\inf_{k \in \N} \Big(\sup_{N \in \N} \frac{\card\{1 \leq n \leq N : |w(x) \cdots w(x+n-1)| > k \text{ for some } x \in \R\}}{N}\Big) > 0.
\]
\end{corollary}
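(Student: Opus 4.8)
The plan is to deduce this from the general characterization in Theorem~\ref{Dist-C0X-1}, exactly as Corollary~\ref{dcbws-c0} and its following remark were obtained in the bilateral shift case; the continuous setting only requires translating the combinatorial quantities through the identities $f^{-n}(B) = B - n$ and $w^{(n)}(x) = w(x)w(x+1)\cdots w(x+n-1)$. Writing $x = y - n$ one obtains the key reindexing
\[
\|w^{(n)}\|_{f^{-n}(B)} = \sup_{y \in B} |w(y-n) \cdots w(y-1)|,
\]
so that a factor $\|w^{(n)}\|_{f^{-n}(B)} > k$ is the same as the existence of some $x \in \R$ (namely $x = y-n$ with $y \in B$) with $|w(x) \cdots w(x+n-1)| > k$. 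I expect the forward implication to be routine and to use neither the zero-freeness nor the decay hypothesis, whereas the converse will require both, the uniform decay estimate below being the \emph{main obstacle}.

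For the implication ``distributionally chaotic $\Rightarrow$ density condition'', I would start from the sequence $(B_i)_{i \in \N}$ of relatively compact open sets and the data $\eps > 0$, $(N_k)_{k\in\N}$ furnished by Theorem~\ref{Dist-C0X-1}(b). For each $k$ one gets an index $i$ with $\card\{1 \le n \le N_k : \|w^{(n)}\|_{f^{-n}(B_i)} > k\} \ge \eps N_k$; by the reindexing above, every such $n$ witnesses $|w(x)\cdots w(x+n-1)| > k$ for some $x \in \R$. Hence the ratio $\card\{1 \le n \le N_k : |w(x)\cdots w(x+n-1)| > k \text{ for some } x \in \R\}/N_k$ is at least $\eps$ for every $k$, which forces the stated infimum to be $\ge \eps > 0$.

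For the converse, the first task is to verify property (a) of Theorem~\ref{Dist-C0X-1} by proving the uniform decay
\[
\lim_{n \to \infty} \sup_{x \in B} |w(x-n) \cdots w(x-1)| = 0 \quad \text{for every bounded } B \subset \R.
\]
I would cover $B$ by finitely many unit intervals $(m,m+1)$ and, writing $x = x' + m$ with $x' \in (0,1)$, split $\prod_{i=1}^n w(x-i)$ according to the sign of $m$. When $m \ge 0$ it factors as a block of $m$ values of $w$ at points of a fixed compact set, bounded by $\|w\|^{m}$, times $w(x'-1)\cdots w(x'-(n-m))$, which is at most $\sup_{x' \in (0,1)}|w(x'-(n-m))\cdots w(x'-1)| \to 0$. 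When $m < 0$, the same target product $w(x'-1)\cdots w(x'-(n-m))$ appears divided by the block $\prod_{i=1}^{-m} w(x'-i)$, which is bounded away from $0$ because $w$ is continuous and zero-free, hence bounded below on compacta. Thus $m \ge 0$ uses only boundedness of $w$, while $m < 0$ uses the zero-freeness essentially; this is where both standing hypotheses enter. Granting the estimate, property (a) holds with $D = \N$ for every relatively compact open set, in particular for each member of the family built below.

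Finally, to verify property (b) I would let $2\eps > 0$ be the value of the infimum, choose for each $k$ an integer $N_k$ (passing to a subsequence to make $(N_k)$ increasing) with $\card I_k > \eps N_k$, where $I_k = \{1 \le n \le N_k : |w(x)\cdots w(x+n-1)| > k \text{ for some } x \in \R\}$, and for each $n \in I_k$ select a witness $x_{k,n}$ and set $y_{k,n} = x_{k,n} + n$. Since $I_k$ is finite, the points $\{y_{k,n} : n \in I_k\}$ are finite in number and therefore lie in some bounded open, hence relatively compact, set $B_k$. By the reindexing identity, $\|w^{(n)}\|_{f^{-n}(B_k)} \ge |w(x_{k,n})\cdots w(x_{k,n}+n-1)| > k$ for each $n \in I_k$, so $\card\{1 \le n \le N_k : \|w^{(n)}\|_{f^{-n}(B_k)} > k\} \ge \card I_k > \eps N_k$. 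The family $(B_k)_{k \in \N}$ then satisfies both (a) and (b), and Theorem~\ref{Dist-C0X-1} yields distributional chaos. I would note that although the $B_k$ may grow without bound in $k$, property (a) is imposed separately for each fixed index and is therefore unaffected.
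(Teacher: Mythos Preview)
Your proposal is correct and follows precisely the approach the paper intends: the paper leaves this proof to the reader, stating that it is analogous to Corollaries~\ref{dcuws-c0} and~\ref{dcbws-c0}, and your argument is exactly that analogue, with the reindexing identity $\|w^{(n)}\|_{f^{-n}(B)} = \sup_{y \in B}|w(y-n)\cdots w(y-1)|$ playing the role that the discrete computation plays there. The only place requiring a touch more care is the covering of a bounded set $B$ by open unit intervals $(m,m+1)$, which misses integer points; this is harmless since continuity of $w$ lets you pass to closed intervals (or one can simply cover by translates of $[0,1]$), but you may want to say so explicitly.
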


\begin{theorem}\label{DCSumC0X}
Consider a weighted composition operator $C_{w,f}$ on $C_0(\Omega)$ with positive weight function $w : \Omega \to (0,\infty)$.
Suppose that there is a sequence $(A_k)_{k \in \N}$ of relatively compact open sets in $\Omega$ such that 
the following properties hold:
\begin{itemize}
\item [\rm (a)] There exists a set $D \subset \N$ with $\udens(D) = 1$ such that 
  \[
  \lim_{n \in D} \|w^{(n)}\|_{f^{-n}(A_k)} = 0 \ \text{ for all } k \in \N.
  \]

\item [\rm (b)] There exists a set $E \subset \N$ with $\udens(E) > 0$ such that 
  \[
  \sum_{n \in E} \frac{1}{\|w^{(n)}\|_{f^{-n}(A_n)}} < \infty.
  \]
\end{itemize}
Then, $C_{w,f}$ is distributionally chaotic.
\end{theorem}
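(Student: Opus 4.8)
The plan is to verify the \emph{Distributional Chaos Criterion} recalled in the preliminaries, following closely the scheme of the proof of Theorem~\ref{DCSum} but systematically replacing characteristic functions by continuous bump functions furnished by Urysohn's lemma. For each $k \in \N$ I first build, using that $A_k$ is relatively compact open, a function $\phi_k \in C_0(\Omega)$ with $0 \le \phi_k \le 1$, $\|\phi_k\| = 1$ and $\supp \phi_k \subset A_k$; these will play the role of the vectors $x_k$ in the criterion. Since $\supp \phi_k \subset A_k$ forces $\phi_k \circ f^n$ to vanish off $f^{-n}(A_k)$, the identity $\|(C_{w,f})^n(\phi_k)\| = \|(\phi_k \circ f^n) \cdot w^{(n)}\|$ from the proof of Theorem~\ref{PB-C0X} gives $\|(C_{w,f})^n(\phi_k)\| \le \|w^{(n)}\|_{f^{-n}(A_k)}$, so property~(a) of the theorem yields $\lim_{n \in D} (C_{w,f})^n(\phi_k) = 0$ for every $k$, which is exactly clause~(a) of the criterion.

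For the unbounded part I arrange that the single bump $\phi_n$ witnesses the quantity $M_n := \|w^{(n)}\|_{f^{-n}(A_n)}$ at the \emph{matching} power $n$. Note that property~(b) forces $M_n \to \infty$ along $E$ (since $\sum_{n \in E} 1/M_n < \infty$, in particular $M_n > 0$ for $n \in E$), while $M_n < \infty$ because $w$, hence each $w^{(n)}$, is bounded. For each $n$ I choose $x_n^* \in f^{-n}(A_n)$ with $|w^{(n)}(x_n^*)| \ge M_n/2$ and, via Urysohn's lemma, take the $\phi_n$ above to satisfy in addition $\phi_n(f^n(x_n^*)) = 1$. Then
\[
\|(C_{w,f})^n(\phi_n)\| \ge |\phi_n(f^n(x_n^*))\, w^{(n)}(x_n^*)| = |w^{(n)}(x_n^*)| \ge M_n/2.
\]

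Next I transplant the summation argument of \cite{Chen} used in Theorem~\ref{DCSum}. Set $a_n = 1/M_n$ for $n \in E$ and $a_n = 0$ otherwise, $r_n = \sum_{i \ge n} a_i$, and $c_n = 1/(\sqrt{r_n}\, M_n)$ for $n \in E$ and $c_n = 0$ otherwise. The elementary telescoping bound $a_n/\sqrt{r_n} \le 2(\sqrt{r_n} - \sqrt{r_{n+1}})$ gives $\sum_n c_n = \sum_n a_n/\sqrt{r_n} \le 2\sqrt{r_1} < \infty$, while $c_n M_n = 1/\sqrt{r_n} \to \infty$ along $E$. Since $\|\phi_n\| = 1$, absolute convergence $\sum_n c_n \|\phi_n\| < \infty$ shows that $y := \sum_{n \in E} c_n \phi_n$ converges in the Banach space $C_0(\Omega)$, so $y \in \ov{\spa\{\phi_k : k \in \N\}}$. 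Finally, exploiting $w > 0$, $\phi_n \ge 0$ and $c_n \ge 0$, every summand of $(C_{w,f})^n(y)$ is nonnegative, whence pointwise $(C_{w,f})^n(y) \ge c_n (\phi_n \circ f^n)\, w^{(n)}$ and therefore
\[
\|(C_{w,f})^n(y)\| \ge c_n \|(C_{w,f})^n(\phi_n)\| \ge \tfrac{1}{2} c_n M_n = \tfrac{1}{2\sqrt{r_n}},
\]
which tends to $\infty$ along $E$. Since $\udens(E) > 0$, this is a distributionally unbounded orbit, and \cite[Proposition~8]{BBMP} converts it into clause~(b) of the criterion; together with clause~(a) this yields that $C_{w,f}$ is distributionally chaotic.

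The only genuinely new difficulty compared with the $L^p$ case is the passage from characteristic functions to continuous test functions: one must produce bumps that simultaneously live in the prescribed relatively compact open sets $A_n$ and detect a fixed proportion of the supremum $M_n = \|w^{(n)}\|_{f^{-n}(A_n)}$ at the correct power $n$. This is handled by selecting a near-maximizing preimage point and applying Urysohn's lemma, exactly as in the proofs of Theorems~\ref{PB-C0X} and~\ref{Dist-C0X-1}; the loss of the harmless constant factor $1/2$ is absorbed by the criterion. The sup-norm convergence of $y$ then follows from plain absolute convergence rather than an $L^p$ estimate, and positivity of $w$ supplies the crucial pointwise lower bound for $(C_{w,f})^n(y)$.
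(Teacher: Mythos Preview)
Your proof is correct and follows essentially the same route as the paper's own argument: both construct Urysohn bumps $\phi_k$ supported in $A_k$ witnessing a fixed fraction of $\|w^{(n)}\|_{f^{-n}(A_n)}$, verify clause~(a) of the Distributional Chaos Criterion via hypothesis~(a), and then build $y=\sum_{n\in E} c_n\phi_n$ with the same $c_n = a_n/\sqrt{r_n}$ scheme from \cite{Chen}, using positivity of $w$ and of the $\phi_k$ to extract the diagonal lower bound and invoking \cite[Proposition~8]{BBMP}. The only cosmetic difference is that the paper inserts an auxiliary open set $B_n$ with $f^n(x_n)\in B_n\subset\overline{B_n}\subset A_n$ and phrases the estimates in terms of $\|w^{(n)}\|_{f^{-n}(B_n)}$, whereas you work directly with $M_n=\|w^{(n)}\|_{f^{-n}(A_n)}$ and absorb the factor $1/2$ at the end; both choices are equivalent.
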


\begin{proof}
For each $k \in E$, choose a point $x_k \in f^{-k}(A_k)$ with $w^{(k)}(x_k) > \frac{1}{2} \|w^{(k)}\|_{f^{-k}(A_k)}$,
an open set $B_k$ in $\Omega$ with $f^k(x_k) \in B_k \subset \ov{B_k} \subset A_k$,
and a continuous map $\phi_k : \Omega \to [0,1]$ with 
\[
\phi_k = 1 \text{ on } \ov{B_k} \ \ \text{ and } \ \ \supp \phi_k \subset A_k.
\]
By property (a),
\[
\lim_{n \in D} \|(C_{w,f})^n(\phi_k)\| \leq \lim_{n \in D} \|w^{(n)}\|_{f^{-n}(A_k)} = 0 \ \text{ for all } k \in E. 
\]
By property (b),
\[
\sum_{n \in E} \frac{1}{\|w^{(n)}\|_{f^{-n}(B_n)}} < 2 \sum_{n \in E} \frac{1}{\|w^{(n)}\|_{f^{-n}(A_n)}} < \infty.
\]
Let us now prove that there exists $\varphi \in \ov{\spa\{\phi_k : k \in E\}}$ such that
\[
\lim_{n \in E} \|(C_{w,f})^n(\varphi)\| = \infty.
\]
In view of \cite[Proposition~8]{BBMP}, this will allow us to apply the Distributional Chaos Criterion and conclude that
$C_{w,f}$ is distributionally chaotic. Let 
\begin{align*}
 a_n &=\left\{
  \begin{array}{cl}
    \frac{1}{\|w^{(n)}\|_{f^{-n}(B_n)}}  &  \text{ if }  n \in E\\
    0  &  \text{ if } n \notin E,
  \end{array}
	     \right.\\
r_n &= \displaystyle \sum _{i\ge n} a_i\,,\\
c_n & =\left\{
  \begin{array}{cl}
    \frac{1}{\sqrt{r_n}\, \|w^{(n)}\|_{f^{-n}(B_n)}}  &  \text{ if }  n \in E\\
    0  &  \text{ if } n \notin E.
  \end{array}
	     \right.\\
\end{align*}
Calculations similar to those made in the proof of Theorem~\ref{DCSum} show that
\[
\sum_{n \in E} c_n < \infty \ \ \text{ and } \ \ \lim_{n \in E} c_n\, \|w^{(n)}\|_{f^{-n}(B_n)} = \infty. 
\]
Hence, we can define
\[
\varphi = \sum_{n \in E} c_n \phi_n \in C_0(\Omega),
\]
since this series is absolutely convergent. Moreover, $\varphi \in \ov{\spa\{\phi_k : k \in E\}}$. Finally,
 \begin{align*}
\lim_{n \in E} \|(C_{w,f})^n(\varphi)\| &= \lim_{n \in E} \Big\|\sum_{k \in E} c_k\, (C_{w,f})^n(\phi_k)\Big\|\\
  &\geq \lim_{n \in E} \|c_n\, (C_{w,f})^n(\phi_n)\|\\
  &\geq \lim_{n \in E} c_n\, \|w^{(n)}\|_{f^{-n}(B_n)} = \infty,
\end{align*}
which completes the proof.
\end{proof}

\begin{corollary} 
Consider a weighted composition operator $C_{w,f}$ on $C_0(\Omega)$ with positive weight function $w : \Omega \to (0,\infty)$.
If there is a relatively compact open set $A$ in $\Omega$ such that 
\begin{itemize}
\item there exists $D \subset \N$ with $\udens(D) = 1$ and $\displaystyle \lim_{n \in D} \|w^{(n)}\|_{f^{-n}(A)} = 0$, 

\item there exists $E \subset \N$ with $\udens(E) > 0$ and $\displaystyle \sum_{n \in E} \frac{1}{\|w^{(n)}\|_{f^{-n}(A)}} < \infty$,
\end{itemize}
then $C_{w,f}$ is distributionally chaotic.
\end{corollary}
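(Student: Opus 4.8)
The plan is to obtain this corollary as an immediate specialization of Theorem~\ref{DCSumC0X}, applied to the constant sequence of sets $A_k = A$ for every $k \in \N$. Since $A$ is a relatively compact open set in $\Omega$, the constant sequence $(A_k)_{k \in \N}$ is indeed a sequence of relatively compact open sets, so the structural hypotheses of the theorem are met, and it remains only to verify that the two bulleted conditions of the corollary are precisely conditions (a) and (b) of the theorem in this special case.

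First I would check condition (a). With $A_k = A$ for all $k$, the requirement ``$\lim_{n \in D} \|w^{(n)}\|_{f^{-n}(A_k)} = 0$ for all $k \in \N$'' collapses to the single statement $\lim_{n \in D} \|w^{(n)}\|_{f^{-n}(A)} = 0$, which is exactly the first bullet of the corollary (together with the requirement $\udens(D) = 1$). Next I would check condition (b): again using $A_n = A$ for all $n$, the series in the theorem becomes $\sum_{n \in E} \frac{1}{\|w^{(n)}\|_{f^{-n}(A)}}$, and the demand that this series converge for some $E \subset \N$ with $\udens(E) > 0$ is precisely the second bullet of the corollary. Thus both hypotheses of Theorem~\ref{DCSumC0X} hold verbatim.

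Having matched the hypotheses, the conclusion is immediate: Theorem~\ref{DCSumC0X} yields that $C_{w,f}$ is distributionally chaotic. There is no genuine obstacle here, as the argument is purely a matter of recognizing that a single relatively compact open set $A$ furnishes a (constant) admissible sequence; the only point worth stating explicitly is that the positivity assumption $w : \Omega \to (0,\infty)$ carries over unchanged, since it is shared by both the corollary and the theorem. Consequently no additional estimate or construction is required beyond the specialization itself.
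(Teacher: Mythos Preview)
Your proposal is correct and matches the paper's approach: the corollary is stated without proof immediately after Theorem~\ref{DCSumC0X}, signalling that it is precisely the specialization $A_k = A$ for all $k$, exactly as you describe.
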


\begin{theorem}\label{DDC-C0X}
Consider a weighted composition operator $C_{w,f}$ on $C_0(\Omega)$.
If the space $C_0(\Omega)$ is separable and 
\[
\lim_{n \to \infty} \|w^{(n)}\|_{f^{-n}(B)} = 0
\]
for every relatively compact open set $B$ in $\Omega$, then the following assertions are equivalent:
\begin{itemize}
\item [\rm (i)] $C_{w,f}$ is distributionally chaotic;
\item [\rm (ii)] $C_{w,f}$ is densely distributionally chaotic;
\item [\rm (iii)] $C_{w,f}$ admits a dense distributionally irregular manifold;
\item [\rm (iv)] There exist $\phi \in C_0(\Omega)$ and $\delta > 0$ such that 
  \[
  \udens\{n \in \N : \|(C_{w,f})^n(\phi)\| \geq \delta\} > 0.
  \]
\end{itemize}
\end{theorem}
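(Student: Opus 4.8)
The plan is to follow exactly the strategy used for the analogous statement on $L^p(\mu)$, replacing the dense subspace of simple functions by the dense subspace $C_c(\Omega)$. I would prove the implications in the cyclic order
\[
\text{(i)} \Rightarrow \text{(iv)} \Rightarrow \text{(iii)} \Rightarrow \text{(ii)} \Rightarrow \text{(i)},
\]
so that only two of them require genuine work.

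For (i) $\Rightarrow$ (iv): if $C_{w,f}$ is distributionally chaotic then, by the characterization of distributional chaos recalled at the beginning of this section, it admits a distributionally irregular vector $\phi$, so there is a set $E \subset \N$ with $\udens(E) = 1$ and $\lim_{n \in E} \|(C_{w,f})^n(\phi)\| = \infty$. Fixing any $\delta > 0$, the set $\{n \in \N : \|(C_{w,f})^n(\phi)\| \geq \delta\}$ contains all but finitely many elements of $E$, hence has upper density at least $\udens(E) = 1 > 0$, which is (iv). The implications (iii) $\Rightarrow$ (ii) $\Rightarrow$ (i) are immediate from the definitions, since a dense distributionally irregular manifold furnishes, away from the zero vector, an uncountable dense set of distributionally irregular vectors, and dense distributional chaos is in particular distributional chaos.

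The real content is (iv) $\Rightarrow$ (iii), and this is where I expect the main (though modest) work to lie. First I would observe that every $\varphi \in C_c(\Omega)$ has orbit converging to $0$: choosing a relatively compact open set $B$ with $\supp \varphi \subset B$, the function $(C_{w,f})^n(\varphi) = (\varphi \circ f^n) \cdot w^{(n)}$ vanishes outside $f^{-n}(B)$, so
\[
\|(C_{w,f})^n(\varphi)\| = \|(\varphi \circ f^n) \cdot w^{(n)}\|_{f^{-n}(B)} \leq \|\varphi\|\, \|w^{(n)}\|_{f^{-n}(B)} \longrightarrow 0
\]
by the standing hypothesis on $\|w^{(n)}\|_{f^{-n}(B)}$. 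Since $C_c(\Omega)$ is dense in $C_0(\Omega)$, the set of vectors whose $C_{w,f}$-orbit converges to $0$ is dense. With $C_0(\Omega)$ separable and (iv) providing a vector $\phi$ and $\delta > 0$ with $\udens\{n : \|(C_{w,f})^n(\phi)\| \geq \delta\} > 0$, I would then invoke \cite[Theorem~33]{BBPW} to upgrade this into a dense distributionally irregular manifold, exactly as in the $L^p$ proof. The only point demanding care is checking that the hypotheses of that theorem match the present situation, namely separability of the space, density of the set of vectors with orbit tending to zero, and the positive-upper-density condition (iv); once these are in place the conclusion (iii) follows directly, and the cycle closes.
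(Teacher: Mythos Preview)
Your proposal is correct and follows essentially the same approach as the paper: the same cyclic chain of implications, the same use of a distributionally irregular vector for (i) $\Rightarrow$ (iv), the same observation that orbits of $C_c(\Omega)$ functions tend to zero via the estimate $\|(C_{w,f})^n(\varphi)\| \leq \|\varphi\|\,\|w^{(n)}\|_{f^{-n}(B)}$, and the same appeal to \cite[Theorem~33]{BBPW} for (iv) $\Rightarrow$ (iii). Your write-up is in fact slightly more detailed than the paper's, but the argument is the same.
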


\begin{proof}
(i) $\Rightarrow$ (iv): Take a distributionally irregular vector $\phi \in C_0(\Omega)$ for $C_{w,f}$.

\smallskip\noindent
(iv) $\Rightarrow$ (iii): The assumption implies that
\[
\lim_{n \to \infty} (C_{w,f})^n(\varphi) = 0 \ \text{ for all } \varphi \in C_c(\Omega).
\]
Since $C_c(\Omega)$ is dense in $C_0(\Omega)$, (iii) follows from (iv) and \cite[Theorem~33]{BBPW}.   

\smallskip\noindent
(iii) $\Rightarrow$ (ii) $\Rightarrow$ (i): Obvious.
\end{proof}

\begin{corollary} 
Assume the hypotheses of the previous theorem.
If there exist a constant $C > 0$ and a relatively compact open set $A$ in $\Omega$ such that
\[
\udens\{n \in \N : \|w^{(n)}\|_{f^{-n}(A)} \geq C \} > 0,
\]
then $C_{w,f}$ is densely distributionally chaotic.
\end{corollary}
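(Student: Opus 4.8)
The plan is to reduce the corollary to condition (iv) of Theorem~\ref{DDC-C0X}, since we are explicitly assuming the hypotheses of that theorem (separability of $C_0(\Omega)$ and the decay of $\|w^{(n)}\|_{f^{-n}(B)}$ on every relatively compact open $B$). Once (iv) is verified, the theorem immediately yields that $C_{w,f}$ is densely distributionally chaotic. So the entire task is to produce a single vector $\phi \in C_0(\Omega)$ and a threshold $\delta > 0$ for which $\udens\{n \in \N : \|(C_{w,f})^n(\phi)\| \geq \delta\} > 0$, and the given set $A$ is exactly what we should exploit.

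The construction I would use is the following. Since $A$ is relatively compact, $\overline{A}$ is compact, so by Urysohn's lemma there is a continuous map $\phi : \Omega \to [0,1]$ with compact support satisfying $\phi = 1$ on $\overline{A}$; in particular $\phi \in C_c(\Omega) \subset C_0(\Omega)$. The key observation is that for each $n \in \N$ and each $x \in f^{-n}(A)$ we have $f^n(x) \in A \subset \overline{A}$, hence $\phi(f^n(x)) = 1$. Therefore
\[
\|(C_{w,f})^n(\phi)\| = \sup_{x \in \Omega} |\phi(f^n(x))|\,|w^{(n)}(x)|
  \geq \sup_{x \in f^{-n}(A)} |w^{(n)}(x)| = \|w^{(n)}\|_{f^{-n}(A)}.
\]
Consequently, whenever $\|w^{(n)}\|_{f^{-n}(A)} \geq C$ we also have $\|(C_{w,f})^n(\phi)\| \geq C$, which gives the set inclusion
\[
\{n \in \N : \|(C_{w,f})^n(\phi)\| \geq C\} \supseteq \{n \in \N : \|w^{(n)}\|_{f^{-n}(A)} \geq C\}.
\]

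Taking upper densities and using the hypothesis then yields
\[
\udens\{n \in \N : \|(C_{w,f})^n(\phi)\| \geq C\}
  \geq \udens\{n \in \N : \|w^{(n)}\|_{f^{-n}(A)} \geq C\} > 0,
\]
so condition (iv) of Theorem~\ref{DDC-C0X} holds with $\delta = C$, and the conclusion follows. I do not anticipate a genuine obstacle here, as the argument is a direct verification; the only point requiring a little care is ensuring $\phi \in C_0(\Omega)$, which is precisely where the relative compactness of $A$ is needed (it lets Urysohn's lemma deliver a compactly supported bump equal to $1$ on $\overline{A}$, so that the lower bound on the orbit norm is attained on $f^{-n}(A)$ while $\phi$ genuinely lies in the space).
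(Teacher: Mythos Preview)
Your proof is correct and follows exactly the same route as the paper: choose $\phi \in C_0(\Omega)$ with $\phi = 1$ on $A$ (via Urysohn's lemma and the relative compactness of $A$), set $\delta = C$, and verify condition~(iv) of Theorem~\ref{DDC-C0X}. The paper's proof is a one-line version of yours, so the only difference is the level of detail you have supplied.
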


\begin{proof}
Take any $\phi \in C_0(\Omega)$ with $\phi = 1$ on $A$, and put $\delta = C$.
Then, property (iv) in the previous theorem holds.
\end{proof}

\begin{theorem} 
Consider a weighted composition operator $C_{w,f}$ on $C_0(\Omega)$ with positive weight function $w : \Omega \to (0,\infty)$.
Suppose that the space $C_0(\Omega)$ is separable and the following properties hold:
\begin{itemize}
\item [\rm (a)] For every relatively compact open set $B$ in $\Omega$,
\[
\lim_{n \to \infty} \|w^{(n)}\|_{f^{-n}(B)} = 0.
\]
\item [\rm (b)] There exist a relatively compact open set $A$ in $\Omega$ and a set $D \subset \N$ with $\udens(D) > 0$ such that
\[
\sum_{n \in D} \frac{1}{\|w^{(n)}\|_A} < \infty.
\]
\end{itemize}
Then, $C_{w,f}$ is densely distributionally chaotic.
\end{theorem}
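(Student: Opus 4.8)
The plan is to reduce the statement to the equivalences already established in Theorem~\ref{DDC-C0X}. The two standing hypotheses of that theorem are met verbatim: the space $C_0(\Omega)$ is separable, and condition~(a) here is exactly the requirement that $\lim_{n \to \infty} \|w^{(n)}\|_{f^{-n}(B)} = 0$ for every relatively compact open set $B$. Hence it suffices to verify assertion~(iv) of Theorem~\ref{DDC-C0X}, i.e.\ to produce a vector $\phi \in C_0(\Omega)$ and a number $\delta > 0$ with $\udens\{n \in \N : \|(C_{w,f})^n(\phi)\| \geq \delta\} > 0$; the theorem then yields dense distributional chaos (and in fact a dense distributionally irregular manifold via the chain (iv)$\Rightarrow$(iii)$\Rightarrow$(ii) in its proof).

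To build such a $\phi$ I would first unpack condition~(b). Since $\sum_{n \in D} \|w^{(n)}\|_A^{-1}$ converges, its terms tend to $0$, so $\|w^{(n)}\|_A \to \infty$ as $n \to \infty$ within $D$. Put $c_n = \|w^{(n)}\|_A^{-1}$, so $c_n \to 0$ along $D$. As $A$ is open and $w^{(n)}$ is continuous, for each $n \in D$ I would choose $x_n \in A$ with $|w^{(n)}(x_n)| > \tfrac12 \|w^{(n)}\|_A$, and set $y_n = f^n(x_n)$.

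The crucial step, which I expect to be the main obstacle, is to show that the centers $y_n$ escape every compact set: each compact $K \subset \Omega$ contains $y_n$ for only finitely many $n \in D$. Indeed, if some compact $K$ contained $y_n$ for infinitely many $n \in D$, choose a relatively compact open $B \supset K$; then $x_n \in f^{-n}(B)$, whence $\|w^{(n)}\|_{f^{-n}(B)} \geq |w^{(n)}(x_n)| > \tfrac12 \|w^{(n)}\|_A \to \infty$ along that subsequence, contradicting condition~(a). This escape property is precisely what will force the function below to lie in $C_0(\Omega)$. With it in hand, I would group the finitely many indices sharing a common center, place around each distinct center $y$ a continuous bump $\psi_y$ with $0 \le \psi_y \le 1$, $\psi_y(y)=1$ and pairwise disjoint (hence locally finite) supports, and let $\phi = \sum_y h_y \psi_y$ with $h_y = \max\{c_n : y_n = y,\ n \in D\}$. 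Because $c_n \to 0$, for each $\eps > 0$ only finitely many centers carry height $h_y \geq \eps$, so $\{|\phi| \geq \eps\}$ is compact and thus $\phi \in C_0(\Omega)$.

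Finally I would estimate the orbit. For $n \in D$, evaluating at $x_n$ and using $\phi(y_n) = h_{y_n} \geq c_n$ together with the disjointness of supports gives
\[
\|(C_{w,f})^n(\phi)\| \geq |\phi(f^n(x_n))|\, |w^{(n)}(x_n)| \geq c_n \cdot \tfrac12 \|w^{(n)}\|_A = \tfrac12 .
\]
Consequently $\{n \in \N : \|(C_{w,f})^n(\phi)\| \geq \tfrac12\} \supseteq D$ has upper density at least $\udens(D) > 0$, so assertion~(iv) holds with $\delta = \tfrac12$, and Theorem~\ref{DDC-C0X} finishes the proof. I note that this route invokes condition~(b) only through $\|w^{(n)}\|_A \to \infty$ along $D$, so the full summability is more than the argument strictly requires; a direct Chen-type construction of a distributionally unbounded orbit, parallel to Theorem~\ref{DCSumC0X}, would be the alternative that does exploit summability.
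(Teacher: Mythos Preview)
Your proof is correct but takes a genuinely different route from the paper's. The paper's argument is two lines: for each $n$ pick a relatively compact open set $A_n \supset f^n(A)$, observe that $A \subset f^{-n}(A_n)$ implies $\|w^{(n)}\|_{f^{-n}(A_n)} \geq \|w^{(n)}\|_A$, so the series in condition~(b) of Theorem~\ref{DCSumC0X} converges; condition~(a) there holds with $D = \N$ by hypothesis~(a). Theorem~\ref{DCSumC0X} yields distributional chaos, and Theorem~\ref{DDC-C0X} upgrades it to dense distributional chaos. You instead bypass Theorem~\ref{DCSumC0X} entirely and build an explicit $\phi \in C_0(\Omega)$ witnessing assertion~(iv) of Theorem~\ref{DDC-C0X}; the key escape-to-infinity step for the centers $y_n = f^n(x_n)$ (which you flagged as the main obstacle) is handled cleanly by playing condition~(a) against the blow-up of $\|w^{(n)}\|_A$. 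One point you should make explicit: the pairwise disjoint bump construction requires separating a countable closed discrete set, which is unproblematic here because separability of $C_0(\Omega)$ forces $\Omega$ to be second countable, hence metrizable. The payoff of your approach, which you already noted, is that it only uses $\|w^{(n)}\|_A \to \infty$ along $D$ rather than the full summability in~(b), so it actually proves a slightly stronger statement; the paper's proof is shorter but genuinely consumes the summability via the Chen-type construction inside Theorem~\ref{DCSumC0X}.
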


\begin{proof}
For each $n \in \N$, let $A_n$ be a relatively compact open set in $\Omega$ such that $f^n(A) \subset A_n$. Since
\[
\sum_{n \in D} \frac{1}{\|w^{(n)}\|_{f^{-n}(A_n)}} \leq \sum_{n \in D} \frac{1}{\|w^{(n)}\|_A} < \infty,
\]
it follows from Theorem~\ref{DCSumC0X} that $C_{w,f}$ is distributionally chaotic.
Hence, by Theorem~\ref{DDC-C0X}, $C_{w,f}$ is densely distributionally chaotic.
\end{proof}

The following results on weighted shifts follow easily from the previous theorem (we omit the details), 
but they can also be derived from Corollaries~\ref{dcuws-c0} and~\ref{dcbws-c0}.

\begin{corollary} 
Consider a weighted shift $B_w$ on $c_0(\N)$ with positive weights. 
If there exists a set $D \subset \N$ with $\udens(D) > 0$ such that
\[
\sum_{n \in D} \left(\prod_{j=1}^{n} w_j \right)^{-1} < \infty,
\]
then $B_w$ is densely distributionally chaotic.
\end{corollary}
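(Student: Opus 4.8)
The plan is to deduce the corollary from the previous theorem by realizing $B_w$ as a weighted composition operator $C_{w,f}$ on a suitable space $C_0(\Omega)$ and then checking that hypotheses (a) and (b) of that theorem are met. Concretely, I would take $\Omega = \N$ with the discrete topology and $f : n \in \N \mapsto n+1 \in \N$, so that $C_0(\Omega) = c_0(\N)$ and $C_{w,f} = B_w$, exactly as in the corresponding example in Section~2. Since $c_0(\N)$ is separable and the weights are positive, the standing hypotheses of the theorem are in place. The only preliminary computation needed is the identification of the iterated weights: from $f^k(n) = n+k$ one reads off
\[
w^{(n)}(i) = w_i\, w_{i+1} \cdots w_{i+n-1} \ \text{ for every } i \in \N.
\]

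Next I would verify condition (a). In the discrete space $\N$ the relatively compact open sets are precisely the finite subsets, so it suffices to treat a finite set $B$ with $M = \max B$. Because $f^{-n}(B) = \{i \in \N : i+n \in B\}$, this set is empty as soon as $n \geq M$, whence $\|w^{(n)}\|_{f^{-n}(B)} = 0$ for all large $n$ and in particular $\lim_{n \to \infty} \|w^{(n)}\|_{f^{-n}(B)} = 0$. Thus (a) holds automatically, without any appeal to the summability hypothesis.

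For condition (b) I would simply take $A = \{1\}$, which is a (relatively compact) open singleton in the discrete topology, together with the set $D$ provided by the hypothesis. Since $\|w^{(n)}\|_A = |w^{(n)}(1)| = w_1 w_2 \cdots w_n = \prod_{j=1}^n w_j$, the series in (b) becomes exactly
\[
\sum_{n \in D} \frac{1}{\|w^{(n)}\|_A} = \sum_{n \in D} \Big(\prod_{j=1}^n w_j\Big)^{-1},
\]
which is finite by assumption, while $\udens(D) > 0$ is given. Hence (b) holds, and the previous theorem immediately yields that $B_w$ is densely distributionally chaotic.

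I do not anticipate any genuine obstacle here, since the argument is a direct specialization of the preceding theorem. The only points requiring a little care are the bookkeeping identifications: that relatively compact open sets in discrete $\N$ coincide with finite sets, that $w^{(n)}(i)$ equals the product $w_i \cdots w_{i+n-1}$, and that the natural choice $A = \{1\}$ converts the series in hypothesis~(b) into the series appearing in the statement.
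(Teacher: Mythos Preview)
Your proposal is correct and is exactly the specialization the paper has in mind: the paper omits the details but states that the corollary ``follows easily from the previous theorem,'' and your verification of (a) via $f^{-n}(B)=\varnothing$ for large $n$ together with the choice $A=\{1\}$ for (b) is precisely the intended argument.
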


\begin{corollary}
Consider a weighted shift $B_w$ on $c_0(\Z)$ with positive weights. If 
\[
\lim_{n \to \infty} \prod_{j=-n+1}^{0} w_j = 0
\]
and there exists a set $D \subset \N$ with $\udens(D) > 0$ such that
\[
\sum_{n \in D} \left(\prod_{j=1}^{n} w_j \right)^{-1}  < \infty,
\]
then $B_w$ is densely distributionally chaotic.
\end{corollary}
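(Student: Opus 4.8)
The plan is to realize $B_w$ as the weighted composition operator $C_{w,f}$ on $C_0(\Omega)$ with $\Omega=\Z$ carrying the discrete topology and $f:n\mapsto n+1$, and then to invoke the previous theorem (the separability criterion for dense distributional chaos on $C_0(\Omega)$ with positive weights) by checking its hypotheses (a) and (b). First I would record the dictionary between the two formalisms. The space $c_0(\Z)$ is separable; the relatively compact open subsets of the discrete space $\Z$ are precisely the finite subsets; for each $i\in\Z$ one has $w^{(n)}(i)=w_iw_{i+1}\cdots w_{i+n-1}=\prod_{j=i}^{i+n-1}w_j$; and since $f^n(i)=i+n$, the preimage $f^{-n}(B)$ is the translate $B-n$. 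Hence, for every finite set $B\subset\Z$,
\[
\|w^{(n)}\|_{f^{-n}(B)}=\max_{b\in B}\ \prod_{j=b-n}^{b-1}w_j .
\]

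Next I would verify hypothesis (a), namely $\lim_{n\to\infty}\|w^{(n)}\|_{f^{-n}(B)}=0$ for every finite $B$. Because $B$ is finite, it suffices to show $\prod_{j=b-n}^{b-1}w_j\to0$ for each fixed $b$, and this is exactly where the first hypothesis enters. Using positivity of the weights, an elementary reindexing writes $\prod_{j=b-n}^{b-1}w_j=c_b\prod_{j=b-n}^{0}w_j$, where $c_b>0$ is a fixed constant depending only on $b$ (namely $\prod_{j=1}^{b-1}w_j$ when $b\geq1$ and $\big(\prod_{j=b}^{0}w_j\big)^{-1}$ when $b\leq0$); and $\prod_{j=b-n}^{0}w_j$ is precisely the backward product $\prod_{j=-m+1}^{0}w_j$ with $m=n-b+1\to\infty$, which tends to $0$ by hypothesis. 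Taking the maximum over the finitely many $b\in B$ then gives (a).

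Then I would verify hypothesis (b) by taking the singleton $A=\{1\}$, which is a relatively compact open set in $\Z$. Here $\|w^{(n)}\|_A=w^{(n)}(1)=\prod_{j=1}^{n}w_j$, so the series in (b) becomes exactly $\sum_{n\in D}\big(\prod_{j=1}^{n}w_j\big)^{-1}$, which is finite and supported on a set $D$ of positive upper density by the second hypothesis. With (a) and (b) established, the previous theorem yields that $C_{w,f}=B_w$ is densely distributionally chaotic. Alternatively, the conclusion could be derived from the characterization in Corollary~\ref{dcbws-c0} (with $S=\Z$), but the direct verification above is cleaner.

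I expect the only mildly delicate point to be the bookkeeping in hypothesis (a): one must confirm that the single backward-product assumption $\prod_{j=-n+1}^{0}w_j\to0$ forces $\prod_{j=b-n}^{b-1}w_j\to0$ simultaneously for every $b$ in a finite set. This is a routine reindexing that relies on positivity of the weights (so that the correction factors $c_b$ are nonzero constants) together with the finiteness of $B$; everything else is a direct substitution into the previous theorem.
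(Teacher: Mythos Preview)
Your proposal is correct and follows exactly the approach the paper indicates: the paper states that the corollary ``follow[s] easily from the previous theorem (we omit the details),'' and you have supplied precisely those omitted details by checking hypotheses (a) and (b) of that theorem with $\Omega=\Z$ and $A=\{1\}$. The reindexing argument for (a) and the identification $\|w^{(n)}\|_{\{1\}}=\prod_{j=1}^n w_j$ for (b) are both handled correctly.
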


%%%%%%%%%%%%%%%%%%%%%%%%%%%%%%%%%%%%%%%%%%%%%%%%%%%%%%%%%%%%%%%%%%%%%%
%%%%%%%%%%%%%%%%%%%%%%%%%%%%%%%%%%%%%%%%%%%%%%%%%%%%%%%%%%%%%%%%%%%%%%

\section{Absolutely Ces\`aro  bounded weighted composition operators}

Recall that an operator $T$ on a Banach space $Y$ is said to be {\em $p$-absolutely Ces\`aro bounded} if 
there exists $C \in (0,\infty)$ such that 
\[
\sup_{N \in \N} \frac{1}{N} \sum_{n=1}^N \|T^ny\|^p \leq C \|y\|^p \ \text{ for all } y \in Y.
\]
More generally, given a subspace $Z$ of $Y$, we say that $T$ is {\em $p$-absolutely Ces\`aro bounded in $Z$}
if the above inequality holds for every $y \in Z$. By defining the extended real number 
\[
N_p(T) = \sup_{\|y\| = 1} \sup_{N \in \N} \frac{1}{N} \sum_{n=1}^N \|T^n y\|^p,
\]
we have that 
\[
T \text{ is $p$-absolutely Ces\`aro bounded } \ \Longleftrightarrow \ N_p(T) < \infty.
\]
It is usual to say {\em absolutely Ces\`aro bounded} instead of $1$-absolutely Ces\`aro bounded.

Recall also that an operator $T$ on a Banach space $Y$ is said to be {\em mean ergodic} if the sequence $(M_n(T))_{n \in \N}$
of \emph{Cesàro means} of $T$, defined by
\[
M_n(T)y =\frac{1}{n+1}\sum_{k=0}^n T^ky \ \ \ \ \ (n \in \N),
\]
converges in the strong operator topology of the space of all operators on $Y$.

%%%%%%%%%%%%%%%%%%%%%%%%%%%%%%
%%%%%%%%%%%%%%%%%%%%%%%%%%%%%%

\subsection{The case of the space $L^p(\mu)$}

\begin{theorem}\label{p-ACB}
For any weighted composition operator $C_{w,f}$ on $L^p(\mu)$,
\begin{equation}\label{p-ACB-1}
N_p(C_{w,f}) = \sup_{0 < \mu(B) < \infty} \sup_{N \in \N} \frac{1}{N} \sum_{n=1}^N \frac{\mu_n(f^{-n}(B))}{\mu(B)}\cdot
\end{equation}
In particular, $C_{w,f}$ is $p$-absolutely Ces\`aro bounded if and only if there exists a constant $C \in (0,\infty)$ such that,
for each measurable set $B$ of finite positive $\mu$-measure,
\begin{equation}\label{p-ACB-2}
 \frac{1}{N} \sum _{n=1}^N \mu_n(f^{-n}(B)) \leq C\, \mu (B) \ \text{ for all } N \in \N.
\end{equation}
\end{theorem}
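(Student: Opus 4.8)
The plan is to mirror the proof of Theorem~\ref{powerbounded}, replacing the norm of a single iterate by the Ces\`aro average $\frac{1}{N}\sum_{n=1}^N \|(C_{w,f})^n(\cdot)\|_p^p$. Write $r$ for the right-hand side of (\ref{p-ACB-1}), that is, the supremum over $N \in \N$ and over measurable sets $B$ with $0 < \mu(B) < \infty$ of $\frac{1}{N}\sum_{n=1}^N \frac{\mu_n(f^{-n}(B))}{\mu(B)}$. I would establish the two inequalities $r \le N_p(C_{w,f})$ and $N_p(C_{w,f}) \le r$ separately; together they give (\ref{p-ACB-1}), and the last assertion is then merely the reformulation that $r < \infty$ is equivalent to the existence of a constant $C$ satisfying (\ref{p-ACB-2}).

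For $r \le N_p(C_{w,f})$, I would fix a measurable set $B$ with $0 < \mu(B) < \infty$ and take the normalized indicator $\phi = \frac{1}{\mu(B)^{1/p}}\,\rchi_B$, so that $\|\phi\|_p = 1$. The computation already carried out in the proof of Theorem~\ref{powerbounded} gives $\|(C_{w,f})^n(\phi)\|_p^p = \frac{\mu_n(f^{-n}(B))}{\mu(B)}$ for every $n$, hence $\frac{1}{N}\sum_{n=1}^N \|(C_{w,f})^n(\phi)\|_p^p = \frac{1}{N}\sum_{n=1}^N \frac{\mu_n(f^{-n}(B))}{\mu(B)}$. Taking the supremum over $N$ and then over all admissible $B$, and comparing with the definition of $N_p(C_{w,f})$, yields $r \le N_p(C_{w,f})$.

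For the reverse inequality I would fix $t > 1$ and a nonzero $\varphi \in L^p(\mu)$, and form the level sets $B_i = \{x : t^{i-1} \le |\varphi(x)| < t^i\}$ ($i \in \Z$), which have finite measure exactly as in Theorem~\ref{powerbounded} (and for which $\mu_n(f^{-n}(B_i)) = 0$ whenever $\mu(B_i)=0$, by Lemma~\ref{null}). The same termwise estimate used there gives $\|(C_{w,f})^n(\varphi)\|_p^p \le \sum_{i \in \Z} t^{ip}\,\mu_n(f^{-n}(B_i))$ for each $n$. Averaging over $n \in \{1,\ldots,N\}$ and interchanging the finite sum over $n$ with the series over $i$, I would obtain $\frac{1}{N}\sum_{n=1}^N \|(C_{w,f})^n(\varphi)\|_p^p \le \sum_{i \in \Z} t^{ip}\,\big(\frac{1}{N}\sum_{n=1}^N \mu_n(f^{-n}(B_i))\big)$. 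The definition of $r$ bounds each inner average by $r\,\mu(B_i)$, so the right-hand side is at most $r\sum_{i \in \Z} t^{ip}\mu(B_i) = t^p r \sum_{i \in \Z} t^{(i-1)p}\mu(B_i) \le t^p r\,\|\varphi\|_p^p$, the last step because $t^{(i-1)p} \le |\varphi|^p$ on $B_i$. Taking the supremum over $N$ and over $\|\varphi\|_p = 1$ gives $N_p(C_{w,f}) \le t^p r$, and letting $t \to 1^+$ yields $N_p(C_{w,f}) \le r$.

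I expect this to be a routine adaptation rather than a genuinely hard argument, so the only point that really demands attention is conceptual: the quantity $r$ controls the \emph{Ces\`aro averages} of the measures $\mu_n(f^{-n}(B_i))$, not the individual terms. Consequently, unlike in Theorem~\ref{powerbounded} (where one has the termwise bound $\mu_n(f^{-n}(B_i)) \le r^p\mu(B_i)$), here one must first average over $n$ and only afterwards invoke $\frac{1}{N}\sum_{n=1}^N \mu_n(f^{-n}(B_i)) \le r\,\mu(B_i)$. The interchange of the two summations that makes this order of operations legitimate is harmless, since the sum over $n$ is finite and all terms are non-negative, so it is justified by Tonelli's theorem.
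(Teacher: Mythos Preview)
Your proposal is correct and follows essentially the same route as the paper's own proof: both directions use normalized indicators for $r \le N_p(C_{w,f})$ and the dyadic-type level sets $B_i = \{t^{i-1} \le |\varphi| < t^i\}$ together with the $t \to 1^+$ trick for the reverse inequality. Your explicit remark about averaging over $n$ before invoking the bound $\frac{1}{N}\sum_{n=1}^N \mu_n(f^{-n}(B_i)) \le r\,\mu(B_i)$ is exactly the one subtlety, and it matches what the paper does.
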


\begin{proof}
Denote the right-hand side of (\ref{p-ACB-1}) by $r$.

Given a measurable set $B$ of finite positive $\mu$-measure, define $\phi = \frac{1}{\mu(B)^{1/p}}\,\rchi_{B}$. 
Since $\|\phi\|_p =1$, we obtain
\[
\sup_{N \in \N} \frac{1}{N} \sum_{n=1}^N \frac{\mu_n(f^{-n}(B))}{\mu(B)}
  = \sup_{N \in \N} \frac{1}{N} \sum_{n=1}^N \|(C_{w,f})^n(\phi)\|_p^p \leq  N_p(C_{w,f}).
\]
This shows that $r \leq N_p(C_{w,f})$.

Conversely, fix $t > 1$. Given $\varphi \in L^p(\mu)$ with $\|\varphi\|_p = 1$, consider the measurable sets
\[
B_i = \{x \in X : t^{i-1} \leq |\varphi(x)| < t^i\} \ \ \  (i \in \Z).
\]
Then, for every $N \in \N$,
\begin{eqnarray*}
\frac{1}{N} \sum_{n=1}^N \|(C_{w,f})^n(\varphi)\|_p^p 
& = & \frac{1}{N} \sum_{n=1}^N \sum_{i \in \Z} \int_{f^{-n}(B_i)} |\varphi \circ f^n|^p |w \circ f^{n-1}|^p \cdots |w \circ f|^p |w|^p d\mu\\
& \leq & \sum_{i \in \Z} t^{ip} \frac{1}{N} \sum_{n=1}^N \mu_n( f^{-n}(B_i)) \leq \sum _{i \in \Z} t^{ip} r \mu(B_i)\\
& = & t^p r \sum _{i \in \Z} t^{(i-1)p} \mu(B_i) \leq t^p r \|\varphi\|_p^p = t^p r.
\end{eqnarray*}
This shows that $N_p(C_{w,f}) \leq t^p r$. Since $t > 1$ is arbitrary, we obtain $N_p(C_{w,f}) \leq r$.

The last assertion follows from (\ref{p-ACB-1}).
\end{proof}

\begin{corollary}
Consider a weighted composition operator $C_{w,f}$ on $L^p(\mu)$.
If $p > 1$ and there exists $C \in (0,\infty)$ such that, for each measurable set $B$ of finite positive $\mu$-measure,
\[
\frac{1}{N} \sum_{n=1}^N \mu_n(f^{-n}(B)) \leq C\, \mu (B) \ \text{for all } N \in \N,
\]
then $C_{w,f}$ is mean ergodic.
\end{corollary}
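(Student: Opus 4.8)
The plan is to show that the hypothesis forces $T := C_{w,f}$ into the setting of the classical mean ergodic theorem on a reflexive space. By Theorem~\ref{p-ACB}, the stated inequality is precisely the condition $N_p(T) < \infty$, so $T$ is $p$-absolutely Ces\`aro bounded; set $C = N_p(T)$. Since $p > 1$, the space $L^p(\mu)$ is reflexive, and this is exactly the structural ingredient that makes the argument work.

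From $p$-absolute Ces\`aro boundedness I would extract two quantitative facts. First, retaining only the top term $n = N$ in the defining average gives $\|T^N y\|^p \le C N \|y\|^p$, hence $\|T^N\| \le (CN)^{1/p}$ and $\|T^N\|/N \le C^{1/p} N^{1/p - 1} \to 0$, using $1/p - 1 < 0$; in particular $\frac{1}{n} T^n \to 0$ strongly. Second, the power-mean inequality gives, for every $y$ and $N$, $\frac{1}{N+1}\sum_{k=0}^N \|T^k y\| \le \big(\frac{1}{N+1}\sum_{k=0}^N \|T^k y\|^p\big)^{1/p} \le (1+C)^{1/p}\|y\|$, since $\frac{1}{N+1}\sum_{k=0}^N \|T^ky\|^p \le (1+C)\|y\|^p$ (the $k=0$ term contributes $\|y\|^p$ and the rest is bounded by $C\|y\|^p$). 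Thus the Ces\`aro means satisfy $\sup_N \|M_N(T)\| \le (1+C)^{1/p} < \infty$.

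With uniform boundedness of $(M_N(T))$, the decay $\frac{1}{n} T^n \to 0$, and reflexivity in place, mean ergodicity follows from the mean ergodic theorem. For completeness I would recall the argument. On $\ker(I - T)$ the means are the identity, and for $z - Tz \in \operatorname{ran}(I-T)$ one computes $M_N(T)(z - Tz) = \frac{1}{N+1}(z - T^{N+1}z) \to 0$, so $M_N(T) \to 0$ on $\ov{\operatorname{ran}(I-T)}$ by uniform boundedness. It then remains to prove $L^p(\mu) = \ker(I-T) + \ov{\operatorname{ran}(I-T)}$. Given $y$, reflexivity lets me pass to a subsequence with $M_{N_j}(T) y \rightharpoonup z$; since $(I-T)M_N(T)y = \frac{1}{N+1}(y - T^{N+1}y) \to 0$ and $I - T$ is continuous for the weak topology, the limit $z$ lies in $\ker(I-T)$, while $y - M_N(T)y \in \operatorname{ran}(I-T)$ forces $y - z$ into the weakly closed subspace $\ov{\operatorname{ran}(I-T)}$. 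Hence $M_N(T)y$ converges for every $y$, i.e.\ $T$ is mean ergodic.

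The only delicate step is the last one: upgrading weak subsequential limits of $(M_N(T)y)$ to convergence of the full sequence, which is exactly where reflexivity (hence $p>1$) is indispensable, as mean ergodicity may fail for such operators on the non-reflexive spaces $L^1(\mu)$. Everything else reduces to Hölder's inequality and Theorem~\ref{p-ACB}. Alternatively, in the final write-up one could simply invoke the known fact that every $p$-absolutely Ces\`aro bounded operator on a reflexive Banach space with $p > 1$ is mean ergodic, and limit the work to the two estimates above.
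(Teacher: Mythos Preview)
Your proposal is correct and follows the same route as the paper: recognize via Theorem~\ref{p-ACB} that the hypothesis is exactly $p$-absolute Ces\`aro boundedness, note that $L^p(\mu)$ is reflexive for $p>1$, and conclude mean ergodicity. The paper simply cites \cite[Corollary~2.7]{BermBMP} for that last implication, whereas you unpack it by deriving $\|T^N\|/N \to 0$ and $\sup_N \|M_N(T)\| < \infty$ and then running the classical reflexive mean ergodic argument; this is more self-contained but not a different idea, and indeed your closing remark already identifies the shortcut the paper takes.
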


\begin{proof}
Every $p$-absolutely Ces\`aro bounded operator in a reflexive Banach space is mean ergodic \cite[Corollary~2.7]{BermBMP}.
\end{proof}

\begin{corollary}\label{p-ACB-UWBS}
For any weighted shift $B_w$ on $\ell^p(\N)$, 
\begin{equation}\label{p-ACB-3}
N_p(B_w) = \sup_{i \in \N, N \in \N} \frac{1}{N} \sum_{n=1}^{\min\{N,\;i-1\}} |w_{i-n} \cdots w_{i-1}|^p.
\end{equation}
In particular, $B_w$ is $p$-absolutely Ces\`aro bounded if and only if the above supremum is finite.
\end{corollary}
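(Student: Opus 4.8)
The plan is to deduce this directly from Theorem~\ref{p-ACB} by specializing to the weighted shift setting. Realizing $B_w$ as $C_{w,f}$ with $X = \N$, $\fM = \cP(\N)$, $\mu$ the counting measure and $f : n \mapsto n+1$, the measurable sets of finite positive $\mu$-measure are precisely the nonempty finite subsets $B$ of $\N$, with $\mu(B) = \card(B)$. First I would compute the relevant quantities explicitly. Since $f^k(j) = j+k$, we have $w^{(n)}(j) = w_j w_{j+1} \cdots w_{j+n-1}$, while $f^{-n}(B) = \{i-n : i \in B, \, i > n\}$. Therefore,
\[
\mu_n(f^{-n}(B)) = \sum_{i \in B, \, i > n} |w^{(n)}(i-n)|^p = \sum_{i \in B, \, i > n} |w_{i-n} \cdots w_{i-1}|^p.
\]

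Next I would substitute this into formula (\ref{p-ACB-1}) and interchange the order of the two finite sums. Writing
\[
S_i(N) = \sum_{n=1}^{\min\{N,\, i-1\}} |w_{i-n} \cdots w_{i-1}|^p,
\]
the condition $i > n$ in the inner sum is equivalent to $n \leq i-1$, so that for each fixed $N$,
\[
\frac{1}{N} \sum_{n=1}^N \frac{\mu_n(f^{-n}(B))}{\mu(B)} = \frac{1}{\card(B)} \sum_{i \in B} \frac{S_i(N)}{N}.
\]
In other words, the quantity appearing in (\ref{p-ACB-1}) is simply the arithmetic mean, over $i \in B$, of the numbers $S_i(N)/N$.

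The final step is to recognize that taking the supremum over all finite sets $B$ collapses to the supremum over singletons. Denote by $r$ the right-hand side of (\ref{p-ACB-3}), that is, $r = \sup_{i,N} S_i(N)/N$. On the one hand, choosing $B = \{i\}$ in Theorem~\ref{p-ACB} gives $N_p(B_w) \geq r$. On the other hand, since an average never exceeds the supremum of its terms,
\[
\frac{1}{\card(B)} \sum_{i \in B} \frac{S_i(N)}{N} \leq r \ \text{ for every finite } B \text{ and every } N,
\]
whence $N_p(B_w) \leq r$. This yields (\ref{p-ACB-3}), and the final assertion about $p$-absolute Ces\`aro boundedness is then immediate from the last part of Theorem~\ref{p-ACB}.

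I do not expect any serious obstacle here: the argument is entirely computational, combined with the elementary observation that the double supremum over $(B,N)$ reduces to the supremum over $(i,N)$ via the average-versus-supremum inequality. The only point requiring a little care is the bookkeeping when interchanging the summations over $n$ and $i$ and correctly identifying the upper limit $\min\{N,\, i-1\}$ of the inner sum.
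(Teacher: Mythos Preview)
Your proposal is correct and follows essentially the same approach as the paper: both specialize Theorem~\ref{p-ACB} to the counting-measure setting, compute $\mu_n(f^{-n}(B))$ explicitly, and then reduce the supremum over finite sets $B$ to the supremum over singletons via the observation that $\frac{1}{N}\sum_{n=1}^N \mu_n(f^{-n}(B))/\mu(B)$ is an average of the $S_i(N)/N$ over $i\in B$. Your write-up is slightly more explicit about the interchange of sums, but the argument is the same.
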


\begin{proof}
Formula (\ref{p-ACB-3}) is a special case of formula (\ref{p-ACB-1}). Indeed, regard $B_w$ as $C_{w,f}$ by considering 
$X = \N$, $\fM = P(\N)$, $\mu$ the counting measure on $\fM$ and $f : n \in \N \mapsto n+1 \in \N$.
Denote the right-hand side of (\ref{p-ACB-3}) by $r$. By (\ref{p-ACB-1}),
\[
N_p(B_w) \geq  \sup_{i \in \N} \sup_{N \in \N} \frac{1}{N} \sum_{n=1}^N \frac{\mu_n(f^{-n}(\{i\}))}{\mu(\{i\})} = r.
\]
On the other hand, for any nonempty finite set $B = \{i_1,\ldots,i_k\} \subset \N$,
\begin{align*}
\sup_{N \in \N} \frac{1}{N} \sum_{n=1}^N \frac{\mu_n(f^{-n}(B))}{\mu(B)}
& = \sup_{N \in \N} \frac{1}{N} \sum_{n=1}^N \sum_{j=1}^k \frac{\mu_n(f^{-n}(\{i_j\}))}{k}\\
& \leq \sum_{j=1}^k \frac{1}{k} \Big(\sup_{N \in \N} \frac{1}{N} \sum_{n=1}^{\min\{N,\, i_j - 1\}} |w_{i_j-n} \cdots w_{i_j-1}|^p\Big) \leq r.
\end{align*}
Thus, by (\ref{p-ACB-1}), $N_p(B_w) \leq r$.
\end{proof}

\begin{corollary}\label{p-ACB-BWBS}
For any weighted shift $B_w$ on $\ell^p(\Z)$, 
\begin{equation}\label{p-ACB-5}
N_p(B_w) = \sup_{i \in \Z, N \in \N} \frac{1}{N} \sum_{n=1}^N |w_{i-n} \cdots w_{i-1}|^p.
\end{equation}
In particular, $B_w$ is $p$-absolutely Ces\`aro bounded if and only if the above supremum is finite.
\end{corollary}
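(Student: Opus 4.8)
The plan is to mirror the proof of Corollary~\ref{p-ACB-UWBS}, exploiting the fact that formula~(\ref{p-ACB-5}) is merely the specialization of the general identity~(\ref{p-ACB-1}) to the bilateral shift, and that the only structural difference with the unilateral case is the \emph{absence of the truncation} $\min\{N,\,i-1\}$.

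First I would regard $B_w$ as $C_{w,f}$ by taking $X = \Z$, $\fM = \cP(\Z)$, $\mu$ the counting measure, and $f : n \mapsto n+1$, as in the corresponding example. The crucial computation is that $f^n(m) = m+n$, so $f^{-n}(\{i\}) = \{i-n\}$ for \emph{every} $i \in \Z$ and $n \in \N$; in particular this preimage is always a nonempty singleton, unlike in $\ell^p(\N)$, where $f^{-n}(\{i\})$ becomes empty once $i-n < 1$. Since $w^{(n)}(m) = w_m w_{m+1} \cdots w_{m+n-1}$, this gives $\mu_n(f^{-n}(\{i\})) = |w^{(n)}(i-n)|^p = |w_{i-n} \cdots w_{i-1}|^p$, which is exactly the summand in~(\ref{p-ACB-5}) with no cut-off. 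Denoting by $r$ the right-hand side of~(\ref{p-ACB-5}), the lower bound $N_p(B_w) \geq r$ then follows at once from~(\ref{p-ACB-1}) by restricting the outer supremum to the singletons $B = \{i\}$, $i \in \Z$.

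For the reverse inequality I would take an arbitrary nonempty finite set $B = \{i_1,\ldots,i_k\} \subset \Z$ (these are precisely the sets of finite positive $\mu$-measure). Because the translates $i_1 - n, \ldots, i_k - n$ remain distinct, additivity of $\mu_n$ yields $\mu_n(f^{-n}(B)) = \sum_{j=1}^k |w_{i_j-n} \cdots w_{i_j-1}|^p$ and $\mu(B) = k$. A straightforward averaging argument then shows
\[
\frac{1}{N} \sum_{n=1}^N \frac{\mu_n(f^{-n}(B))}{\mu(B)}
  = \sum_{j=1}^k \frac{1}{k} \Big( \frac{1}{N} \sum_{n=1}^N |w_{i_j-n} \cdots w_{i_j-1}|^p \Big) \leq r,
\]
since each inner average is at most $r$ by definition of the latter and the coefficients $1/k$ sum to $1$. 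Taking the supremum over $N$ and over all finite $B$ and invoking~(\ref{p-ACB-1}) gives $N_p(B_w) \leq r$, and hence equality.

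There is no serious obstacle here: the argument is essentially identical to the unilateral case, and the only point deserving attention is verifying that on $\Z$ the preimage $f^{-n}(\{i\})$ is always a single integer, so that the sum in~(\ref{p-ACB-5}) runs over the full range $1 \leq n \leq N$ rather than being truncated at $i-1$. The final assertion on $p$-absolute Ces\`aro boundedness is then immediate from the definition of $N_p(B_w)$ and the equivalence recorded after it.
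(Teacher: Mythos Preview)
Your proposal is correct and takes essentially the same approach as the paper, which simply states that formula~(\ref{p-ACB-5}) is a special case of~(\ref{p-ACB-1}) and that the proof is similar to that of Corollary~\ref{p-ACB-UWBS}, leaving the details to the reader. You have supplied precisely those details, including the key observation that on $\Z$ the preimage $f^{-n}(\{i\})$ is always a singleton, which accounts for the absence of the truncation $\min\{N,\,i-1\}$.
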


\begin{proof}
Formula (\ref{p-ACB-5}) is also a special case of formula (\ref{p-ACB-1}).
The proof is similar to that of Corollary~\ref{p-ACB-UWBS} and is left to the reader.
\end{proof}

A special case of Jensen's inequality (see \cite[Theorem~3.3]{WRud87}) asserts that
\[
\varphi\Big(\frac{a_1 + \cdots + a_N}{N}\Big) \leq \frac{\varphi(a_1) + \cdots + \varphi(a_N)}{N}\,,
\]
whenever $\varphi : [0,\infty) \to \R$ is a convex function, $N \in \N$ and $a_1,\ldots,a_N \in [0,\infty)$.
As a consequence, for any operator $T$ on a Banach space $Y$,
\[
T \text{ $p$-absolutely Ces\`aro bounded } \ \Longrightarrow \ T \text{ $q$-absolutely Ces\`aro bounded } \forall q \in [1,p]
\]
(for a generalization of this fact see \cite[Theorem~7.12]{ABeYa}).

\begin{example}\label{ejemplos}
Given any real number $q > p$, consider the weighted shift $B_w$ on $\ell^p(\N)$ whose weight sequence
$w = (w_n)_{n \in \N}$ is given by
\[
w_n = \left(\frac{n+1}{n}\right)^{\frac{1}{q}} \ \text{ for all } n \in \N.
\]
Then, $B_w$ is $p$-absolutely Ces\`aro bounded, but it is not $q$-absolutely Ces\`aro bounded.
In particular, $B_w$ is not power-bounded.
\end{example}

\begin{proof}
Indeed, let $\alpha = \frac{p}{q} \in (0,1)$. For any $i \geq 2$ and $N \in \N$,
\begin{eqnarray*}
\sum_{n=1}^{\min\{N,\;i-1\}} |w_{i-n} \cdots w_{i-1}|^p
&=& \sum_{n=1}^{\min\{N,\;i-1\}} \Big(\frac{i}{i-n}\Big)^\alpha\\
&=& \left\{
  \begin{array}{cl}
  i^\alpha \sum_{n=1}^{\min\{N,\;i-1\}} (i-n)^{-\alpha} & \text{if } i \le 2N\\
  \sum_{n=1}^N \Big(\frac{i}{i-n}\Big)^\alpha  & \text{if } i > 2N
  \end{array}
  \right.\\
&\le& \left\{
  \begin{array}{cl} 
  i^\alpha \sum_{n=1}^{i-1} n^{-\alpha} & \text{if } i \le 2N\\
  \sum_{n=1}^N \Big(\frac{i}{i-N}\Big)^\alpha & \text{if } i > 2N
  \end{array}
  \right.\\
&\le& \left\{
  \begin{array}{cl} 
  i^\alpha \big(1 + \int_1^{i-1} t^{-\alpha} dt \big) & \text{if } i \le 2N\\
  \sum_{n=1}^N 2^\alpha & \text{if } i > 2N
  \end{array}
  \right.\\
&\le& \left\{
  \begin{array}{ll}
  \frac{i}{1 - \alpha} & \text{if } i \le 2N\\
  2N & \text{if } i > 2N
  \end{array}
  \right.\\
&\leq& \frac{2}{1 - \alpha}\, N.
\end{eqnarray*}
Thus, by Corollary~\ref{p-ACB-UWBS}, $B_w$ is $p$-absolutely Ces\`aro bounded.

On the other hand, suppose that $B_w$ is $q$-absolutely Ces\`aro bounded.
Then, there exists a constant  $C \in (0,\infty)$ such that 
\[
\sup_{N \in \N} \frac{1}{N} \sum_{n=1}^N \|(B_w)^n x\|_p^q \leq C \|x\|_p^q \ \text{ for all } x \in \ell_p(\N).
\]
In particular, for each $i \geq 2$,
\[
C \geq \frac{1}{i-1} \sum_{n=1}^{i-1} \|(B_w)^n e_i\|_p^q = \frac{1}{i-1} \Big(\frac{i}{i-1} + \frac{i}{i-2} + \cdots + \frac{i}{1}\Big)
= \frac{i}{i-1} \sum_{n=1}^{i-1} \frac{1}{n}\cdot
\]
Since the harmonic series diverges, we have a contradiction.
\end{proof}

The above example comes from \cite[Theorem~2.1]{BermBMP}, 
where it was proved that $B_w$ is $p$-absolutely Ces\`aro bounded but not power-bounded.
Here we obtain the stronger conclusion that $B_w$ is not $q$-absolutely Ces\`aro bounded.
Moreover, the proof that $B_w$ is $p$-absolutely Ces\`aro bounded, as an application of Corollary~\ref{p-ACB-UWBS},
is slightly shorter (but it uses similar estimates).

\begin{example} 
The weighted translation operator $T_w$ on $L^p[1,\infty)$ with weight function
\[
w(x) = \Big(\frac{x+1}{x}\Big)^{\frac{1-\varepsilon}{p}},
\]
where $\eps > 0$ is fixed, is $p$-absolutely Ces\`aro bounded.
\end{example}

\begin{proof}
In view of Theorem~\ref{p-ACB}, it is enough to show that there exists a constant $C \in (0,\infty)$ such that,
for each Lebesgue measurable set $B \subset [1, \infty)$ with finite positive Lebesgue measure,
\[
\frac{1}{N} \sum _{n=1}^N \mu_n(f^{-n}(B)) \leq C\, \mu (B) \ \text{ for all } N \in \N.
\]
But, in fact,
\begin{align*}
\sum _{n=1}^N \mu_n(f^{-n}(B)) 
&= \sum_{n=1}^N \int_{(B-n) \cap [1,\infty)} \Big(\frac{x+n}{x}\Big)^{1-\eps} dx \\
&= \sum _{n=1}^N \int_{B \cap [n+1,\infty) } \Big(\frac{x}{x-n}\Big)^{1-\eps} dx =
  \int_{B} \sum_{n=1}^{\min\{N,\;[x]-1\}} \Big(\frac{x}{x-n}\Big)^{1-\eps} dx \\
&\leq \int_{B \cap [1,1+2N]} x^{1-\eps} \sum_{n=1}^{[x]-1} \Big(\frac{1}{x-n}\Big)^{1-\eps} dx
  + \int_{B \cap [1+2N, \infty)} \sum_{n=1}^N \Big(\frac{x}{x-n}\Big)^{1-\eps} dx \\
&< \frac{1+2N}{\eps} \mu(B) + 2N \mu(B) \leq (2+ \frac{3}{\eps}) N \mu(B),
\end{align*}
as it was to be shown.    
\end{proof}

%%%%%%%%%%%%%%%%%%%%%%%%%%%%%%
%%%%%%%%%%%%%%%%%%%%%%%%%%%%%%

\subsection{The case of the space $C_0(\Omega)$}

\begin{theorem}\label{p-ACB-C0X}
For any weighted composition operator $C_{w,f}$ on $C_0(\Omega)$,
\begin{equation}\label{p-ACB-C0X-1}
N_p(C_{w,f}) = \sup_{N \in \N} \frac{1}{N} \sum_{n=1}^N \|w^{(n)}\|^p.
\end{equation}
In particular, $C_{w,f}$ is $p$-absolutely Ces\`aro bounded if and only if there exists a constant $C \in (0,\infty)$ such that
\begin{equation}\label{p-ACB-C0X-2}
\frac{1}{N} \sum _{n=1}^N \|w^{(n)}\|^p \leq C \ \text{ for all } N \in \N.
\end{equation}
\end{theorem}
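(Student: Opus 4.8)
The plan is to establish the formula (\ref{p-ACB-C0X-1}) through the two one-sided inequalities, in the spirit of the proofs of Theorem~\ref{PB-C0X} and Theorem~\ref{p-ACB}, after which the characterization (\ref{p-ACB-C0X-2}) will follow at once from the equivalence ``$C_{w,f}$ is $p$-absolutely Ces\`aro bounded $\iff N_p(C_{w,f}) < \infty$''. Denote the right-hand side of (\ref{p-ACB-C0X-1}) by $r$.

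For the inequality $N_p(C_{w,f}) \leq r$, I would take any $\varphi \in C_0(\Omega)$ with $\|\varphi\| = 1$ and reuse the estimate from Theorem~\ref{PB-C0X}, namely $\|(C_{w,f})^n(\varphi)\| = \|(\varphi \circ f^n) \cdot w^{(n)}\| \leq \|\varphi\|\, \|w^{(n)}\| = \|w^{(n)}\|$. Averaging the $p$-th powers over $n \in \{1,\dots,N\}$ gives $\frac{1}{N}\sum_{n=1}^N \|(C_{w,f})^n(\varphi)\|^p \leq \frac{1}{N}\sum_{n=1}^N \|w^{(n)}\|^p \leq r$, and taking the supremum over $N$ and over the unit sphere of $C_0(\Omega)$ yields $N_p(C_{w,f}) \leq r$.

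The reverse inequality is where the genuine difficulty lies. In the definition of $N_p$ the \emph{same} vector $\varphi$ is tested against all the iterates occurring in one Ces\`aro average, so, in contrast to the $L^p$ case where a single characteristic function probes one set, here a single bump function must nearly saturate $\|w^{(n)}\|$ simultaneously for every $n \leq N$. I would fix $N \in \N$ and $\eps > 0$ small enough that $\eps < \|w^{(n)}\|$ whenever $\|w^{(n)}\| > 0$ and $n \leq N$, and for each such $n$ choose $x_n \in \Omega$ with $|w^{(n)}(x_n)| \geq \|w^{(n)}\| - \eps$. The crucial point is that the finitely many target points $f^n(x_n)$, $n = 1,\dots,N$, form a compact subset of $\Omega$; since $\Omega$ is locally compact Hausdorff, I can enclose them in a relatively compact open set and apply Urysohn's lemma to obtain a \emph{single} $\varphi \in C_c(\Omega) \subset C_0(\Omega)$ with $0 \leq \varphi \leq 1$, $\|\varphi\| = 1$ and $\varphi(f^n(x_n)) = 1$ for all such $n$.

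With this $\varphi$ fixed, for each $n \leq N$ one has $\|(C_{w,f})^n(\varphi)\| \geq |\varphi(f^n(x_n))\, w^{(n)}(x_n)| = |w^{(n)}(x_n)| \geq \|w^{(n)}\| - \eps$, while the indices with $\|w^{(n)}\| = 0$ contribute $0$ to both sides. Hence $N_p(C_{w,f}) \geq \frac{1}{N}\sum_{n=1}^N (\|w^{(n)}\| - \eps)^p$; as this is a finite sum, letting $\eps \to 0^+$ gives $N_p(C_{w,f}) \geq \frac{1}{N}\sum_{n=1}^N \|w^{(n)}\|^p$, and a final supremum over $N$ produces $N_p(C_{w,f}) \geq r$. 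Together with the first inequality this proves (\ref{p-ACB-C0X-1}), and (\ref{p-ACB-C0X-2}) is immediate. The main obstacle throughout is the simultaneous-saturation requirement in the lower bound, and it is dissolved precisely by the compactness of the finite set $\{f^n(x_n) : 1 \leq n \leq N\}$ together with Urysohn's lemma.
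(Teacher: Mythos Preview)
Your proposal is correct and follows essentially the same approach as the paper: the upper bound via $\|(\varphi\circ f^n)\cdot w^{(n)}\|\le\|w^{(n)}\|$, and the lower bound by fixing $N$ and $\eps$, choosing near-maximizers $x_n$, enclosing the finite set $\{f^n(x_n)\}$ in a relatively compact open set, and applying Urysohn's lemma to produce a single test function. Your extra care in separating the indices with $\|w^{(n)}\|=0$ is a mild refinement that avoids writing $(\|w^{(n)}\|-\eps)^p$ for a possibly negative base, but otherwise the arguments coincide.
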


\begin{proof}
Denote the right-hand side of (\ref{p-ACB-C0X-1}) by $r$.

For every $\varphi \in C_0(\Omega)$ with $\|\varphi\| = 1$,
\[
\frac{1}{N} \sum_{n=1}^N \|(C_{w,f})^n(\varphi)\|^p 
  = \frac{1}{N} \sum_{n=1}^N \|(\varphi \circ f^n) \cdot w^{(n)}\|^p \leq  \frac{1}{N} \sum_{n=1}^N \|w^{(n)}\|^p.
\]
This shows that $N_p(C_{w,f}) \leq r$.

Conversely, fix $N \in \N$ and $\eps > 0$. 
For each $n \in \{1,\ldots,N\}$, there is $x_n \in \Omega$ with $|w^{(n)}(x_n)| \geq \|w^{(n)}\| - \eps$.
Let $V$ be a relatively compact open set in $\Omega$ such that $\{f^n(x_1),\ldots,f^n(x_N)\} \subset V$.
By Urysohn's lemma, there exists a continuous map $\phi : \Omega \to [0,1]$ such that $\supp \phi \subset V$
and $\phi(f^n(x_n)) = 1$ for all $n \in \{1,\ldots,N\}$. Hence,
\[
N_p(C_{w,f}) \geq \frac{1}{N} \sum_{n=1}^N \|(C_{w,f})^n(\phi)\|^p 
    \geq \frac{1}{N} \sum_{n=1}^N |\phi(f^n(x_n)) w^{(n)}(x_n)|^p 
    \geq \frac{1}{N} \sum_{n=1}^N (\|w^{(n)}\| - \eps)^p.
\]
Since $N \in \N$ and $\eps > 0$ are arbitrary, we obtain $N_p(C_{w,f}) \geq r$.

The last assertion follows from (\ref{p-ACB-C0X-1}).
\end{proof}

The results below follow immediately from the previous theorem.

\begin{corollary}\label{p-ACB-UWBS-C0X}
Let $\Omega = \N$ or $\Z$. For any weighted shift $B_w$ on $c_0(\Omega)$,  
\begin{equation}\label{p-ACB-C0X-3}
N_p(B_w) = \sup_{N \in \N} \frac{1}{N} \sum_{n=1}^N \Big(\sup_{i \in \Omega} |w_i \cdots w_{i+n-1}|\Big)^p.
\end{equation}
In particular, $B_w$ is $p$-absolutely Ces\`aro bounded if and only if the above supremum is finite.
\end{corollary}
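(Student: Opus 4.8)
The plan is to derive this as an immediate specialization of Theorem~\ref{p-ACB-C0X}. First I would recall from the example identifying $C_0(\Omega)$ with $c_0(\N)$ (resp.\ $c_0(\Z)$) that a weighted shift $B_w$ on $c_0(\Omega)$, with $\Omega \in \{\N,\Z\}$ endowed with the discrete topology, is exactly the weighted composition operator $C_{w,f}$ associated with the forward shift $f : i \mapsto i+1$. Since formula~(\ref{p-ACB-C0X-1}) already expresses $N_p(C_{w,f})$ entirely in terms of the sup-norms $\|w^{(n)}\|$, the task reduces to computing these norms explicitly for the shift, and the two cases $\Omega = \N$ and $\Omega = \Z$ are handled uniformly.

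The one computation to carry out is to unwind the definition of $w^{(n)}$. Since $f^k(i) = i+k$ for every $k \in \N_0$, we have $(w \circ f^k)(i) = w_{i+k}$, so that
\[
w^{(n)}(i) = (w \circ f^{n-1})(i) \cdots (w \circ f)(i)\cdot w(i) = w_i\, w_{i+1} \cdots w_{i+n-1}
\]
for all $i \in \Omega$ and $n \in \N$. Taking the supremum over $i \in \Omega$ then yields
\[
\|w^{(n)}\| = \sup_{i \in \Omega} |w_i \cdots w_{i+n-1}|,
\]
which is precisely the inner expression appearing in~(\ref{p-ACB-C0X-3}).

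Substituting this identity into~(\ref{p-ACB-C0X-1}) gives formula~(\ref{p-ACB-C0X-3}) at once, and the stated characterization then follows from the general equivalence that $B_w$ is $p$-absolutely Ces\`aro bounded if and only if $N_p(B_w) < \infty$. I expect no genuine obstacle here; the only point demanding a little care is the bookkeeping in the product defining $w^{(n)}$, namely verifying that iterating $C_{w,f}$ produces the window $w_i \cdots w_{i+n-1}$ in the correct order. This is confirmed by a short induction using $(C_{w,f})^n(\varphi) = (\varphi \circ f^n)\cdot w^{(n)}$, and one checks that it reconciles the \emph{forward} shift $f$ with the backward action $(B_w x)_i = w_i x_{i+1}$.
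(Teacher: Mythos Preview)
Your proposal is correct and matches the paper's approach: the paper simply notes that this corollary follows immediately from Theorem~\ref{p-ACB-C0X}, and you have filled in the routine computation of $\|w^{(n)}\|$ for the shift that makes this immediate. There is nothing to add.
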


\begin{corollary}\label{p-ACB-BWBS-C0X}
Let $\Omega = [1,\infty)$ or $\R$. For any weighted translation operator $T_w$ on $C_0(\Omega)$,  
\begin{equation}\label{p-ACB-C0X-5}
N_p(T_w) = \sup_{N \in \N} \frac{1}{N} \sum_{n=1}^N \Big(\sup_{x \in \Omega} |w(x) \cdots w(x+n-1)|\Big)^p.
\end{equation}
In particular, $T_w$ is $p$-absolutely Ces\`aro bounded if and only if the above supremum is finite.
\end{corollary}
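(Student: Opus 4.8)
The plan is to derive this as an immediate specialization of Theorem~\ref{p-ACB-C0X} to the weighted translation operator $T_w = C_{w,f}$ determined by $f : x \in \Omega \mapsto x+1 \in \Omega$. Theorem~\ref{p-ACB-C0X} already gives the general formula
\[
N_p(C_{w,f}) = \sup_{N \in \N} \frac{1}{N} \sum_{n=1}^N \|w^{(n)}\|^p,
\]
so the only thing I need to do is rewrite the quantity $\|w^{(n)}\|$ explicitly in terms of the weight $w$ for this particular choice of $f$.

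First I would compute $w^{(n)}$ from its definition. Since $f^j(x) = x + j$ for every $j \geq 0$, the factor $w \circ f^j$ evaluates to $w(x+j)$, and therefore
\[
w^{(n)}(x) = (w \circ f^{n-1})(x) \cdots (w \circ f)(x) \cdot w(x) = w(x+n-1) \cdots w(x+1)\, w(x) \ \ \ (x \in \Omega).
\]
Taking the supremum norm over $\Omega$ then yields
\[
\|w^{(n)}\| = \sup_{x \in \Omega} |w(x)\, w(x+1) \cdots w(x+n-1)|,
\]
so that $\|w^{(n)}\|^p = \big(\sup_{x \in \Omega} |w(x) \cdots w(x+n-1)|\big)^p$.

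Substituting this identity into the formula of Theorem~\ref{p-ACB-C0X} gives exactly \eqref{p-ACB-C0X-5}, and the final assertion ($p$-absolute Ces\`aro boundedness $\Longleftrightarrow$ finiteness of the supremum) is the characterization $T_w \text{ is $p$-absolutely Ces\`aro bounded} \Leftrightarrow N_p(T_w) < \infty$ recorded in the preliminary discussion, applied to the right-hand side just computed. I do not expect any genuine obstacle here: the entire content is the elementary evaluation $w^{(n)}(x) = w(x) \cdots w(x+n-1)$ for the shift map, exactly as in the analogous passages for $B_w$ on $c_0(\Omega)$ (Corollary~\ref{p-ACB-UWBS-C0X}). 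The only point worth a moment's care is that the argument is uniform in the two cases $\Omega = [1,\infty)$ and $\Omega = \R$, since neither the computation of $w^{(n)}$ nor the supremum depends on which of these domains is used; this is why the statement covers both simultaneously.
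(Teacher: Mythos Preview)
Your proposal is correct and takes essentially the same approach as the paper, which simply states that the result follows immediately from Theorem~\ref{p-ACB-C0X}. In fact you have spelled out more detail than the paper does, but the substance---specializing the general formula via the computation $w^{(n)}(x) = w(x)\cdots w(x+n-1)$ for the translation map---is identical.
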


\begin{remark}
Formula (\ref{p-ACB-3}) and formula (\ref{p-ACB-C0X-3}) for $\Omega = \N$ do not give the same value in general.
For instance, consider the weight sequence $w = (w_n)_{n \in \N}$ obtained by concatenating the blocks
\[
z^{(n)} = \big(\underbrace{\sqrt[n]{e},\ldots,\sqrt[n]{e}}_{n \text{ times}},\frac{1}{e}\big) \ \ \text{ for } n \geq 2.
\]
Since
\[
\sup_{i \in \N} |w_i \cdots w_{i+n-1}| = \left\{\begin{array}{cl} \sqrt{e} & \text{if } n = 1\\ e & \text{if } n \geq 2, \end{array} \right.
\]
we have that
\[
\sup_{N \in \N} \frac{1}{N} \sum_{n=1}^N \Big(\sup_{i \in \N} |w_i \cdots w_{i+n-1}|\Big)^p
  = \sup_{N \in \N} \frac{(\sqrt{e})^p + (N-1) e^p}{N} = e^p.
\]
On the other hand, the largest possible value $V_N$ for
\[
\frac{1}{N} \sum_{n=1}^{\min\{N,\;i-1\}} |w_{i-n} \cdots w_{i-1}|^p
\]
is given by
\[
V_N = \left\{\begin{array}{cl} (\sqrt{e})^p & \text{if } N = 1\\ 
  \displaystyle \frac{(\sqrt[N]{e}\,)^p + (\sqrt[N]{e}\,)^{2p} + \cdots + (\sqrt[N]{e}\,)^{Np}}{N} & \text{if } N \geq 2.
  \end{array} \right.
\]
In particular,
\[
V_N < e^p \ \ \text{ for all } N \geq 1.
\]
Since
\begin{align*}
\lim_{N \to \infty} V_N 
  &= \lim_{N \to \infty} \frac{(\sqrt[N]{e}\,)^p (e^p - 1)}{N ((\sqrt[N]{e}\,)^p - 1)} 
     = \lim_{t \to 0^+} \frac{t\, e^{pt} (e^p - 1)}{e^{pt} - 1}\\
  &= (e^p - 1) \lim_{t \to 0^+} \frac{t}{e^{pt} - 1} = (e^p - 1) \lim_{t \to 0^+} \frac{1}{p\, e^{pt}} = \frac{e^p - 1}{p} < e^p,
\end{align*}
we conclude that
\[
\sup_{i \in \N, N \in \N} \frac{1}{N} \sum_{n=1}^{\min\{N,\;i-1\}} |w_{i-n} \cdots w_{i-1}|^p = \sup_{N \in \N} V_N < e^p.
\]

Similarly, formula (\ref{p-ACB-5}) and formula (\ref{p-ACB-C0X-3}) for $\Omega = \Z$ do not give the same value in general.
\end{remark}

%%%%%%%%%%%%%%%%%%%%%%%%%%%%%%%%%%%%%%%%%%%%%%%%%%%%%%%%%%%%%%%%%%%%%%
%%%%%%%%%%%%%%%%%%%%%%%%%%%%%%%%%%%%%%%%%%%%%%%%%%%%%%%%%%%%%%%%%%%%%%

\section{Mean Li-Yorke chaotic weighted composition operators}

Given a metric space $M$, recall that a map $f : M \to M$ is said to be {\em mean Li-Yorke chaotic} if there exists an uncountable set 
$S \subset M$ such that each pair $(x,y)$ of distinct points in $S$ is a {\em mean Li-Yorke pair for $f$}, in the sense that
\[
\liminf_{N \to \infty} \frac{1}{N} \sum_{n=1}^N d(f^n(x),f^n(y)) = 0 
\ \ \text{ and } \ \ 
\limsup_{N \to \infty} \frac{1}{N} \sum_{n=1}^N d(f^n(x),f^n(y)) > 0.
\]
If the set $S$ can be chosen to be dense in $M$, then $f$ is {\em densely mean Li-Yorke chaotic}.

An extensive study of the concept of mean Li-Yorke chaos in the setting of linear dynamics was developed in \cite{BBP}.
In particular, the following useful characterizations were obtained:
For any operator $T$ on any Banach space $Y$, the following assertions are equivalent:
\begin{itemize}
\item [(i)] $T$ is mean Li-Yorke chaotic;
\item [(ii)] $T$ admits an {\em absolutely mean semi-irregular vector}, that is, a vector $y \in Y$ such that
  \[
  \liminf_{N \to \infty} \frac{1}{N} \sum_{n=1}^N \|T^ny\| = 0 
  \ \ \text{ and } \ \ 
  \limsup_{N \to \infty} \frac{1}{N} \sum_{n=1}^N \|T^ny\| > 0.
  \]
\item [(iii)] $T$ admits an {\em absolutely mean irregular vector}, that is, a vector $y \in Y$ such that
  \[
  \liminf_{N \to \infty} \frac{1}{N} \sum_{n=1}^N \|T^ny\| = 0 
  \ \ \text{ and } \ \ 
  \limsup_{N \to \infty} \frac{1}{N} \sum_{n=1}^N \|T^ny\| = \infty.
  \]
\end{itemize}

Let us also recall that by an {\em absolutely mean irregular manifold for $T$} we mean a vector subspace of $Y$ consisting,
except for the zero vector, of absolutely mean irregular vectors for $T$.

%%%%%%%%%%%%%%%%%%%%%%%%%%%%%%
%%%%%%%%%%%%%%%%%%%%%%%%%%%%%%

\subsection{The case of the space $L^p(\mu)$}

\begin{theorem}[Necessary Condition]\label{MLY-Nec}
If a weighted composition operator $C_{w,f}$ on $L^p(\mu)$ is mean Li-Yorke chaotic, 
then there exists a nonempty countable family $(B_i)_{i \in I}$ of measurables sets of finite positive $\mu$-measure such that
\begin{equation}\label{MLY-eq1}
\liminf_{N \to \infty} \frac{1}{N} \sum_{n=1}^N \mu_n(f^{-n}(B_i))^\frac{1}{p} = 0 \ \ \text{ for all } i \in I
\end{equation}
and
\begin{equation}\label{MLY-eq2}
\sup \Big\{\frac{1}{N} \sum_{n=1}^N \frac{\mu_n(f^{-n}(B_i))}{\mu(B_i)} : i \in I, N \in \N \Big\} = \infty.
\end{equation}
\end{theorem}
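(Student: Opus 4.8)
The plan is to exploit the characterization of mean Li-Yorke chaos recalled just above. Since $C_{w,f}$ is mean Li-Yorke chaotic, it admits an absolutely mean irregular vector $\varphi \in L^p(\mu)$; in particular $\varphi \neq 0$, and
\[
\liminf_{N \to \infty} \frac{1}{N} \sum_{n=1}^N \|(C_{w,f})^n(\varphi)\|_p = 0
\quad \text{and} \quad
\limsup_{N \to \infty} \frac{1}{N} \sum_{n=1}^N \|(C_{w,f})^n(\varphi)\|_p = \infty.
\]
Mimicking the dyadic decomposition used in the proof of Theorem~\ref{Dist-1}, I would slice $\varphi$ along the level sets
\[
B_i = \{x \in X : 2^{i-1} \leq |\varphi(x)| < 2^i\} \quad (i \in \Z), \qquad I = \{i \in \Z : \mu(B_i) > 0\},
\]
so that $(B_i)_{i \in I}$ is a nonempty countable family of measurable sets of finite positive $\mu$-measure (finiteness of each $\mu(B_i)$ follows exactly as in the proof of Theorem~\ref{powerbounded}). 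Since $(C_{w,f})^n(\varphi) = (\varphi \circ f^n) \cdot w^{(n)}$ and $x \in f^{-n}(B_i)$ precisely when $2^{i-1} \leq |\varphi(f^n(x))| < 2^i$, decomposing the integral defining $\|(C_{w,f})^n(\varphi)\|_p^p$ over the sets $f^{-n}(B_i)$ yields the two-sided estimate
\[
\sum_{i \in \Z} 2^{(i-1)p} \mu_n(f^{-n}(B_i)) \leq \|(C_{w,f})^n(\varphi)\|_p^p \leq \sum_{i \in \Z} 2^{ip} \mu_n(f^{-n}(B_i)).
\]

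To obtain (\ref{MLY-eq1}), I would keep only the $i$-th term of the lower bound, which gives $2^{i-1} \mu_n(f^{-n}(B_i))^{1/p} \leq \|(C_{w,f})^n(\varphi)\|_p$ for each fixed $i \in I$ and every $n$. Averaging over $n \in \{1,\ldots,N\}$ and taking $\liminf_{N \to \infty}$ then forces
\[
2^{i-1} \liminf_{N \to \infty} \frac{1}{N} \sum_{n=1}^N \mu_n(f^{-n}(B_i))^{1/p} \leq \liminf_{N \to \infty} \frac{1}{N} \sum_{n=1}^N \|(C_{w,f})^n(\varphi)\|_p = 0,
\]
and since $2^{i-1} > 0$ this is exactly (\ref{MLY-eq1}).

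For (\ref{MLY-eq2}) I would argue by contradiction. If the supremum there were a finite constant $C$, then $\frac{1}{N} \sum_{n=1}^N \mu_n(f^{-n}(B_i)) \leq C \mu(B_i)$ for all $i \in I$ and $N \in \N$. Inserting this into the upper bound of the two-sided estimate and interchanging the two nonnegative sums (Tonelli) would give
\[
\frac{1}{N} \sum_{n=1}^N \|(C_{w,f})^n(\varphi)\|_p^p \leq \sum_{i \in I} 2^{ip} \frac{1}{N} \sum_{n=1}^N \mu_n(f^{-n}(B_i)) \leq C \sum_{i \in I} 2^{ip} \mu(B_i) \leq 2^p C \|\varphi\|_p^p,
\]
where the final inequality uses $2^{(i-1)p} \mu(B_i) \leq \int_{B_i} |\varphi|^p \, d\mu$ together with $\sum_{i \in I} \int_{B_i} |\varphi|^p \, d\mu = \|\varphi\|_p^p$. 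The step I expect to be the main obstacle is bridging the averages of $\|(C_{w,f})^n(\varphi)\|_p$, which govern the definition of an absolutely mean irregular vector, with the averages of $\|(C_{w,f})^n(\varphi)\|_p^p$, which arise naturally from the measure-theoretic bookkeeping. This is precisely where Jensen's inequality for the convex function $t \mapsto t^p$ enters: it gives $\bigl(\frac{1}{N} \sum_{n=1}^N \|(C_{w,f})^n(\varphi)\|_p\bigr)^p \leq \frac{1}{N} \sum_{n=1}^N \|(C_{w,f})^n(\varphi)\|_p^p$, so the uniform bound just obtained would yield $\frac{1}{N} \sum_{n=1}^N \|(C_{w,f})^n(\varphi)\|_p \leq 2 C^{1/p} \|\varphi\|_p$ for all $N$, contradicting $\limsup_{N \to \infty} \frac{1}{N} \sum_{n=1}^N \|(C_{w,f})^n(\varphi)\|_p = \infty$. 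This contradiction establishes (\ref{MLY-eq2}) and completes the proof.
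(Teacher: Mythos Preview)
Your proposal is correct and follows essentially the same approach as the paper: the dyadic level-set decomposition $B_i = \{2^{i-1} \le |\varphi| < 2^i\}$ of an absolutely mean irregular vector, the same two-sided estimate on $\|(C_{w,f})^n(\varphi)\|_p^p$, the termwise lower bound for (\ref{MLY-eq1}), and the contradiction argument for (\ref{MLY-eq2}) via Jensen's inequality to pass from averages of $\|\cdot\|_p^p$ to averages of $\|\cdot\|_p$. No substantive difference.
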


\begin{proof}
Let $\varphi \in L^p(\mu)$ be an absolutely mean irregular vector for $C_{w,f}$. 
Consider the measurable sets
\[
B_i = \{x \in X : 2^{i-1} \leq |\varphi(x)| < 2^i\} \ \ \ (i \in \Z)
\]
and let $I$ be the nonempty subset of $\Z$ given by $I = \{i \in \Z : \mu(B_i) > 0\}$.
We have that $0 < \mu(B_i) < \infty$ for all $i \in I$, because
\[
2^{(i-1)p} \mu(B_i) \leq \int_X |\varphi|^p d\mu <\infty \ \ \ (i \in \Z).
\]
Since
\begin{align*}
2^{i-1} \frac{1}{N} \sum_{n=1}^N \mu_n(f^{-n}(B_i))^\frac{1}{p} 
  &\leq \frac{1}{N} \sum_{n=1}^N \Big(\int_{f^{-n}(B_i)} |\varphi \circ f^n|^p |w^{(n)}|^p d\mu\Big)^\frac{1}{p}\\
  &\leq \frac{1}{N} \sum_{n=1}^N \|(C_{w,f})^n(\varphi)\|_p
\end{align*}
and $\varphi$ is an absolutely mean irregular vector for $C_{w,f}$, it follows that (\ref{MLY-eq1}) holds.
On the other hand, if (\ref{MLY-eq2}) fails, that is,
\[
C = \sup \Big\{\frac{1}{N} \sum_{n=1}^N \frac{\mu_n(f^{-n}(B_i))}{\mu(B_i)} : i \in I, N \in \N \Big\} < \infty,
\]
then
\begin{align*}
\Big(\frac{1}{N} \sum_{n=1}^N \|(C_{w,f})^n(\varphi)\|_p\Big)^p 
&\leq \frac{1}{N} \sum_{n=1}^N \|(C_{w,f})^n(\varphi)\|_p^p\\
&= \frac{1}{N} \sum_{n=1}^N \sum_{i \in \Z} \int_{f^{-n}(B_i)} |\varphi \circ f^n|^p |w^{(n)}|^p d\mu\\
&\leq \frac{1}{N} \sum_{n=1}^N \sum_{i \in \Z} 2^{ip} \mu_n(f^{-n}(B_i))\\
&= \sum_{i \in \Z} 2^{ip} \frac{1}{N} \sum_{n=1}^N \mu_n(f^{-n}(B_i))\\
&\leq \sum_{i \in \Z} 2^{ip} C \mu(B_i) = 2^p C \sum_{i \in \Z} 2^{(i-1)p} \mu(B_i) \leq 2^p C \|\varphi\|_p^p,
\end{align*}
contradicting the fact that $\varphi$ is an absolutely mean irregular vector for $C_{w,f}$.
\end{proof}

\begin{remark}
Note that the countable family $(B_i)_{i \in I}$ constructed in the proof of Theorem~\ref{MLY-Nec} has the following additional properties:
the sets $B_i$ are pairwise disjoint and there exists an increasing sequence $(N_j)_{j \in \N}$ of positive integers such that
\[
\lim_{j \to \infty} \frac{1}{N_j} \sum_{n=1}^{N_j} \mu_n(f^{-n}(B_i))^\frac{1}{p} = 0 \ \ \text{ for all } i \in I.
\]
\end{remark}

\begin{theorem}[Sufficient Condition]\label{MLY-Suf}
Consider a weighted composition operator $C_{w,f}$ on $L^p(\mu)$.
If there exists a nonempty countable family $(B_i)_{i \in I}$ of measurables sets of finite positive $\mu$-measure such that
\begin{equation}\label{MLY-eq1b}
\liminf_{N \to \infty} \frac{1}{N} \sum_{n=1}^N \mu_n(f^{-n}(B_i))^\frac{1}{p} = 0 \ \ \text{ for all } i \in I
\end{equation}
and
\begin{equation}\label{MLY-eq2b}
\sup \Big\{\frac{1}{N} \sum_{n=1}^N \frac{\mu_n(f^{-n}(B_i))^\frac{1}{p}}{\mu(B_i)} : i \in I, N \in \N \Big\} = \infty,
\end{equation}
then $C_{w,f}$ is mean Li-Yorke chaotic
\end{theorem}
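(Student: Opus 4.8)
The plan is to construct an \emph{absolutely mean irregular vector} for $C_{w,f}$; by the characterization recalled at the start of this section (from \cite{BBP}), the existence of such a vector is equivalent to mean Li-Yorke chaos, so this will finish the proof. The natural building blocks are the characteristic functions $\rchi_{B_i}$, for which $\|(C_{w,f})^n(\rchi_{B_i})\|_p = \mu_n(f^{-n}(B_i))^{1/p}$. In this language, hypothesis (\ref{MLY-eq1b}) says that for each $i \in I$ the Cesàro means $\frac{1}{N}\sum_{n=1}^N \|(C_{w,f})^n(\rchi_{B_i})\|_p$ have liminf zero, while (\ref{MLY-eq2b}) says that, after normalizing $\rchi_{B_i}$ by $\mu(B_i)$, the corresponding Cesàro means of orbit-norms are unbounded as $i$ and $N$ vary. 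As a preliminary step I would replace $(B_i)_{i \in I}$ by a pairwise disjoint family spanning the same characteristic functions, exactly as in Remark~\ref{Disjoint}; then the iterates $(C_{w,f})^n(\rchi_{B_i})$ have pairwise disjoint supports, so that $\|(C_{w,f})^n(\sum_k c_k \rchi_{B_{i_k}})\|_p^p = \sum_k c_k^p\, \mu_n(f^{-n}(B_{i_k}))$ and every norm estimate becomes a clean additive one.

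I would then fix the vector as a series $\varphi = \sum_k c_k\, \rchi_{B_{i_k}}$ assembled recursively. Using (\ref{MLY-eq2b}), at stage $k$ I would select an index $i_k \in I$ and a scale $N_k$ (much larger than those previously chosen) for which $\frac{1}{N_k}\sum_{n=1}^{N_k} \mu_n(f^{-n}(B_{i_k}))^{1/p}$ is enormous relative to $\mu(B_{i_k})$, together with a coefficient $c_k > 0$ small enough that $\sum_k c_k\, \mu(B_{i_k})^{1/p} < \infty$ (so $\varphi \in L^p(\mu)$), yet large enough that the single dominant term forces $\frac{1}{N_k}\sum_{n=1}^{N_k} \|(C_{w,f})^n(\varphi)\|_p \geq c_k\, \frac{1}{N_k}\sum_{n=1}^{N_k} \mu_n(f^{-n}(B_{i_k}))^{1/p} > k$. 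The normalization by $\mu(B_i)$ in (\ref{MLY-eq2b}) is precisely what leaves room to choose such coefficients, and disjointness of the supports makes this lower bound legitimate. This yields $\limsup_N \frac{1}{N}\sum_{n=1}^N \|(C_{w,f})^n(\varphi)\|_p = \infty$.

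For the complementary estimate I would exploit (\ref{MLY-eq1b}) through the triangle-inequality upper bound $\frac{1}{N}\sum_{n=1}^N \|(C_{w,f})^n(\varphi)\|_p \leq \sum_k c_k\, \frac{1}{N}\sum_{n=1}^N \mu_n(f^{-n}(B_{i_k}))^{1/p}$, interleaving between the large scales $N_k$ a second increasing sequence $(M_l)$ of scales at which this right-hand side is forced to be small. At each $M_l$ I would split the sum into a head of finitely many already-selected terms and a tail; the tail is killed by the summability of $(c_k\,\mu(B_{i_k})^{1/p})$, while the head must be driven down using (\ref{MLY-eq1b}). To make the head small I would order the recursion so that each $i_k$ together with its large scale $N_k$ is chosen only after the preceding small scale $M_{l}$ is fixed, and so that every index contributed to the head at $M_l$ is simultaneously in its small regime there. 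If these choices can be kept consistent, one obtains $\liminf_N \frac{1}{N}\sum_{n=1}^N \|(C_{w,f})^n(\varphi)\|_p = 0$, so $\varphi$ is absolutely mean irregular and $C_{w,f}$ is mean Li-Yorke chaotic.

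The main obstacle is exactly this head estimate for the liminf. Hypothesis (\ref{MLY-eq1b}) supplies, for each fixed $i$, only \emph{some} sequence of scales along which the Cesàro mean of $\mu_n(f^{-n}(B_i))^{1/p}$ tends to zero, and a priori these sequences differ from index to index, so one cannot make several head terms small at a \emph{common} scale by a naive diagonal argument (the liminf of a finite sum of such Cesàro means need not vanish). The delicate part of the proof is therefore the joint selection of the indices $i_k$ and the scales $N_k$, $M_l$: each new $i_k$ must be chosen not only to be large at $N_k$ via (\ref{MLY-eq2b}) but also so that, at the previously fixed small scales, its Cesàro mean is already small, and each $M_l$ must be placed after the relevant large scales and within the common small regime of the finitely many active indices. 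Reconciling the dominant-term lower bounds at the $N_k$ with the head-and-tail upper bounds at the $M_l$ is the crux; once the recursion is arranged to respect both, the remaining computations are routine.
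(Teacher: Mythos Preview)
Your approach---building an absolutely mean irregular vector directly as a series $\sum_k c_k \rchi_{B_{i_k}}$---is genuinely different from the paper's, and the obstacle you yourself flag is a real gap, not a technicality to be filled in later. Hypothesis (\ref{MLY-eq1b}) gives, for each $i$, only a \emph{subsequence} of scales along which the Ces\`aro mean is small, and these subsequences can be mutually incompatible: there exist non-negative sequences $(a_n^{(1)})$, $(a_n^{(2)})$ with $\liminf_N \frac{1}{N}\sum_{n=1}^N a_n^{(i)} = 0$ for $i=1,2$ while $\frac{1}{N}\sum_{n=1}^N(a_n^{(1)}+a_n^{(2)}) = 1$ for every $N$ (take $a^{(1)}$ and $a^{(2)}$ to be the indicators of complementary blocks $[N_{2k},N_{2k+1})$ and $[N_{2k+1},N_{2k+2})$ with $N_{k+1}/N_k\to\infty$). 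Hence the ``head'' at your small scales $M_l$ cannot in general be forced small for several active indices simultaneously, and the recursion as described does not close. (Your preliminary disjointness reduction via Remark~\ref{Disjoint} is also not quite right---that remark extracts disjoint sets from a given vector, whereas here you would have to refine an arbitrary family and check that both hypotheses survive---but this is minor next to the main issue.)

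The paper sidesteps the synchronization problem entirely by an \emph{indirect} argument. Let $\fX$ be the closed span of those $\rchi_B$ whose $B$ satisfies the liminf condition; hypothesis (\ref{MLY-eq2b}) shows that $C_{w,f}$ is not absolutely Ces\`aro bounded in $\fX$. Now assume for contradiction that $C_{w,f}$ is \emph{not} mean Li-Yorke chaotic, so it has no absolutely mean semi-irregular vector. Then for each such $\rchi_B$ the liminf is in fact a \emph{limit}, and limits \emph{are} stable under finite linear combinations---this is precisely the ingredient your scheme lacks. Consequently the set of $\varphi\in\fX$ whose Ces\`aro means tend to $0$ is dense, hence residual; intersecting it with the residual set of vectors with unbounded Ces\`aro means (from \cite[Theorem~4]{BBP}) produces an absolutely mean irregular vector after all, a contradiction. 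The decisive idea is that the contradiction hypothesis upgrades every $\liminf$ to a $\lim$, dissolving the incompatibility of subsequences that blocks your direct construction.
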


\begin{proof}
Let $\fX$ be the closed subspace of $L^p(\mu)$ generated by the characteristic functions $\rchi_B$ such that 
$B$ is a measurable set of finite $\mu$-measure satisfying
\begin{equation}\label{MLY-eq0}
\liminf_{N \to \infty} \frac{1}{N} \sum_{n=1}^N \mu_n(f^{-n}(B))^\frac{1}{p} = 0.
\end{equation}
For each $i \in I$, let $\phi_i = \frac{1}{\mu(B_i)}\, \rchi_{B_i}$. By (\ref{MLY-eq1b}), $\phi_i \in \fX$ for all $i \in I$. 
Since $\|\phi_i\|_p = 1$ and 
\[
\frac{1}{N} \sum_{n=1}^N \|(C_{w,f})^n(\phi_i)\|_p = \frac{1}{N} \sum_{n=1}^N \frac{\mu_n(f^{-n}(B_i))^\frac{1}{p}}{\mu(B_i)},
\]
it follows from (\ref{MLY-eq2b}) that $C_{w,f}$ is not absolutely Ces\`aro bounded in $\fX$.
Suppose, by contradiction, that $C_{w,f}$ is not mean Li-Yorke chaotic. 
Then, $C_{w,f}$ does not admit an absolutely mean semi-irregular vector, and so (\ref{MLY-eq0}) is equivalent to
\[
\lim_{N \to \infty} \frac{1}{N} \sum_{n=1}^N \mu_n(f^{-n}(B))^\frac{1}{p} = 0.
\]
This implies that the set $\cR_1$ of all $\varphi \in \fX$ such that
\[
\liminf_{N \to \infty} \frac{1}{N} \sum_{n=1}^N \|(C_{w,f})^n(\varphi)\|_p = 0
\]
is residual in $\fX$. Since $C_{w,f}$ is not absolutely Ces\`aro bounded in $\fX$, 
\cite[Theorem~4]{BBP} implies that the set $\cR_2$ of all $\varphi \in \fX$ such that
\[
\limsup_{N \to \infty} \frac{1}{N} \sum_{n=1}^N \|(C_{w,f})^n(\varphi)\|_p = \infty
\]
is also residual in $\fX$. Since each $\varphi \in \cR_1 \cap \cR_2$ is an absolutely mean irregular vector for $C_{w,f}$,
we conclude that $C_{w,f}$ is mean Li-Yorke chaotic, a contradiction. 
\end{proof}

In the case $p = 1$, conditions (\ref{MLY-eq2}) and (\ref{MLY-eq2b}) coincide, and so Theorems~\ref{MLY-Nec} and~\ref{MLY-Suf} 
gives us a characterization of the mean Li-Yorke chaotic weighted composition operators on $L^1(\mu)$.
More precisely, the following result holds.

\begin{corollary}\label{MLY}
Assume $p = 1$. Given a weighted composition operator $C_{w,f}$ on $L^1(\mu)$, let $\fX$ be the closed subspace of $L^1(\mu)$ 
generated by the characteristic functions $\rchi_B$ such that $B$ is a measurable set of finite $\mu$-measure satisfying
\[
\liminf_{N \to \infty} \frac{1}{N} \sum_{n=1}^N \mu_n(f^{-n}(B)) = 0.
\]
Then, the following assertions are equivalent:
\begin{itemize}
\item [\rm (i)] $C_{w,f}$ is mean Li-Yorke chaotic;
\item [\rm (ii)] There exists a nonempty countable family $(B_i)_{i \in I}$ of measurables sets of finite positive $\mu$-measure such that
  \[
  \liminf_{N \to \infty} \frac{1}{N} \sum_{n=1}^N \mu_n(f^{-n}(B_i)) = 0 \ \ \text{ for all } i \in I
  \]
  and
  \[
  \sup \Big\{\frac{1}{N} \sum_{n=1}^N \frac{\mu_n(f^{-n}(B_i))}{\mu(B_i)} : i \in I, N \in \N \Big\} = \infty.
  \]
\item [\rm (iii)] $C_{w,f}$ is not absolutely Ces\`aro bounded in $\fX$.
\end{itemize}
\end{corollary}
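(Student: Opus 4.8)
The plan is to prove the cycle (i) $\Rightarrow$ (ii) $\Rightarrow$ (iii) $\Rightarrow$ (i), leaning almost entirely on the two theorems just established. The starting observation is that for $p=1$ one has $\mu_n(f^{-n}(B))^{1/p} = \mu_n(f^{-n}(B))$, so conditions (\ref{MLY-eq1})--(\ref{MLY-eq2}) of Theorem~\ref{MLY-Nec} and conditions (\ref{MLY-eq1b})--(\ref{MLY-eq2b}) of Theorem~\ref{MLY-Suf} both collapse onto the two displayed conditions in assertion (ii). Hence (i) $\Rightarrow$ (ii) is exactly Theorem~\ref{MLY-Nec}, and (ii) $\Rightarrow$ (i) is exactly Theorem~\ref{MLY-Suf}; this settles (i) $\Leftrightarrow$ (ii) at once.

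For (ii) $\Rightarrow$ (iii) I would reproduce the opening move in the proof of Theorem~\ref{MLY-Suf}. Given a family $(B_i)_{i\in I}$ as in (ii), set $\phi_i = \frac{1}{\mu(B_i)}\,\rchi_{B_i}$. Each $B_i$ satisfies the liminf condition defining the generators of $\fX$, so $\phi_i \in \fX$; moreover $\|\phi_i\|_1 = 1$ and $\frac{1}{N}\sum_{n=1}^N \|(C_{w,f})^n(\phi_i)\|_1 = \frac{1}{N}\sum_{n=1}^N \frac{\mu_n(f^{-n}(B_i))}{\mu(B_i)}$. The second condition in (ii) makes the supremum of the right-hand side over $i\in I$ and $N\in\N$ infinite, which is precisely the failure of absolute Ces\`aro boundedness in $\fX$.

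The implication (iii) $\Rightarrow$ (i) is where the real work sits, and I would run the Baire-category argument closing the proof of Theorem~\ref{MLY-Suf}. Assume (iii) and, for contradiction, that $C_{w,f}$ is not mean Li-Yorke chaotic, so that it admits no absolutely mean semi-irregular vector. Every generator $\rchi_B$ of $\fX$ obeys $\liminf_N \frac{1}{N}\sum_{n=1}^N \|(C_{w,f})^n(\rchi_B)\|_1 = \liminf_N \frac{1}{N}\sum_{n=1}^N \mu_n(f^{-n}(B)) = 0$; the absence of semi-irregular vectors forces this liminf to be a genuine limit, so the vectors with vanishing Ces\`aro mean of norms form a dense linear subspace of $\fX$. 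Consequently $\cR_1 = \{\varphi\in\fX : \liminf_N \frac{1}{N}\sum_{n=1}^N \|(C_{w,f})^n(\varphi)\|_1 = 0\}$, being a $G_\delta$ containing a dense set, is residual. Invoking (iii) together with \cite[Theorem~4]{BBP} then makes the set $\cR_2$ of $\varphi\in\fX$ with $\limsup_N \frac{1}{N}\sum_{n=1}^N \|(C_{w,f})^n(\varphi)\|_1 = \infty$ residual as well, so $\cR_1\cap\cR_2\neq\varnothing$ and any of its members is an absolutely mean irregular vector --- contradicting our assumption and proving (i). The delicate point throughout is this last step: the density of $\cR_1$ rests on upgrading the defining liminf of the generators to a true limit, which is legitimate only under the temporary hypothesis that no absolutely mean semi-irregular vector exists.
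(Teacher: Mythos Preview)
Your proposal is correct and mirrors the paper's own reasoning: the paper derives Corollary~\ref{MLY} directly from Theorems~\ref{MLY-Nec} and~\ref{MLY-Suf} by observing that for $p=1$ the conditions in (\ref{MLY-eq2}) and (\ref{MLY-eq2b}) coincide, and the arguments you spell out for (ii) $\Rightarrow$ (iii) and (iii) $\Rightarrow$ (i) are exactly the opening and closing steps of the proof of Theorem~\ref{MLY-Suf}.
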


\begin{theorem}\label{DMLY}
Consider a weighted composition operator $C_{w,f}$ on $L^p(\mu)$. 
If for every measurable set $A$ of finite $\mu$-measure and for every $\eps > 0$, there is a measurable set $B \subset A$ with 
\[
\mu(A \backslash B) < \eps \ \ \text{ and } \ \ \ \liminf_{N \to \infty} \frac{1}{N} \sum_{n=1}^N \mu_{n}(f^{-n}(B))^\frac{1}{p} = 0,
\]
then the following assertions are equivalent:
\begin{itemize}
\item [\rm (i)] $C_{w,f}$ is mean Li-Yorke chaotic;
\item [\rm (ii)] $C_{w,f}$ has a residual set of absolutely mean irregular vectors;
\item [\rm (iii)] $C_{w,f}$ is not absolutely Ces\`aro bounded.
\end{itemize}
\end{theorem}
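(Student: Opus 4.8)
The plan is to mimic the proof of Theorem~\ref{DLY-PB}, systematically replacing orbit quantities by their Ces\`aro averages and power-boundedness by absolute Ces\`aro boundedness. First I would introduce the space $\fX_0$ of all simple functions $\sum_{k=1}^m b_k \rchi_{B_k}$ whose supporting sets satisfy
\[
\liminf_{N \to \infty} \frac{1}{N} \sum_{n=1}^N \mu_n(f^{-n}(B_1 \cup \ldots \cup B_m))^{\frac{1}{p}} = 0,
\]
and show that $\fX_0$ is dense in $L^p(\mu)$. The slightly delicate point is that the density hypothesis should be applied to the \emph{union} rather than to each piece, since $\liminf$ is not subadditive: given a simple function $\sum_j c_j \rchi_{A_j}$ with the $A_j$ pairwise disjoint of finite $\mu$-measure, I set $A = \bigcup_j A_j$, choose $B \subset A$ with $\mu(A\setminus B) < \eps$ and $\liminf_N \frac{1}{N}\sum_{n=1}^N \mu_n(f^{-n}(B))^{1/p}=0$, and put $B_j = B \cap A_j$. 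Then $\bigcup_j B_j = B$ already satisfies the required liminf condition, while $\sum_j c_j \rchi_{B_j}$ lies within $L^p$-distance $(\max_j|c_j|)\,\eps^{1/p}$ of the original function.

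Next I would verify that every $\varphi \in \fX_0$ has vanishing liminf of its Ces\`aro norm averages. Writing $M = \|\varphi\|_\infty$ and letting $B$ be the union of the supporting sets, the pointwise bound $\|(C_{w,f})^n(\varphi)\|_p \le M\, \mu_n(f^{-n}(B))^{1/p}$ gives $\frac{1}{N}\sum_{n=1}^N \|(C_{w,f})^n(\varphi)\|_p \le M\,\frac{1}{N}\sum_{n=1}^N \mu_n(f^{-n}(B))^{1/p}$ for every $N$; taking $\liminf$ and invoking the defining property of $\fX_0$ yields $\liminf_N \frac{1}{N}\sum_{n=1}^N \|(C_{w,f})^n(\varphi)\|_p = 0$. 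Consequently the set $\cR_1$ of all $\varphi \in L^p(\mu)$ with this liminf equal to zero contains the dense set $\fX_0$; since $\cR_1$ equals $\bigcap_{k,M}\bigcup_{N\ge M}\{\varphi : \frac{1}{N}\sum_{n=1}^N\|(C_{w,f})^n(\varphi)\|_p < 1/k\}$, a countable intersection of open sets, it is a dense $G_\delta$, hence residual in $L^p(\mu)$.

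To close the cycle I would argue as follows. For (iii)$\Rightarrow$(ii): if $C_{w,f}$ is not absolutely Ces\`aro bounded, then \cite[Theorem~4]{BBP} guarantees that the set $\cR_2$ of all $\varphi$ with $\limsup_N \frac{1}{N}\sum_{n=1}^N\|(C_{w,f})^n(\varphi)\|_p = \infty$ is also residual; any vector in the (residual, hence nonempty) intersection $\cR_1 \cap \cR_2$ is an absolutely mean irregular vector, so (ii) holds. The implication (ii)$\Rightarrow$(i) is immediate, since the existence of a single absolutely mean irregular vector forces mean Li-Yorke chaos by the characterizations recalled above. Finally (i)$\Rightarrow$(iii) is also immediate: a mean Li-Yorke chaotic operator admits an absolutely mean irregular vector $y$, for which $\limsup_N \frac{1}{N}\sum_{n=1}^N\|(C_{w,f})^n(y)\|_p = \infty$, and this is incompatible with absolute Ces\`aro boundedness. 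I expect the only genuine subtleties to be the density argument for $\fX_0$ (resolved by applying the hypothesis to the union set) and the correct invocation of \cite[Theorem~4]{BBP}; the remainder is a routine transcription of the Li-Yorke argument into the Ces\`aro-averaged setting.
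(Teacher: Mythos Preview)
Your proposal is correct and follows essentially the same approach as the paper: define the same set $\fX_0$ of simple functions, prove its density by applying the hypothesis to the union of supports (the paper leaves this step implicit, but your observation about the non-subadditivity of $\liminf$ is exactly the right point), and deduce that every $\varphi \in \fX_0$ has vanishing $\liminf$ of its Ces\`aro norm averages. The only difference is in the packaging of the final step: the paper cites \cite[Theorem~22]{BBP} as a black box to obtain the equivalence of (i), (ii), and (iii) directly, while you unpack the Baire category argument by hand via $\cR_1$, $\cR_2$, and \cite[Theorem~4]{BBP}---which is exactly what the paper itself does in the proof of Theorem~\ref{MLY-Suf}, so no new idea is involved.
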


\begin {proof} 
Let $\fX_0$ be the set of all simple functions of the form $\sum _{k=1}^m b_k \rchi_{B_k}$, 
where $b_1,\ldots,b_m$ are scalars and $B_1,\ldots,B_m$ are measurable sets of finite $\mu$-measure satisfying
\begin{equation}\label{MLY-eq3}
\liminf_{N \to \infty} \frac{1}{N} \sum_{n=1}^N \mu_n(f^{-n}(B_1 \cup \ldots \cup B_m))^\frac{1}{p} = 0.
\end{equation}
By the assumption, $\fX_0$ is dense in $L^p(\mu)$. Moreover, by (\ref{MLY-eq3}),
\[
\liminf_{N \to \infty} \frac{1}{N} \sum_{n=1}^N \|(C_{w,f})^n(\varphi)\|_p = 0 \ \text{ for all } \varphi \in \fX_0.
\]
Hence, the equivalences between properties (i), (ii) and (iii) follow from \cite[Theorem~22]{BBP}.
\end{proof}

\begin{remark}\label{R-DMLY-PB}
{\bf (a)} In the case $L^p(\mu)$ is separable, \cite[Theorem~17]{BBP} says that (ii) is equivalent~to
\begin{itemize}
\item [(ii')] $C_{w,f}$ is densely mean Li-Yorke chaotic.
\end{itemize}

\noindent
{\bf (b)} If the space $L^p(\mu)$ is separable and for every measurable set $A$ of finite $\mu$-measure and for every $\eps > 0$, 
there exists a measurable set $B \subset A$ with 
\[
\mu(A \backslash B) < \eps \ \ \text{ and } \ \ \ \lim_{N \to \infty} \frac{1}{N} \sum_{n=1}^N \mu_{n}(f^{-n}(B))^\frac{1}{p} = 0,
\]
then \cite[Theorem~29]{BBP} implies that (i)--(iii) are also equivalent to
\begin{itemize}
\item [(iv)] $C_{w,f}$ admits a dense absolutely mean irregular manifold.
\end{itemize}
\end{remark}

Let us now present some applications to weighted shifts and weighted translation operators. 

\begin{corollary}\label{MLY-UWBS}
For weighted shifts $B_w$ on $\ell^p(\N)$ and weighted translation operators $T_w$ on $L^p[1,\infty)$,
properties (i)--(iv) above are always equivalent to each other.
\end{corollary}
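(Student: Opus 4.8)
The plan is to reduce everything to the hypotheses of Remark~\ref{R-DMLY-PB}(b), in the same spirit in which the analogous Li--Yorke corollary reduced to Remark~\ref{R-DLY-PB}(b). That remark upgrades the equivalence of (i)--(iii) from Theorem~\ref{DMLY} to the full equivalence of (i)--(iv), provided that the underlying space is separable and that the strong Ces\`aro-vanishing condition (with $\lim$, not merely $\liminf$) holds. Since $\ell^p(\N)$ and $L^p[1,\infty)$ are both separable, it only remains to verify, in each case, that for every measurable set $A$ of finite $\mu$-measure and every $\eps>0$ there is a measurable $B\subset A$ with $\mu(A\setminus B)<\eps$ and
\[
\lim_{N\to\infty}\frac{1}{N}\sum_{n=1}^N \mu_n(f^{-n}(B))^{1/p}=0.
\]

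For $B_w$ on $\ell^p(\N)$ (so $X=\N$, $\mu$ the counting measure, $f(n)=n+1$), a set of finite measure is a finite subset $A\subset\N$, and I would simply take $B=A$. The point is that $f^{-n}(\{j\})=\{j-n\}$ is empty once $n\ge j$, so for each fixed $j$ the quantity $\mu_n(f^{-n}(\{j\}))^{1/p}=|w_{j-n}\cdots w_{j-1}|$ vanishes for all large $n$; hence $\sum_{n=1}^N\mu_n(f^{-n}(\{j\}))^{1/p}$ is eventually constant in $N$ and its Ces\`aro average tends to $0$. Using the subadditivity of $t\mapsto t^{1/p}$ (valid since $1/p\le 1$) together with $\mu_n(f^{-n}(A))=\sum_{j\in A}\mu_n(f^{-n}(\{j\}))$, the same conclusion follows for the finite set $A$ itself, and $\mu(A\setminus A)=0<\eps$.

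For $T_w$ on $L^p[1,\infty)$ (so $X=[1,\infty)$, $\mu$ the Lebesgue measure, $f(x)=x+1$), a genuine truncation is needed: given $A$ of finite measure and $\eps>0$, I would choose $M$ so large that $\mu(A\cap(M,\infty))<\eps$ and set $B=A\cap[1,M]$. Then $f^{-n}(B)=(B-n)\cap[1,\infty)$ is empty as soon as $n\ge M$, so $\mu_n(f^{-n}(B))=0$ for all large $n$; consequently $\sum_{n=1}^N\mu_n(f^{-n}(B))^{1/p}$ is eventually constant and its Ces\`aro average tends to $0$, while $\mu(A\setminus B)<\eps$ by the choice of $M$.

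The structural fact doing all the work is that on $\N$ and on $[1,\infty)$ the preimages $f^{-n}(B)$ of a bounded set eventually become empty, so the orbit of a suitably supported vector is eventually $0$ and its Ces\`aro means collapse; this is exactly what fails in the bilateral cases $\ell^p(\Z)$ and $L^p(\R)$, and it is the only place where the one-sided geometry enters, the rest being a direct appeal to Theorem~\ref{DMLY} and Remark~\ref{R-DMLY-PB}. I do not anticipate a real obstacle here: the one slightly delicate point is ensuring that the \emph{full} limit (not merely the $\liminf$) is attained, so that part (b) of the remark—and hence the equivalence with (iv)—applies, and this is precisely what the ``eventually empty'' argument delivers.
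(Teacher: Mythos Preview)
Your proposal is correct and follows exactly the approach of the paper, which consists of the single sentence ``The conditions in Remark~\ref{R-DMLY-PB}(b) are satisfied.'' You have simply supplied the verification that the paper leaves to the reader, namely that on $\N$ and on $[1,\infty)$ the preimages $f^{-n}(B)$ of a bounded set eventually become empty, forcing the Ces\`aro averages to vanish in the limit.
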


\begin{proof}
The conditions in Remark~\ref{R-DMLY-PB}(b) are satisfied.
\end{proof}

\begin{corollary}\label{MLY-BWBS}
A weighted shift $B_w$ on $\ell^p(\Z)$ with nonzero weights is mean Li-Yorke chaotic if and only if 
it is not absolutely Ces\`aro bounded and
\begin{equation}\label{MLY-eq4}
\liminf_{N \to \infty} \frac{1}{N} \sum_{n=1}^N |w_{-n} \cdots w_{-1}| = 0.
\end{equation}
\end{corollary}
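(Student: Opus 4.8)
The plan is to realize $B_w$ as the weighted composition operator $C_{w,f}$ on $L^p(\mu)$ with $X=\Z$, $\fM=\cP(\Z)$, $\mu$ the counting measure and $f(n)=n+1$, and to work throughout with the quantities $P_i^{(n)}:=|w_{i-n}\cdots w_{i-1}|$, noting that $\mu_n(f^{-n}(\{i\}))^{1/p}=P_i^{(n)}$ and $\mu(\{i\})=1$. In this language, Corollary~\ref{p-ACB-BWBS} applied with exponent $1$ says that $B_w$ fails to be absolutely Ces\`aro bounded exactly when $\sup_{i\in\Z,\,N\in\N}\frac1N\sum_{n=1}^N P_i^{(n)}=\infty$, while hypothesis (\ref{MLY-eq4}) is precisely $\liminf_{N\to\infty}\frac1N\sum_{n=1}^N P_0^{(n)}=0$. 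Thus the whole statement will reduce to manipulations of the averages $\frac1N\sum_{n=1}^N P_i^{(n)}$.

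The technical heart, which I would isolate as a preliminary \emph{propagation} claim, is that since all weights are nonzero the telescoping identities $P_a^{(n)}=|w_0\cdots w_{a-1}|\,P_0^{(n-a)}$ (valid for $a>0$ and $n>a$) and $P_{-s}^{(n)}=P_0^{(n+s)}/|w_{-s}\cdots w_{-1}|$ (valid for $a=-s\le 0$) give, in every case, $P_a^{(n)}=c\,P_0^{(n+\delta)}$ for all large $n$, with a fixed constant $c>0$ and fixed shift $\delta=-a$. From this I would deduce that $\liminf_N\frac1N\sum_{n=1}^N P_a^{(n)}=0$ holds for one value of $a$ if and only if it holds for every $a\in\Z$. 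Concretely: given a subsequence $(N_j)$ with $\frac{1}{N_j}\sum_{n\le N_j}P_0^{(n)}\to0$, the shifted indices $M_j=N_j-\delta$ satisfy $\frac{1}{M_j}\sum_{n\le M_j}P_a^{(n)}\to0$, because a fixed index shift, finitely many boundary terms, and the factor $M_j/(M_j+\delta)\to1$ cannot affect whether the Ces\`aro average of a nonnegative sequence has liminf zero.

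For the sufficiency direction I would apply Theorem~\ref{MLY-Suf} with $I=\Z$ and $B_i=\{i\}$. Then condition (\ref{MLY-eq2b}) reads $\sup_{i,N}\frac1N\sum_{n=1}^N P_i^{(n)}=\infty$, i.e.\ the failure of absolute Ces\`aro boundedness, and condition (\ref{MLY-eq1b}) reads $\liminf_N\frac1N\sum_{n=1}^N P_i^{(n)}=0$ for every $i$, which follows from (\ref{MLY-eq4}) (the case $i=0$) through the propagation claim. Hence Theorem~\ref{MLY-Suf} yields that $B_w$ is mean Li-Yorke chaotic.

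For necessity I would argue directly from the characterizations of mean Li-Yorke chaos recalled at the start of Section~6, rather than from Theorem~\ref{MLY-Nec}, since the latter only detects the failure of $p$-absolute Ces\`aro boundedness while the statement concerns the $1$-absolute notion. If $B_w$ were absolutely Ces\`aro bounded with constant $C$, then $\frac1N\sum_{n=1}^N\|B_w^n\varphi\|_p\le C\|\varphi\|_p$ would force $\limsup_N\frac1N\sum_{n=1}^N\|B_w^n\varphi\|_p<\infty$ for every $\varphi$, contradicting the existence of an absolutely mean irregular vector; so $B_w$ is not absolutely Ces\`aro bounded. To obtain (\ref{MLY-eq4}), I would take an absolutely mean semi-irregular vector $\varphi$ and an index $a$ with $\varphi(a)\ne0$; from $\|B_w^n\varphi\|_p\ge P_a^{(n)}|\varphi(a)|$ and $\liminf_N\frac1N\sum_{n=1}^N\|B_w^n\varphi\|_p=0$ we get $\liminf_N\frac1N\sum_{n=1}^N P_a^{(n)}=0$, and the propagation claim transports this to $a=0$. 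The main obstacle will be making the propagation argument airtight: one must choose the subsequence on the \emph{target} side so that the boundary terms are absorbed, and use the nonvanishing of the weights to guarantee the constants $c$ are positive and finite; once that is in place, both directions are routine substitutions.
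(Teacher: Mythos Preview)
Your necessity argument is correct and essentially matches the paper's. The problem is in sufficiency. You assert that Corollary~\ref{p-ACB-BWBS} ``applied with exponent $1$'' characterizes absolute Ces\`aro boundedness of $B_w$ on $\ell^p(\Z)$ by the finiteness of $\sup_{i,N}\frac1N\sum_{n=1}^N P_i^{(n)}$. But Corollary~\ref{p-ACB-BWBS} (a specialization of Theorem~\ref{p-ACB}) computes $N_p(B_w)$ for the shift acting on $\ell^p(\Z)$; the exponent in $N_p$ is tied to the exponent of the ambient space. For $p>1$ there is no formula in the paper for $N_1(B_w)$ on $\ell^p(\Z)$, and it is not clear that testing on the standard unit vectors $e_i$ suffices to detect $1$-absolute Ces\`aro boundedness there. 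Consequently you cannot pass from ``$B_w$ is not absolutely Ces\`aro bounded'' to condition (\ref{MLY-eq2b}) with $B_i=\{i\}$, and your appeal to Theorem~\ref{MLY-Suf} breaks down when $p>1$.

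The paper circumvents this with a dichotomy that never rewrites absolute Ces\`aro boundedness in terms of the $P_i^{(n)}$. If $\limsup_N \frac1N\sum_{n=1}^N P_0^{(n)}>0$, then together with (\ref{MLY-eq4}) the vector $e_0$ is itself absolutely mean semi-irregular, and mean Li-Yorke chaos follows immediately. Otherwise $\lim_N \frac1N\sum_{n=1}^N P_0^{(n)}=0$, and your propagation argument upgrades this to $\lim_N \frac1N\sum_{n=1}^N P_a^{(n)}=0$ for every $a\in\Z$, hence to $\lim_N \frac1N\sum_{n=1}^N \mu_n(f^{-n}(B))^{1/p}=0$ for every finite $B\subset\Z$. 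This is exactly the standing hypothesis of Theorem~\ref{DMLY}, whose equivalence (iii)$\Leftrightarrow$(i) then converts ``not absolutely Ces\`aro bounded'' directly into mean Li-Yorke chaos. Note that the upgrade from $\liminf$ to $\lim$ is what makes the hypothesis of Theorem~\ref{DMLY} verifiable for \emph{finite} sets $B$ (not just singletons), which is why the case split is essential and cannot simply be skipped in favor of Theorem~\ref{MLY-Suf}.
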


\begin{proof}
($\Rightarrow$): Since $B_w$ is mean Li-Yorke chaotic, it is not absolutely Ces\`aro bounded.
Let $(B_i)_{i \in I}$ be the family given by Theorem~\ref{MLY-Nec}. Choose $i \in I$ and $j \in B_i$. 
By (\ref{MLY-eq1}),
\[
\liminf_{N \to \infty} \frac{1}{N} \sum_{n=1}^N |w_{-n+j} \cdots w_{-1+ j}| 
  = \liminf_{N \to \infty} \frac{1}{N} \sum_{n=1}^N \mu_n(f^{-n}(\{j\}))^\frac{1}{p} = 0,
\]
which gives (\ref{MLY-eq4}).

\smallskip\noindent
($\Leftarrow$): If
\[
\limsup_{N \to \infty} \frac{1}{N} \sum_{n=1}^N |w_{-n} \cdots w_{-1}| > 0,
\]
then $e_0$ is an absolutely mean irregular vector for $B_w$, and so $B_w$ is mean Li-Yorke chaotic. If
\[
\lim_{N \to \infty} \frac{1}{N} \sum_{n=1}^N |w_{-n} \cdots w_{-1}| = 0,
\]
then
\[
\lim_{N \to \infty} \frac{1}{N} \sum_{n=1}^N \mu_n(f^{-n}(B))^\frac{1}{p} = 0 \ \text{ for all } B \subset \Z \text{ finite}.
\]
Since $B_w$ is not absolutely Ces\`aro bounded, we can apply Theorem~\ref{DMLY} and conclude that $B_w$ is mean Li-Yorke chaotic.
\end{proof}

%%%%%%%%%%%%%%%%%%%%%%%%%%%%%%
%%%%%%%%%%%%%%%%%%%%%%%%%%%%%%

\subsection{The case of the space $C_0(\Omega)$}

\begin{theorem}\label{MLY-C0X}
Given a weighted composition operator $C_{w,f}$ on $C_0(\Omega)$, 
let $\fX$ be the closed subspace of $C_0(\Omega)$ generated by the functions $\varphi \in C_0(\Omega)$ 
whose support is contained in a relatively compact open set $B$ in $\Omega$ satisfying
\begin{equation}\label{MLY-C0X-1}
\liminf_{N \to \infty} \frac{1}{N} \sum_{n=1}^N \|w^{(n)}\|_{f^{-n}(B)} = 0.
\end{equation}
Then, the following assertions are equivalent:
\begin{itemize}
\item [\rm (i)] $C_{w,f}$ is mean Li-Yorke chaotic;
\item [\rm (ii)] There exists a sequence $(B_i)_{i \in \N}$ of relatively compact open sets in $\Omega$ such that
  \begin{equation}\label{MLY-C0X-2}
  \ov{B_i} \subset B_{i+1} \ \text{ for all } i \in \N,
  \end{equation}
  \begin{equation}\label{MLY-C0X-3}
  \liminf_{N \to \infty} \frac{1}{N} \sum_{n=1}^N \|w^{(n)}\|_{f^{-n}(B_i)} = 0 \ \text{ for all } i \in \N
  \end{equation}
  and
  \begin{equation}\label{MLY-C0X-4}
  \sup\Big\{\frac{1}{N} \sum_{n=1}^N \|w^{(n)}\|_{f^{-n}(B_i)} : i, N \in \N\Big\} = \infty.
  \end{equation}
\item [\rm (iii)] $C_{w,f}$ is not absolutely Ces\`aro bounded in $\fX$.
\end{itemize}
\end{theorem}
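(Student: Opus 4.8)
The plan is to prove the cycle (i) $\Rightarrow$ (ii) $\Rightarrow$ (iii) $\Rightarrow$ (i), using throughout the characterization of mean Li-Yorke chaos via absolutely mean (semi-)irregular vectors recalled above, the identity $\|(C_{w,f})^n(\varphi)\| = \|(\varphi \circ f^n)\cdot w^{(n)}\|$, and the Baire-category machinery of \cite[Theorem~4]{BBP}. This mirrors the $C_0(\Omega)$ arguments already used for Li-Yorke chaos (Theorem~\ref{LY-PB-C0X}) and distributional chaos (Theorem~\ref{Dist-C0X-1}), with Ces\`aro averages in place of plain norms.

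For (i) $\Rightarrow$ (ii), I would start from an absolutely mean irregular vector $\varphi \in C_0(\Omega)$ and set $B_i = \{x \in \Omega : |\varphi(x)| > 1/i\}$. Since $\varphi \in C_0(\Omega)$, each $B_i$ is relatively compact and open, and the strict inequalities give $\ov{B_i} \subset \{x : |\varphi(x)| \ge 1/i\} \subset B_{i+1}$, which is (\ref{MLY-C0X-2}). On $f^{-n}(B_i)$ one has $|\varphi \circ f^n| > 1/i$, so $\frac{1}{i}\|w^{(n)}\|_{f^{-n}(B_i)} \le \|(C_{w,f})^n(\varphi)\|$; averaging and using $\liminf_N \frac1N\sum_{n=1}^N\|(C_{w,f})^n(\varphi)\| = 0$ yields (\ref{MLY-C0X-3}). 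For (\ref{MLY-C0X-4}), since $\varphi$ vanishes off $\bigcup_i B_i$ one gets $\|(C_{w,f})^n(\varphi)\| \le \|\varphi\|\sup_i\|w^{(n)}\|_{f^{-n}(B_i)}$; because the sets $B_i$ increase, the supremum over $i$ is a monotone limit that may be pulled outside the finite average, giving $\frac1N\sum_{n=1}^N\|(C_{w,f})^n(\varphi)\| \le \|\varphi\|\sup_i\frac1N\sum_{n=1}^N\|w^{(n)}\|_{f^{-n}(B_i)}$. Taking $\limsup_N$ on the left, which is $+\infty$, forces the supremum in (\ref{MLY-C0X-4}) to be infinite.

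For (ii) $\Rightarrow$ (iii), using $\ov{B_i}\subset B_{i+1}$ and Urysohn's lemma, I would choose $\phi_i : \Omega \to [0,1]$ with $\phi_i = 1$ on $\ov{B_i}$ and $\supp\phi_i \subset B_{i+1}$; by (\ref{MLY-C0X-3}) for index $i+1$, each $\phi_i$ is a generator of $\fX$ with $\|\phi_i\| = 1$. Since $\phi_i = 1$ on $\ov{B_i}$ we have $\|(\phi_i\circ f^n)\cdot w^{(n)}\|_{f^{-n}(B_i)} = \|w^{(n)}\|_{f^{-n}(B_i)}$, whence $\frac1N\sum_{n=1}^N\|(C_{w,f})^n(\phi_i)\| \ge \frac1N\sum_{n=1}^N\|w^{(n)}\|_{f^{-n}(B_i)}$, and (\ref{MLY-C0X-4}) then exhibits unit vectors of $\fX$ with unbounded Ces\`aro averages of norms, i.e. $C_{w,f}$ is not absolutely Ces\`aro bounded in $\fX$.

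For (iii) $\Rightarrow$ (i), I would argue by contradiction: if $C_{w,f}$ is not mean Li-Yorke chaotic it has no absolutely mean semi-irregular vector, so for every generator $\varphi$ of $\fX$ (supported in a relatively compact open $B$ satisfying (\ref{MLY-C0X-1})) the bound $\liminf_N \frac1N\sum_{n=1}^N\|(C_{w,f})^n(\varphi)\| \le \|\varphi\|\liminf_N\frac1N\sum_{n=1}^N\|w^{(n)}\|_{f^{-n}(B)} = 0$ upgrades to an honest limit $0$; this property survives finite linear combinations and hence holds on a dense subspace of $\fX$, making $\cR_1 = \{\varphi\in\fX : \liminf_N\frac1N\sum_{n=1}^N\|(C_{w,f})^n(\varphi)\| = 0\}$ a dense $G_\delta$, thus residual. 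As $\fX$ is a closed, hence Baire, subspace and (iii) makes the averages unbounded on its unit sphere, \cite[Theorem~4]{BBP} renders $\cR_2 = \{\varphi\in\fX : \limsup_N\frac1N\sum_{n=1}^N\|(C_{w,f})^n(\varphi)\| = \infty\}$ residual in $\fX$; any vector in $\cR_1\cap\cR_2$ is absolutely mean irregular, a contradiction. I expect this final step to be the main obstacle: one must justify the liminf-to-limit upgrade so that the vanishing Ces\`aro-average property is preserved under linear combinations, and one must apply the Baire result to the family $(C_{w,f})^n|_\fX : \fX \to C_0(\Omega)$ without presuming that $\fX$ is invariant under $C_{w,f}$.
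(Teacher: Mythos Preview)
Your proposal is correct and follows essentially the same route as the paper's proof: the same level sets $B_i = \{|\varphi| > 1/i\}$ for (i) $\Rightarrow$ (ii), the same Urysohn functions $\phi_i$ for (ii) $\Rightarrow$ (iii), and the same Baire-category contradiction via \cite[Theorem~4]{BBP} for (iii) $\Rightarrow$ (i). The only notable variation is in establishing (\ref{MLY-C0X-4}): you exchange the finite sum with the monotone supremum over $i$, whereas the paper argues by contradiction, picking for each $n$ a point $x_n$ attaining $\|(C_{w,f})^n(\varphi)\|$ and then a single index $i_N$ with $x_n \in f^{-n}(B_{i_N})$ for all $n \le N$; both arguments are valid and of comparable length.
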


\begin{proof}
(i) $\Rightarrow$ (ii): Let $\varphi \in C_0(\Omega)$ be an absolutely mean irregular vector for $C_{w,f}$.
Then, there is an increasing sequence $(N_j)_{j \in \N}$ of positive integers such that
\[
\lim_{j \to \infty} \frac{1}{N_j} \sum_{n=1}^{N_j} \|(C_{w,f})^n(\varphi)\| = 0.
\]
For each $i \in \N$, consider the relatively compact open subset $B_i$ of $\Omega$ given by
\[
B_i = \{x \in \Omega : |\varphi(x)| > i^{-1}\}.
\]
Clearly, (\ref{MLY-C0X-2}) holds. Since, for each $i \in \N$,
\[
\frac{1}{N_j} \sum_{n=1}^{N_j} \|(C_{w,f})^n(\varphi)\|
  \geq \frac{1}{N_j} \sum_{n=1}^{N_j} \|(\varphi \circ f^n) \cdot w^{(n)}\|_{f^{-n}(B_i)}
  \geq \frac{1}{i} \frac{1}{N_j} \sum_{n=1}^{N_j} \|w^{(n)}\|_{f^{-n}(B_i)},
\]
we obtain (\ref{MLY-C0X-3}). Now, suppose that (\ref{MLY-C0X-4}) is false, that is,
\[
C = \sup\Big\{\frac{1}{N} \sum_{n=1}^N \|w^{(n)}\|_{f^{-n}(B_i)} : i, N \in \N\Big\} < \infty.
\]
For each $n \in \N$, take $x_n \in \Omega$ such that $\|(C_{w,f})^n(\varphi)\| = |\varphi(f^n(x_n)) w^{(n)}(x_n)|$.
Given $N \in \N$, there exists $i_N \in \N$ such that $x_n \in f^{-n}(B_{i_N})$ for all $n \in \{1,\ldots,N\}$. Hence,
\[
\frac{1}{N} \sum_{n=1}^N \|(C_{w,f})^n(\varphi)\| 
  = \frac{1}{N} \sum_{n=1}^N |\varphi(f^n(x_n)) w^{(n)}(x_n)| 
  \leq \frac{1}{N} \sum_{n=1}^N \|w^{(n)}\|_{f^{-n}(B_{i_N})} \|\varphi\|
  \leq C \|\varphi\|,
\]
which contradicts the fact that $\varphi$ is an absolutely mean irregular vector for $C_{w,f}$.

\smallskip\noindent
(ii) $\Rightarrow$ (iii): Without loss of generality, we may assume that $B_i \neq \emptyset$ for all $i \in \N$.
By (\ref{MLY-C0X-2}), for each $i \in \N$, there exists a continuous map $\phi_i : \Omega \to [0,1]$ such that 
$\supp \phi_i \subset B_{i+1}$ and $\phi_i = 1$ on $\overline{B_i}$.
By (\ref{MLY-C0X-3}), $\phi_i \in \fX$ for all $i \in \N$. Since $\|\phi_i\| = 1$ and 
\[
\frac{1}{N} \sum_{n=1}^N \|(C_{w,f})^n(\phi_i)\| 
  \geq \frac{1}{N} \sum_{n=1}^N \|(\phi_i \circ f^n) \cdot w^{(n)}\|_{f^{-n}(B_i)} 
  = \frac{1}{N} \sum_{n=1}^N \|w^{(n)}\|_{f^{-n}(B_i)},
\]
it follows from (\ref{MLY-C0X-4}) that $C_{w,f}$ is not absolutely Ces\`aro bounded in $\fX$.

\smallskip\noindent
(iii) $\Rightarrow$ (i): Suppose, by contradiction, that $C_{w,f}$ is not mean Li-Yorke chaotic. 
If $\phi \in C_0(\Omega)$ has support contained in a relatively compact open set $B$ in $\Omega$ satisfying (\ref{MLY-C0X-1}), then
\begin{align*}
\liminf_{N \to \infty} \frac{1}{N} \sum_{n=1}^N \|(C_{w,f})^n(\phi)\| 
  &= \liminf_{N \to \infty} \frac{1}{N} \sum_{n=1}^N \|(\phi \circ f^n) \cdot w^{(n)}\|_{f^{-n}(B)}\\
  &\leq \|\phi\| \liminf_{N \to \infty} \frac{1}{N} \sum_{n=1}^N \|w^{(n)}\|_{f^{-n}(B)} = 0.
\end{align*}
Since $C_{w,f}$ does not admit an absolutely mean semi-irregular vector, we conclude that
\[
\lim_{N \to \infty} \frac{1}{N} \sum_{n=1}^N \|(C_{w,f})^n(\phi)\| = 0.
\]
This implies that the set $\cR_1$ of all $\varphi \in \fX$ such that
\[
\liminf_{N \to \infty} \frac{1}{N} \sum_{n=1}^N \|(C_{w,f})^n(\varphi)\| = 0
\]
is residual in $\fX$. Now, by arguing as at the end of the proof of Theorem~\ref{MLY}, we get a contradiction.
\end{proof}

\begin{theorem}\label{DMLY-C0X}
Consider a weighted composition operator $C_{w,f}$ on $C_0(\Omega)$. If
\[
\liminf_{N \to \infty} \frac{1}{N} \sum_{n=1}^N \|w^{(n)}\|_{f^{-n}(B)} = 0
\]
for every relatively compact open set $B$ in $\Omega$, then the following assertions are equivalent:
\begin{itemize}
\item [\rm (i)] $C_{w,f}$ is mean Li-Yorke chaotic;
\item [\rm (ii)] $C_{w,f}$ has a residual set of absolutely mean irregular vectors;
\item [\rm (iv)] $C_{w,f}$ is not absolutely Ces\`aro bounded.
\end{itemize}
\end{theorem}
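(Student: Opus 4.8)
The plan is to transcribe the proof of Theorem~\ref{DMLY} to the $C_0(\Omega)$ setting, using $C_c(\Omega)$ as the dense subspace on which the lower Ces\`aro average of the orbit norms vanishes, and then to invoke the abstract characterization of mean Li-Yorke chaos from \cite[Theorem~22]{BBP}. The role played in the $L^p$ argument by the simple functions whose supporting sets satisfy the $\liminf$ condition will here be played by the compactly supported continuous functions, whose density in $C_0(\Omega)$ is already available.

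First I would verify that
\[
\liminf_{N \to \infty} \frac{1}{N} \sum_{n=1}^N \|(C_{w,f})^n(\varphi)\| = 0 \qquad \text{for every } \varphi \in C_c(\Omega).
\]
Given such a $\varphi$, pick a relatively compact open set $B$ in $\Omega$ with $\supp \varphi \subset B$. Since $\varphi \circ f^n$ vanishes at every point outside $f^{-n}(B)$, one has the localization
\[
\|(C_{w,f})^n(\varphi)\| = \|(\varphi \circ f^n) \cdot w^{(n)}\|_{f^{-n}(B)} \leq \|\varphi\|\, \|w^{(n)}\|_{f^{-n}(B)},
\]
so that averaging over $n$ and passing to the lower limit yields, by the standing hypothesis applied to $B$,
\[
\liminf_{N \to \infty} \frac{1}{N} \sum_{n=1}^N \|(C_{w,f})^n(\varphi)\| \leq \|\varphi\| \liminf_{N \to \infty} \frac{1}{N} \sum_{n=1}^N \|w^{(n)}\|_{f^{-n}(B)} = 0.
\]

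Next, since $C_c(\Omega)$ is dense in $C_0(\Omega)$ and the lower Ces\`aro average of the orbit norms vanishes on this dense subspace, the hypotheses of \cite[Theorem~22]{BBP} are met with the dense set $\fX_0 = C_c(\Omega)$; the equivalence of the three listed assertions then follows at once, exactly as in the proof of Theorem~\ref{DMLY}. The only step that deserves attention is the localization identity $\|(C_{w,f})^n(\varphi)\| = \|(\varphi \circ f^n) \cdot w^{(n)}\|_{f^{-n}(B)}$, which rests on the observation that $\varphi \circ f^n$ is identically zero off $f^{-n}(B)$; this is the same device already used in the proof of Theorem~\ref{DLY-PB-C0X} and in the implication (iii) $\Rightarrow$ (i) of Theorem~\ref{MLY-C0X}. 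I do not expect a genuine obstacle here, since the abstract machinery of \cite[Theorem~22]{BBP} has been designed to handle precisely this configuration, and the present argument is a faithful adaptation of the $L^p$ case with $C_c(\Omega)$ in the place of the simple functions.
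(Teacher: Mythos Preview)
Your proposal is correct and follows essentially the same approach as the paper's own proof: both verify that the lower Ces\`aro average of the orbit norms vanishes on $C_c(\Omega)$ via the localization estimate $\|(C_{w,f})^n(\varphi)\| \leq \|\varphi\|\,\|w^{(n)}\|_{f^{-n}(B)}$ for $\supp\varphi \subset B$, and then invoke \cite[Theorem~22]{BBP} together with the density of $C_c(\Omega)$ in $C_0(\Omega)$. Your write-up is in fact slightly more detailed than the paper's, which compresses the same argument into three lines.
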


\begin {proof} 
Given $\varphi \in C_c(\Omega)$, take a relatively compact open set $B$ in $\Omega$ with $\supp \varphi \subset B$. 
By the hypothesis,
\[
\liminf_{N \to \infty} \frac{1}{N} \sum_{n=1}^N \|(C_{w,f})^{n}(\varphi)\| 
  \leq \liminf_{N \to \infty} \frac{1}{N} \sum_{n=1}^N \|w^{(n)}\|_{f^{-n}(B)} \|\varphi\| = 0.
\]
Since $C_c(\Omega)$ is dense in $C_0(\Omega)$, the equivalences between conditions (i), (ii) and (iii) follow from \cite[Theorem~22]{BBP}.
\end{proof}

\begin{remark}\label{R-DMLY-PB-C0X}
{\bf (a)} In the case $C_0(\Omega)$ is separable, \cite[Theorem~17]{BBP} says that (ii) is equivalent~to
\begin{itemize}
\item [(ii')] $C_{w,f}$ is densely mean Li-Yorke chaotic.
\end{itemize}

\noindent
{\bf (b)} If the space $C_0(\Omega)$ is separable and 
\[
\lim_{N \to \infty} \frac{1}{N} \sum_{n=1}^N \|w^{(n)}\|_{f^{-n}(B)} = 0
\]
for every relatively compact open set $B$ in $\Omega$, then \cite[Theorem~29]{BBP} implies that (i)--(iii) are also equivalent to
\begin{itemize}
\item [(iv)] $C_{w,f}$ admits a dense absolutely mean irregular manifold.
\end{itemize}
\end{remark}

Let us now present some applications to weighted shifts and weighted translation operators.

\begin{corollary}\label{MLY-UWBS-C0X}
For weighted shifts $B_w$ on $c_0(\N)$ and weighted translation operators $T_w$ on $C_0[1,\infty)$,
properties (i)--(iv) above are always equivalent to each other.
\end{corollary}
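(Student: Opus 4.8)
The plan is to verify that both operators satisfy the hypothesis of Theorem~\ref{DMLY-C0X} together with the stronger condition appearing in Remark~\ref{R-DMLY-PB-C0X}(b); once this is done, the equivalence of properties (i)--(iv) follows immediately from those two statements. Both $c_0(\N)$ and $C_0[1,\infty)$ are separable, so the only thing left to check is the Ces\`aro-average condition on the iterated weights $w^{(n)}$.

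The key observation I would use is that in both cases the underlying map $f$ pushes points forward, so for a fixed relatively compact open set $B$ the preimages $f^{-n}(B)$ are eventually empty. Concretely, for $B_w$ on $c_0(\N)$ one has $\Omega = \N$ with the discrete topology and $f : n \mapsto n+1$, so a relatively compact open set $B$ is a finite subset of $\N$, say $B \subset \{1,\ldots,M\}$; since $f^{-n}(B) = \{i - n : i \in B,\ i > n\}$, we get $f^{-n}(B) = \varnothing$ whenever $n \geq M$. Likewise, for $T_w$ on $C_0[1,\infty)$ one has $\Omega = [1,\infty)$ and $f : x \mapsto x+1$, so a relatively compact open set $B$ is contained in some interval $[1,M]$; as $f^{-n}(B) = \{x \in [1,\infty) : x + n \in B\}$ and $x + n \geq 1 + n$, it follows again that $f^{-n}(B) = \varnothing$ whenever $n \geq M$.

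In either situation, the convention that the supremum over the empty set equals $0$ gives $\|w^{(n)}\|_{f^{-n}(B)} = 0$ for all sufficiently large $n$, so that
\[
\lim_{N \to \infty} \frac{1}{N} \sum_{n=1}^N \|w^{(n)}\|_{f^{-n}(B)} = 0
\]
for every relatively compact open set $B$ in $\Omega$. This verifies at once the hypothesis of Theorem~\ref{DMLY-C0X} (which only asks for the corresponding $\liminf$ to vanish) and the condition of Remark~\ref{R-DMLY-PB-C0X}(b), whence (i)--(iv) are equivalent.

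I do not expect any genuine obstacle here: the whole argument rests on the elementary fact that a forward shift or translation eventually carries every point of a bounded (finite) set out of the domain under $f^n$, making its $f^{-n}$-preimage empty. The only point requiring a little care is the convention $\|\varphi\|_\varnothing = 0$, which is what turns the vanishing of $\|w^{(n)}\|_{f^{-n}(B)}$ for large $n$ into an exact identity rather than a mere estimate on small suprema.
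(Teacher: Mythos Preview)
Your proposal is correct and follows essentially the same approach as the paper: the paper's proof is the single line ``The conditions in Remark~\ref{R-DMLY-PB-C0X}(b) are satisfied,'' and you have simply spelled out why those conditions hold by observing that $f^{-n}(B)$ is eventually empty for every relatively compact open set $B$. Your additional details (separability, the explicit description of $f^{-n}(B)$, and the use of the convention $\|\cdot\|_\varnothing = 0$) are all accurate and fill in exactly what the paper leaves implicit.
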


\begin{proof}
The conditions in Remark~\ref{R-DMLY-PB-C0X}(b) are satisfied.
\end{proof}

\begin{corollary}\label{MLY-BWBS-C0X}
A weighted shift $B_w$ on $c_0(\Z)$ with nonzero weights is mean Li-Yorke chaotic if and only if 
it is not absolutely Ces\`aro bounded and
\begin{equation}\label{MLY-C0X-5}
\liminf_{N \to \infty} \frac{1}{N} \sum_{n=1}^N |w_{-n} \cdots w_{-1}| = 0.
\end{equation}
\end{corollary}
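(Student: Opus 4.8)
The plan is to prove Corollary~\ref{MLY-BWBS-C0X} as the $c_0(\Z)$ counterpart of Corollary~\ref{MLY-BWBS}, replacing the $L^p$-tools by the $C_0(\Omega)$-results of this subsection. Regarding $B_w$ as $C_{w,f}$ on $C_0(\Z)$ with $f(i)=i+1$, every relatively compact open set is a finite set $B\subset\Z$, and $f^{-n}(B)=B-n$ with $\|w^{(n)}\|_{f^{-n}(B)}=\max_{j\in B}|w_{j-n}\cdots w_{j-1}|$; in particular $\|(B_w)^ne_0\|=|w_{-n}\cdots w_{-1}|$. Writing $A_j(n)=|w_{j-n}\cdots w_{j-1}|$, the one computation I would isolate at the outset is the shift--scaling identity: since the weights are nonzero, cancelling the common run of factors gives, for every $j\in\Z$, a constant $\gamma_j>0$ with
\[
A_j(n)=\gamma_j\,A_0(n-j)\quad\text{for all }n>\max\{0,j\}.
\]
Because $(N-j)/N\to1$, a reindexing of the Ces\`aro average then yields $\liminf_{N\to\infty}\frac1N\sum_{n=1}^NA_j(n)=\gamma_j\liminf_{N\to\infty}\frac1N\sum_{n=1}^NA_0(n)$, and the same with $\lim$ in place of $\liminf$. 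Thus the vanishing of the Ces\`aro liminf (or limit) of $(A_j(n))_n$ does not depend on the index $j$; this is what lets me move between an arbitrary index and the index $0$ appearing in (\ref{MLY-C0X-5}).

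For the forward implication I would first record that mean Li-Yorke chaos forces $B_w$ not to be absolutely Ces\`aro bounded: an absolutely Ces\`aro bounded operator keeps $\frac1N\sum_{n=1}^N\|T^ny\|$ uniformly bounded and hence admits no absolutely mean irregular vector. Then I apply Theorem~\ref{MLY-C0X}, implication (i)$\Rightarrow$(ii), to obtain finite sets $B_i\subset\Z$ satisfying (\ref{MLY-C0X-3}). Choosing any $j\in B_1$ and using $\|w^{(n)}\|_{f^{-n}(B_1)}\ge A_j(n)$ gives $\liminf_N\frac1N\sum_{n=1}^NA_j(n)=0$, and the shift--scaling identity turns this into (\ref{MLY-C0X-5}).

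For the converse, assume $B_w$ is not absolutely Ces\`aro bounded and that (\ref{MLY-C0X-5}) holds, and set $\ell=\limsup_{N\to\infty}\frac1N\sum_{n=1}^NA_0(n)$. If $\ell>0$, then since $\|(B_w)^ne_0\|=A_0(n)$ and (\ref{MLY-C0X-5}) gives $\liminf_N\frac1N\sum_{n=1}^NA_0(n)=0$, the vector $e_0$ is an absolutely mean semi-irregular vector, so $B_w$ is mean Li-Yorke chaotic. If $\ell=0$, then $\lim_N\frac1N\sum_{n=1}^NA_0(n)=0$; by the shift--scaling identity $\lim_N\frac1N\sum_{n=1}^NA_j(n)=0$ for every $j$, whence for every finite $B\subset\Z$,
\[
\frac1N\sum_{n=1}^N\|w^{(n)}\|_{f^{-n}(B)}=\frac1N\sum_{n=1}^N\max_{j\in B}A_j(n)\le\sum_{j\in B}\frac1N\sum_{n=1}^NA_j(n)\xrightarrow[N\to\infty]{}0.
\]
Thus the hypothesis of Theorem~\ref{DMLY-C0X} is satisfied, and since $B_w$ is not absolutely Ces\`aro bounded, that theorem gives mean Li-Yorke chaos.

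The only genuinely load-bearing step is the shift--scaling identity, and its subtlety is exactly that it must be an \emph{identity} up to a positive constant rather than a one-sided estimate: a crude comparison of $A_j(n)$ with $A_0(n)$ through the boundedness of the weights fails in one direction, but cancelling the shared block of (nonzero) weights gives the exact relation $A_j(n)=\gamma_j A_0(n-j)$, after which the index $j$ is immaterial for the Ces\`aro liminf. Everything else is a routine transcription of the proof of Corollary~\ref{MLY-BWBS}, together with the elementary remark that a bounded shift of the range of summation does not change the Ces\`aro liminf.
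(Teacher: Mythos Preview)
Your proof is correct and follows essentially the same route as the paper: the paper simply says the argument is analogous to that of Corollary~\ref{MLY-BWBS} with Theorems~\ref{MLY-C0X} and~\ref{DMLY-C0X} in place of Theorems~\ref{MLY-Nec} and~\ref{DMLY}, and that is exactly what you carry out---including the same case split on the $\limsup$ for the backward direction. Your explicit isolation of the shift--scaling identity $A_j(n)=\gamma_j A_0(n-j)$ is a nice clarification of the step the paper leaves implicit when passing from index $j$ to index $0$.
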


\begin{proof}
The proof is analogous to that of Corollary~\ref{MLY-BWBS}, 
but we have to use Theorems~\ref{MLY-C0X} and~\ref{DMLY-C0X} instead of Theorems~\ref{MLY} and~\ref{DMLY}.
\end{proof}

\begin{corollary}
Given a weighted translation operator $T_w$ on $C_0(\R)$, let $\fX$ be the closed subspace of $C_0(\R)$ generated by the 
functions $\varphi \in C_0(\R)$ whose support is contained in a bounded open set $B$ in $\R$ satisfying
\[
\liminf_{N \to \infty} \frac{1}{N} \sum_{n=1}^N \Big(\sup_{x \in B} |w(x-n) \cdots w(x-1)|\Big) = 0.
\]
Then, the following assertions are equivalent:
\begin{itemize}
\item [\rm (i)] $T_w$ is mean Li-Yorke chaotic;
\item [\rm (ii)] There exists a sequence $(B_i)_{i \in \N}$ of bounded open sets in $\R$ such that
  \[
  \ov{B_i} \subset B_{i+1} \ \text{ for all } i \in \N,
  \]
  \[
  \liminf_{N \to \infty} \frac{1}{N} \sum_{n=1}^N \Big(\sup_{x \in B_i} |w(x-n) \cdots w(x-1)|\Big) = 0 \ \text{ for all } i \in \N
  \]
  and
  \[
  \sup\Big\{\frac{1}{N} \sum_{n=1}^N \Big(\sup_{x \in B_i} |w(x-n) \cdots w(x-1)|\Big) : i, N \in \N\Big\} = \infty.
  \]
\item [\rm (iii)] $T_w$ is not absolutely Ces\`aro bounded in $\fX$.
\end{itemize}
\end{corollary}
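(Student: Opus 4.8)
The plan is to recognize this corollary as nothing more than the specialization of Theorem~\ref{MLY-C0X} to the weighted translation operator, so the whole argument reduces to identifying the abstract quantities appearing in that theorem with the concrete expressions stated here. First I would record that $T_w$ coincides with $C_{w,f}$ in the case $\Omega = \R$ (with its usual topology) and $f : x \mapsto x+1$, as already noted in the corresponding example in Section~2. With this identification, the sequence $w^{(n)}$ associated to $w$ and $f$ is
\[
w^{(n)}(x) = (w \circ f^{n-1})(x) \cdots (w \circ f)(x)\, w(x) = w(x) w(x+1) \cdots w(x+n-1).
\]

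Next I would compute the set $f^{-n}(B)$ and the seminorm $\|w^{(n)}\|_{f^{-n}(B)}$ that govern Theorem~\ref{MLY-C0X}. Since $f^n(x) = x+n$, we have $f^{-n}(B) = B - n$, and substituting $x = y - n$ with $y \in B$ gives
\[
\|w^{(n)}\|_{f^{-n}(B)} = \sup_{y \in B} |w^{(n)}(y-n)| = \sup_{y \in B} |w(y-n) \cdots w(y-1)|.
\]
This is exactly the quantity appearing in the three displayed conditions of the corollary. I would also observe that in $\R$ a set is relatively compact open precisely when it is bounded open, so the class of sets quantified over in Theorem~\ref{MLY-C0X} matches the class of bounded open sets used here, and likewise for the definition of the subspace $\fX$.

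With these identifications in place, the three clauses (i), (ii), (iii) of Theorem~\ref{MLY-C0X} translate verbatim into the three clauses of the present corollary: condition (\ref{MLY-C0X-2}) becomes the nesting $\ov{B_i} \subset B_{i+1}$, condition (\ref{MLY-C0X-3}) becomes the liminf-average condition on the $B_i$, condition (\ref{MLY-C0X-4}) becomes the supremum being infinite, and the subspace $\fX$ is unchanged. Hence the equivalence of (i), (ii) and (iii) follows immediately by invoking Theorem~\ref{MLY-C0X}. I do not expect any genuine obstacle: the only point requiring care is the index shift in the computation of $\|w^{(n)}\|_{f^{-n}(B)}$, where one must track correctly that translating the argument by $-n$ turns the product $w(x) \cdots w(x+n-1)$ into $w(y-n) \cdots w(y-1)$; everything else is a direct transcription.
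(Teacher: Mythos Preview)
Your proposal is correct and takes exactly the same approach as the paper, which simply states that this is a particular case of Theorem~\ref{MLY-C0X}. Your write-up is in fact more detailed than the paper's, spelling out the identifications $f^{-n}(B)=B-n$, $\|w^{(n)}\|_{f^{-n}(B)}=\sup_{y\in B}|w(y-n)\cdots w(y-1)|$, and relatively compact open $=$ bounded open in $\R$, all of which are correct.
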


\begin{proof}
This is a particular case of Theorem~\ref{MLY-C0X}.
\end{proof}

%%%%%%%%%%%%%%%%%%%%%%%%%%%%%%%%%%%%%%%%%%%%%%%%%%%%%%%%%%%%%%%%%%%%%%
%%%%%%%%%%%%%%%%%%%%%%%%%%%%%%%%%%%%%%%%%%%%%%%%%%%%%%%%%%%%%%%%%%%%%%

\section*{Acknowledgements}

The first author was partially supported by CNPq -- Project {\#}308238/2021-4 and by CAPES -- Finance Code 001.
The second author was partially supported by Project PID2022-139449NB-I00, funded by MCIN/AEI/10.13039/501100011033/FEDER, UE.

%%%%%%%%%%%%%%%%%%%%%%%%%%%%%%%%%%%%%%%%%%%%%%%%%%%%%%%%%%%%%%%%%%%%%%
%%%%%%%%%%%%%%%%%%%%%%%%%%%%%%%%%%%%%%%%%%%%%%%%%%%%%%%%%%%%%%%%%%%%%%

\smallskip

{\footnotesize

\bigskip\noindent
{\sc Nilson C. Bernardes Jr.}

\smallskip\noindent
Departamento de Matem\'atica Aplicada, Instituto de Matem\'atica, Universidade Federal do Rio de Janeiro, 
Caixa Postal 68530, RJ 21941-909, Brazil.\\
\textit{ e-mail address}: ncbernardesjr@gmail.com

\bigskip\noindent {\sc Antonio Bonilla}

\smallskip\noindent
Departamento de An\'alisis Matem\'atico and Instituto de Matem\'aticas y Aplicaciones (IMAULL), 
Universidad de La Laguna, C/Astrof\'{\i}sico Francisco S\'anchez, s/n, 38721 La Laguna, Tenerife, Spain\\
\textit{ email address:} abonilla@ull.edu.es

\bigskip\noindent
{\sc Jo\~ao V. A. Pinto}

\smallskip\noindent
Departamento de Matem\'atica Aplicada, Instituto de Matem\'atica, Universidade Federal do Rio de Janeiro, 
Caixa Postal 68530, RJ 21941-909, Brazil.\\
\textit{ e-mail address}: joao.pinto@ufu.br

}


\begin{thebibliography}{99}

\bibitem{ABeYa} L. Abadias,  G. Bello and D. Yakubovich, 
    {\it Operator inequalities, functional models and ergodicity}, J. Math.\ Anal.\ Appl.\ {\bf 498} (2021), no.\ 2, Paper No.\ 124984, 39 pp.

\bibitem{AJK} V. Asensio, E. Jord\'a and T. Kalmes,
    {\it Power boundedness and related properties for weighted composition operators on $\mathcal{S}(\R^d)$},
    J. Funct.\ Anal.\ {\bf 288} (2025), no.\ 3, Paper No.\ 110745, 33 pp.
    
\bibitem{BaDarPi} F. Bayart, U. B. Darji and B. Pires, 
    {\it Topological transitivity and mixing of composition operators}, J. Math.\ Anal.\ Appl.\ {\bf 465} (2018), no.\ 1, 125--139.

\bibitem{BaMa} F. Bayart and \'E. Matheron, 
    {\it Dynamics of Linear Operators}, Cambridge University Press, Cambridge, 2009.

\bibitem{BBMP11} T. Berm\'{u}dez, A. Bonilla, F. Mart\'{\i}nez-Gim\'{e}nez and A. Peris,
    {\it Li-Yorke and distributionally chaotic operators}, J. Math.\ Anal.\ Appl.\ {\bf 373} (2011), no.\ 1, 83--93.

\bibitem{BermBMP} T. Berm\'{u}dez, A. Bonilla, V. M\"uller and A. Peris,
    {\it Ces\`aro bounded operators in Banach spaces},  J. Anal.\ Math.\ {\bf 140} (2020), no.\ 1, 187--206.

\bibitem{BBMP} N. C. Bernardes Jr., A. Bonilla, V. M\"uller and A. Peris,
    {\it Distributional chaos for linear operators}, J. Funct.\ Anal.\ {\bf265} (2013), no.\ 9, 2143--2163.

\bibitem{BBMP2} N. C. Bernardes Jr., A. Bonilla, V. M\"uller and A. Peris,
    {\it Li-Yorke chaos in linear dynamics}, Ergodic Theory Dynam.\ Systems {\bf 35} (2015), no.\ 6, 1723--1745.

\bibitem{BBPW} N. C. Bernardes Jr., A. Bonilla, A. Peris and X. Wu,
    {\it Distributional chaos for operators on Banach spaces}, J. Math.\ Anal.\ Appl.\ {\bf 459} (2018), no.\ 2, 797--821. 

\bibitem{BBP} N. C. Bernardes Jr., A. Bonilla and A. Peris, 
    {\it Mean Li-Yorke chaos in Banach spaces}, J. Funct.\ Anal.\ {\bf 278} (2020), no.\ 3, Paper No.\ 108343, 31 pp.

\bibitem{BCDFP} N. C. Bernardes Jr., B. M. Caraballo, U. B. Darji, V. V. F\'avaro and A. Peris,
    {\it Generalized hyperbolicity, stability and expansivity for operators on locally convex spaces},
    J. Funct.\ Anal.\ {\bf 288} (2025), no.\ 2, Paper No.\ 110696, 51 pp.
    
\bibitem{BerDarPi} N. C. Bernardes Jr., U. B. Darji and B. Pires, 
    {\it Li-Yorke chaos for composition operators on $L^p$-spaces}, Monatsh.\ Math.\ {\bf 191} (2020), no.\ 1, 13--35.

\bibitem{BerPer} N. C. Bernardes Jr. and A. Peris,
    {\it On shadowing and chain recurrence in linear dynamics}, Adv.\ Math.\ {\bf 441} (2024), Paper No.\ 109539, 46 pp.
    
\bibitem{BV} N. C. Bernardes Jr. and F. M. Vasconcellos, 
    {\it Li-Yorke chaotic weighted composition operators}, 
    Rev.\ Real Acad.\ Cienc.\ Exactas F\'is.\ Nat.\ Ser.\ A Mat.\ RACSAM {\bf 119} (2025), no.\ 1, Paper No.\ 13, 22 pp.

\bibitem{BonKalPer21} J. Bonet, T. Kalmes and A. Peris,
    {\it Dynamics of shift operators on non-metrizable sequence spaces}, Rev.\ Mat.\ Iberoam.\ {\bf 37} (2021), no.\ 6, 2373--2397.

\bibitem{BonDAnDarPia22} D. Bongiorno, E. D'Aniello, U. B. Darji and L. Di Piazza,
    {\it Linear dynamics induced by odometers}, Proc.\ Amer.\ Math.\ Soc.\ {\bf 150} (2022), no.\ 7, 2823--2837.
    
\bibitem{PBouJSha97} P. S. Bourdon and J. H. Shapiro,
    {\it Cyclic phenomena for composition operators}, Mem.\ Amer.\ Math.\ Soc.\ {\bf 125} (1997), no.\ 596.

\bibitem{Chen} K.-Y. Chen, 
    {\it Distributional chaos for weighted translation operators on groups}, 
    J. Math.\ Anal.\ Appl.\ {\bf 538} (2024), no.\ 1, Paper No.\ 128392, 18 pp.

\bibitem{DAnDarMai21} E. D'Aniello, U. B. Darji and M. Maiuriello,
    {\it Generalized hyperbolicity and shadowing in $L^p$ spaces}, J. Differential Equations {\bf 298} (2021), 68--94.

\bibitem{DarPi} U. B. Darji and B. Pires, 
    {\it Chaos and frequent hypercyclicity for composition operators}, Proc.\ Edinb.\ Math.\ Soc.\ (2) {\bf 64} (2021), no.\ 3, 513--531.

\bibitem{EGalAMon04} E. A. Gallardo-Guti\'errez and A. Montes-Rodr\'iguez,
    {\it The role of the spectrum in the cyclic behavior of composition operators}, Mem.\ Amer.\ Math.\ Soc.\ {\bf 167} (2004), no.\ 791.

\bibitem{GG} D. Gomes and K.-G. Grosse-Erdmann, 
    {\it Kitai's Criterion for composition operators}, J. Math.\ Anal.\ Appl.\ {\bf 547} (2025), no.\ 2, Paper No.\ 129347, 28 pp. 

 \bibitem{GP} K.-G. Grosse-Erdmann and A. Peris Manguillot, 
    {\it Linear Chaos}, Springer, London, 2011.

\bibitem{HeYin} S. He and Z. Yin,
    {\it Distributional chaos for composition operators on $L^p$-spaces}, arXiv:2503.00988v1, 2025.
    
\bibitem{TKal19} T. Kalmes,
    {\it Dynamics of weighted composition operators on function spaces defined by local properties},
    Studia Math.\ {\bf 249} (2019), no.\ 3, 259--301.

\bibitem{K} U. Krengel,
    {\it Ergodic Theorems}, De Gruyter Stud.\ Math., 6, Walter de Gruyter $\&$ Co., Berlin, 1985. With a supplement by Antoine Brunel.

\bibitem{MOP} F. Mart\'inez-Gim\'enez, P. Oprocha and A. Peris,
    {\it Distributional chaos for backward shifts}, J. Math.\ Anal.\ Appl.\ {\bf 351} (2009), no.\ 2, 607--615.

\bibitem{M} M. Maiuriello, 
    {\it Expansivity and strong structural stability for composition operators on $L^p$ spaces}, 
    Banach J. Math.\ Anal.\ {\bf 16} (2022), no.\ 4, Paper No.\ 51, 20 pp. 

\bibitem{Prz} A. Przestacki,
    {\it Dynamical properties of weighted composition operators on the space of smooth functions}, 
    J. Math.\ Anal.\ Appl.\ {\bf 445} (2017), no.\ 1, 1097--1113.
    
\bibitem{WRud87} W. Rudin,
    {\it Real and Complex Analysis}, Third Edition, McGraw-Hill Book Co., New York, 1987.

\bibitem{WRud91} W. Rudin,
    {\it Functional Analysis}, Second Edition, McGraw-Hill, Inc., New York, 1991.

\bibitem{JSha93} J. H. Shapiro,
    {\it Composition Operators and Classical Function Theory}, Springer-Verlag, New York, 1993.
    
\end{thebibliography}
\end{document}